\newtheorem{theoremalph}{Theorem}
\newtheorem{Coroll}[theoremalph]{Corollary}
\newtheorem{Theorem}{Theorem}[section]
\newtheorem*{Theorem A}{Theorem A}
\newtheorem*{Theorem A'}{Theorem A'}
\newtheorem*{Theorem C'}{Theorem C'}
\newtheorem*{Conj*}{Conjecture}
\newtheorem{Definition}[Theorem]{Definition}
\newtheorem{Proposition}[Theorem]{Proposition}
\newtheorem{Lemma}[Theorem]{Lemma}
\newtheorem*{Remark}{Remark}
\newtheorem{Remark-numbered}[Theorem]{Remark}
\newtheorem{Remarks-numbered}[Theorem]{Remarks}
\newtheorem{Corollary}[Theorem]{Corollary}
\newtheorem*{Claim}{Claim}
\newtheorem{Claim-numbered}{Claim}
 \def\NN{{\mathbb N}} 
 \def\RR{{\mathbb R}} 
\def\TT{{\mathbb T}}
   \def\cN{{\cal N}} \def\cT{{\cal T}}
\def\cC{{\cal C}}   \def\cO{{\cal O}} 
   \def\cP{{\cal P}} \def\cV{{\cal V}}
\def\cE{{\cal E}}    \def\cW{{\cal W}}
\def\cF{{\cal F}}
\newcommand{\id}{\operatorname{Id}}
\newcommand{\sing}{{\operatorname{Sing}}}
\newcommand{\orb}{\operatorname{Orb}}
\def\dim{\operatorname{dim}}
\def\diam{\operatorname{Diam}}
\def\orb{\operatorname{Orb}}
\def\supp{\operatorname{Supp}}
\def\Interior{\operatorname{Interior}}
\def\lip{\operatorname{Lip}}
\begin{document}

\title{{Topologically stable manifolds\\ for index-$1$ singular dominated splittings}}

\author{Sylvain Crovisier\footnote{S.Crovisier was partially supported by \emph{ISDEEC} ANR-16-CE40-0013, and by the ERC project 692925 \emph{NUHGD}.} \; and Dawei Yang\footnote{D. Yang  was partially supported by National Key R\&D Program of China (2022YFA1005801), by NSFC 12171348, NSFC 12325106, {ZXL2024386 and Jiangsu Specially Appointed Professorship.}
}}

\maketitle

\begin{abstract} 
For $C^2$ vector fields, we study regular ergodic measures whose supports admit singular dominated splittings with one of the bundles having dimension $1$. For such a measure $\mu$, we prove that if any periodic orbit 
within the support of $\mu$ (when it exists) has at least one negative Lyapunov exponent, and if the dynamics on the support of $\mu$ is not topologically equivalent to an irrational flow on a $2$-torus, then $\mu$-almost every point $x$ admits a $2$-dimensional topologically stable manifold $V^s(x)$: we mean that $V^s(x)$ is an embedded disc such that the orbit any point within it converges to the orbit of $x$ up to a time-reparametrization. Note that we do not assume any hyperbolicity for $\mu$.

We also establish an analogous conclusion
for compact invariant sets $\Lambda$ with a singular dominated splitting,
assuming some mild contraction property (any regular ergodic measure properly supported in $\Lambda$
must have at least one negative Lyapunov exponent).
This result will be used in our future work on the Palis density conjecture for three-dimensional vector fields.
\end{abstract}

\section{Introduction}

Let us assume that $X$ is a vector field over a 
compact Riemannian manifold without boundary $M$ and $(\varphi_t)_{t\in \RR}$ is the flow generated by $X$. 

Given a point $x\in M$, one defines its topologically stable set ${\frak W}^s(x)$: a point $y$ belongs to ${\frak W}^s(x)$ if and only if there is an increasing homeomorphism $\theta:\RR^+\to\RR^+$ which satisfies $d(\varphi_{\theta(t)}(y),\varphi_t(x))\to 0$ as $t\to+\infty$.
 Due to the shear of the flow,
 ${\frak W}^s(x)$ is in general not equal to the stronger stable set $W^s(x)$, defined by
 $$W^s(x):=\big\{y\in M:~\lim_{t\to\infty}d(\varphi_{t}(y),\varphi_t(x))=0\big\}.$$
 
In this paper, we will establish the existence of such topologically stable manifolds for typical points of general regular ergodic measures when the vector field is $C^2$ and satisfies some weak generic property. We will also require the measure to admit some dominated splitting, but we do not require the measure to be hyperbolic (as in Pesin theory~\cite{Pes}).

One way to study flows is to intersect the orbits along a transverse section as considered first by Poincar\'e for the local dynamics near a periodic orbit. For studying the global dynamics, difficulties occur in the presence of \emph{singularities},
that are the points $\sigma\in M$ such that $X(\sigma)=0$.
We denote by ${\rm Sing}(X)$ the set of singularities of $X$. 
Singularities only give simple dynamics: they are in fact the fixed points of the flow.
The other points are called \emph{regular} points.
Difficulties appear when regular orbits accumulates on singularities,
as it is the case for the famous Lorenz attractor \cite{Lo}.

The \emph{tangent flow} of $X$,  i.e. the derivative with respect to the space variable $D\varphi_t: TM\to TM$,
can give some information on the stable sets: for instance any point in a uniformly hyperbolic invariant compact set with no singularity admits a stable manifold, see~\cite{hirsch-pugh-shub}.
Liao in~\cite{Lia63} generalized the above idea of Poincar\'e on the regular part of the system and introduced the notion of \emph{linear Poincar\'e flow} $(\psi_t)_{t\in \RR}$. 
At any regular point $x$, one defines the normal space $\cN_x=\{v\in T_x M:~\left<v,X(x)\right>=0\}$. This gives the normal bundle $\cN_{M\setminus{\rm Sing}(X)}$ above the regular set of $X$, which is a priori non-compact.
The flow $\psi_t:~\cN_{M\setminus{\rm Sing}(X)}\to \cN_{M\setminus{\rm Sing}(X)}$ is defined for any vector $v\in\cN_x$ with $x\in M\setminus{\rm Sing}(X)$ by projecting orthogonally $D\varphi_t(v)$ on $\cN_{\varphi_t(x)}$:
$$\psi_t(v)=D\varphi_t(v)-\frac{\left<X(\varphi_t(x)),D\varphi_t(v)\right>}{\left|X(\varphi_t(x))\right|^2}X(\varphi_t(x)).$$
Hyperbolic structures on the tangent dynamics, like dominated splittings, have been extensively studied for diffeomorphisms beyond uniform hyperbolicity. They give very weak forms of hyperbolicity, but still robust dynamics. For vector fields, the existence of a dominated splitting for the tangent flow may be a very strong assumption: it sometimes imply uniform contraction or expansion as in \cite{BGY}.
It is natural to also consider dominated splittings on the normal bundle. As summarized by Bonatti and da Luz \cite{BdL}: ``for flows, hyperbolic structures live on the normal bundle for the linear Poincar\'e flow, but not on the tangent bundle''.

Let us consider some compact invariant set $\Lambda\subset M$.

A $\psi_t$-invariant splitting $\cN_{\Lambda\setminus \sing(X)}=\cN^{cs}\oplus \cN^{cu}$ is said to be \emph{dominated} if there is some time $\tau>0$ such that for any $x\in\Lambda\setminus \sing(X)$ and any unit vectors $v^{cs}\in\cN^{cs}$ and $v^{cu}\in\cN^{cu}$, one has
$$\|\psi_{\tau}.v^{cs}\|\le \tfrac 1 2 \|\psi_{\tau}.v^{cu}\|.$$
In contrast to the cases of diffeomorphisms and of the tangent flow $(D\varphi_t)$, dominated splittings of the linear Poincar\'e flow are in general not robust due to the existence of singularities. For this reason we require the dominated splitting to satisfy a compatibility condition with the singularities.
For instance \cite[Definition 1.1]{CYZ2} has introduced the notion of \emph{singular domination}, but similar notions appeared before in different works.
In the present paper we consider a definition slightly different from there and focus on the case where
$\dim(\cN^{cs})=1$.

\begin{Definition}
A compact invariant set $\Lambda$ is said to admit an \emph{index-1 singular dominated splitting}
if there is a dominated splitting $\cN_{\Lambda\setminus{\rm Sing}(X)}=\cN^{cs}\oplus\cN^{cu}$ with $\dim\cN^{cs}=1$ such that at any singularity $\sigma\in\Lambda$ (if there exists some):
\begin{itemize}
\item[--] $DX(\sigma)$ is invertible,
\item[--] there is a dominated splitting $T_\sigma M=E^{ss}\oplus E^{cu}$ where $E^{ss}$ is one-dimensional and uniformly contracted by the tangent flow,
\item[--] the associated strong stable manifold $W^{ss}(\sigma)$ satisfies $W^{ss}(\sigma)\cap \Lambda=\{\sigma\}$.
\end{itemize}
\end{Definition}

We now introduce a weak notion of stable manifold at a regular point:

\begin{Definition}
For any regular point $x$, we say that an embedded $C^1$-disc $V^s(x)$ containing $x$ is \emph{topologically stable} if
for any $y\in V^s(x)$:
\begin{itemize} 
\item[--] $X(y)\in T_yV^s(x)$,
\item[--] there exists an orientation-preserving homeomorphism $\theta\colon \RR\to\RR$ such that
$$d(\varphi_{\theta(t)}(y),\varphi_t(x))\underset{t\to 0}\longrightarrow 0.$$
\end{itemize}
\end{Definition}

Let us consider an ergodic measure $\mu$ for $(\varphi_t)_{t\in \RR}$ which is \emph{regular},
i.e. which satisfies $\mu({\rm Sing}(X))=0$. Note that one may have ${\rm supp}(\mu)\cap {\rm Sing}(X)\neq\emptyset$.
Pesin theory ensures that if one of the Lyapunov exponents of the measure is negative, then $\mu$-almost every point admits a stable manifold. In the next theorem we do not assume information on negative Lyapunov exponents.

\begin{theoremalph}\label{Thm:measure-stable}
Let $M$ be a compact Riemannian manifold without boundary, $X$ be a $C^2$ vector field over $M$ and $\mu$ be a regular 
ergodic measure of the flow generated by $X$. Let us assume that on the support of $\mu$:
\begin{enumerate}
\item there exists an index-$1$ singular dominated splitting $\cN_{{\rm supp}(\mu)\setminus{\rm Sing}(X)}=\cN^{cs}\oplus\cN^{cu}$,
\item\label{i2} any periodic orbit (if it exists) has at least one negative Lyapunov exponent,
\item\label{i3} the dynamics is not topologically equivalent to an irrational flow on a $2$-torus.
\end{enumerate}
Then $\mu$-almost every point $x$ admits a $2$-dimensional topologically stable disc $V^s(x)$
which is tangent to $\cN^{cs}(x)\oplus X(x)$.
\end{theoremalph}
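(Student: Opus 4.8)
The plan is to reduce everything to the statement that the Lyapunov exponent of $\mu$ along the one–dimensional bundle $\cN^{cs}$ is negative, and then to build the disc from the resulting contraction of the linear Poincar\'e flow and flow it out in the direction of $X$. For the reduction, let $\lambda^{cs}\le\lambda^{cu}$ be the two Lyapunov exponents of $(\psi_t)$ along $\cN^{cs}\oplus\cN^{cu}$; domination forces the strict gap $\lambda^{cs}<\lambda^{cu}$, and since the only other exponent of $D\varphi_t$ is the trivial one along $X$, having $\lambda^{cs}(\mu)<0$ is equivalent to $\mu$ having \emph{some} negative exponent. Suppose for contradiction that $\lambda^{cs}(\mu)\ge 0$, i.e.\ all exponents of $\mu$ are nonnegative. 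If $\mu$ is carried by a periodic orbit this already contradicts hypothesis~\eqref{i2}. Otherwise $\cN^{cs}$ is not uniformly contracted over the compact invariant set ${\rm supp}(\mu)$, and one invokes a Pujals--Sambarino type mechanism for the linear Poincar\'e flow: using the $C^2$ regularity of $X$ (for bounded-distortion / Schwartz-type estimates on the one-dimensional bundle), the domination of $\cN^{cs}\oplus\cN^{cu}$, and Liao's closing and shadowing lemmas for $(\psi_t)$, the failure of contraction along $\cN^{cs}$ forces one of two degeneracies inside ${\rm supp}(\mu)$: either a periodic orbit along which $\cN^{cs}$ is not contracted --- hence a periodic orbit with no negative exponent, contradicting \eqref{i2} --- or a minimal set that, after reduction to a $C^2$ circle map and Denjoy's theorem, is conjugate to an irrational rotation, forcing ${\rm supp}(\mu)$ to be a $2$-torus carrying an irrational flow, contradicting \eqref{i3}. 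The subtlety is that $\psi_t$ degenerates as $|X|\to 0$ near ${\rm Sing}(X)$; this is exactly where the \emph{index-$1$ singular domination} enters --- invertibility of $DX(\sigma)$, the tangent splitting $E^{ss}\oplus E^{cu}$, and $W^{ss}(\sigma)\cap{\rm supp}(\mu)=\{\sigma\}$ give, via the rescaled linear Poincar\'e flow, uniform control along orbit segments passing near singularities. Hence $\lambda^{cs}(\mu)<0$.

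Next, the stable curve. With $\lambda^{cs}(\mu)<0$ and $\cN^{cs}$ one-dimensional and dominated, Oseledets--Pesin theory applies to the cocycle $(\psi_t)$ over the full-measure set of regular points: for $\mu$-a.e.\ $x$ there is a $C^1$ embedded curve $W^{ss}_{\mathrm{loc}}(x)\ni x$ tangent to $\cN^{cs}(x)\subset T_xM$ such that the normal distance from the orbit of any $y\in W^{ss}_{\mathrm{loc}}(x)$ to the orbit of $x$ decays. Excursions of the orbit of $x$ toward ${\rm Sing}(X)$ --- of asymptotically zero frequency, since $\mu$ is regular --- are absorbed into the Pesin blocks using the singular domination, so the curve and its uniform size estimates survive. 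I would then convert the normal decay into convergence up to a time-reparametrization: for $y\in W^{ss}_{\mathrm{loc}}(x)$ define $\theta(t)$ by synchronizing $\varphi_{\theta(t)}(y)$ with $\varphi_t(x)$ along a moving cross-section (legitimate for all large $t$ because $y$ stays near the orbit of $x$); then $d(\varphi_{\theta(t)}(y),\varphi_t(x))$ is exactly the normal distance and tends to $0$ as $t\to+\infty$, and $\theta$ is an orientation-preserving homeomorphism of $\RR$ --- away from ${\rm Sing}(X)$ because $|X|$ is locally bounded below, and near ${\rm Sing}(X)$ precisely because the freedom in $\theta$ absorbs the slowdown of the flow.

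Finally, flowing out. Set $V^s(x):=\bigcup_{|s|<\delta}\varphi_s\big(W^{ss}_{\mathrm{loc}}(x)\big)$. Since $W^{ss}_{\mathrm{loc}}(x)$ is a $C^1$ curve transverse to $X(x)$ and $X$ is $C^2$, for small $\delta$ the map $(s,z)\mapsto\varphi_s(z)$ is a $C^1$ embedding, so $V^s(x)$ is a $C^1$ embedded $2$-disc containing $x$, tangent at $x$ to $\cN^{cs}(x)\oplus X(x)$, with $X(y)\in T_yV^s(x)$ for every $y\in V^s(x)$ by construction. Any such $y$ equals $\varphi_s(z)$ with $z\in W^{ss}_{\mathrm{loc}}(x)$ and $|s|<\delta$; composing the reparametrization from the previous step for $z$ with the shift $t\mapsto t-s$ yields an orientation-preserving homeomorphism of $\RR$ witnessing $d(\varphi_{\theta(t)}(y),\varphi_t(x))\to 0$. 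Hence $V^s(x)$ is topologically stable, proving the theorem.

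The main obstacle is the first step: extracting the Pujals--Sambarino alternative --- a non-contracted one-dimensional dominated bundle forces either a periodic orbit with no negative exponent or an irrational torus --- \emph{for the linear Poincar\'e flow in the presence of singularities}, where $\psi_t$ degenerates as $|X|\to0$. Both the $C^2$ hypothesis and the full strength of the index-$1$ singular domination are used essentially there; keeping the Pesin stable-manifold construction of the second step uniform near ${\rm Sing}(X)$ is a secondary but still delicate point.
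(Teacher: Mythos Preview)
Your reduction in the first step is where the argument breaks. You aim to prove $\lambda^{cs}(\mu)<0$ and then invoke Pesin theory, but the Pujals--Sambarino mechanism you cite does \emph{not} yield this conclusion, and the paper explicitly does not claim it (``we do not assume any hyperbolicity for $\mu$''). What the mechanism actually produces --- and this is the content of Section~3, in particular Theorem~\ref{Thm:recurrent-contraction} and Corollary~\ref{Cor:ergodic-measure} --- is only \emph{topological contraction}: a uniform $\varepsilon_0>0$ such that $|P_t(\cW^{cs}_{\varepsilon_0}(x))|\to 0$ for $\mu$-a.e.\ $x$, with no exponential rate. This is strictly weaker than $\lambda^{cs}(\mu)<0$; nothing rules out $\lambda^{cs}(\mu)=0$ with sub-exponential contraction. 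Your argument ``$\lambda^{cs}(\mu)\ge 0 \Rightarrow$ not uniformly contracted $\Rightarrow$ bad periodic orbit or torus'' also misstates the dichotomy: non-uniform contraction on $\supp(\mu)$ gives a normally expanded irrational torus \emph{contained in} $\supp(\mu)$, not that $\supp(\mu)$ \emph{is} a torus; bridging this gap (so that hypothesis~\eqref{i3} applies) requires the extra argument indicated in Remark~\ref{r.torus}. Finally, the ``Pujals--Sambarino type mechanism for the linear Poincar\'e flow in the presence of singularities'' that you invoke as a black box is precisely the main technical work of the paper: Sections~\ref{s.fibered}--\ref{s.topological-hyperbolicity} build the compactified local fibered flow, identifications, plaque families, $\delta$-intervals, returns, and the Denjoy-type analysis needed to run the argument through singularities.

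The paper's actual route bypasses Lyapunov exponents entirely. It compactifies via blow-up of the singularities (Theorem~\ref{Thm:local-fibered-flow-model}) to obtain a local $C^2$ fibered flow $\widehat P^*$ over a compact $\widehat\Lambda$ with a compatible identification; the index-$1$ singular domination is used to show that the extended bundle $\widehat\cN^{cs}$ is uniformly contracted on a neighbourhood $\widehat V$ of the blown-up singular set (Theorem~\ref{Thm:local-fibered-flow-model-dominated}). This places the problem in the abstract setting (A1)--(A3) of Section~\ref{s.topological-hyperbolicity}, where Corollary~\ref{Cor:ergodic-measure} gives topological contraction of the $C^2$ center-stable plaques $\widehat\cW^{cs}_{\varepsilon_0}(\widehat x)$ for $\widehat\mu$-a.e.\ $\widehat x$. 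Projecting back by $I$ yields a one-dimensional normal stable manifold in $\cN_x$, and your third step --- flowing it out along $X$ for a short time --- is exactly how the paper produces the $2$-disc $V^s(x)$. So your last step is correct, but the curve it starts from must come from topological contraction of plaques, not from Pesin theory.
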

A more precise version stated in Section~\ref{s.main} provides some uniformity on the discs $V^s(x)$.
Note also that for generic vector fields, if $\mu$ is not supported on a source, then it always satisfy
conditions~\eqref{i2} and~\eqref{i3}.

The statement is non-trivial even when $\supp(\mu)$ contains no singularity.
When it contains a singularity, the induced dynamics cannot be topologically equivalent to an irrational flow on
a $2$-torus and the following corollary holds.

\begin{Coroll}\label{Cor:generic-measure-stable}
Let $M$ be a compact Riemannian manifold without boundary, $X$ be a $C^2$ vector field over $M$ and $\mu$ be a regular ergodic measure of the flow generated by $X$. Let us assume that on the support of $\mu$:
\begin{itemize}
\item[--] there exists an index-$1$ singular dominated splitting $\cN_{{\rm supp}(\mu)\setminus{\rm Sing}(X)}=\cN^{cs}\oplus\cN^{cu}$,
\item[--] any periodic orbit (if it exists) has at least one negative Lyapunov exponent.
\end{itemize}
If the support of $\mu$ contains a singularity, then
$\mu$-almost every point $x$ admits a $2$-dimensional topologically stable disc $V^s(x)$
which is tangent to $\cN^{cs}(x)\oplus X(x)$.

\end{Coroll}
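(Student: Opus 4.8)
The plan is to deduce the corollary directly from Theorem~\ref{Thm:measure-stable}. The two bulleted hypotheses of the corollary are literally conditions~(1) and~\eqref{i2} of that theorem, so the only thing to do is to verify that condition~\eqref{i3} is automatically satisfied as soon as $\supp(\mu)$ contains a singularity. In other words, I will argue that if $\supp(\mu)\cap \Sing(X)\neq\emptyset$, then the restriction of $(\varphi_t)_{t\in\RR}$ to $\supp(\mu)$ cannot be topologically equivalent to an irrational flow on the $2$-torus.

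The key elementary point is that being a singularity is a topological-equivalence invariant. Recall that a topological equivalence between two flows is a homeomorphism of the underlying spaces that carries each orbit of the first flow onto an orbit of the second, preserving the direction of time; it need not respect the time parametrization. If $h$ is such a homeomorphism and the orbit of a point $x$ reduces to the singleton $\{x\}$ (i.e. $x$ is a singularity), then $h(\{x\})=\{h(x)\}$ is a single point which must be a full orbit of the target flow, hence $h(x)$ is a singularity of that flow. Now an irrational flow on $\TT^2$ is generated by a nowhere-vanishing constant vector field of irrational slope, so it has no singularity at all; consequently a space carrying a flow topologically equivalent to an irrational $\TT^2$ flow contains no singularity. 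Since by hypothesis $\supp(\mu)$ contains a singularity $\sigma$, the dynamics on $\supp(\mu)$ is not topologically equivalent to an irrational flow on a $2$-torus, i.e. condition~\eqref{i3} of Theorem~\ref{Thm:measure-stable} holds.

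All three hypotheses of Theorem~\ref{Thm:measure-stable} being met, the theorem applies and provides, for $\mu$-almost every $x$, a $2$-dimensional topologically stable disc $V^s(x)$ tangent to $\cN^{cs}(x)\oplus X(x)$, which is exactly the assertion of the corollary. There is essentially no obstacle here: the entire content lies in Theorem~\ref{Thm:measure-stable}, and the only (standard) fact used is that topological equivalences preserve singularities. The corollary simply records the frequently-occurring situation — in particular the one relevant to singular-hyperbolic type sets and to the intended application to the Palis density conjecture for three-dimensional vector fields — in which hypothesis~\eqref{i3} does not have to be checked separately.
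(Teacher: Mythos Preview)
Your proof is correct and matches the paper's approach exactly: the paper simply remarks, just before stating the corollary, that ``when it contains a singularity, the induced dynamics cannot be topologically equivalent to an irrational flow on a $2$-torus and the following corollary holds,'' without further detail. Your write-up fills in the (elementary) justification that topological equivalences send singularities to singularities while the irrational torus flow has none, which is precisely the intended argument.
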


The following analogous result holds for compact invariant sets when any proper compact invariant subset satisfies the contraction property. 

\begin{theoremalph}\label{Thm:minially-non-contracted-stable}
Let $M$ be a Riemannian compact manifold without boundary, $X$ be a $C^2$ vector field over $M$ and $\Lambda$ be a compact set invariant by the flow generated by $X$ which is not reduced to a periodic orbit.
Let us assume that :
\begin{itemize}
\item[--] there exists an index-$1$ singular dominated splitting $\cN_{\Lambda\setminus{\rm Sing}(X)}=\cN^{cs}\oplus\cN^{cu}$,
\item[--] any regular ergodic measure $\mu$ (if there exists) whose support is a proper subset of $\Lambda$ has at least one negative Lyapunov exponent,
\item[--] the dynamics is not topologically equivalent to an irrational flow on a $2$-torus.
\end{itemize}
Then any point $x\in \Lambda\setminus \sing(X)$ admits a $2$-dimensional topologically stable disc $V^s(x)$
which is tangent to $\cN^{cs}(x)\oplus X(x)$.

\end{theoremalph}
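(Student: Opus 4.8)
The plan is to deduce Theorem~\ref{Thm:minially-non-contracted-stable} from Theorem~\ref{Thm:measure-stable} (or rather from its more precise version announced in Section~\ref{s.main}) by a compactness and bootstrap argument. The starting point is that Theorem~\ref{Thm:measure-stable} already produces topologically stable discs at $\mu$-a.e.\ point for every regular ergodic measure $\mu$ supported in $\Lambda$: indeed each such $\mu$ satisfies hypothesis~(1) since the singular dominated splitting restricts to $\supp(\mu)$, hypothesis~(3) by assumption, and hypothesis~(2) because a periodic orbit inside $\supp(\mu)$, if it is a proper subset of $\Lambda$, is a periodic measure with a negative exponent (and if $\supp(\mu)=\Lambda$ equals a single periodic orbit, that case is excluded since $\Lambda$ is not reduced to a periodic orbit). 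The real content is to pass from ``a.e.\ point of each invariant measure'' to ``\emph{every} regular point of $\Lambda$,'' which forces us to exploit the contraction hypothesis on all \emph{proper} invariant subsets, not just the ones carrying the measures we start with.

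First I would record the uniform estimates behind Theorem~\ref{Thm:measure-stable}: the precise version should give, for a measure $\mu$, discs $V^s(x)$ of a definite size with uniform $C^1$ geometry and a uniform contraction rate along them, over a set of points of large measure where the relevant Pesin-type / Liao-type estimates hold. The key mechanism to extract is that the existence of the disc at $x$ is governed by a quantitative condition along the $\psi_t$-orbit of $x$ (a Liao sifting / uniform contraction condition on the $\cN^{cs}$ bundle along the forward orbit, of the type ``the forward orbit of $x$ is $\cN^{cs}$-contracted at scale $\lambda$''). So I would isolate a closed condition $\mathcal{C}$ on regular points $x\in\Lambda$ — namely $\prod$-type uniform contraction of $\psi_t|_{\cN^{cs}}$ along $\mathcal{O}^+(x)$ — such that: (a) $\mathcal{C}(x)$ implies $x$ has a topologically stable disc of uniform size tangent to $\cN^{cs}(x)\oplus X(x)$, by the graph-transform / plaque-family construction used in Theorem~\ref{Thm:measure-stable}; and (b) the set of $x\in\Lambda$ where $\mathcal{C}$ \emph{fails} is forward invariant in an appropriate sense, so its $\omega$-limits lie in a proper compact invariant subset $\Lambda'\subsetneq\Lambda$.

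The heart of the argument is then a dichotomy run on $\Lambda$. Suppose some regular $x\in\Lambda$ does \emph{not} satisfy $\mathcal{C}$. Using the index-1 domination, the failure of uniform $\cN^{cs}$-contraction along $\mathcal{O}^+(x)$ should, via a Pliss-lemma argument and weak-$*$ limits of empirical measures along a bad subsequence of times, produce a regular ergodic measure $\nu$ supported in $\overline{\mathcal{O}^+(x)}$ whose $\cN^{cs}$-exponent is $\ge 0$. If $\supp(\nu)$ were a proper subset of $\Lambda$, the contraction hypothesis of the theorem would give $\nu$ a negative Lyapunov exponent; since $\dim\cN^{cs}=1$ and the splitting is dominated, the only exponent that can be negative without forcing $\cN^{cs}$-contraction is along the flow direction or in $\cN^{cu}$ — here I would use the index-1 structure to argue that a negative exponent must be the $\cN^{cs}$ one (the $\cN^{cu}$ exponent dominates the $\cN^{cs}$ exponent, and the flow direction has zero exponent for regular measures), a contradiction. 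Hence $\supp(\nu)=\Lambda$, i.e.\ $\nu$ is a regular ergodic measure of full support with $\cN^{cs}$-exponent $\ge 0$; combined with domination this means $\Lambda$ carries no negative exponent at all and $\nu$-a.e.\ point is ``neutral'', which should force (using that $\cN^{cs}$ is one-dimensional, that there is no periodic orbit obstruction, and an argument excluding the torus case) that the whole dynamics on $\Lambda$ is topologically equivalent to an irrational flow on a $2$-torus — contradicting hypothesis~(3). Therefore every regular $x\in\Lambda$ satisfies $\mathcal{C}$, hence admits the desired disc.

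The main obstacle I anticipate is step (b)/the dichotomy: cleanly showing that \emph{failure of the uniform contraction condition at a single orbit} already yields an invariant measure with non-negative $\cN^{cs}$-exponent, and then that such a measure of \emph{full} support forces the irrational-torus picture. The first half is a Liao–Pliss selecting-times argument whose care is in handling the non-compactness of the normal bundle near $\sing(X)$ — this is precisely where the \emph{index-1 singular dominated splitting} hypothesis (invertibility of $DX(\sigma)$, the strong stable direction $E^{ss}$, and $W^{ss}(\sigma)\cap\Lambda=\{\sigma\}$) must be used, via the rescaled/extended linear Poincaré flow à la Li–Gan–Wen or Bonatti–da~Luz, to control $\psi_t$ along orbit segments that pass near singularities. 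The second half — a regular ergodic measure of full support in $\Lambda$ with no negative exponent is an irrational $2$-torus flow — is really a structural classification: one shows $\Lambda$ is a normally contracted(?) no, rather: $\cN^{cs}$ being neither contracted nor expanded on an ergodic-full-support set, together with $\dim\cN^{cs}=1$ and the Poincaré–Bendixson–type constraints coming from the $2$-dimensional nature of $\cN^{cs}\oplus X$, pins the dynamics down; excluding periodic orbits (hypothesis~(2)) and minimality considerations leave only the irrational translation flow. I would expect to need a separate lemma for this, possibly already present in the paper's machinery, and to treat with care the possibility that the ``neutral'' measure coexists with other measures having negative exponents — which is why the argument must be localized to $\overline{\mathcal{O}^+(x)}$ and then the alternative $\supp(\nu)\subsetneq\Lambda$ dispatched first.
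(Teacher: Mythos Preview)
Your proposal diverges from the paper's proof in a fundamental way, and the divergence is precisely where the gap lies.

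The paper does \emph{not} deduce Theorem~\ref{Thm:minially-non-contracted-stable} from Theorem~\ref{Thm:measure-stable}. Both theorems are proved in parallel from the abstract machinery of Section~\ref{s.topological-hyperbolicity}: Theorem~A follows from Theorem~\ref{Thm:recurrent-contraction}/Corollary~\ref{Cor:ergodic-measure}, while Theorem~C follows from Theorem~\ref{Thm:topologicalcontracting}. The latter is tailored exactly to the ``minimally non-contracted'' hypothesis: it says that under (A1)--(A3), either some proper invariant subset has non-contracted $\cE$, or $K$ is a torus, or $\cE$ is topologically contracted everywhere. The paper's proof of Theorem~C simply lifts $\Lambda$ to the compactified space $\widehat\Lambda$ (Theorems~\ref{Thm:local-fibered-flow-model} and~\ref{Thm:local-fibered-flow-model-dominated}), checks that every proper invariant subset of $\widehat\Lambda$ has uniformly contracted $\widehat\cN^{cs}$ (this is where the hypothesis on proper-support measures is used, via an ergodic-measure argument), and then invokes Theorem~\ref{Thm:topologicalcontracting} directly. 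No pointwise dichotomy, no bootstrap from Theorem~A.

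Your argument has a genuine gap at the step you yourself flag: the claim that a full-support regular ergodic measure $\nu$ with non-negative $\cN^{cs}$-exponent forces $\Lambda$ to be topologically an irrational torus flow. You write that this ``should'' follow from ``Poincar\'e--Bendixson--type constraints'' and that the needed lemma is ``possibly already present in the paper's machinery''---but that machinery \emph{is} Section~\ref{s.topological-hyperbolicity}: the entire $\delta$-interval analysis, Proposition~\ref{Prop:dynamicsofinterval}, the construction of aperiodic intervals, and Proposition~\ref{p.aperiodic}. There is no shortcut; this is the technical heart of the paper. Moreover, there is a conceptual mismatch: your condition $\mathcal C$ is an \emph{exponential} contraction condition, and you aim to prove $\mathcal C(x)$ for every $x$, i.e.\ uniform hyperbolicity of $\cN^{cs}$. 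But the conclusion of Theorem~C (and of Theorem~\ref{Thm:topologicalcontracting}) is only \emph{topological} contraction of the plaques---$|P_t(\cW^{cs}_{\varepsilon_0}(x))|\to 0$, possibly sub-exponentially. So it is entirely consistent with the hypotheses that $\cN^{cs}$ is \emph{not} uniformly contracted on $\Lambda$, that some full-support $\nu$ has zero $\cN^{cs}$-exponent, and yet $\Lambda$ is not a torus and every point still has a topologically stable disc. In that scenario your contradiction never materializes, while the paper's Theorem~\ref{Thm:topologicalcontracting} still delivers the conclusion. Finally, note that despite announcing Theorem~A as your ``starting point,'' your argument never actually uses the stable discs it provides; the real work is entirely in the unproven structural lemma.
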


Theorem~\ref{Thm:minially-non-contracted-stable} and its more precise version stated in Section~\ref{s.main}
will be used in a future work to prove a conjecture of Palis for three-dimensional vector fields~\cite{CY}.
This was one of the main motivations for this paper.

Our results and their proofs continue and generalize previous works:
for one-dimensional maps by Ma\~n\'e \cite{Man85}, surface diffeomorphisms by Pujals and Sambarino \cite{PS1},
higher-dimen\-sio\-nal diffeomorphisms with a contractive codimension-one dominated splitting \cite{PS2,PS3,CP},
and non-singular three-dimensional vector fields by Arroyo and Rodriguez-Hertz \cite{ARH}.
Note that Theorems~\ref{Thm:measure-stable} and~\ref{Thm:minially-non-contracted-stable} have counterparts for diffeomorphisms in any dimension; in this case a stronger version by Pujals, Sambarino and the first author has been announced in~\cite[Theorem 4.4]{Crovisier-ICM}.

As mentioned above, in our proofs we will have to deal with the singularities.
Therefore we will have to rescale the flow (as in~\cite{Lia89,GY}), to consider blow-ups, and to use an abstract model for the non-linear Poincar\'e flow introduced in our earlier work~\cite{CY2}. Note that our way to construct normally expanded irrational torus (in Section~\ref{ss.topological}) is different from previous approaches: we use more local arguments, whereas Pujals and Sambarino \cite{PS1} and Arroyo and Rodriguez-Hertz \cite{ARH} used more semi-global arguments.

\section{Fibered dynamics with a dominated splitting}\label{s.fibered}

{
We will work with the nonlinear Poincar\'e flow and its compactification which has been built in our previous paper \cite{CY2}.
It can be seen as a local fibered flow with an identification structure as we recall here.
In this section, we will also discuss the dynamics of local fibered flow preserving a dominated splitting.}

\subsection{Local fibered flow and identifications}
The definitions come from \cite[Definition 1.1, Definition 1.2]{CY2}.

\subsubsection{Definition of local fibered flows}

\begin{Definition}[Local fibered flow]\label{d.local-flow}
Let $(\varphi_t)_{t\in \RR}$ be a continuous flow over a compact metric space $K$, and let $\cN\to K$ be a continuous Riemannian vector bundle.
A \emph{local $C^k$-fibered flow} $P$ on $\cN$ is a continuous family of $C^k$-diffeomorphisms $P_{t}\colon \cN_x\to \cN_{\varphi_t(x)}$, for $(x,t)\in K\times \RR$,
preserving the $0$-section of $\cN$ with the following property.

There is $\beta_0>0$ such that for each $x\in K$, $t_1,t_2\in \RR$, and $u\in \cN_x$ satisfying
$$\|P_{s.t_1}(u)\|\leq \beta_0 \text{ and } \|P_{s.t_2}(P_{t_1}(u))\|\leq \beta_0 \text{ for each }s\in[0,1],$$
then we have
$$P_{t_1+t_2}(u)=P_{t_2}\circ P_{t_1}(u).$$
\end{Definition}

{ We consider local fibered flows as in Definition~\ref{d.local-flow}. The following notations will be used.}

\smallskip

\noindent
{\bf Notations.} -- One sometimes denotes a point $u\in \cN_x$  as $u_x$  to emphasize the base point $x$.
\hspace{-1cm}
\smallskip

\noindent
-- The length of a $C^1$ curve $\gamma\subset \cN_x$ (with respect to the metric of $\cN_x$)
is denoted by $|\gamma|$.
\smallskip

\noindent
-- A ball centered at $u$ and with radius $r$ inside a fiber $\cN_x$ is denoted by $B(u,r)$.
\smallskip

\noindent
-- For $x\in K$, $t\in \RR$ and $u\in \cN_x$, one denotes
by $DP_t(u)$ the derivative of $P_t$ at $u$ along $\cN_x$.
In particular $(DP_t(0_x))_{t\in \RR, x\in K}$ defines a linear flow over the $0$-section of $\cN$.

\subsubsection{Definition of identifications}

\begin{Definition}\label{Def:identification}
Let us consider a \emph{local $C^k$-fibered flow} $P$ on $\cN\to K$
over a  continuous flow $(\varphi_t)_{t\in \RR}$ over a compact metric space $K$.
A \emph{$C^k$-identification} $\pi$ on an open set $U\subset K$
is a continuous family of $C^k$-diffeomorphisms $\pi_{y,x}\colon \cN_y\to \cN_x$ indexed by pairs of close points $x,y \in U$ satisfying for some
$\beta_0,r_0>0$ the following:
for any $x,y,z\in U$ and $u\in \cN_y$ with $d(z,x), d(z,y)<r_0$
and $\|u\|<\beta_0$,
$$\pi_{z,x}\circ \pi_{y,z}(u)=\pi_{y,x}(u).$$
\end{Definition}
In particular $\pi_{x,x}$ coincides with the identity on $B(0,\beta_0)$.
\medskip

\noindent
{\bf Notations.} -- We will sometimes denote $\pi_{y,x}$
by $\pi_x$. Also the projection $\pi_{y,x}(0)=\pi_x(0_y)$ of $0\in \cN_y$ on $\cN_x$ will be denoted
by $\pi_x(y)$.
\medskip

\noindent
-- We will denote by ${\rm Lip}$ be the set of orientation-preserving bi-Lipschitz homeomorphisms $\theta$ of $\RR$
(and by ${\rm Lip}_{1+\rho}$ the set of maps in ${\rm Lip}$ whose Lipschitz constant is smaller than $1+\rho$).

\begin{Definition}\label{d.compatible}
The identification $\pi$ on $U$ is \emph{compatible} with the flow $(P_t)$ if:
\begin{enumerate}

\item \emph{No small period.}
For any $\varkappa>0$, there is $r>0$ such that for any $x\in \overline{U}$
and $t\in [-2,2]$ with $d(x,\varphi_t(x))<r$, then we have $|t|< \varkappa$.

\item \emph{Local injectivity.} For any $\delta>0$, there exists $\beta>0$ such that for any $x,y\in U$:\\
if $d(x,y)<r_0$ and $\|\pi_{x}(y)\|\leq \beta$, then $d(\varphi_t(y),x)\leq \delta$ for some $t\in [-1/4,1/4]$.

\item \emph{Local invariance.} For any $x,y\in U$
and $t\in [-2,2]$ such that $y$ and $\varphi_t(y)$ are $r_0$-close to $x$,
and for any $u\in B(0,\beta_0)\subset \cN_{y}$, we have
$$\pi_{x}\circ P_t(u)=\pi_{x}(u).$$

\item \emph{Global invariance.}
For any $\delta,\rho>0$, there exists $r,\beta>0$ such that:

For any $y,y'\in K$ with $y\in U$ and $d(y,y')<r$,
for any $u\in  \cN_y$, $u'\in \cN_{y'}$ with $\pi_y(u')=u$,
and any intervals $I,I'\subset \RR$ containing $0$ and satisfying
$$\|P_s(u)\|<\beta\text{ and } \|P_{s'}(u')\|<\beta\text{ for any } s\in I \text{ and any }
s'\in I',$$
there is $\theta\in {\rm Lip}_{1+\rho}$ such that $\theta(0)=0$ and
 $d(\varphi_s(y),\varphi_{\theta(s)}(y'))<\delta$ for any $s\in I\cap \theta^{-1}(I')$.
Moreover, if some $v\in \cN_y$ satisfies [$\forall s\in I\cap \theta^{-1}(I')$,
$\|P_s(v)\|<\beta$], then:
\begin{itemize}
\item[--] $v'=\pi_{y'}(v)$ satisfies
$\|P_{\theta(s)}(v')\|<\delta$ for each $s\in I\cap \theta^{-1}(I')$;
\item[--] if $\varphi_s(y)\in U$
for some $s\in I\cap \theta^{-1}(I')$, then $\pi_{\varphi_s(y)}\circ P_{\theta(s)}(v')=P_s(v)$.
\end{itemize}
\end{enumerate}

\end{Definition}

\begin{Remarks-numbered}\label{r.identification}\rm
a) These definitions are still satisfied if one reduces
$r_0$ or $\beta_0$. Their value will be reduced in the following sections in order to
satisfy additional properties.
\medskip

\noindent
b) One can also rescale the time and keep a compatible identification: the flow $t\mapsto \varphi_{t/C}$ for $C>1$
still satisfies the definitions above, maybe after reducing the constant $r_0$.
\medskip

\noindent
c) Applying a rescaling as discussed in the item (b),
one can replace the interval $[-2,2]$ in the ``No small period'' Property and in the Local invariance
by any larger interval $[-L,L]$; one can replace the interval $[-1/4,1/4]$ in the local injectivity
by any small interval $[-\eta,\eta]$.
\medskip

\noindent
d) The ``No small period" assumption (which does not involve the projections $\pi_x$) is equivalent to the non-existence of periodic orbits of period $\leq 2$
which intersect $\overline U$.
In particular, by reducing $r_0$, one can assume the following property:
\smallskip

\emph{For any $x\in U$ and any $t\in [1,2]$, we have $d(x,\varphi_t(x))\ge r_0$.}
\medskip

\noindent
e) For $x\in U$, the Local injectivity prevents the existence of  $y\in U$ that is $r_0$-close to $x$,
is different from $\varphi_t(x)$ for any $t\in [-1/4,1/4]$, and
such that $\pi_x(y)=0_x$.
In particular:
\smallskip

\emph{If $x,\varphi_t(x)\in U$ and $t\not\in (-1/2,1/2)$ satisfy
$\pi_x(\varphi_t(x))=0_x$, then $x$ is periodic.}\hspace{-1cm}\mbox{}
\medskip

\noindent
f) The Global invariance says that when two orbits $(P_s(u))$ and
$(P_s(u'))$ of the local fibered flow are close to
the zero-section of $\cN$ and have two points which are identified by $\pi$,
then they are associated to orbits of the flow $\varphi$ that are close
(up to a reparametrization $\theta$).
In this case, any orbit of $(P_t)$ close to the zero-section
above the first $\varphi$-orbit can be projected to an orbit of $(P_t)$ above the second $\varphi$-orbit.
\medskip

\noindent
g) The Global invariance can be applied to pairs of points $y,y'$ where the condition
$d(y,y')<r$ has been replaced by a weaker one $d(y,y')<r_0$. In particular, this gives:
\medskip

\emph{For any $\delta,\rho>0$, there exist $\beta>0$ such that:
if  $y,y'\in K$, $u\in  \cN_y$, $u'\in \cN_{y'}$ and the intervals $I,I'\subset \RR$ containing $0$ satisfy
\begin{itemize}
\item[--] $d(y,y')<r_0$ and $y\in U$,
\item[--] $\pi_y(u')=u$,
$\|P_s(u)\|<\beta\text{ and } \|P_{s'}(u')\|<\beta\text{ for any } s\in I \text{ and any }
s'\in I',$
\end{itemize}
there is $\theta\in {\rm Lip}_{1+\rho}$ such that $|\theta(0)|\leq 1/4$ and $d(\varphi_s(y),\varphi_{\theta(s)}(y'))<\delta$ for any $s\in I\cap \theta^{-1}(I')$.}
\end{Remarks-numbered}

\subsubsection{Properties of identifications}
The next property (\cite[Proposition 7.3]{CY2}) states that one cannot find two reparametrizations
of a same orbit, that shadow each other, coincide for some parameter and differ by at least $2$ for another parameter.

\begin{Proposition}[No shear inside orbits]\label{p.no-shear}
Let us assume that there exists a $C^k$ identification on an open set $U\subset K$
which is compatible with the local fibered flow $(P_t)_{t\in\RR}$
and that the set $\Delta:=\{x| \forall |t|\leq \tfrac 1 2,\; \varphi_t(x)\notin U\}$
is disjoint from $\overline U$. If $r_0>0$ is small, then
for any $x\in U$, any increasing homeomorphism $\theta$ of $\RR$, any closed interval $I$ containing $0$ satisfying $[d(\varphi_t(x),\varphi_{\theta(t)}(x))\leq r_0,\;\forall t\in I]$ and $\varphi_{\theta(0)}(x)\in U$,
the following holds:
\begin{itemize}
\item[--] If $\theta(0)> 1/2$, then $\theta(t)> t+2$, $\forall t\in I$ when $\varphi_t(x), \varphi_{\theta(t)}(x)\in U$;
\item[--] If $\theta(0)\in [-2,2]$, then $\theta(t) \in [t-1/2,t+1/2]$, $\forall t\in I$ when $\varphi_t(x),\varphi_{\theta(t)}(x)\in U$;
\item[--] If $\theta(0)< -1/2$, then $\theta(t)<t-2$, $\forall t\in I$ when $\varphi_t(x),\varphi_{\theta(t)}(x)\in U$.
\end{itemize}
\end{Proposition}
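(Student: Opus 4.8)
The plan is to deduce the statement from the ``No small period'' property of the identification (part (1) of Definition~\ref{d.compatible}, see also item (d) of Remarks~\ref{r.identification}) together with the only consequence of the hypothesis $\Delta\cap\overline U=\emptyset$ that is needed, and then to propagate a trichotomy for the shift function $g(t):=\theta(t)-t$ along the orbit of $x$.

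First I would fix a small $\varkappa\in(0,1/2)$ and, using ``No small period'', shrink $r_0$ so that
$$\text{if }z\in\overline U,\ |t|\le 2\text{ and }d(z,\varphi_t(z))\le r_0,\text{ then }|t|<\varkappa.\qquad(\dagger)$$
I would also observe that $\widehat U:=\{z\in K:\varphi_s(z)\in U\text{ for some }|s|\le 1/2\}$ is open and, by the hypothesis $\Delta\cap\overline U=\emptyset$, contains the compact set $\overline U$; choosing $\delta_0>0$ so that the $\delta_0$-neighbourhood of $\overline U$ lies in $\widehat U$ and shrinking $r_0$ below $\delta_0$, one records:
$$\text{if }d(z,\overline U)\le r_0\text{ then }\varphi_s(z)\in U\text{ for some }|s|\le 1/2.\qquad(\star)$$

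Next, set $g(t)=\theta(t)-t$ (continuous on $\RR$) and $H=\{t\in I:\varphi_t(x)\in\overline U\}$, a closed subset of $I$ that contains $0$ and every $t$ with $\varphi_t(x)\in U$. Writing $\varphi_{\theta(t)}(x)=\varphi_{g(t)}(\varphi_t(x))$ and applying $(\dagger)$ at each $t\in H$, using $d(\varphi_t(x),\varphi_{\theta(t)}(x))\le r_0$, shows that
$$g(H)\ \subset\ (-\varkappa,\varkappa)\ \cup\ (2,+\infty)\ \cup\ (-\infty,-2),$$
a disjoint union of three open ``zones''. Since $\varphi_{\theta(0)}(x)\in U$, the parameter $0$ lies in $H$ and $d(x,\varphi_{\theta(0)}(x))\le r_0$, so $(\dagger)$ forces $g(0)\in(2,\infty)$ when $\theta(0)>1/2$, $g(0)\in(-\varkappa,\varkappa)\subset[-1/2,1/2]$ when $\theta(0)\in[-2,2]$, and $g(0)\in(-\infty,-2)$ when $\theta(0)<-1/2$. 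As the conclusion of the Proposition only concerns parameters $t$ with $\varphi_t(x),\varphi_{\theta(t)}(x)\in U$ — all of which lie in $H$ — and as $(-\varkappa,\varkappa)\subset[-1/2,1/2]$, it suffices to prove that $g$ takes its values on $H$ inside the single zone $Z$ containing $g(0)$. On each connected component of $H$ the continuous function $g$ has connected image, hence stays in one zone; so $g\equiv Z$ on the component of $0$, and everything reduces to ruling out a change of zone between two components of $H$.

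This last point is the main obstacle, and the place where $\Delta\cap\overline U=\emptyset$ is essential. Suppose $g$ leaves $Z$ somewhere on $H$; by symmetry assume this occurs for some $t>0$ and let $t_1$ be the least such parameter, so $t_1\in H$, $g(t_1)$ lies in a zone $Z'\ne Z$, and $g\in Z$ on $H\cap(0,t_1)$. Since $U$ is open and contains $x$, the set $H$ contains a neighbourhood of $0$, hence $t_1>0$; and since $\overline Z\cap Z'=\emptyset$, $t_1$ is not a left-accumulation point of $H$, so there is a ``gap'' $(a,t_1)\subset I\setminus H$ with $a:=\sup(H\cap[0,t_1))\in H$, $g(a)\in Z$, and $\varphi_t(x)\notin\overline U$ for all $t\in(a,t_1)$. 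By the intermediate value theorem $g$ attains, at some $s\in(a,t_1)$, one of the barrier values $\pm 2$ separating $Z$ from $Z'$; taking $g(s)=2$ (the case $-2$ being symmetric) gives a near-return
$$d\big(\varphi_s(x),\varphi_{s+2}(x)\big)=d\big(\varphi_s(x),\varphi_{\theta(s)}(x)\big)\le r_0$$
of return time $2$, but based at a point $\varphi_s(x)\notin\overline U$, so $(\dagger)$ cannot be applied directly. To finish I would track the shadow orbit $\varphi_{\theta(t)}(x)$ over $[a,t_1]$: it stays $r_0$-close to $\varphi_t(x)$ and, by $(\star)$, is within time $1/2$ of $U$ at the two ends of the gap; using this — and possibly a time-rescaling as in items (b)–(c) of Remarks~\ref{r.identification} — one should be able to relocate the above near-return to a point of $\overline U$ with return time still in $[-2,2]\setminus(-\varkappa,\varkappa)$, contradicting $(\dagger)$. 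Equivalently, a change of $g$ by $2$ over a gap of $H$ would force the orbit of $x$ to be near-recurrent away from $\overline U$ in a way the ``No small period'' property forbids once $(\star)$ is taken into account; pinning down this localization argument is, I expect, the delicate part of the proof. Once $g(H)\subset Z$ is established for $t>0$, the same reasoning for $t<0$ and for the remaining choices of $Z'$ yields all three alternatives of the Proposition.
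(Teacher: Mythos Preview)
The paper does not prove this proposition here: it is quoted from \cite[Proposition~7.3]{CY2}, so there is no in-paper argument to compare against. I will therefore assess your proposal on its own.

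Your setup is correct: the choice of $\varkappa$ and $(\dagger)$ via ``No small period'', the consequence $(\star)$ of $\Delta\cap\overline U=\emptyset$, the trichotomy $g(H)\subset(-\varkappa,\varkappa)\cup(2,\infty)\cup(-\infty,-2)$ on $H=\{t\in I:\varphi_t(x)\in\overline U\}$, and the identification of the initial zone $Z$ from the hypothesis on $\theta(0)$ are all sound. You also correctly isolate the crux of the matter --- ruling out a zone change of $g$ across a gap of $H$.

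The genuine gap in your proposal is precisely there. Your sketched resolution does not work as written, for two reasons. First, the appeal to ``time-rescaling as in items (b)--(c) of Remarks~\ref{r.identification}'' is misplaced: those remarks describe a global rescaling $\varphi_t\mapsto\varphi_{t/C}$ performed \emph{once, before the argument begins}, to normalize constants; they are not a device that can be invoked mid-proof to repair a local near-return. Second, and more seriously, when $g(s)=2$ at some $s$ in a gap of $H$ you obtain $d(\varphi_s(x),\varphi_{s+2}(x))\le r_0$ with $\varphi_s(x)\notin\overline U$; your property $(\star)$ only applies to points $r_0$-close to $\overline U$, and nothing in your setup guarantees that $\varphi_s(x)$ is close to $\overline U$ --- the gap of $H$ can be long and the orbit can be far from $\overline U$ throughout. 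Knowing $(\star)$ at the two \emph{endpoints} of the gap does not transport the near-return from the interior point $s$ back to $\overline U$.

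A more promising route (and likely closer to the argument in \cite{CY2}) is to enlarge $H$ to $H\cup H'$ with $H':=\{t\in I:\varphi_{\theta(t)}(x)\in\overline U\}$: the trichotomy holds on $H\cup H'$ by applying $(\dagger)$ at $\varphi_t(x)$ or at $\varphi_{\theta(t)}(x)$, and a gap of $H\cup H'$ is simultaneously contained in a gap of $G:=\{s:\varphi_s(x)\in\overline U\}$ and in a gap of $\theta^{-1}(G)$. Tracking how $\theta$ pairs up gaps of $G$ with gaps of $\theta^{-1}(G)$, together with $(\star)$ at every boundary point of $H\cup H'$ (not just of $H$), is what one needs to exclude a zone jump. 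Even then the bookkeeping is delicate --- one must argue that $t$ and $\theta(t)$ cannot lie in distinct gaps of $G$ in a way that allows $g$ to increase past $2$ --- so this step deserves a full, careful write-up rather than a one-sentence sketch.
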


The following closing lemma (\cite[Proposition 7.5]{CY2}) is an example of properties given by identifications.
\begin{Lemma}[Closing lemma] \label{l.closing0}
Let us assume that there exists a $C^k$ identification on an open set $U\subset K$
which is compatible with the local fibered flow $(P_t)_{t\in\RR}$
and let $\beta_0,r_0>0$ be small enough.
Let us consider:
\begin{itemize}
\item[--] $x\in U$ and $T\geq 4$ with $\varphi_T(x) \in U\cap B(x,r_0)$,
\item[--] a fixed point $p\in \cN_x$ for $\overline P_T:=\pi_x\circ P_T$
such that $\|P_t(p)\|<\beta_0$ for each $t\in [0,T]$,
\item[--]  a sequence $(y_k)_{k\in\NN}$ in a compact set of $U\cap B(x,r_0/2)$
such that $\pi_x(y_k)$ converges to $p$.
\end{itemize}
Then, taking a subsequence, $(y_k)_{k\in\NN}$
converges to a periodic point $y\in K$
such that $\pi_x(y)=p$.

Moreover, if $T'$ denotes the period of $y$, then we have
$$DP_{T'}(0_y)= D\pi_{x}(0_y)^{-1}\circ D\overline P_T(p) \circ D\pi_x(0_y).$$
\end{Lemma}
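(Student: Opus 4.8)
The plan is to extract a convergent subsequence of $(y_k)$, identify the $\pi_x$-projection of its limit with the fixed point $p$, close up the orbit of that limit by a limiting shadowing argument, and then read off the derivative formula from the compatibility of the identification with the flow. Throughout, $\beta_0$, $r_0$ and the auxiliary constants are reduced whenever needed.

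First, since the points $y_k$ lie in a fixed compact subset of $U\cap B(x,r_0/2)$, after passing to a subsequence we may assume $y_k\to y$ with $y\in U$ and $d(x,y)\le r_0/2$; continuity of the identification together with $\pi_x(y_k)\to p$ gives $\pi_x(y)=p$. It remains only to prove that $y$ is periodic and to establish the displayed formula — the points $y_k$ are not used again. Write $x_1:=\varphi_T(x)\in U\cap B(x,r_0)$; recall that $\pi_{x_1,x}(P_T(p))=\overline P_T(p)=p$ and $\|P_t(p)\|<\beta_0$ for $t\in[0,T]$, and note that $\pi_x(y)=p$ and the cocycle relation for the identification give $\pi_{x,y}(p)=0_y$.

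Fix small $\delta,\rho>0$. Apply the Global invariance property of Definition~\ref{d.compatible} — which is available even though only $d(x,y)<r_0$ holds, by Remark~\ref{r.identification}(g) — to the orbit segment $(\varphi_s(x))_{s\in[0,T]}$, the fiber orbit $(P_s(p))_{s\in[0,T]}$ (which stays in $B(0,\beta_0)$), and the point $y$, with $u=p$, $u'=\pi_{x,y}(p)=0_y$, $I=[0,T]$ and $I'=\RR$. This yields $\theta=\theta_\delta\in{\rm Lip}_{1+\rho}$ with $|\theta(0)|\le 1/4$ and $d(\varphi_s(x),\varphi_{\theta(s)}(y))<\delta$ for every $s\in[0,T]$. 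The ``Moreover'' part, applied with $v=p$ at $s=T$ (where $\varphi_T(x)=x_1\in U$), gives $\pi_{x_1}(\varphi_{\theta(T)}(y))=P_T(p)$; composing with $\pi_{x_1,x}$ and using the cocycle relation (valid since $x$, $x_1$, $\varphi_{\theta(T)}(y)$ are mutually close) gives $\pi_x(\varphi_{\theta(T)}(y))=\overline P_T(p)=p=\pi_x(y)$, hence $\pi_{\varphi_{\theta(T)}(y)}(y)=0$. Since $\varphi_{\theta(T)}(y)$ is close to $y$, Local injectivity then produces $t_\delta\in[-1/4,1/4]$ with $d(\varphi_{t_\delta}(y),\varphi_{\theta(T)}(y))\le\delta'$, where $\delta'\to 0$ as $\delta\to 0$. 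Put $S_\delta:=\theta_\delta(T)-t_\delta$. The bi-Lipschitz bound on $\theta_\delta$ and $|\theta_\delta(0)|\le1/4$ keep $S_\delta$ in a fixed bounded set, and $S_\delta$ is also bounded away from $0$: if it were not, the whole arc $(\varphi_s(x))_{s\in[0,T]}$ with $T\ge4$ would be $\delta$-shadowed by an arbitrarily short orbit arc of $y$, contradicting the ``no small period'' property (Remark~\ref{r.identification}(d)) — in fact $S_\delta$ stays close to $T$, invoking Proposition~\ref{p.no-shear} if necessary. Letting $\delta\to0$ along a sequence and passing to a further subsequence, $S_\delta\to T'>0$; applying $\varphi_{-t_\delta}$ and passing to the limit yields $\varphi_{T'}(y)=y$, so $y$ is periodic with return time $T'$ close to $T$.

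Lastly, for the derivative formula, take $w\in\cN_y$ close to $0_y$ and use the ``Moreover'' part once more — for the same $\theta_\delta$ — now with $v=\pi_{y,x}(w)$, so that $v'=\pi_{x,y}(v)=w$: at $s=T$ this gives $\pi_{\varphi_{\theta_\delta(T)}(y),x_1}(P_{\theta_\delta(T)}(w))=P_T(\pi_{y,x}(w))$. Letting $\delta\to0$ (so $\theta_\delta(T)\to T'$ and $\varphi_{T'}(y)=y$) and composing with $\pi_{x_1,x}$, we obtain $\pi_{y,x}\circ P_{T'}=\overline P_T\circ\pi_{y,x}$ on a neighbourhood of $0_y$. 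Differentiating at $w=0_y$, where $\pi_{y,x}(0_y)=p$, $P_{T'}(0_y)=0_y$ and $\overline P_T(p)=p$, and using that $D\pi_x(0_y)$ is invertible, gives precisely $DP_{T'}(0_y)=D\pi_x(0_y)^{-1}\circ D\overline P_T(p)\circ D\pi_x(0_y)$. The conceptual skeleton is short; the main obstacle is the quantitative bookkeeping in the closing step — controlling the reparametrizations $\theta_\delta$ (that $|\theta_\delta(0)|$ is small and $\theta_\delta(T)\approx T$, where the ``no small period'' property, the bi-Lipschitz constant of ${\rm Lip}_{1+\rho}$ and possibly Proposition~\ref{p.no-shear} intervene) and choosing the hierarchy of constants $r_0\gg\beta_0\gg\delta,\delta',\beta$ and $\rho$ so that every point encountered, together with its $\pi$-images, remains where the identification and all its compatibility properties apply — in particular absorbing the loss from ``$d<r_0$'' to ``$d<r_0+\delta$'' when a returning point falls just outside $B(x,r_0)$.
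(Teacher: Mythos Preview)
The paper does not give its own proof here, referring instead to \cite[Proposition~7.5]{CY2}. Your overall strategy --- extract a limit $y$ with $\pi_x(y)=p$, close the orbit via Global invariance plus Local injectivity, and read off the conjugacy from the ``Moreover'' clause --- is the right one and essentially works. There is, however, one genuine technical slip in the closing step.

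You apply Global invariance for a parameter $\delta$ and then let $\delta\to 0$. But Global invariance (Definition~\ref{d.compatible}(4), or Remark~\ref{r.identification}(g)) requires $\|P_s(p)\|<\beta$ where $\beta=\beta(\delta,\rho)$; since $\beta_0$ is fixed in advance and $\sup_{s\in[0,T]}\|P_s(p)\|$ may be any value below $\beta_0$, for $\delta$ small enough the hypothesis $\|P_s(p)\|<\beta(\delta)$ can fail. So sending $\delta\to 0$ in Global invariance is not permissible. A second symptom of the same slip: even granting the limit, you only get $\theta_\delta(T)\to T'+t^*$ for some $t^*\in[-1/4,1/4]$, not $\theta_\delta(T)\to T'$, and the neighbourhood on which the ``Moreover'' identity holds shrinks with $\delta$, so the claimed conjugacy ``on a neighbourhood of $0_y$'' does not follow from a limit.

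The repair is immediate and uses only tools you already invoke. Keep $\delta$ (hence $\theta$) \emph{fixed}. Since $\pi_{\varphi_{\theta(T)}(y)}(y)=0$ exactly, Local injectivity applies for every $\delta'>0$, giving $t_{\delta'}\in[-1/4,1/4]$ with $d(\varphi_{t_{\delta'}}(y),\varphi_{\theta(T)}(y))\le\delta'$; letting only $\delta'\to 0$ yields $t^*\in[-1/4,1/4]$ with $\varphi_{t^*}(y)=\varphi_{\theta(T)}(y)$, so $T':=\theta(T)-t^*$ is a period (and $T'\ge T/(1+\rho)-1/2>0$). For the derivative, the ``Moreover'' clause (still with the fixed $\delta$) gives $\pi_x\circ P_{\theta(T)}=\overline P_T\circ\pi_{y,x}$ on a \emph{fixed} neighbourhood of $0_y$; Local invariance with the short shift $t^*=\theta(T)-T'\in[-1/4,1/4]$ gives $\pi_x\circ P_{\theta(T)}=\pi_x\circ P_{T'}$ there, so $\pi_{y,x}\circ P_{T'}=\overline P_T\circ\pi_{y,x}$ near $0_y$ and the displayed derivative identity follows by differentiation. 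With this adjustment your proof is complete.
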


For the next statement (\cite[Corollary 7.6]{CY2}), we consider an open set $V$ such that $K\subset U\cup V$.
We reduce $r_0>0$ so that $d(K\setminus U, K\setminus V) > r_0$.
\begin{Corollary}\label{c.closing0}
Let us assume that $\beta_0,r_0>0$ are small enough.
If $x\in K\setminus V$ has an iterate $y=\varphi_T(x)$ in $B(x,r_0)$ with $T\geq 4$ and if there exists
a subset $B\subset \cN_x$ containing $0_x$ such that:
\begin{itemize}
\item[--] $P_t(B)\subset B(0_{\varphi_t(x)},\beta_0)$ for any $0<t<T$,
\item[--] $\overline P_T:=\pi_x\circ P_T$ sends $B$ into itself,
\item[--] the sequence $\overline P_T^k(0_x)$ converges to a fixed point $p\in B$ of $\overline P$,
\end{itemize}
then the positive orbit of $x$ by $\varphi$ converges to a periodic orbit of the flow $(\varphi_t)_{t\in\RR}$.
\end{Corollary}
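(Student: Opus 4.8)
The plan is to reduce Corollary~\ref{c.closing0} to the Closing Lemma~\ref{l.closing0}; the only real work is to produce the sequence of points that the Closing Lemma requires. The cheap hypotheses come first: since $x\in K\setminus V\subseteq U$ and $\varphi_T(x)\in B(x,r_0)$ while $d(K\setminus U,K\setminus V)>r_0$, also $y:=\varphi_T(x)\in U$; and since $0_x\in B$ with $P_t(B)\subseteq B(0_{\varphi_t(x)},\beta_0)$ for $0<t<T$, letting $t\to 0^+$ and $t\to T^-$ and using continuity of $P$ shows that the orbit $\{P_t(p):t\in[0,T]\}$ of the fixed point $p\in B$ stays within $\beta_0$ of the zero section. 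Hence $x$, $T\ge 4$ and $p$ already satisfy the hypotheses of Lemma~\ref{l.closing0} concerning the base point, the return time and the fixed point (after harmlessly shrinking $\beta_0,r_0$), and it remains only to exhibit a sequence $(y_k)$ in a compact subset of $U\cap B(x,r_0/2)$ with $\pi_x(y_k)\to p$; the natural candidate is $y_k:=\varphi_{kT}(x)$.

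The core of the argument is to control this orbit inductively. Put $q_k:=\overline P_T^k(0_x)\in B$, so $q_k\to p$, fix $\rho,\delta>0$ small, and then take $\beta_0$ small enough that all the orbit segments below stay within the Global-invariance radius. I would prove, by induction on $k$, that $\varphi_{jT}(x)\in U\cap B(x,r_0/2)$ for all $j\le k$ and that $\pi_x(\varphi_{kT}(x))$ stays very close to $q_k$. For the inductive step one applies the Global invariance of the identification — in the form of Remark~\ref{r.identification}(g), which only needs $d(x,\varphi_{kT}(x))<r_0$ — to the pair $(x,\varphi_{kT}(x))$, using the zero-section orbit over $\varphi_{kT}(x)$ and the orbit of $u:=\pi_x(\varphi_{kT}(x))\in\cN_x$, both of which remain $\beta_0$-close to the zero section on $[0,T]$ by the forward-invariance of $B$ and the inductive hypothesis. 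This produces a $(1+\rho)$-Lipschitz reparametrization $\theta$ with $\theta(0)=0$ and $d(\varphi_s(x),\varphi_{kT+\theta(s)}(x))<\delta$ on $[0,T]\cap\theta^{-1}[0,T]$, and — through the ``Moreover'' clause — shows that $\pi_x(\varphi_{(k+1)T}(x))$ agrees with $\overline P_T(q_k)=q_{k+1}$ up to an error controlled by $\delta,\rho$. Since $\theta(T)$ differs from $T$ by at most $\rho T$, which can be taken small, $\varphi_{(k+1)T}(x)$ lies $O(\delta+\rho T\|X\|_\infty)$-close to $\varphi_T(x)\in B(x,r_0)$; combined with $q_k\to p$ and the smallness of the constants, the errors do not accumulate, so $\varphi_{kT}(x)$ stays trapped in $B(x,r_0/2)$ and $\pi_x(\varphi_{kT}(x))\to p$.

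With the sequence $y_k=\varphi_{kT}(x)\in U\cap B(x,r_0/2)$, $\pi_x(y_k)\to p$ in hand, Lemma~\ref{l.closing0} produces — after extracting a subsequence — a periodic point $z\in K$ of period $T'$ close to $T$ with $\pi_x(z)=p$, along whose orbit the fibered flow stays near the zero section. Finally, to pass from ``$\varphi_{kT}(x)\to z$'' to ``$\omega(x)=\orb(z)$'', one more application of the Global invariance comparing each segment $\varphi_{[kT,(k+1)T]}(x)$ with the orbit of $z$ over one period shows, using $\pi_x(\varphi_{kT}(x))\to p=\pi_x(z)$, that these segments converge uniformly to $\orb(z)$; hence the positive orbit of $x$ converges to the periodic orbit of $z$, as claimed.

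The hard part is the bookkeeping in the inductive step. Each application of Global invariance introduces both a reparametrization and a $\delta$-sized shadowing error, and a priori these could drift in time or accumulate in the fibers over successive returns. Keeping them under control needs the smallness of $\beta_0,r_0,\delta,\rho$, the use of Proposition~\ref{p.no-shear} to rule out a genuine time-drift along the orbit, and — crucially — the fact that $p$ is an honest fixed point of $\overline P_T$ inside the forward-invariant set $B$, so that the attracted pseudo-orbit $(q_k)$ really pulls the true orbit back into $B(x,r_0/2)$ at every return rather than merely into a ball of radius $r_0+o(1)$.
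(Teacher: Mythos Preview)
The paper does not actually prove Corollary~\ref{c.closing0}: both it and Lemma~\ref{l.closing0} are imported verbatim from the companion paper~\cite{CY2} (as Corollary~7.6 and Proposition~7.5 there), so there is no proof here to compare against. That said, your strategy --- reduce to Lemma~\ref{l.closing0} by producing a sequence of forward iterates of $x$ whose $\pi_x$-projections converge to $p$ --- is the natural one and is correct in outline.

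One refinement would make the bookkeeping you flag as ``the hard part'' disappear. Rather than fixing the return times at $kT$ and arguing that $\pi_x(\varphi_{kT}(x))$ is \emph{approximately} $q_k$ with errors that must be shown not to accumulate, it is cleaner to define times $T_k$ inductively so that $\pi_x(\varphi_{T_k}(x))=q_k$ \emph{exactly}. Concretely: set $T_0=0$; given $T_k$ with $\varphi_{T_k}(x)\in U$, $d(\varphi_{T_k}(x),x)<r_0$ and $\pi_x(\varphi_{T_k}(x))=q_k\in B$, apply the Global invariance to the pair $y=x$, $y'=\varphi_{T_k}(x)$, $u=q_k$, $u'=0_{y'}$ on $[0,T]$. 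The ``Moreover'' clause at $s=T$ (where $\varphi_T(x)\in U$) gives a time $T_{k+1}=T_k+\theta(T)$ with $\pi_{\varphi_T(x)}(\varphi_{T_{k+1}}(x))=P_T(q_k)$, hence $\pi_x(\varphi_{T_{k+1}}(x))=\overline P_T(q_k)=q_{k+1}$ by the groupoid property of $\pi$. The Local injectivity then places $\varphi_{T_{k+1}}(x)$ in $B(x,r_0/2)$ once $\beta_0$ is small enough, closing the induction; no drift ever enters. Your version with fixed times $kT$ also works, but you then genuinely need the No-shear Proposition~\ref{p.no-shear} and a smallness argument to prevent accumulation, exactly as you note.
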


\subsection{Dominated splitting of a local fibered flow}

\subsubsection{Definition of a dominated splitting}
\begin{Definition}\label{d.dominated}
The local flow $(P_t)$ admits a \emph{dominated splitting}
$\cN=\cE\oplus \cF$ if $\cE$, $\cF$ are sub-bundles of $\cN$ that are invariant
by the linear flow $(DP_t(0))$ and if there exists $\tau_0> 0$ such that
for any $x\in K$, for any unit vector $u\in \cE(x)$ and unit vector $v\in \cF(x)$ and for any $t\geq \tau_0$ we have:
$$\|DP_t(0_x).u\|\leq \frac 1 2 \|DP_t(0_x).v\|.$$
Moreover we say that $\cE$ is \emph{$2$-dominated} if there exists $\tau_0>0$ such that for any $x\in K$,
any unit vector $u\in \cE(x)$ and any unit vector $v\in \cF(x)$, and for any $t\geq \tau_0$ we have:
$$\max(\|DP_t(0_x).u\|,\|DP_t(0_x).u\|^2)\leq \frac 1 2 \|DP_t(0_x).v\|.$$
\end{Definition}

When there exists a dominated splitting $\cN=\cE\oplus \cF$ and $V\subset K$ is an open subset,
one can prove that $\cE$ is uniformly contracted by considering the induced dynamics on $K\setminus V$
and checking that the following property is satisfied.
\begin{Definition}
The bundle $\cE$ is \emph{uniformly contracted on the open set $V$} if
there exists $t_0>0$ such that for any $x\in K$ satisfying $\varphi_t(x)\in V$
for any $0\leq t\leq t_0$ we have:
$$\|DP_{t_0}{|\cE(x)}\|\leq \frac 1 2.$$
We say that $\cE$ is \emph{uniformly contracted} if it is uniformly contracted on $K$.
\end{Definition}

\subsubsection{Normally expanded irrational tori}\label{ss:tori}

We give a setting of
a dominated splitting $\cE\oplus \cF$ such that $\cE$ is not uniformly contracted.
\begin{Definition}\label{d.torus}
A \emph{normally expanded irrational torus} is
an invariant compact subset $\cT\subset K$ such that
\begin{itemize}
\item[--] the dynamics of $\varphi|_{\cT}$
is topologically equivalent to an irrational flow on $\TT^2$,
\item[--] there exists a dominated splitting $\cN|_{\cT}=\cE\oplus \cF$ and $\cE$ has one-dimensional fibers,
\item[--] for some $x\in U\cap \cT$ and $r>0$, $\pi_x(\{z\in K, d(x,z)<r\})$ is a $C^1$-curve
tangent to $\cE(x)$.
\end{itemize}
\end{Definition}

The name is justified as follows.
\begin{Lemma}\label{l.torus}
For any normally expanded irrational torus $\cT$,
the Lyapunov exponent along $\cE$ of the (unique) invariant measure of $\varphi$ on $\cT$
is equal to zero; in particular $\cF$ is uniformly expanded (i.e uniformly
contracted by backward iterations).
\end{Lemma}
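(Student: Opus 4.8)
\textbf{Proof plan for Lemma~\ref{l.torus}.}
The plan is to exploit the fact that the flow on $\cT$ is topologically equivalent to an irrational flow on $\TT^2$, so it is uniquely ergodic; call $\nu$ the unique $\varphi$-invariant probability measure on $\cT$. We want to show that the Lyapunov exponent of $(DP_t(0))$ along the line bundle $\cE$, which by unique ergodicity and subadditivity equals $\lambda:=\lim_{t\to\infty}\frac1t\log\|DP_t(0_x)|\cE(x)\|$ for $\nu$-a.e. $x$ (and in fact uniformly in $x$ by unique ergodicity of the cocycle, since $\log\|DP_t(0_x)|\cE\|$ is a continuous subadditive cocycle over a uniquely ergodic flow), is zero. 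By domination, $\lambda$ is strictly smaller than the corresponding exponent along $\cF$, so once $\lambda=0$ is established the bundle $\cF$ has positive exponent; since $\cF$ is also a continuous invariant sub-bundle over a uniquely ergodic system, the subadditive argument again upgrades this to uniform expansion of $\cF$, giving the last assertion.

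The core is therefore to rule out $\lambda\neq0$. First I would rule out $\lambda>0$: if $\lambda>0$, then both $\cE$ and $\cF$ are uniformly expanded (the latter by domination), so $(DP_t(0))$ uniformly expands all of $\cN$; I would then use the third bullet of Definition~\ref{d.torus} — that for some $x\in U\cap\cT$ and $r>0$ the projection $\pi_x(\{z\in K:\ d(x,z)<r\})$ is a $C^1$ curve $\gamma$ through $0_x$ tangent to $\cE(x)$ — together with the compatibility of the identification (Global invariance, Remark~\ref{r.identification}~f,g) to derive a contradiction. Concretely, points of $\cT$ near $x$ are encoded by points of $\gamma$ near $0_x$; an orbit of $\varphi$ on $\cT$ starting near $x$ and returning near $x$ corresponds, via $\pi$, to an orbit of $(P_t)$ whose endpoint is the $\pi$-image of a nearby point of $\cT$, hence again lies on a short curve tangent to $\cE$. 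If $\cE$ were uniformly contracted by backward time (i.e. $\lambda>0$), then for a suitable backward orbit segment these $\cE$-tangent curves would have to shrink, but the minimality of the irrational flow forces infinitely many close returns, and the images $P_{-t}$ of a fixed small $\cE$-curve would have to stay of definite size while also being contracted — contradiction. Symmetrically, $\lambda<0$ would force $\cE$ to be uniformly contracted (forward time); then by Corollary~\ref{c.closing0} (or directly the Closing lemma~\ref{l.closing0}), using a close return of a point of $\cT\cap(K\setminus V)$ near $x$ and the forward-contracted box $B$ along $\cE$ supplied by $\lambda<0$, the positive orbit of that point would converge to a periodic orbit of $\varphi$; but a periodic orbit inside $\cT$ contradicts the flow on $\cT$ being topologically equivalent to an irrational (hence aperiodic, minimal) flow on $\TT^2$.

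So the argument has three pieces: (i) reduce to the uniquely ergodic exponent $\lambda$ and use domination to pass between $\cE$ and $\cF$; (ii) exclude $\lambda<0$ via the closing lemma, since a periodic orbit cannot sit inside an irrational torus; (iii) exclude $\lambda>0$ via a backward-contraction argument, using that the $\pi_x$-image of a neighborhood of $x$ in $\cT$ is an $\cE$-tangent $C^1$ curve and that minimality produces arbitrarily long returning orbit segments along which a fixed-length $\cE$-curve cannot be both bounded below and exponentially contracted. I expect step (iii) to be the main obstacle: one must run the identification/Global-invariance machinery backward along long orbit segments while keeping everything inside the $\beta_0$-neighborhood of the zero section, and control the distortion of $P_{-t}$ on the curve $\gamma$; the honest way is probably to argue by contradiction that a nonzero exponent along a one-dimensional bundle over a minimal isometric (irrational-flow) base forces, through the curve $\gamma$, the existence either of a periodic orbit or of a genuine stable/unstable direction incompatible with the torus being normally \emph{isometric} in the $\cE$-direction — in other words, the topological conjugacy to an irrational flow is exactly what pins $\lambda$ to $0$, and making that rigorous through the compactified non-linear Poincaré flow is the delicate part.
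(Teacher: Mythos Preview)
Your overall strategy is sound and would work, but it is considerably more roundabout than the paper's proof, and your step (iii) --- which you rightly flag as the delicate one --- is in fact the whole argument, done symmetrically.

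The paper does not split into the cases $\lambda<0$ and $\lambda>0$ at all. Instead it picks a global transversal $\Sigma\simeq\TT^1$ through $x$ for $\varphi|_{\cT}$, takes a nearby point $y\in\Sigma$, and looks at the returns $\varphi_{t_k}(x),\varphi_{t'_k}(y)$ to $\Sigma$ (with $|t_{k+1}-t_k|$ uniformly bounded). The crucial observation --- coming directly from the topological equivalence with an irrational linear flow --- is that for each $k$ the distance $d(\varphi_{t_k}(x),\varphi_{t'_k}(y))$ stays between two positive constants $\varepsilon_1,\varepsilon_2$ (no Denjoy-type collapse or escape). Using the third bullet of Definition~\ref{d.torus} together with Local injectivity and Global invariance, the interval $\pi_x(I)\subset\gamma$ (with $I$ the arc in $\Sigma$ between $x$ and $y$) satisfies $\varepsilon'_1\le |\pi_x\circ P_{t_k}\circ\pi_x(I)|\le\varepsilon'_2$ for all $k$. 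Since the return gaps $t_{k+1}-t_k$ are bounded, this two-sided bound on the length of the iterated $\cE$-arc forces $\lambda=0$ in one stroke.

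So the comparison is: your step (iii) is the right geometric idea, but you only use it to get an upper bound (ruling out $\lambda>0$). The same return argument gives the lower bound too, making your step (ii) via the closing lemma unnecessary. Your route (ii) is correct in outline, but to apply Corollary~\ref{c.closing0} with a one-dimensional $B\subset\gamma$ you still need to check that $\overline P_T=\pi_x\circ P_T$ sends $\gamma$ into $\gamma$ --- and the justification of that (via Global invariance and the definition of $\gamma$ as $\pi_x$ of nearby points of $\cT$) is exactly the mechanism the paper uses directly. In short: drop the closing lemma detour, and instead argue that the iterated $\cE$-arc is bounded \emph{below} as well as above; the topological model of the irrational flow hands you both bounds for free.
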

\begin{Remark-numbered}\label{r.torus}
With the technics of Section~\ref{s.topological-hyperbolicity}, one
can also prove that the $\alpha$-limit set of any point $z$ in a neighborhood
coincides with $\cT$.
\end{Remark-numbered}
\begin{proof}
Let us choose a global transversal $\Sigma\simeq \TT^1$ containing $x$
for the restriction of $\varphi$ to $\cT$. The dynamics is conjugated to a suspension
of an irrational rotation of $\Sigma$.
We consider the sequence $(t_k)$ of positive returns of the orbit of $x$ inside a neighborhood
of $x$ in $\Sigma$. Note that $|t_{k+1}-t_k|$ is uniformly bounded.
For every $y\in \Sigma$ close to $x$
there exists a sequence $(t'_k)$ such that $|t_{k+1}-t_k|$ and $|t'_{k+1}-t'_k|$ are close and
$\varphi_{t'_k}(y)$ is close to $\varphi_{t_k}(x)$ and belongs to $\Sigma$.
In particular by choosing $y$ close enough to $x$,
there exist $\varepsilon_1,\varepsilon_2>0$ arbitrarily small such that
$$\forall k\geq 0, \; \varepsilon_1\leq d(\varphi_{t'_k}(y), \varphi_{t_k}(x))\leq \varepsilon_2.$$
Let $I\subset \Sigma$ be the interval bounded by $x,y$.
By Definition~\ref{d.torus} and the Local Injectivity, the transversal $\Sigma\cap B(x,r)$ is mapped by $\pi_x$ homeomorphically inside an interval of the $C^1$-curve
$\gamma=\pi_x(\{z\in K, d(x,z)<r\})$ and by the Global invariance
$\pi_x\circ P_{t_k}$ sends $\pi_x(y)$ to
$\pi_x(\varphi_{t'_k}(y))$ and similarly sends $\pi_x(I)\subset \gamma$ inside $\gamma$.
Moreover, there exist $\varepsilon'_1,\varepsilon'_2>0$ (that can be taken arbitrarily small if $y$ is close enough to $x$)
such that for each $k$,
$$\varepsilon'_1\leq |\pi_x\circ P_{t_k}\circ\pi_x(I)|\leq \varepsilon'_2.$$

Since $t_{k+1}-t_k$ is bounded, it implies that the Lyapunov exponent along $\cE$ vanishes.
\end{proof}

\subsubsection{Contraction on periodic orbits and criterion for $2$-domination}
When there exists a dominated splitting $\cE\oplus \cF$ where $\cE$ is one-dimensional,
the uniform contraction of the bundle $\cE$ above each periodic orbit of $K$,
implies that it is $2$-dominated.
\begin{Proposition}\label{p.2domination}
Let us assume that
\begin{itemize}
\item[--] there exists a dominated splitting $\cN=\cE\oplus \cF$ and the fibers of $\cE$ are one-dimensional,
\item[--] $\cE$ is uniformly contracted on an open set $V$ containing $K\setminus U$.
\end{itemize}
Then either the bundle $\cE$ is $2$-dominated, or there exists a periodic orbit
$\cO$ in $K$ whose Lyapunov exponents are all positive.
\end{Proposition}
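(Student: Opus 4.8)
The plan is to argue by contradiction: assume that $\cE$ is not $2$-dominated and produce a periodic orbit with all Lyapunov exponents positive. The failure of $2$-domination means that for every $\tau>0$ there is a point $x_\tau\in K$ and unit vectors $u_\tau\in\cE(x_\tau)$, $v_\tau\in\cF(x_\tau)$ with $\max(\|DP_\tau(0_{x_\tau}).u_\tau\|,\|DP_\tau(0_{x_\tau}).u_\tau\|^2)>\tfrac12\|DP_\tau(0_{x_\tau}).v_\tau\|$; since $\cE\oplus\cF$ is dominated, the term $\|DP_\tau.u_\tau\|$ cannot be the one that dominates for large $\tau$, so in fact $\|DP_\tau(0_{x_\tau}).u_\tau\|^2>\tfrac12\|DP_\tau(0_{x_\tau}).v_\tau\|\geq \|DP_\tau(0_{x_\tau}).u_\tau\|$ (the last inequality from domination, after enlarging $\tau$), which forces $\|DP_\tau(0_{x_\tau}).u_\tau\|> \tfrac12$, i.e. $\cE$ is \emph{not} contracted along the piece of orbit from $x_\tau$ to $\varphi_\tau(x_\tau)$. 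The strategy is then to take a weak-$*$ limit of the empirical measures along these orbit segments and use the uniform contraction of $\cE$ on $V\supset K\setminus U$ to show the limit measure is supported in $K\setminus V\subset U$, where the identification machinery is available; there one closes up the orbit to a genuine periodic orbit.

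The key steps, in order: (i) Fix a large $\tau_k\to\infty$ and obtain $x_k$ as above with $\|DP_{t}(0_{x_k}).u_k\|$ not decaying exponentially, more precisely with time averages of $\log\|DP_1|_{\cE}\|$ along the segment $\{x_k,\dots,\varphi_{\tau_k}(x_k)\}$ bounded below by a fixed constant $\ge 0$ for infinitely many lengths — here one uses a standard maximizing/averaging argument (Pliss-type lemma) to also ensure the segment can be taken with good recurrence or with all sub-averages controlled. (ii) Use the hypothesis that $\cE$ is uniformly contracted on $V$: along any sub-segment lying entirely in $V$ for time $\ge t_0$ the bundle contracts by $\tfrac12$, so the orbit segments $x_k,\dots,\varphi_{\tau_k}(x_k)$ must spend a definite proportion of time inside $K\setminus V$; the Birkhoff limit measure $\mu_\infty$ therefore gives positive mass to $K\setminus V$, and combined with the non-contraction of $\cE$ we get $\int\log\|DP_1|_{\cE}\|\,d\mu_\infty\ge 0$. (iii) By ergodic decomposition pick an ergodic component $\nu$ with $\int\log\|DP_1|_{\cE}\|\,d\nu\ge 0$ and $\nu(K\setminus V)>0$, hence $\nu(K\setminus V)=1$ up to replacing by a further recurrence argument — actually one wants a periodic orbit, so instead apply the closing procedure: a generic point $x$ for $\nu$ returns close to itself with a long return time $T\ge 4$, with the orbit staying in $U$ at the relevant moments and with $\overline P_T=\pi_x\circ P_T$ having the zero-section nearly fixed; the non-contraction of $\cE$ makes $0_x$ an attracting-type (in the $\cE$ direction expanding) fixed configuration, and Corollary~\ref{c.closing0} (or Lemma~\ref{l.closing0}) yields a periodic point $y$ with $DP_{T'}(0_y)$ conjugate to $D\overline P_T(p)$. (iv) On that periodic orbit $\cE$ is not uniformly contracted, so its $\cE$-exponent is $\ge 0$; but by the domination $\cF$-exponents exceed twice the $\cE$-exponent plus a gap, hence are strictly positive, and the $\cE$-exponent itself — being $\ge 0$ and one-dimensional along a periodic orbit — is positive unless it is exactly $0$. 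Ruling out the exponent-zero case is where we invoke that a one-dimensional bundle over a periodic orbit with zero exponent would contradict the strict non-contraction we extracted in step (i), or alternatively we re-run the maximizing argument to get the average bounded below by a fixed \emph{positive} constant; either way all Lyapunov exponents of $\cO$ are positive.

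The main obstacle I anticipate is step (iii): converting the non-contracting orbit segments into an actual periodic orbit while keeping the orbit inside the domain $U$ where the identification $\pi$ is defined and while ensuring the hypotheses of the closing lemma (the subset $B\subset\cN_x$ invariant under $\overline P_T$ with iterates of $0_x$ converging to a fixed point) genuinely hold. One must carefully choose the return point $x$ in $K\setminus V$ using recurrence of the limit measure together with the "no small period" property, verify the $\beta_0$-smallness of $\|P_t(p)\|$ along $[0,T]$ using the one-dimensionality of $\cE$ and the contraction on $V$ to control the transverse size, and handle the possibility that the recurrent orbit passes through $V$ — it may, but only for bounded stretches, which is exactly what the uniform contraction on $V$ is designed to absorb. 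The bookkeeping of constants ($t_0$, $\beta_0$, $r_0$, the domination time $\tau_0$) and the interplay between the linear flow $DP_t(0)$ and the nonlinear $P_t$ near the zero-section is the technical heart of the argument.
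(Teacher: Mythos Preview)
Your overall architecture (empirical measures $\to$ ergodic measure $\to$ closing lemma) matches the paper's, but there is a genuine gap in how you handle the Lyapunov exponents, and the closing step is set up in the wrong time direction.

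\medskip

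\textbf{The exponent gap.} In step~(i) you correctly extract from the failure of $2$-domination the inequality $\|DP_\tau|_{\cE(x_\tau)}\|^2>\tfrac12\|DP_\tau|_{\cF(x_\tau)}\|$, but you then discard it, keeping only $\|DP_\tau|_{\cE}\|>1$. Passed to a limit measure this gives merely $\lambda^{\cE}\ge 0$, and your step~(iv) cannot repair this: you write that ``by the domination $\cF$-exponents exceed twice the $\cE$-exponent plus a gap'', but that is the \emph{2-domination} inequality, precisely what is assumed to fail. Ordinary domination only gives $\lambda^{\cF}>\lambda^{\cE}$, and the hand-wave about ``strict non-contraction from step~(i)'' does not survive passage to a weak-$*$ limit, ergodic decomposition, and closing. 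The paper instead keeps the full inequality through the limit: from $\|DP_n|_{\cE(x_n)}\|^2\ge\tfrac12\inf_{\|v\|=1}\|DP_n|_{\cF(x_n)}v\|$ one gets $2\lambda^{\cE}\ge\lambda^{\cF}$ for the limit measure, and combining with domination $\lambda^{\cF}>\lambda^{\cE}$ yields $\lambda^{\cE}\ge\lambda^{\cF}-\lambda^{\cE}>0$ \emph{strictly}. This is the step you are missing.

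\medskip

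\textbf{The closing direction.} Once an ergodic $\mu$ has \emph{all} exponents strictly positive, for $\mu$-a.e.\ $x$ the \emph{backward} flow $P_{-t}$ contracts a neighborhood $V_x$ of $0_x$ in $\cN_x$ exponentially. Since $\cE$ is uniformly contracted on $V$, the support of $\mu$ meets $U$; pick a recurrent $x\in U$ and a large $T$ with $\varphi_{-T}(x)$ close to $x$: then $\widetilde P_{-T}=\pi_x\circ P_{-T}$ maps $V_x$ into itself as a contraction, and Corollary~\ref{c.closing0} produces a periodic point $y$ with $DP_{-T'}(0_y)$ conjugate to $D\widetilde P_{-T}$ at its fixed point, hence with all exponents positive. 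Your version uses the forward map $\overline P_T=\pi_x\circ P_T$ and calls $0_x$ ``attracting-type'' while ``$\cE$-direction expanding'', which is self-contradictory; with positive exponents $0_x$ is a source for $P_T$, and the closing lemma needs a contraction.
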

\begin{proof}
If there is no $2$-domination, there exists a sequence $(x_n)$ in $K$,
such that
$$\|DP_n{|_{\cE(x_n)}}\|^2\geq \frac 1 2 \inf_{v\in \cF(x_n), \|v\|=1}\|DP_n(v)\|.$$
One can extract a $\varphi$-invariant measure from the sequence
$$\mu_n:=\frac 1 n \int_{t=0}^{n}\delta_{\varphi_{t}(x_n)}\; dt$$
and the maximal Lyapunov exponent $\lambda^\cE$ along $\cE$, the minimal Lyapunov exponent $\lambda^\cF$
along $\cF$ satisfy
$2\lambda^\cE\geq \lambda^\cF$.
In particular, $\lambda^\cF> \lambda^\cE\geq \lambda^\cF-\lambda^\cE>0$.
Since $\cE$ is one-dimensional,
one deduces that $\varphi$ admits an ergodic measure $\mu$ whose Lyapunov exponents are both positive.
Since $\cE$ is uniformly contracted on $V$, the support of $\mu$ has to intersect $U$.
For $\mu$-almost every point $x\in K$, there exists a neighborhood
$V_x$ of $0$ in $\cN_x$ such that $\|(DP_{-t})|_{V_x}\|$ decreases exponentially
as $t\to +\infty$. In particular, one can take $x\in U$ recurrent
and find a large time $T>0$ such that
$\widetilde P_{-T}:=\pi_x\circ P_{-T}$ sends
$V_x$ into itself as a contraction.
{By} Corollary~\ref{c.closing0}, 
there is a periodic point $y$ in $K$ with some period $T'>0$
and a fixed point $p\in V_x$ for $\widetilde P_{-T}$
such that the tangent map $DP_{-T'}(0_y)$ is conjugate to the tangent map
$D\widetilde P_{-T}(p)$ (by Lemma~\ref{l.closing0}).
Hence the Lyapunov exponents of $y$ are all positive.
\end{proof}

\subsection{Plaque families}\label{s.plaque}

We now introduce center-stable plaques  $\cW^{cs}(x)$ that are candidates to be the local stable manifolds
of the dynamics tangent to $\cE$.
A symmetric discussion gives center-unstable plaques $\cW^{cu}$ tangent to $\cF$.

\subsubsection{Standing assumptions}\label{ss.assumptions}
In this Section~\ref{s.plaque}, we consider:
\begin{itemize}
\item[--] a bundle $\cN$ with $d$-dimensional fibers, a local fibered flow $(\cN,P)$ over a topological flow $(\varphi,K)$ and an identification $\pi$
on an open set $U$, compatible with $(P_t)$,
\item[--] a dominated splitting $\cN=\cE\oplus \cF$ of the bundle $\cN$,
\item[--] an open set $V$ containing $K\setminus U$.
\end{itemize}
Reducing $r_0$ we assume that the distance between $K\setminus V$ and $K\setminus U$ is much larger than $r_0$.

We also fix an integer $\tau_0\geq 1$ satisfying Definition~\ref{d.dominated} of the domination.
We choose $\lambda>1$ such that $\lambda^{4\tau_0}<2$.
 In particular
for any $x\in K$, for any unit vector $u\in \cE(x)$ and any unit vector $v\in \cF(x)$, we have:
\begin{equation}\label{e.domination}
\forall t\geq\tau_0,~~~\|DP_t(0).u\|\leq \lambda^{-2t} \|DP_t(0).v\|.
\end{equation}

In each space $\cN_x=\cE(x)\oplus \cF(x)$, $x\in K$, we introduce the constant cone
$$\cC^\cE(x):=\{u=u^\cE+u^\cF\in \cN_x=\cE(x)\oplus \cF(x), \|u^\cE\|> \|u^\cF\|\} \cup\{0_x\},$$
and $\cC^\cF(x)$ in a symmetric way. They vary continuously with $x$.
Moreover the dominated splitting implies that for any $t\geq \tau_0$ the cone fields are contracted:
$$DP_t(0_x).\overline{\cC^\cF(x)}\subset \cC^\cF(\varphi_t(x))
\text{ and } DP_{-t}(0_x).\overline{\cC^\cE(x)}\subset \cC^\cE(\varphi_{-t}(x)).$$
We say that a vector is tangent to the cone $\cC^\cE(x)$, when the vector belongs to $\cC^\cE$.

The cone can be extended trivially at any $u\in\cN_x$: the affine structure of $\cN_x$ allows to identify the tangent spaces
of $\cN_x$ at $x$ and $u$ and define $\cC^{\cE}(u)=\cC^{\cE}(x)$, $\cC^{\cF}(u)=\cC^{\cF}(x).$

Given $\eta>0$, let us introduce the set of points whose forward (respectively backward) orbits remain $\eta$-close to the $0$-section:
$$K_\eta^+=\{u\in\cN,~\|P_t(u)\|\le\eta,~\forall t\ge0\},$$
$$K_\eta^-=\{u\in\cN,~\|P_{-t}(u)\|\le\eta,~\forall t\ge0\}.$$
The invariance of the cone fields and the existence of the bundles extend over these sets.
The proof of the following lemma is standard, hence omitted.
\begin{Lemma}\label{Lem:extend-bundle}
There is $\eta_0>0$ with the following properties.
\begin{itemize}
\item The cone fields are invariant in the following sense: for any $t\ge\tau_0$ and $u\in \cN$,
\begin{itemize}
\item $DP_t(u).\overline{\cC^\cF(u)}\subset \cC^\cF(P_t(u)), \forall t\ge\tau_0$, provided $\|P_{[0,t]}(u)\|\le\eta_0$;
\item $DP_{-t}(u).\overline{\cC^\cE(u)}\subset \cC^\cE(P_{-t}(u)), \forall t\ge\tau_0$, when $\|P_{[-t,0]}(u)\|\le\eta_0.$ 
\end{itemize}
\item The bundle $\cE$ can be extended continuously and invariantly on $K_{\eta_0}^+$ and the bundle $\cF$ can be extended continuously and invariantly on $K_{\eta_0}^-$. The extensions are unique and are still denoted by $\cE$ and $\cF$. The invariance means that
$$DP_t(u)(\cE(u))=\cE(P_t(u)),~~~\forall u\in K_{\eta_0}^+,~\forall t\ge 0,$$
$$DP_{-t}(u)(\cF(u))=\cF(P_{-t}(u)),~~~\forall u\in K_{\eta_0}^-,~\forall t\ge 0.$$
\end{itemize}
\end{Lemma}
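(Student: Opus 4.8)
The plan is to run the standard cone-field argument, the real content being only the choice of the uniform size $\eta_0$. Everything is governed by two facts visible at the zero section: by Definition~\ref{d.dominated} and~\eqref{e.domination}, the linear flow $DP_t(0_x)$ satisfies $DP_t(0_x)\,\overline{\cC^\cF(x)}\subset\cC^\cF(\varphi_t(x))$ for all $x\in K$ and $t\ge\tau_0$, and the image cone is thinner by a factor $\lambda^{-2t}$ (if one measures a cone $\{\|u^\cE\|>\|u^\cF\|\}$ by the supremum of $\|u^\cE\|/\|u^\cF\|$ over its generators, this factor is exactly the one in~\eqref{e.domination}). Since $(u,t)\mapsto DP_t(u)$ is continuous and $K\times[\tau_0,2\tau_0]$ is compact, I would first fix $\eta_0\in(0,\beta_0)$ so that these two properties persist with $DP_t(u)$ in place of $DP_t(0_x)$ whenever $\|P_{[0,t]}(u)\|\le\eta_0$ and $t\in[\tau_0,2\tau_0]$, using the affine identification of the cone at $u$ with the cone at its base point. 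Writing an arbitrary $t\ge\tau_0$ as a sum of times in $[\tau_0,2\tau_0]$ and composing (the local cocycle relation $P_{t_1+t_2}=P_{t_2}\circ P_{t_1}$ being valid along segments that stay in $B(0,\beta_0)$) yields the cone invariance in the first bullet together with a uniform estimate: if $\|P_{[0,t]}(u)\|\le\eta_0$ then $DP_t(u)\,\overline{\cC^\cF(u)}$ has width at most $C\lambda^{-2t}$ for a uniform $C$. The statements for $\cC^\cE$ and for backward time are symmetric.

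For the bundle extension, for $u\in K_{\eta_0}^-$ I would set $\cF(u):=\bigcap_{t\ge\tau_0}DP_t(P_{-t}(u))\,\overline{\cC^\cF(P_{-t}(u))}$: by the width estimate this is a nested intersection of closed cones of vanishing width, hence a single line, which agrees with $\cF(x)$ on the zero section and is invariant directly from the definition and the local cocycle relation; the bundle $\cE$ on $K_{\eta_0}^+$ is built the same way from forward iterates of $\overline{\cC^\cE}$ under $DP_{-t}$. Continuity of $\cF$ at a point $u_0$ follows by truncating the intersection at a time $t$ with $C\lambda^{-2t}$ small: on $K_{\eta_0}^-$ the segment $s\mapsto P_{-s}(u)$, $s\in[0,t]$, stays in $B(0,\eta_0)\subset B(0,\beta_0)$, where $P$ is a continuous family of diffeomorphisms and the cone field is continuous, so $u\mapsto DP_t(P_{-t}(u))\,\overline{\cC^\cF(P_{-t}(u))}$ is continuous and traps $\cF(u)$ in a thin cone near $\cF(u_0)$. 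For uniqueness, after shrinking $\eta_0$ further, uniform continuity forces any continuous extension $\cF'$ of $\cF$ to $K_{\eta_0}^-$ to satisfy $\cF'(u)\subset\cC^\cF(u)$ (its value is close to $\cF$ at the base point and $\|u\|\le\eta_0$); if $\cF'$ is moreover invariant, then $\cF'(u)\subset DP_t(P_{-t}(u))\,\overline{\cC^\cF(P_{-t}(u))}$ for every $t\ge\tau_0$, so $\cF'(u)=\cF(u)$.

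There is no serious obstacle — the lemma is standard, which is why the authors omit it — but the point demanding care is that one is not in a genuine global linear cocycle: $P_t$ is only a \emph{local} fibered flow, defined and satisfying the cocycle identity only along orbit segments contained in $B(0,\beta_0)$, and the domination hypothesis concerns the linearization $DP_t(0)$ at the zero section alone. The crux is therefore the uniform choice of $\eta_0$: one must extract, from the continuity of $(u,t)\mapsto DP_t(u)$ and the compactness of $K$ and of the window $[\tau_0,2\tau_0]$, a single $\eta_0\le\beta_0$ that simultaneously delivers the cone inclusions and the uniform width contraction for the nonlinear maps $DP_t(u)$, and then check that every subsequent step stays inside $\{\|u\|\le\eta_0\}$ so that the local cocycle relation may be invoked freely.
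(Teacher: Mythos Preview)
Your argument is the standard cone-field proof that the paper explicitly omits as ``standard,'' and it is correct in substance. One small slip: you write that the nested intersection of shrinking cones is ``a single line,'' but in this lemma the dimensions of $\cE$ and $\cF$ are not yet restricted (the one-dimensionality of $\cE$ only enters from Proposition~\ref{p.2domination} onward), so the intersection is a $\dim(\cF)$-dimensional subspace, not a line; the argument is otherwise unchanged.
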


\smallskip

\subsubsection{Plaque family for fibered flows}
The center-stable plaques are defined on the positively invariant set $K_\eta^+$ and the center-unstable plaques are defined on the negatively invariant set $K_\eta^-$. Recall $\eta_0$ in Lemma~\ref{Lem:extend-bundle}.
\begin{Definition}\label{d.plaque}
Let us fix $\eta\in(0,\eta_0)$.
A \emph{$C^k$-plaque family tangent to $\cE$ over a subset $Y\subset K_\eta^+$} 
is a family of $C^k$-diffeomorphisms onto their image $\{\Theta_u\colon \cE(u)\to \cN_x,\; x\in K,\; u\in Y\cap \cN_x\}$, such that $\Theta_u(u)=u$, the image of $D\Theta_u(u)$ coincides with $\cE(u)$ and such that $\Theta_u$ depends continuously on $u\in Y$ for the $C^k$-topology.

\emph{ For $\alpha>0$, we denote by $\cW^{cs}_{\alpha}(u)$ the ball centered at $u$ and of radius $\alpha$
inside $\Theta_u(\cE(u))$ with respect to the induced metric on $\cN_x$ and we denote by $\cW^{cs}(u)=\cW^{cs}_1(u)$.
Sometimes the plaque family will be denoted by $\cW^{cs}=\{\cW^{cs}(u), u\in K_\eta^+\}$. For $x\in K$, we denote $\cW^{cs}_{\alpha_\cE}(x)=\cW^{cs}_{\alpha_\cE}(0_x)$.}

The plaque family is \emph{locally (forward) invariant} by the time-one map of the flow $(P_t)$ if there exists $\alpha_\cE>0$
such that for  any $u\in K_\eta^+$ we have
$$P_1(\cW^{cs}_{\alpha_\cE}(u))\subset \cW^{cs}(P_1(u)).$$

\medskip

One has similar definitions for $C^k$-plaque family tangent to $\cF$ associated to $K_\eta^-$.

\end{Definition}

Hirsch-Pugh-Shub's plaque family theorem~\cite{hirsch-pugh-shub} generalizes to local fibered flows.
\begin{Theorem}\label{t.plaque}
For any local fibered flow $(\cN,P)$ admitting a dominated splitting $\cN=\cE\oplus \cF$
there exists $\eta\in(0,\eta_0)$ and a $C^1$-plaque family tangent to $\cE$ over $K_\eta^+$ which is locally invariant by $P_1$.
If the flow is $C^2$ and if $\cE$ is $2$-dominated, then the plaque family can be chosen $C^2$.

\smallskip

Similar statements hold for $\cF$ and $K_\eta^-$.
\end{Theorem}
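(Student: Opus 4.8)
\textbf{Proof proposal for Theorem~\ref{t.plaque} (plaque family theorem for local fibered flows).}

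The plan is to reduce the statement to the classical Hirsch--Pugh--Shub plaque family theorem for a single $C^k$ diffeomorphism (or more precisely, for a sequence of maps along orbits / a ``bundle map'' over a base), by building an auxiliary global bundle map out of the time-one maps $P_1$ and carefully encoding the local nature of the fibered flow. First I would restrict attention to the positively invariant set $K_\eta^+$ for $\eta\in(0,\eta_0)$ small, on which by Lemma~\ref{Lem:extend-bundle} the bundle $\cE$ extends continuously and invariantly and the cone field $\cC^\cE$ is backward invariant; dually for $\cF$ on $K_\eta^-$. The key point is that on $K_\eta^+$ the maps $P_1\colon \cN_u \to \cN_{P_1(u)}$ are genuine $C^k$ diffeomorphisms between the relevant balls (using the cocycle property from Definition~\ref{d.local-flow}, valid because all iterates stay $\le \beta_0$), so the family $(P_1)$ behaves exactly like the graph transform setup in the diffeomorphism case: a continuous family of $C^k$ maps of $\RR^d$-bundles over the compact base $K_\eta^+$ (with its induced dynamics $u\mapsto P_1(u)$), admitting a $DP_1(0)$-invariant dominated splitting $\cE\oplus\cF$ extended to $K_\eta^+$.

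Next I would invoke the abstract plaque family theorem in the form: given a normally hyperbolic (here: dominated) invariant section of a bundle map, there is a locally invariant family of $C^1$ plaques tangent to $\cE$, obtained as the fixed point of a graph transform acting on the space of continuous sections of a space of $C^1$ discs. Concretely, one sets up the graph transform $\Theta_u \mapsto (\text{graph over } \cE \text{ of the image of } \Theta_{P_1^{-1}(u)} \text{ under } P_1)$, shows it is a contraction on an appropriate complete metric space of uniformly-$C^1$, uniformly-bounded disc families with tangent directions in the cone $\cC^\cE$, and takes the unique fixed point; its plaques $\cW^{cs}(u)=\Theta_u(\cE(u))$ are then automatically $C^1$, tangent to $\cE(u)$ at $u$, vary continuously, and satisfy $P_1(\cW^{cs}_{\alpha_\cE}(u))\subset \cW^{cs}(P_1(u))$ for a suitable radius $\alpha_\cE>0$ governed by the domination constant $\lambda$. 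The domination inequality~\eqref{e.domination} is exactly what makes the graph transform a fiberwise contraction in the $C^1$ metric on tangent slopes; the $C^2$ refinement under $2$-domination comes from applying the section-theorem / fiber-contraction argument one derivative higher, where the relevant contraction rate is $\|DP_t|_\cE\|^2 \cdot \|DP_t|_\cF\|^{-1} < 1$, which is precisely the meaning of $\cE$ being $2$-dominated in Definition~\ref{d.dominated} — this is why that hypothesis is needed and why one needs the flow to be $C^2$. The statements for $\cF$ and $K_\eta^-$ follow by applying the above to the time-reversed flow $(P_{-t})$, under which $\cF$ becomes the dominated (contracted) bundle.

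The main obstacle, and the only place where this differs seriously from the literal Hirsch--Pugh--Shub statement, is the \emph{locality}: $P_t$ is only defined (and only a cocycle) as long as orbits stay within the $\beta_0$-ball, so the graph transform must be run on discs of controlled size and one must verify that the iterates of such discs under $P_1$ do not escape the region where $P_1$ is defined and where the cone estimates of Lemma~\ref{Lem:extend-bundle} hold. This is handled by first shrinking $\eta$ (and hence $\beta_0$, $r_0$, per Remark~\ref{r.identification}(a)) so that a disc of radius $1$ inside $\Theta_u(\cE(u))$ over a point of $K_\eta^+$ stays inside $B(0_u,\eta_0)$, using backward cone invariance to control how the graph transform's preimages sit, and choosing the invariance radius $\alpha_\cE$ small enough (depending on $\lambda$ and the uniform $C^1$ bound on the plaque family) that $P_1$ contracts $\cW^{cs}_{\alpha_\cE}(u)$ strictly inside $\cW^{cs}(P_1(u))$. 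Once the size bookkeeping is set up, continuity of $u\mapsto \Theta_u$ in the $C^k$ topology is inherited from the fiber-contraction theorem exactly as in the classical proof, and no new ideas beyond~\cite{hirsch-pugh-shub} are required; hence I would state this last step as ``standard'' and only spell out the choice of constants $\eta$, $\alpha_\cE$.
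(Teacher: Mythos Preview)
Your proposal is correct and follows essentially the same route as the paper: reduce to a graph-transform argument \`a la Hirsch--Pugh--Shub applied to the sequence of time-one maps $P_1$ along orbits, using the extended bundle $\cE$ on $K_\eta^+$ from Lemma~\ref{Lem:extend-bundle}, and obtain $C^2$ plaques under $2$-domination by the usual fiber-contraction step one derivative higher. The paper packages this via an intermediate abstract statement (Theorem~\ref{t.generalizedplaque}) for sequences of local diffeomorphisms of $\RR^d$, and handles the locality issue you flag not by size-bookkeeping but by the standard trick of replacing each local $f_n$ by a global diffeomorphism $\widehat f_n$ of $\RR^d$ that agrees with $f_n$ near $0$ and with $Df_n(0)$ outside a uniform neighborhood; this makes the graph transform globally defined and sidesteps the ``discs might escape the domain'' concern entirely, which is slightly cleaner than tracking $\eta$ and $\alpha_\cE$ through the iteration as you propose.
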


\subsubsection{Plaque family for sequences of diffeomorphisms}
The proof of Theorems~\ref{t.plaque} is very similar to~\cite[Theorem 5.5]{hirsch-pugh-shub}.
It is a consequence of a more general result that we state now.
We denote by $d=d^\cE+d^\cF$ the dimensions of the fibers of $\cN,\cE,\cF$ and endow $\RR^d$ with the standard euclidean metric. For $\chi>0$ let us define
the horizontal cone
$$\cC_\chi=\bigg\{(x,y)\in\RR^{d^\cE}\times \RR^{d^\cF}, \chi\|x\|> \|y\|\bigg\}\cup\bigg\{0\bigg\}.$$
\begin{Definition}\label{d.sequence}
A \emph{sequence of $C^k$-diffeomorphisms of $\RR^d$ bounded by constants $\beta,C>0$}
is a sequence $\underbar F$ of diffeomorphisms $f_n\colon U_n\to V_n$, $n\in \NN$, where
$U_n,V_n\subset \RR^d$ contain $B(0,\beta)$, such that $f_n(0)=0$ and
such that the $C^k$-norms of $f_n, f_n^{-1}$ are bounded by $C$.

We denote by $\sigma(\underbar F)$ the shifted sequence $(f_{n+1})_{n\geq 0}$ associated to $\underbar F=(f_n)_{n\geq 0}$.
\smallskip

The sequence has a \emph{dominated splitting} if there exists $\tau_0\in \NN$
such that for any $n\in \NN$ and for any 
{$z\in B(0,\beta)\cap f_n^{-1}(B(0,\beta))\cap \dots \cap (f_{n+\tau_0-1}\circ\cdots\circ f_n)^{-1}(B(0,\beta))$,}
the cone $\cC_1$ is mapped by $D(f_{n+\tau_0-1}\circ \dots \circ f_n)^{-1}(z)$
inside the smaller cone $\cC_{1/2}$.
\smallskip

The center stable direction of the dominated splitting is \emph{$2$-dominated}
if there exists $\tau_0\in \NN$ such that for any 
{$z\in B(0,\beta)\cap f_n^{-1}(B(0,\beta))\cap \dots \cap (f_{n+\tau_0-1}\circ\cdots\circ f_n)^{-1}(B(0,\beta))$}
and for any unit vectors $u,v$ satisfying
$D(f_{n+\tau_0-1}\circ \dots \circ f_n)(z).u\in\cC_1$ and $v\in \RR^d\setminus \cC_1$,
then
$$ \|D(f_{n+\tau_0-1}\circ \dots \circ f_n)(z).u\|^2\leq \frac 1 2 \|D(f_{n+\tau_0-1}\circ \dots \circ f_n)(z).v\|. $$
\end{Definition}

\begin{Theorem}\label{t.generalizedplaque}
For any $C,\beta,\tau_0$, there exists $\alpha\in (0,\beta)$, and for any sequence of $C^1$-diffeomor\-phisms
$\underbar F=(f_n)$ of $\RR^d$ bounded by $\beta,C$ with a dominated splitting, associated to the constant $\tau_0$, there
exists a $C^1$-map $\psi=\psi(\underbar F)\colon \RR^{d^\cE}\to \RR^{d^\cF}$ such that:
\begin{itemize}
\item[--] For any $z,z'$ in the graph $\{(x,\psi(\underbar F)(x)), x\in \RR^{d^\cE}\}$, the difference $z'-z$ is contained in $\cC_{\frac 1 2}$.
\item[--] (Local invariance.) $f_0\left( \{(x,\psi(\underbar F)(x)), x\in \RR^{d^\cE}, |x|<\alpha\}\right) \subset \{(x,\psi(\sigma(\underbar F))(x)), x\in \RR^{d^\cE}\}$.
\item[--] The function $\psi$ depends continuously on $\underbar F$ for the $C^1$-topology:
for any $R,\varepsilon>0$, there exists $N\geq 1$ and $\delta>0$
such that if the two sequences $\underbar F$ and $\underbar F'$ satisfy
$$\|(f_n-f_n')|_{B(0,\beta)}\|_{C^1}\leq \delta \text{ ~~~~~~~for } 0\leq n \leq N,$$
then $\|(\psi(\underbar F)-\psi(\underbar F'))|_{B(0,R)}\|_{C^1}$ is smaller than $\varepsilon$.
\item[--] For sequences of $C^2$-diffeomorphisms $\underbar F$ such that the center stable direction of the dominated splitting is $2$-dominated (still for the constant $\tau_0$), the function $\psi(\underbar F)$ is
$C^2$ and depends continuously on $\underbar F$ for the $C^2$-topology.
\end{itemize}
\end{Theorem}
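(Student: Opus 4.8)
The plan is to establish Theorem~\ref{t.generalizedplaque} by the classical graph transform, upgraded to $C^1$ (and, under $2$-domination, to $C^2$) by the Hirsch--Pugh--Shub fibre-contraction / $C^r$-section theorem, and carried out not on a single sequence but on the whole space of admissible sequences, so that the equivariance under the shift $\underbar F\mapsto\sigma(\underbar F)$ is built in from the start. Write $\Sigma=\Sigma(\beta,C,\tau_0)$ for the set of sequences satisfying the hypotheses, with $\sigma$ the shift on $\Sigma$. First I would \emph{globalize}: replace each $f_n$ by a diffeomorphism $\hat f_n$ of $\RR^d$ that agrees with $f_n$ on a small ball $B(0,\alpha)$ (with $\alpha$ depending only on $\beta,C,\tau_0$), is globally uniformly bi-Lipschitz, coincides with a fixed linear map outside $B(0,\beta)$, and still enjoys the cone estimates of the dominated splitting globally — i.e. $D(\hat f_{n+\tau_0-1}\circ\cdots\circ\hat f_n)^{-1}$ maps $\cC_1$ into $\cC_{1/2}$ everywhere, and the $2$-domination inequality holds everywhere when it is assumed — possibly after replacing $\tau_0$ by a fixed multiple of itself. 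The standard device of linearizing $\hat f_n$ outside $B(0,\beta)$ along a reference splitting close to the coordinate one makes the graph transform below preserve \emph{global} graphs, and only the behaviour on $B(0,\alpha)$ will matter for the final statement, so these choices are harmless. (In the intended application to Theorem~\ref{t.plaque} the base is compact, which provides the uniform modulus of continuity of the $Df_n$ used here.)

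Next comes the \emph{graph transform}. Let $\cG$ be the complete metric space of sequences $\underbar G=(g_n)_{n\ge0}$ of $1$-Lipschitz maps $g_n\colon\RR^{d^\cE}\to\RR^{d^\cF}$ with $g_n(0)=0$, equipped with the uniform distance (finite since all sections share the linear behaviour at infinity coming from the globalization). Define $\Phi_{\underbar F}\colon\cG\to\cG$ by declaring the graph of $(\Phi_{\underbar F}\underbar G)_n$ to be $\hat f_n^{-1}(\mathrm{graph}\,g_{n+1})$; the cone condition makes this well-defined, with image again $1$-Lipschitz and in fact of slope in $\cC_{1/2}$, and the domination makes $\Phi_{\underbar F}^{\tau_0}$ a $\tfrac12$-contraction. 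Let $\underbar G^\ast(\underbar F)=(g_n^\ast)$ be its unique fixed point and put $\psi(\underbar F):=g_0^\ast$. Uniqueness forces $g_n^\ast(\underbar F)=\psi(\sigma^n\underbar F)$; the fixed-point equation gives $\hat f_0(\mathrm{graph}\,\psi(\underbar F))=\mathrm{graph}\,\psi(\sigma\underbar F)$, which on $B(0,\alpha)$ (where $\hat f_0=f_0$) is precisely the local-invariance assertion, and iterating the fixed-point equation over $\tau_0$ steps shows that any two points of $\mathrm{graph}\,\psi(\underbar F)$ differ by a vector in $\cC_{1/2}$.

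For the regularity I would invoke the fibre-contraction theorem. To get $\psi(\underbar F)\in C^1$, lift $\Phi_{\underbar F}$ to the bundle over $\cG$ whose fibre over $\underbar G$ is the space of continuous fields, along the graphs of the $g_n$, of $d^\cE$-planes contained in the cone $\cC_1$ (candidate tangent spaces); the induced map is fibrewise a $\tfrac12$-contraction over $\tau_0$ steps by domination and covers the base contraction $\Phi_{\underbar F}$, so it has a unique continuous invariant section, which by the usual argument is the derivative field $(Dg_n^\ast)$; hence $\psi(\underbar F)$ is $C^1$. Under the $C^2$ hypothesis and $2$-domination, repeat this one level higher, on the bundle of continuous candidate second-derivative fields: the defining inequality of $2$-domination is exactly what turns that transform into a fibrewise contraction, so $\psi(\underbar F)$ is $C^2$. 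Finally, continuous dependence on $\underbar F$ — in $C^0$, then $C^1$, then $C^2$ — holds because $\Phi_{\underbar F}$ and its two lifts are uniform contractions depending continuously on $\underbar F$ (uniformly on bounded sets); the quantitative version (given $R,\varepsilon$, produce $N,\delta$) follows from the $\tfrac12$-contraction rate over $\tau_0$ steps, which makes $g_0^\ast$, together with its first and (under $2$-domination) second derivatives on $B(0,R)$, depend on $(f_0,\dots,f_{N-1})$ up to an error that is $O(2^{-N/\tau_0})$, combined with continuity of this finite-block dependence.

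The step I expect to be the real obstacle is the regularity: proving that the invariant section of the graph transform is genuinely $C^1$ — and $C^2$ under $2$-domination — and that it varies continuously with the parameter $\underbar F$ in the corresponding topology. This is the part that needs the $C^r$-section / fibre-contraction machinery, and in which $2$-domination plays an indispensable role; by contrast the globalization, the $C^0$ contraction of the graph transform, and the continuity of fixed points of uniform contractions are routine.
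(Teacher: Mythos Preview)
Your approach is essentially the same as the paper's: globalize each $f_n$ to a diffeomorphism $\hat f_n$ coinciding with $Df_n(0)$ outside a uniform neighborhood of $0$, then run the graph transform and the Hirsch--Pugh--Shub fibre-contraction argument to obtain the $C^1$ (and, under $2$-domination, $C^2$) invariant graph with continuous dependence on $\underbar F$. The paper in fact only sketches this and declares it standard, so your write-up is more detailed than theirs; just be careful that the linear extension outside $B(0,\beta)$ should be $Df_n(0)$ (varying with $n$) rather than a single fixed linear map.
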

\noindent
The proof of this theorem is standard. It is obtained by
\begin{itemize}
\item[--] introducing a sequence of diffeomorphisms
$(\widehat f_n)$ defined on the whole plane $\RR^d$ which coincide with the diffeomorphisms
$f_n$ on a uniform neighborhood of $0$ and with the linear diffeomorphism
$Df_n(0)$ outside a uniform neighborhood of $0$,
\item[--] applying a graph transform argument.
\end{itemize}

Theorem~\ref{t.plaque} is a direct consequence of Theorem~\ref{t.generalizedplaque}:
for each $u\in K_\eta^+\cap\cN_x$, we consider the sequence of local diffeomorphisms
$P_1\colon \cN_{\varphi_n(x)}\to \cN_{\varphi_{n+1}(x)}$.
There exist uniformly bounded linear isomorphisms which identify $\cN_{\varphi_n(x)}$
with $\RR^d$ and send the spaces $\cE(u)=\cE(\varphi_n(x))$ and $\cF(u)=\cF(\varphi_n(x))$
to $\RR^{d^\cE}\times \{0\}$ and $\{0\}\times \RR^{d^\cF}$. Since the isomorphisms are bounded
we get a sequence of diffeomorphisms as in Definition~\ref{d.sequence}.
Theorem~\ref{t.generalizedplaque} provides a plaque in $\cN_x$ and which depends continuously on $x$.

\subsubsection{Uniqueness}
There is no uniqueness in Theorem~\ref{t.generalizedplaque},
but once we have fixed the way of choosing $\widehat f_n$, the invariant graph becomes unique
(and this is used to prove the continuity in Theorem~\ref{t.generalizedplaque}).
Also the following classical lemma holds.

\begin{Proposition}\label{p.uniqueness}
In the setting of Theorem~\ref{t.generalizedplaque}, up to reducing $\alpha$, the following property holds.
If there exists $z'$ in the graph of $\psi(\underbar F)$ and $z\in \RR^d$ such that  for any $n\geq 0$
\begin{itemize}
\item[--] the iterates of $z$ and $z'$ by $f_n\circ\dots\circ f_0$ are defined and belong to
$B(0,\alpha)$,
\item[--] $(f_n\circ\dots\circ f_0(z))-(f_n\circ\dots\circ f_0(z'))\in \cC_1$,
\end{itemize}
then $z$ is also contained in the graph of $\psi(\underbar F)$.
\end{Proposition}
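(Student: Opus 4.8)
The plan is to transfer the question to the globally defined maps $\widehat f_n$ used in the proof of Theorem~\ref{t.generalizedplaque} (equal to $f_n$ on a fixed neighbourhood of $0$ and to the linear map $Df_n(0)$ away from $0$). For these maps the graphs $\cG_n:=\{(x,\psi(\sigma^n\underbar{F})(x)):x\in\RR^{d^\cE}\}$ form a genuinely invariant family, $\widehat f_n(\cG_n)=\cG_{n+1}$, each $\cG_n$ passing through $0$, and $\cG_0$ is the attracting fixed point of the graph transform on the space of Lipschitz sections vanishing at $0$ (this is exactly the uniqueness recalled just before the statement). First I would reduce $\alpha$ so that $\widehat f_n=f_n$ on $B(0,2\alpha)$, so that the cone inclusions of Definition~\ref{d.sequence} hold along every orbit contained in $B(0,\alpha)$, and so that the sections $\psi(\sigma^n\underbar{F})$ are $\tfrac12$-Lipschitz there.

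Set $z_n:=\widehat f_{n-1}\circ\cdots\circ\widehat f_0(z)$ and $z'_n:=\widehat f_{n-1}\circ\cdots\circ\widehat f_0(z')$. By the hypotheses these coincide with $f_{n-1}\circ\cdots\circ f_0$ applied to $z$ and $z'$, they stay in $B(0,\alpha)$, and $\delta_n:=z_n-z'_n\in\cC_1$ for all $n$. By the mean value theorem $\delta_0=A_N\,\delta_N$, where $A_N$ is an average of the derivatives of $(\widehat f_{N-1}\circ\cdots\circ\widehat f_0)^{-1}$ along the segment $[z'_N,z_N]$; composing the block-wise inclusions of Definition~\ref{d.sequence} (and using that $\alpha$ is small, so that the derivatives along the orbit are close to the block-diagonal linear parts $Df_n(0)$) one gets $A_N(\cC_1)\subset\cC_{1/2}$, hence $\delta_0\in\cC_{1/2}$, and in the same way $\delta_n\in\cC_{1/2}$ for every $n$. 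Consequently the $\cF$-deviation $w_n:=\pi_{\cF}(z_n)-\psi(\sigma^n\underbar{F})(\pi_{\cE}(z_n))$ of $z_n$ from $\cG_n$ stays bounded, $\|w_n\|\le\|\pi_{\cE}(\delta_n)\|\le 2\alpha$.

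For each $N$ I would then build a Lipschitz section $s_N$ at time $N$ that still vanishes at $0$, coincides with $\psi(\sigma^N\underbar{F})$ outside a ball of radius $\sim\alpha$ around $\pi_{\cE}(z_N)$, and whose graph contains $z_N$ — this is possible precisely because of the bound $\|w_N\|\le 2\alpha$ just obtained. Pulling $s_N$ back $N$ times by the graph transform produces a Lipschitz section $s_N^0$ at time $0$ whose graph contains $z_0$ (here one uses the cone confinement $\delta_n\in\cC_{1/2}$ to see that the orbit of $z_0$ shadows the graph of $s_N$ \emph{within the horizontal cone}, so that the pull-back of $z_N$ along the graph transform is indeed $z_0$), and whose distance to $\cG_0$ on any fixed ball tends to $0$ as $N\to\infty$, since the graph transform contracts, at a uniform rate, towards its fixed point $\cG_0$ on the space of sections vanishing at $0$. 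Letting $N\to\infty$ gives $z_0\in\cG_0$, i.e. $z$ lies in the graph of $\psi(\underbar{F})$.

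The delicate point is the contraction of the graph transform used in the last step: the bundle $\cE$ is only dominated, not uniformly contracted, and $\cF$ need not be uniformly expanded, so this is not an absolute contraction of fibre distances but only a contraction of the Lipschitz constant among sections vanishing at $0$ (together with the fact that the heights at $0$ are pinned, using $\widehat f_n(0)=0$). Combining this with the verification that the pulled-back section really passes through $z_0$ — which is where the horizontal-cone hypothesis on $z-z'$ enters, via the bound $\delta_n\in\cC_{1/2}$ — and keeping all constants uniform in the sequence $\underbar{F}$ (as needed for the continuity statement of Theorem~\ref{t.generalizedplaque}), is the main obstacle; the remaining steps are the routine graph-transform bookkeeping of~\cite{hirsch-pugh-shub}.
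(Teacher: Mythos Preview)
Your route via graph transforms is quite different from, and substantially heavier than, the paper's argument. The paper proceeds by a direct three-point contradiction: assuming $z=(x,y)$ is not on the graph, it introduces $\widehat z:=(x,\psi(\underbar F)(x))$, the vertical projection of $z$ onto $\cG_0$. Then three cone relations hold along the whole forward orbit: $z_n-\widehat z_n$ lies in $\RR^d\setminus\cC_1$ (the initial difference is purely vertical and the complementary cone is forward-invariant by domination), $z_n-z'_n\in\cC_1$ by hypothesis, and $\widehat z_n-z'_n\in\cC_{1/2}$ since both points lie on the invariant graph. Domination makes the ratio $\|z_n-\widehat z_n\|/\max(\|z_n-z'_n\|,\|\widehat z_n-z'_n\|)$ grow exponentially, which eventually violates the triangle inequality $\|z_n-\widehat z_n\|\le\|z_n-z'_n\|+\|z'_n-\widehat z_n\|$. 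That is the entire proof.

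Beyond being longer, your scheme has a genuine gap at the construction of $s_N$. You require $s_N$ to vanish at $0$, pass through $z_N$, and have uniformly bounded Lipschitz constant, justified by the bound $\|w_N\|\le\|\pi_\cE(\delta_N)\|\le 2\alpha$. But that bound does not control the slope of the segment from $0$ to $z_N$, which is at least $\|\pi_\cF(z_N)\|/\|\pi_\cE(z_N)\|\ge \|w_N\|/\|\pi_\cE(z_N)\|-\tfrac12$. Nothing prevents $\|\pi_\cE(z_N)\|$ from being much smaller than $\|w_N\|$ (this happens whenever $\pi_\cE(z'_N)$ and $\pi_\cE(\delta_N)$ nearly cancel), so the Lipschitz constant of any section through both $0$ and $z_N$ may blow up along a subsequence. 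Once that happens, the number of pull-backs needed before the graph transform tames the slope is not bounded uniformly in $N$, and the contraction estimate ``distance to $\cG_0$ on a fixed ball tends to $0$'' no longer follows. The paper's three-point argument avoids this issue entirely because it never needs to thread a section through both $0$ and $z_N$; it only compares differences of orbit points, and the cone dichotomy plus the triangle inequality does the rest.
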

\begin{proof}
Let us assume by contradiction that $z=(x,y)$ is not contained in the graph of $\psi$ and
let us denote $\widehat z=(x,\psi(x))$ with $\psi(x)\neq y$.
The line containing the iterates of $z$ and $\widehat z$
by the sequence $f_{n-1}\circ\dots\circ f_0$, $n\geq 1$,
remains tangent to the cone $\RR^d\setminus \cC_1$ (by the dominated splitting).
The line containing the iterates of $z$ and $z'$
and the line containing the iterates of $\widehat z$ and $z'$ are tangent to $\cC_1$
by our assumption, and by the two items of Theorem~\ref{t.generalizedplaque}.

The domination implies that the distances between the $n^\text{th}$ iterates of $z,\widehat z$
gets exponentially larger than their distance to the $n^\text{th}$ iterate of $z'$.
This contradicts the triangular inequality.
\end{proof}

\begin{Remark-numbered}\label{r.plaque-invariance}
The plaque family $\cW^{cs}$ given by Theorem~\ref{t.plaque} is a priori only invariant
by the time-$1$ map $P_1$ of the flow but the previous proposition shows that
if $\eta, \alpha_\cE>0$ are small enough, then for any $u\in K_\eta^+$ and $z\in \cW^{cs}(u)$ such that
$P_{n}(z)\in \cW^{cs}_{\alpha_\cE}(P_n(u))$ for each $n\in\NN$,
we have $P_{t}(z)\in \cW^{cs}(P_t(u))$ for any $t>0$.
Indeed, by invariance of the cone $\cC^\cF$, the point $P_{t}(z)$ belongs to $\cC^\cE(P_t(u))$ for any $t\geq 0$.
\end{Remark-numbered}

\subsubsection{Coherence}
The uniqueness allows us to deduce that when plaques intersect then they have to \emph{match}, i.e.
to be contained in a larger sub-manifold.

\begin{Proposition}\label{p.coherence}
Fix a plaque family $\cW^{cs}$ as given by Theorem~\ref{t.plaque}.
Up to reducing the constants $\eta,\alpha_\cE>0$, the following property holds.
For any $y\in K$, $u\in K^+_\eta\cap \cN_y$, $u'\in K_\eta^+$,
and for any sets $X$ and $X'$ satisfying $u\in X\subset \cW^{cs}_{\alpha_\cE}(u)$, $u'\in X'\subset \cW^{cs}_{\alpha_\cE}(u')$, if
\begin{enumerate}
\item\label{i.infinite-return} $(\varphi_t(y))_{t\in[0,\infty)}$ has arbitrarily large iterates in the
$r_0$-neighborhood of $K\setminus V$;
\item\label{i.intersect-control} $y$ belongs to the $r_0$-neighborhood of $K\setminus V$, $\pi_{y}(X')\cap X\neq \emptyset$ and
$\diam (P_t(X)), \diam (P_t(X'))$ are smaller than $\alpha_\cE$ for all $t\geq 0$,
\end{enumerate}
then $\pi_{y}(X')$ is contained in $\cW^{cs}(u)$.
\end{Proposition}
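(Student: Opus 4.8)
The plan is to deduce Proposition~\ref{p.coherence} from the uniqueness statement Proposition~\ref{p.uniqueness} by transporting the local invariant graphs through the identification $\pi_y$ and then showing the transported plaque satisfies the hypotheses of the uniqueness proposition. First I would set up the dynamics along the forward orbit of $y$: using assumption~\eqref{i.infinite-return}, pick an increasing sequence $t_k\to\infty$ with $\varphi_{t_k}(y)$ in the $r_0$-neighborhood of $K\setminus V$, so that along these return times one has the Global invariance at one's disposal (Remark~\ref{r.identification}(g)); this is what makes $\pi$ available as the tool to compare the two plaques. The point $z\in \pi_y(X')\cap X$ gives a point $z'\in X'\subset \cW^{cs}_{\alpha_\cE}(u')$ with $\pi_y(z')=z$, and the diameter control in~\eqref{i.intersect-control} together with Global invariance (in its $\cN$-fibered form, the last bulleted clause of Definition~\ref{d.compatible}(4)) shows that $\pi_{\varphi_{t_k}(y)}$ sends the forward iterate $P_{t_k}(z')$ to a point $\varepsilon_k$-close (with $\varepsilon_k\to 0$) to $P_{s_k}(z)$ for a reparametrized time $s_k$ with $|s_k - t_k|$ bounded; combined with Proposition~\ref{p.no-shear} (no shear inside orbits), one checks $s_k$ and $t_k$ differ by a bounded amount and the two forward orbits $(P_t(z))$ and $(P_t(X))$ stay uniformly small, so both $z$ and $\pi_y(z')$ — and more importantly both $\pi_y(X')$ and $X$ — remain in a small neighborhood of the zero section for all forward time.

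Next I would work in the chart over the orbit of $u\in K^+_\eta\cap\cN_y$: identify $\cN_{\varphi_n(y)}$ with $\RR^d$ by uniformly bounded isomorphisms carrying $\cE,\cF$ to the coordinate planes, so that the plaque $\cW^{cs}(u)$ is precisely the graph of $\psi(\underbar F)$ for the associated sequence $\underbar F=(f_n)$ of local diffeomorphisms (this is the identification used to derive Theorem~\ref{t.plaque} from Theorem~\ref{t.generalizedplaque}). The set $X\subset \cW^{cs}_{\alpha_\cE}(u)$ sits in this graph. The transported set $\pi_y(X')$ needs to be shown to also lie in this graph: apply Proposition~\ref{p.uniqueness} to each point $w\in \pi_y(X')$. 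For this I need (i) the iterates $f_n\circ\dots\circ f_0(w)$ defined and in $B(0,\alpha)$ for all $n$ — this follows from the diameter control on $P_t(X')$ and the fact, established in the previous step via Global invariance, that $\pi_{\varphi_{t_k}(y)}\circ P_{t_k} = P_{?}$ intertwines the fibered flows along the two orbits so that staying $\alpha_\cE$-small upstairs is preserved; and (ii) that the difference between the $n$-th iterate of $w$ and of a reference point $z'$ in the graph lies in the cone $\cC_1$ — this is exactly the "difference contained in $\cC_{1/2}$" property of the graph in Theorem~\ref{t.generalizedplaque} combined with the cone-invariance of $\cC^\cF$ from Lemma~\ref{Lem:extend-bundle}, since both $w$ and the reference point are identified (via $\pi$) with points of $\cW^{cs}(u')$, whose pairwise differences lie in $\cC_{1/2}$, and $\pi$ is close to a linear map preserving the splitting so it distorts $\cC_{1/2}$ into at most $\cC_1$.

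The main obstacle I expect is \textbf{controlling the distortion of the cone fields and of the graph property under the identification $\pi_y$ and under the time-reparametrizations $\theta$}. Two separate issues are hidden here: first, $\pi_{y,x}$ is only a $C^1$-diffeomorphism $\cN_y\to\cN_x$, a priori not linear, so the image of the "straight" cone $\cC^\cF$ is only approximately a cone — one has to exploit that, after reducing $r_0$ and $\beta_0$, $\pi_{y,x}$ is $C^1$-close to the linear identification along the splitting (a property one should extract from the Local invariance axiom together with the construction in \cite{CY2}), so that $\cC_{1/2}$ maps into $\cC_1$ and the hypotheses of Proposition~\ref{p.uniqueness} are met with room to spare. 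Second, the two orbits are only shadowing up to a reparametrization $\theta\in\mathrm{Lip}_{1+\rho}$, so the "$n$-th iterate" of the transported point is really $P_{\theta(n)}$ rather than $P_n$; one fixes this using Proposition~\ref{p.no-shear}, which pins $\theta(t)$ to within $1/2$ of $t$ once $\theta(0)$ is small (guaranteed by the $|\theta(0)|\le 1/4$ clause in Remark~\ref{r.identification}(g)), so that the discrete orbit of $\pi_y(X')$ under the correct sequence of $f_n$'s can be compared to the orbit of $X'$, and staying in $B(0,\alpha)$ for all real time forces staying in $B(0,\alpha)$ for all integer time. Once these distortion and reparametrization bookkeeping points are settled, Proposition~\ref{p.uniqueness} applies pointwise to each $w\in\pi_y(X')$ and yields $w\in\mathrm{graph}(\psi(\underbar F))=\cW^{cs}(u)$, which is the claim.
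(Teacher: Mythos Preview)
Your approach is essentially the paper's: transport via Global invariance, verify the cone hypothesis of Proposition~\ref{p.uniqueness}, conclude. Two points deserve sharpening.

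First, on the cone condition~(ii): you cannot verify $P_n(w)-P_n(z')\in\cC^\cE_1$ at time $0$ and propagate \emph{forward}, because $\cC^\cE$ is not forward invariant. The paper's argument runs the other way. At each return time $T_n$ (where $\varphi_{T_n}(y)$ is in the $r_0$-neighborhood of $K\setminus V$, hence in $U$, so the identification is defined), the last clause of the Global invariance gives $\pi_{\varphi_{T_n}(y)}\circ P_{\theta(T_n)}=P_{T_n}\circ\pi_y$ on the relevant points; since $P_{\theta(T_n)}(v''),P_{\theta(T_n)}(v')$ lie in the plaque $\cW^{cs}(P_{\theta(T_n)}(u'))$, their difference is in $\cC^\cE_{1/2}$, and after projecting by $\pi_{\varphi_{T_n}(y)}$ (which, for $\delta$ small, sends $\cC^\cE_{1/2}$ into $\cC^\cE_1$) one gets $P_{T_n}(w)-P_{T_n}(z')\in\cC^\cE_1$. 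Now use that the \emph{complement} of $\cC^\cE$ is forward invariant: if the cone condition failed at some $t_0$, it would fail for all $t>t_0$, contradicting what you just proved at the unbounded sequence $T_n$. Your phrase ``cone-invariance of $\cC^\cF$'' is the right ingredient, but the logic is ``holds at arbitrarily large times $\Rightarrow$ holds at all times'', not forward propagation from $t=0$.

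Second, Proposition~\ref{p.no-shear} is not needed. The Global invariance (Definition~\ref{d.compatible}(4), last bullet) already delivers the identity $\pi_{\varphi_s(y)}\circ P_{\theta(s)}(v')=P_s(v)$ whenever $\varphi_s(y)\in U$, and the first sub-bullet bounds $\|P_{\theta(s)}(\cdot)\|$ at \emph{all} times; no explicit control of $|\theta(t)-t|$ is required, and there is no need to compare ``$P_n$ versus $P_{\theta(n)}$'' at non-return times since the cone argument above only uses the return times $T_n$.
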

\begin{proof}
Proposition~\ref{p.uniqueness} gives $\alpha>0$.
One chooses $\delta>0$ small enough such that for any $x$ in the $r_0$-neighborhood of $K\setminus V$, for any $x'$ is $\delta$-close to $x$, one has that $\pi_x(\cC_{1/2}^\cE(x'))\subset \cC_1^{\cE}(\pi_x(x'))=\cC_1^{\cE}(x)$.

Take $v\in X$ and $v'\in X'$ such that $\pi_y(v')=v$. Provided $\eta,\alpha_\cE$ are small enough, one has that $\|P_t(v)\|$ and $\|P_t(v')\|$ are small for all $t\ge 0$. Then by the Global invariance (the version of Remark~\ref{r.identification}.(g)), there is a bi-Lipschitz parametrization $\theta$ such that $|\theta(0)|\le 1/4$ and $d(\varphi_t(y),\varphi_{\theta(t)}(y'))<\delta$ for all $t\ge 0$, where $y'$ is the base point of $u'$ and $v'$, i.e. $u'\in \cN(y')$.

By Item~\ref{i.infinite-return}, one can choose a sequence of times $T_n\to+\infty$ such that $\varphi_{T_n}(y)$ is in the $r_0$-neighborhood of $K\setminus V$. Thus, we also have a sequence $(T_n'=\theta(T_n))_{n\in\NN}$ tending to $+\infty$.

It suffices to prove that for any $v''\in X'$, one has that $z=\pi_y(v'')\in \cW^{cs}(u)$. By Theorem~\ref{t.generalizedplaque}, one has that $P_{T_n'}(v')-P_{T_n'}(v'')\in \cC_{1/2}^\cE(\varphi_{T_n'}(y'))$. By the choice of $\delta$, one has that  $\pi_{\varphi_{T_n}(y)}(P_{T_n'}(v''))-\pi_{\varphi_{T_n}(y)}(P_{T_n'}(v'))\in \cC_{1}^\cE(\varphi_{T_n}(y))$. By the Global invariance again, one has that $P_{T_n}(z)-P_{T_n}(v)\in \cC^\cE(\varphi_{T_n}(y))$. Since the complement of the cone field $\cC^\cE$ is invariant by forward iterations,
and $P_{t}(z)-P_{t}(v)\in \cC^\cE(\varphi_{t}(y))$ for a sequence of arbitrarily large times $t$, then $P_{t}(z)- P_{t}(v)\in \cC^\cE(\varphi_{t}(y))$ for all $t>0$. One has the following facts:
\begin{itemize}
\item[--] By our assumptions, and requiring $\alpha_\cE<\alpha$ we have
$\|P_t(v)-P_t(u)\|<\alpha$ for any $t\geq 0$.
\item[--] Since $P_t(z)\in P_t(\pi_{y}(X'))$,
we also have 
$\|P_t(z)-P_t(u)\|\leq \alpha$ for any $t\geq 0$ by the Global invariance.
\end{itemize}
Thus, the assumptions of Proposition~\ref{p.uniqueness} are satisfied for $P_1$ and one can conclude $z\in \cW^{cs}(u)$. This complete the proof by using Proposition~\ref{p.uniqueness}.
\end{proof}

\subsubsection{Limit dynamics in periodic fibers}

We state a consequence of the existence of plaque families. It will be used for the center-unstable plaques $\cW^{cu}$.
Note that any $u\in \cN$ such that $\|P_{-t}(u)\|$ is small for any $t\geq 0$  has a plaque $\cW^{cu}(u)$ by Theorem~\ref{t.plaque}.

\begin{Proposition}\label{p.fixed-point}
For any local fibered flow  $(P_t)$ on a bundle $\cN$ admitting a dominated splitting $\cN=\cE\oplus \cF$
where $\cE$ is one-dimensional, if $\eta>0$ is small enough the following property holds.

For any periodic point $z\in K$ with period $T$ and any $u\in K^-_\eta\cap \cN_z$ satisfying $0_z\not\in \cW^{cu}(u)$,
there exists $p\in \cN_z$ such that $P_{2T}(p)=p$ and $P_{-t}(u)$ converges to the orbit of $p$
when $t$ goes to $+\infty$.
\end{Proposition}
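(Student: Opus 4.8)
The plan is to analyze the time-$2T$ return map $\overline P_{2T} := P_{2T}$ acting on the fiber $\cN_z$ (which is well-defined as a genuine diffeomorphism of a small ball around $0_z$ because $z$ is periodic and the local fibered flow composes correctly when orbits stay close to the zero-section), restrict attention to the backward-invariant behavior, and extract a fixed point using the contraction coming from the one-dimensionality of $\cE$ together with the existence of the center-unstable plaque $\cW^{cu}$. First I would set up the periodic fiber picture: since $z$ has period $T$, the maps $P_{-T}$ and $P_{-2T}$ send (a neighborhood of $0_z$ in) $\cN_z$ into itself, the linear flow $DP_{-T}(0_z)$ is a linear automorphism preserving the splitting $\cE(z)\oplus\cF(z)$, and $\cE(z)$ is one-dimensional. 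The reason for passing to $2T$ rather than $T$ is the usual orientation issue: $DP_{-T}(0_z)$ may reverse orientation on the line $\cE(z)$, but $DP_{-2T}(0_z)$ preserves it, which is what one needs for a clean contraction/fixed-point argument on an interval.

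The key steps, in order: (1) Use Theorem~\ref{t.plaque} (applied to $u\in K^-_\eta$) to get the center-unstable plaque $\cW^{cu}(u)$ tangent to $\cF$, and use Remark~\ref{r.plaque-invariance}-type local invariance under backward iteration, so that the backward orbit of $u$ stays controlled and its plaques $\cW^{cu}(P_{-t}(u))$ vary continuously. (2) Consider the ``stable'' direction for backward iteration, namely the line $\cW^{cu}(u)$ plays the role of the strong-unstable leaf, and the complementary center-stable cone $\cC^\cE$ is the relevant invariant cone; since $0_z\notin\cW^{cu}(u)$, the point $u$ sits at nonzero ``$\cE$-height'' above the plaque through $0_z$, and the domination forces this height to grow (in backward time) relative to motion along $\cF$ — equivalently, projecting onto the $\cE$-coordinate along the center-unstable plaque foliation gives a well-defined one-dimensional dynamics. (3) Look at the sequence of points $P_{-2nT}(u)$ and their $\cE$-coordinates (in suitable coordinates on $\cN_z$ that trivialize $\cW^{cu}$-plaques near $0_z$): by domination this one-dimensional map on an interval is an expansion in backward time for the $\cF$-part and hence, reading it the other way, the forward map $P_{2T}$ restricted to the relevant invariant curve through $u$ is a contraction onto itself on a small interval, so it has a fixed point $p$ with $P_{2T}(p)=p$. (4) Finally, show $P_{-t}(u)\to \orb(p)$ as $t\to+\infty$: because $P_{2nT}(p)=p$ and the $\cE$-height of $u$ above $p$'s plaque decays under $P_{-2nT}$ while everything stays in the plaque family, one gets $d(P_{-2nT}(u),p)\to 0$, and then interpolating over the bounded time-window $[0,2T]$ using continuity of the flow gives convergence of the whole backward orbit to the orbit of $p$.

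I expect the main obstacle to be Step (3): making precise the ``one-dimensional contraction'' extracted from the two-dimensional (or higher) local return map $P_{2T}$ on $\cN_z$. One has to argue that, after quotienting by the center-unstable plaque foliation (which is only locally invariant, only $C^1$, and only defined near the zero-section), the induced action on the one-dimensional transversal $\cE(z)$ is a genuine contraction with $0$ as one endpoint of its domain and $u$'s fiber-coordinate in its interior — this requires combining the cone invariance from Section~\ref{s.plaque}, the local invariance of $\cW^{cu}$, and a uniform-size argument to guarantee the fixed point $p$ lands in the domain where $\cW^{cu}(p)$ and the composition $P_{2T}$ are both defined. The orientation bookkeeping needed to know the contraction (rather than a flip) occurs already at the first return $P_{2T}$ is what dictates the factor $2$ in the period and should be handled carefully but is not conceptually hard; the genuine work is the reduction to one-dimensional dynamics and the verification that the fixed point stays in the small neighborhood throughout. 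Uniqueness-type arguments (Proposition~\ref{p.uniqueness}) should then pin down that $P_{-t}(u)$ converges to $\orb(p)$ and not to some other periodic point.
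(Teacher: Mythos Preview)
Your overall architecture---reduce to a one-dimensional return map by projecting along center-unstable plaques, and pass to period $2T$ to get orientation preservation---matches the paper's approach. But Step~(3) contains a genuine gap: domination does \emph{not} give that the induced one-dimensional map is a contraction. Domination only compares rates between $\cE$ and $\cF$; it says nothing about whether $DP_{2T}(0_z)|_{\cE(z)}$ has norm below, equal to, or above $1$. Your sentence ``by domination this one-dimensional map on an interval is an expansion in backward time for the $\cF$-part and hence \ldots\ the forward map $P_{2T}$ restricted to the relevant invariant curve through $u$ is a contraction'' conflates relative domination with absolute hyperbolicity. There is also no invariant curve through $u$ in general; $u$ is just a point of $K^-_\eta\cap\cN_z$. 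Step~(4) inherits the same problem: ``the $\cE$-height of $u$ above $p$'s plaque decays under $P_{-2nT}$'' again presupposes a contraction along $\cE$ that you have not established.

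The paper's argument circumvents this entirely. Instead of looking for a contraction, it first defines $y:=\cW^{cu}(u)\cap\cW^{cs}(0_z)$ and uses domination in the correct way: since $P_{-k}(u)-P_{-k}(y)$ lies in the $\cC^\cF$-cone and $P_{-k}(y)-0_{\varphi_{-k}(z)}$ lies in the $\cC^\cE$-cone, the ratio of their norms decays exponentially; combined with the bound $\|P_{-k}(u)\|\le\eta$ and $y\neq 0_z$, this forces $d(P_{-k}(u),P_{-k}(y))\to 0$. Hence the limit set $L$ of $(P_{-2nT}(u))_{n\ge 0}$ is contained in the one-dimensional arc $\cW^{cs}(z)$. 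Now one invokes the elementary fact that for an orientation-preserving interval map (here $P_{-2T}|_{\cW^{cs}(z)}$), the limit set of any bounded orbit is a single fixed point---no contraction hypothesis needed. The paper makes this precise by arguing that if $L$ were not a point, one could find $q\in L$ fixed by $P_{2T}$ with $L$ meeting the open arc between $0_z$ and $q$, and then replacing $u$ by a backward iterate so that $y$ lies in that arc reduces to the one-dimensional orientation-preserving situation where the limit set must collapse. So the fix to your outline is: replace the unjustified contraction claim in Step~(3) by (i) the projection $y$ onto $\cW^{cs}(0_z)$, (ii) the domination estimate showing $P_{-t}(u)$ is asymptotic to $P_{-t}(y)$, and (iii) the classical one-dimensional limit-set argument.
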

\begin{proof}

Let $\alpha_\cE,\alpha_\cF$ be the constants associated to $\cE,\cF$ as in Theorem~\ref{t.plaque}.
The plaques $\cW^{cu}(u)$ and $\cW^{cs}(0_z)$
intersect at a (unique) point $y$.

Since $\|u\|$ is small, by the local invariance of the plaque families, $y$ is also
the intersection between $P_{1}(\cW^{cs}_{\alpha_\cE}(\varphi_{-1}(z)))$ and
$\cW^{cu}_{\alpha_\cF}(u)$.
One deduces that $P_{-1}(y)$
is the (unique) intersection point between
the plaques $\cW^{cu}(P_{-1}(u))$ and $\cW^{cs}(\varphi_{-1}(z))$.
Repeating this argument inductively,
one deduces that the backward orbit of $y$ by $P_{-1}$
remain in the plaques $\cW^{cu}(P_{-k}(u))$ and $\cW^{cs}(\varphi_{-k}(z))$.
From Remark~\ref{r.plaque-invariance}, any backward iterate $P_{-t}(y)$
belongs to $\cW^{cu}(P_{-t}(u))$.

One has that $y\neq 0_z$ by the fact that $0_z\neq\cW^{cu}(u)$. The domination implies that
 the distance $d(P_{-k}(u),P_{-k}(y))$ is exponentially smaller than
$d(P_{-k}(y), 0_{\varphi_{-k}(z)})$ as $k\to +\infty$.
So one has that $d(P_{-k}(u),P_{-k}(y))$ goes to $0$.
The same argument applied to $P_{-s}(u)$,
$s\in [0,1]$ shows that the distance of $P_{-t}(u)$
to $\cW^{cs}(\varphi_{-t}(z))$ converges to $0$ as $t\to +\infty$.
In particular, the limit set of the orbit of $u$ under $P_{-2T}$
is a closed subset $L$ of $\cW^{cs}(z)$.
In the case $L$ is a single point $p$, the conclusion of the proposition follows.

We assume now by contradiction  that $L$ is not a single point.
There exists $q\neq 0_z$ invariant by $P_{2T}$
in $L$ such that $L$ intersects the
open arc $\gamma$ in $\cW^{cs}(z)$ bounded by $0_z$ and $q$.
Note that the forward iterates $P_k(\gamma)$ by $P_1$ remain small,
hence in $\cW^{cs}(\varphi_k(z))$ by the local invariance of $\cW^{cs}$.
From Remark~\ref{r.plaque-invariance}, any iterate
$P_t(\gamma)$ is contained in $\cW^{cs}(\varphi_t(z))$, $t\in \RR$.

Up to replacing $u$ by a backward iterate, one can assume that $y$ belongs to $\gamma$.
This shows that $P_{-t}(y)$ is the intersection between
$\cW^{cu}(P_{-t}(u))$ and $\cW^{cs}(P_{-t}(z))$ for any $t\geq 0$.
The set $L$ is thus the limit set of the orbit of $y$ under $P_{-2T}$.
This reduces to a one-dimensional dynamics
for an orientation preserving diffeomorphism,
and $L$ has to be a single point, a contradiction.
\end{proof}

\subsubsection{Distortion control}

The following lemma restates the classical Denjoy-Schwartz argument in our setting.
\begin{Lemma}\label{Lem:schwartz}
Let us assume that the local fibered flow $(P_t)$ is $C^2$, that $\cE$ is one-dimensional and that
$\cW^{cs}$ is a $C^2$ locally invariant plaque family.
Then, there is $\beta_S>0$ and for any $C_{Sum}>0$, there are $C_S,\eta_S>0$ with the following property.

For any $x\in K$, for any
interval $I\subset \cW^{cs}(x)$ and any $n\in \NN$ satisfying
$$
\forall m\in \{0,\dots,n\},\; P_m(I)\subset B(0,\beta_S) \text{ and }
\sum_{m=0}^{n} |P_{m}(I)|\leq C_{Sum},$$
then (1) for any $u,v\in I$ we have
\begin{equation}\label{e.distortion}
C_S^{-1}\leq \frac {\|DP_n(u)|_{I}\|}{\|DP_n(v)|_{I}\|}\leq C_S;
\end{equation}
in particular $\|DP_n(u)|_{I}\|\leq C_S \frac{|P_n(I)|}{|I|}$;
\smallskip

\noindent
(2) any interval $\widehat I\subset \cW^{cs}(x)$ containing $I$
with $|\widehat I |\leq (1+\eta_S)|I|$ satisfies
$|P_{n}(\widehat I)|\leq 2|P_{n}(I)|$;
\smallskip

\noindent
(3) any interval $\widehat I\subset \cW^{cs}(x)$ containing $I$
with $|P_{n}(\widehat I)|\leq (1+\eta_S)|P_{n}(I)|$ satisfies
$|\widehat I |\leq 2|I|$.
\end{Lemma}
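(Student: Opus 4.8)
The plan is to follow the classical Denjoy--Schwartz argument, adapted to the one-dimensional center-stable plaque family of a $C^2$ local fibered flow. The key point is that although the plaques $\cW^{cs}(x)$ are only locally invariant under the time-one map $P_1$ and depend on the base point, the $C^2$-regularity hypothesis (Theorem~\ref{t.plaque}) gives a \emph{uniform} bound on the $C^2$-norms of the transition maps $P_1\colon \cW^{cs}(\varphi_m(x))\to \cW^{cs}(\varphi_{m+1}(x))$ when we work in coordinates on each plaque given by arc-length (or by the charts $\Theta_u$). So I would first fix local $C^2$-coordinates on each plaque identifying $\cW^{cs}_{\beta_S}(\varphi_m(x))$ with an interval of $\RR$, chosen so that the restrictions $g_m:=P_1|_{\cW^{cs}}$ become a sequence of $C^2$-diffeomorphisms of one-variable with $\|g_m\|_{C^2},\|g_m^{-1}\|_{C^2}$ bounded by a constant $C_0$ independent of $x$ and $m$; here $\beta_S$ is chosen small enough that $P_{[0,1]}$ keeps the $\beta_S$-balls in the plaques inside the region where this uniform control holds. (This is where the local invariance $P_1(\cW^{cs}_{\alpha_\cE}(u))\subset\cW^{cs}(P_1(u))$ and Remark~\ref{r.plaque-invariance} are used to make sense of the composition $P_n|_I$ staying in the plaques.)

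With this setup, part (1) is the standard distortion estimate. For $u,v\in I$ and $0\le m\le n-1$, the chain rule gives $\log\|DP_n(u)|_I\|-\log\|DP_n(v)|_I\|=\sum_{m=0}^{n-1}\bigl(\log|g_m'(P_m(u))|-\log|g_m'(P_m(v))|\bigr)$, and each term is bounded by $(\sup|\log g_m'|')\cdot |P_m(I)|\le C_1|P_m(I)|$ since the $C^2$-bound controls $(\log g_m')'=g_m''/g_m'$ uniformly. Summing and using $\sum_m|P_m(I)|\le C_{Sum}$ yields $|\log\|DP_n(u)|_I\|-\log\|DP_n(v)|_I\||\le C_1C_{Sum}$, so $C_S:=e^{C_1C_{Sum}}$ works. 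The ``in particular'' clause follows by integrating: $|P_n(I)|=\int_I\|DP_n(v)|_I\|\,dv\ge |I|\cdot C_S^{-1}\|DP_n(u)|_I\|$ for any fixed $u\in I$, which is the displayed inequality $\|DP_n(u)|_I\|\le C_S\frac{|P_n(I)|}{|I|}$.

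For parts (2) and (3), which are the ``engulfing'' statements, the idea is to choose $\eta_S$ depending on the same distortion constant. For (2): given $\widehat I\supset I$ with $|\widehat I|\le(1+\eta_S)|I|$, I would argue by a continuity/bootstrap argument on the largest $k\le n$ for which $\sum_{m=0}^{k}|P_m(\widehat I)|\le 2C_{Sum}$ (so that the distortion estimate of part (1) still applies to $\widehat I$ on $[0,k]$, with constant $C_S':=e^{2C_1C_{Sum}}$); on that range one gets $|P_m(\widehat I)|\le C_S'C_S'|P_m(I)|\cdot\frac{|\widehat I|}{|I|}\le (C_S')^2(1+\eta_S)|P_m(I)|$, and choosing $\eta_S$ small enough that $(C_S')^2(1+\eta_S)\le 2$ and also $2\sum|P_m(I)|\le 2C_{Sum}$ forces $k=n$, giving $|P_n(\widehat I)|\le 2|P_n(I)|$. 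Part (3) is the symmetric statement obtained by running the same argument for the inverse sequence $g_m^{-1}$ starting from the iterate $n$ and going backward, using that $\sum_{m=0}^n|P_m(\widehat I)|$ is controlled once $|P_n(\widehat I)|\le(1+\eta_S)|P_n(I)|$ because the backward contraction along the plaques (domination of $\cE$ by $\cF$ is not even needed here—only the uniform $C^1$-bound on $g_m^{-1}$) propagates the smallness of $|P_m(\widehat I)|/|P_m(I)|$.

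The main obstacle I anticipate is purely bookkeeping: making the uniform $C^2$-bounds on the one-variable transition maps $g_m$ genuinely independent of the base point and of $m$, despite the plaque family being only continuous (not uniformly $C^2$) in $u$ a priori, and despite the charts $\Theta_u$ being defined on $\cE(u)$ rather than a fixed $\RR$. This is handled by the compactness of $K$ together with the continuous dependence of $\Theta_u$ on $u$ in the $C^2$-topology guaranteed by Theorem~\ref{t.plaque}, but one must be careful that the radius $\beta_S$ on which everything is controlled is chosen \emph{after} extracting these uniform bounds, and that $\eta_S$ (depending on $C_{Sum}$) is chosen last. Once the uniform one-dimensional model is in place, the rest is the textbook Denjoy--Schwartz estimate.
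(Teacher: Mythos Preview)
Your overall strategy is exactly the classical Denjoy--Schwartz argument the paper has in mind (it simply cites \cite[Chapter I.2]{dMvS}), and part~(1) together with the reduction to a uniform sequence of one-variable $C^2$ maps is correct. However, your bootstrap for part~(2) has a concrete error: you aim to choose $\eta_S>0$ so that $(C_S')^2(1+\eta_S)\le 2$, but $C_S'=e^{2C_1C_{Sum}}$ is typically much larger than $\sqrt 2$ (remember $C_{Sum}$ is an arbitrary input to the lemma), so no positive $\eta_S$ can satisfy this inequality in general. The problem is that you are comparing $|P_m(\widehat I)|$ directly with $|P_m(I)|$ via the ratio $|\widehat I|/|I|$, which costs you a full factor $(C_S')^2$.

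The standard fix is to compare the \emph{excess} $L:=\widehat I\setminus I$ with $I$ instead. Under the inductive hypothesis $|P_j(\widehat I)|\le 2|P_j(I)|$ for $j<m$, one has $\sum_{j<m}|P_j(\widehat I)|\le 2C_{Sum}$, so distortion on $\widehat I$ up to time $m$ is controlled by $C_S'$. The mean-value theorem then gives
\[
|P_m(L)|\;\le\; C_S'\,\frac{|L|}{|I|}\,|P_m(I)|\;\le\; C_S'\,\eta_S\,|P_m(I)|,
\]
so choosing $\eta_S\le 1/C_S'$ yields $|P_m(L)|\le |P_m(I)|$, i.e.\ $|P_m(\widehat I)|\le 2|P_m(I)|$, closing the induction. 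With this correction, part~(3) goes through by the symmetric backward argument exactly as you describe (applying the corrected part~(2) to the reversed sequence $g_{n-1}^{-1},\dots,g_0^{-1}$, for which the sum $\sum_m|P_m(I)|$ is the same).
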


The proof is similar to~\cite[Chapter I.2]{dMvS}.

\subsection{Hyperbolic iterates}\label{ss:hyperbolicreturns}
We continue with the setting of Section~\ref{s.plaque} and
we fix two locally invariant plaques families $\cW^{cs}$ and $\cW^{cu}$ tangent to $\cE$ and $\cF$ respectively,
and two constants $\alpha_\cE,\alpha_\cF$ controlling  the local invariance and the coherence inside these plaques
as in the previous sections.
The plaques are defined at points of $K$ but also at points $u$ in $K^+_\eta$ or $K^-_\eta$ whose positive orbit or negative orbit by $P_t$ are contained in $\eta$-neighborhood of $K$.
The quantities $\eta,\alpha_\cE,\alpha_\cF>0$ may be reduced in order to satisfy further properties below.

In case $\cE$ is $2$-dominated, $\cW^{cs}$ will be a $C^2$-plaque family. Remember that $\tau_0,\lambda$ are the constants
associated to the domination, as introduced in Section~\ref{ss.assumptions}.

\subsubsection{Hyperbolic points}
We introduce a first notion of hyperbolicity.
\begin{Definition}
Let us fix $C_\cE,\lambda_\cE>1$.
A piece of orbit $(x,\varphi_{t}(x))$ in $K$ is \emph{$(C_\cE,\lambda_\cE)$-hyperbolic for $\cE$} if
for any $s\in (0,t)$, we have
$$\|DP_{s}{|\cE(x)}\| \leq C_\cE\lambda_\cE^{-s}.$$
A point $x$ is \emph{$(C_\cE,\lambda_\cE)$-hyperbolic for $\cE$} if
$(x,\varphi_{t}(x))$ is $(C_\cE,\lambda_\cE)$-hyperbolic for $\cE$ for any $t>0$.
We have similar definitions for the bundle $\cF$ (considering the flow $t\mapsto P_{-t}$).
\end{Definition}

By the continuity of the fibered flow for the $C^1$-topology, the hyperbolicity extends to nearby orbits
(the proof is easy and omitted).
\begin{Lemma}\label{l.shadowingandhyperbolicity}
Let us assume that $\cE$ is one-dimensional.
For any $\lambda'>1$, there exist $C',\delta,\rho>0$
such that for any $x,y\in K$, $t>0$ and $\theta\in \lip_{1+\rho}$ satisfying
 $\theta(0)=0$ and $$d(\varphi_s(x),\varphi_{\theta(s)}(y))<\delta \text{ for each } s\in [0,t],$$
then
$$\|DP_{\theta(t)}|\cE(y)\|\leq C'{\lambda'}^{t}\|DP_t|\cE(x)\|.$$
\end{Lemma}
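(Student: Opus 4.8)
\emph{Proof proposal.}
The plan is to reduce everything to a compactness and uniform-continuity estimate, using that $\cE$ is one-dimensional. Since $\cE$ is $DP_t(0)$-invariant and one-dimensional, the number $\|DP_s|\cE(z)\|$ is a scalar multiplicative cocycle over the flow $(\varphi_t)$ on the zero-section; writing $g(z,s):=\log\|DP_s|\cE(z)\|$ for $z\in K$ and $s\in\RR$, this is the additive identity
$$g(z,s+s')=g(\varphi_{s}(z),s')+g(z,s).$$
Because the local fibered flow is continuous and $\cE$ is a continuous subbundle over the compact space $K$, the function $g$ is continuous; in particular there is $M>0$ with $|g(z,s)|\le M$ for all $z\in K$, $s\in[0,2]$, and $g$ is uniformly continuous on $K\times[0,2]$. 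Given $\lambda'>1$, set $\e:=\log\lambda'>0$ and use uniform continuity to get $\delta'>0$ such that $|g(z,s)-g(z',s')|<\e$ whenever $d(z,z')<\delta'$ and $s,s'\in[0,2]$ with $|s-s'|<\delta'$. I then take $\delta:=\delta'$, $\rho:=\min(\delta',1)$ and $C':=e^{2M}$, and claim these work.

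First I would cut both sides of the desired inequality into unit-time blocks. Given $x,y,t,\theta$ as in the statement, write $t=n+r$ with $n=\lfloor t\rfloor$ and $r\in[0,1)$, and set $s_k:=\theta(k)$ for $0\le k\le n$, so $s_0=0$. The cocycle identity gives
$$g(x,t)=\sum_{k=0}^{n-1}g(\varphi_k(x),1)+g(\varphi_n(x),r),\qquad g(y,\theta(t))=\sum_{k=0}^{n-1}g(\varphi_{s_k}(y),s_{k+1}-s_k)+g(\varphi_{s_n}(y),\theta(t)-s_n).$$
Since $\theta\in\lip_{1+\rho}$ is increasing with $\theta(0)=0$, each increment obeys $s_{k+1}-s_k\in[\tfrac{1}{1+\rho},1+\rho]$, hence $|(s_{k+1}-s_k)-1|\le\rho\le\delta'$ and $s_{k+1}-s_k\in[0,2]$; likewise $0\le\theta(t)-s_n\le(1+\rho)r<2$. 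In particular the two boundary terms $g(\varphi_n(x),r)$ and $g(\varphi_{s_n}(y),\theta(t)-s_n)$ are each bounded in absolute value by $M$.

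Next I would compare the blocks term by term. For $0\le k\le n-1$ one has $k\in[0,t]$, so the shadowing hypothesis gives $d(\varphi_k(x),\varphi_{s_k}(y))=d(\varphi_k(x),\varphi_{\theta(k)}(y))<\delta=\delta'$; together with $|(s_{k+1}-s_k)-1|\le\delta'$ and both times lying in $[0,2]$, uniform continuity yields $g(\varphi_{s_k}(y),s_{k+1}-s_k)\le g(\varphi_k(x),1)+\e$. Summing over $k$ and using the boundary bounds,
$$g(y,\theta(t))\le\sum_{k=0}^{n-1}\big(g(\varphi_k(x),1)+\e\big)+M=\Big(\sum_{k=0}^{n-1}g(\varphi_k(x),1)\Big)+n\e+M\le g(x,t)+n\e+2M,$$
where the last inequality uses $\sum_{k=0}^{n-1}g(\varphi_k(x),1)=g(x,t)-g(\varphi_n(x),r)\le g(x,t)+M$. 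As $n\le t$ and $\e=\log\lambda'$, this reads $g(y,\theta(t))\le g(x,t)+t\log\lambda'+2M$, which after exponentiation is exactly $\|DP_{\theta(t)}|\cE(y)\|\le C'(\lambda')^t\|DP_t|\cE(x)\|$ with $C'=e^{2M}$.

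I do not expect a genuine obstacle here: the whole argument is compactness plus uniform continuity of the cocycle $g$, and notably neither the domination nor the $2$-domination is used, only that $\cE$ is a continuous one-dimensional invariant subbundle over the compact base. The one point requiring a little care is the reparametrization bookkeeping — one must know that the rescaled blocks $[\theta(k),\theta(k+1)]$ have length within a controlled window of $1$, which is precisely what the two-sided bound defining $\lip_{1+\rho}$ yields once $\rho$ is small relative to the modulus of continuity of $g$. The symmetric statement for $\cF$ follows by applying the above to the flow $t\mapsto P_{-t}$.
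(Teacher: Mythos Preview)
Your proof is correct and is the standard argument one would expect here: decompose the one-dimensional multiplicative cocycle into unit-time blocks, use uniform continuity on the compact base $K\times[0,2]$ to compare each block along the two orbits, and control the reparametrization increments via the bi-Lipschitz bound on $\theta$. The paper itself omits the proof entirely (``the proof is easy and omitted''), so there is nothing substantive to compare against; your write-up is exactly the kind of argument the authors had in mind.

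One small remark on presentation: the step $|(s_{k+1}-s_k)-1|\le\rho$ relies on the two-sided Lipschitz bound $(1+\rho)^{-1}\le s_{k+1}-s_k\le 1+\rho$, which in turn requires that ${\rm Lip}_{1+\rho}$ constrains both $\theta$ and $\theta^{-1}$. This is indeed the intended meaning in the paper (since ${\rm Lip}$ is defined as bi-Lipschitz homeomorphisms), but it would not hurt to make this explicit when you invoke it.
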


Hyperbolicity implies summability for the iterations inside one-dimensional plaques.

\begin{Lemma}[Summability]\label{l.summability-hyperbolicity}
Let us assume that $\cE$ is one-dimensional and consider $\lambda_\cE,C_\cE>1$.
Then, there exist $C'_\cE>1$ and $\delta_\cE>0$ with the following property.

For any piece of orbit $(x,\varphi_t(x))$ which is $(C_\cE,\lambda_\cE)$-hyperbolic for $\cE$,
for any interval $I\subset \cW^{cs}(x)$ containing $0$
whose length $|I|$ is smaller than $\delta_\cE$, and for any interval $J\subset I$ one has
$$|P_t(J)|\leq C_\cE\lambda_\cE^{-t/2}\;|J| \text{ and }
\sum_{m=0}^{[t]} |P_m(J)|\leq C'_\cE\; |J|.$$
\end{Lemma}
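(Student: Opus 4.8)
The strategy is to combine the hyperbolicity hypothesis at the level of the linear flow along the zero-section with the distortion control provided by Lemma~\ref{Lem:schwartz}, using a bootstrap to control the lengths $|P_m(J)|$ along the way. First I would fix $\beta_S>0$ from Lemma~\ref{Lem:schwartz}, and choose $C_{Sum}$ (to be determined, roughly of size $2C_\cE/(1-\lambda_\cE^{-1/2})$ or so) so as to obtain the constants $C_S,\eta_S$. Then I would set $\delta_\cE>0$ small enough that, for any interval $I\subset\cW^{cs}(x)$ with $|I|\le\delta_\cE$, the point $0_x$ is well inside $I$, $I\subset B(0,\beta_S)$, and $C_S\cdot C_\cE\cdot\delta_\cE$ is much smaller than $\beta_S$ and than $C_{Sum}$; the role of $\delta_\cE$ is purely to guarantee that all forward iterates of $I$ remain in $B(0,\beta_S)$ so that Lemma~\ref{Lem:schwartz} applies, and that the geometric series we are about to produce does not escape the scale on which distortion is controlled.

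The main step is an induction on $n$ showing simultaneously that $P_m(I)\subset B(0,\beta_S)$ for all $m\le n$, that $\sum_{m=0}^{n}|P_m(I)|\le C_{Sum}$, and that $|P_n(J)|\le C_\cE\lambda_\cE^{-n/2}|J|$ for every subinterval $J\subset I$. Granting the first two for indices $\le n-1$, Lemma~\ref{Lem:schwartz}(1) gives $\|DP_n(u)|_I\|\le C_S\|DP_n(0_x)|_{\cE(x)}\|$ for every $u\in I$, since $0_x\in I$ and the tangent map of $P_n$ at $0_x$ along $\cW^{cs}(x)=\cE(x)$ is exactly $DP_n(0_x)|_{\cE(x)}$ (the plaque is tangent to $\cE$ and $0_x$ is on the zero-section). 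By $(C_\cE,\lambda_\cE)$-hyperbolicity for $\cE$, $\|DP_n(0_x)|_{\cE(x)}\|\le C_\cE\lambda_\cE^{-n}$. Integrating over $J$, and using that the distortion bound lets us replace $\lambda_\cE^{-n}$ by $\lambda_\cE^{-n/2}$ after absorbing $C_S$ and the other half of the exponential into the constant (choose $\delta_\cE$ so that this is legitimate for all $n$), we obtain $|P_n(J)|\le C_\cE\lambda_\cE^{-n/2}|J|$; in particular $|P_n(I)|\le C_\cE\lambda_\cE^{-n/2}|I|\le C_\cE\delta_\cE$, which closes the induction on $B(0,\beta_S)$ and yields the summability bound $\sum_{m=0}^{n}|P_m(I)|\le C_\cE|I|\sum_{m\ge0}\lambda_\cE^{-m/2}=:C'_\cE|I|\le C'_\cE\delta_\cE\le C_{Sum}$, provided $\delta_\cE$ was chosen small relative to $C_{Sum}$. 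Applying the same estimate to $J$ in place of $I$ gives the two stated conclusions, with $C'_\cE:=C_S\cdot C_\cE/(1-\lambda_\cE^{-1/2})$ (up to adjusting constants).

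The one genuine subtlety — and what I expect to be the main obstacle — is the circularity between needing $\sum_{m\le n}|P_m(I)|\le C_{Sum}$ to invoke Lemma~\ref{Lem:schwartz}, and needing that lemma to prove the summability in the first place. This is resolved exactly by the induction above: at stage $n$ one only uses the distortion bound for the already-established iterates $0,\dots,n-1$, which requires $\sum_{m\le n-1}|P_m(I)|\le C_{Sum}$, and the inductive output then reestablishes it at stage $n$ with room to spare because the bound is a convergent geometric series whose total is $C'_\cE|I|\le C'_\cE\delta_\cE\ll C_{Sum}$. One must be slightly careful that the first few iterates (before the asymptotic regime kicks in) also stay in $B(0,\beta_S)$ and contribute a bounded amount to the sum; this is automatic from $|P_m(I)|\le C_\cE\lambda_\cE^{-m/2}|I|\le C_\cE\delta_\cE$ for every $m$, once $\delta_\cE$ is small. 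Everything else is the routine Denjoy–Schwartz bookkeeping already packaged in Lemma~\ref{Lem:schwartz}.
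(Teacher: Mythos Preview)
Your overall structure is sound and would yield the summability estimate (the second inequality) with some constant $C'_\cE$, but the step where you ``absorb $C_S$ and the other half of the exponential into the constant'' does not work as written to recover the exact constant $C_\cE$ in the first inequality. Schwartz gives $|P_n(J)|\le C_S\,C_\cE\,\lambda_\cE^{-n}|J|$; deducing $|P_n(J)|\le C_\cE\,\lambda_\cE^{-n/2}|J|$ requires $C_S\le\lambda_\cE^{n/2}$, which fails for small $n$ whenever $C_S>1$. Shrinking $\delta_\cE$ does not by itself shrink $C_S$, since $C_S$ depends on $C_{Sum}$; and you have fixed $C_{Sum}$ to be of order $C_\cE/(1-\lambda_\cE^{-1/2})$, which is not small. (The route is salvageable: in Denjoy--Schwartz $C_S\to 1$ as $C_{Sum}\to 0$, so one could first take $C_{Sum}$ small enough that $C_S\le\lambda_\cE^{1/2}$ and then take $\delta_\cE$ small so that $C'_\cE\delta_\cE\le C_{Sum}$; but this is not what you wrote.)

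The paper's proof avoids Lemma~\ref{Lem:schwartz} altogether and is both shorter and more elementary---it uses only $C^1$-continuity of $P$ along the plaques, whereas your route needs the $C^2$ hypothesis that Lemma~\ref{Lem:schwartz} requires (and which the present lemma does not assume). The paper fixes $\eta>0$ with $1+\eta<\lambda_\cE^{1/2}$; by $C^1$-continuity there is $\delta_0>0$ such that for any $y\in K$, any interval $I_0\subset\cW^{cs}(y)$ containing $0$ with $|I_0|<\delta_0$, and any $s\in[0,1]$, one has $|P_s(I_0)|\le(1+\eta)\,\|DP_s|_{\cE(y)}\|\cdot|I_0|$ (and the same pointwise derivative bound then holds for any subinterval $J\subset I_0$). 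Choosing $\delta_\cE$ with $C_\cE\,\lambda_\cE^{1/2}\,\delta_\cE<\delta_0$, one checks inductively that $|P_k(I)|<\delta_0$ for every integer $k\in[0,t]$. The per-step error $(1+\eta)$ compounds to $(1+\eta)^k\le\lambda_\cE^{k/2}$ for \emph{every} $k\ge 0$, which is precisely why the constant $C_\cE$ (rather than $C_S\cdot C_\cE$) survives in the first inequality; the second inequality then follows from the geometric series.
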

\begin{proof}
Let $\eta>0$ be small such that $1+\eta<\lambda_\cE^{1/2}$.
Then, there exists $\delta_0$ such that for any $y\in K$ and any interval $I_0\subset \cW^{cs}(y)$
containing $0$ whose length is smaller than $\delta_0$, one has 
$$\forall s\in[0,1],~~~|P_{s}(I_0)|\leq (1+\eta)\|DP_{s}{|\cE(y)}\|\; |I_0| .$$
Let us choose $\delta_\cE$ satisfying $\delta_\cE C_\cE\lambda_\cE^{1/2}<\delta_0$.
One checks inductively that the length of $P_k(I)$ is smaller than $\delta_0$ for each $k\in [0,t]$.
The conclusion of the lemma follows.
\end{proof}

\subsubsection{Pliss points}
We introduce a more combinatorial notion of hyperbolicity (only used for $\cF$). Recall that the integer $\tau_0$
and the number $\lambda>1$ are fixed as in Subsection~\ref{ss.assumptions}.

\begin{Definition}\label{d.pliss}
For $T\geq 0$ and $\gamma>1$, we say that a piece of orbit $(\varphi_{-t}(x),x)$ is a \emph{$(T,\gamma)$-Pliss string}
(for the bundle $\cF$)
if there exists an integer $s\in[0,T]$ such that
$$\text{for any integer } m\in \bigg[0,\frac {t-s}{\tau_0}\bigg],\quad  \prod_{n=0}^{m-1}\|DP_{-\tau_0}|{\cF(\varphi_{-(n\tau_0+s)}(x))}\|\leq \gamma^{-m\tau_0}.$$
A point $x$ is $(T,\gamma)$-Pliss (for $\cF$) if  $(\varphi_{-t}(x),x)$ is a $(T,\gamma)$-Pliss string for any $t>0$.

For simplicity, a piece of orbit $(\varphi_{-t}(x),x)$ is a \emph{$T$-Pliss string} if it is a $(T,\lambda)$-Pliss string and $x$ is \emph{$T$-Pliss} if it is $(T,\lambda)$-Pliss, where $\lambda$ is the constant for the domination.
\end{Definition}

The next proposition will allow us to find iterates that are Pliss points and belong to $U$.

\begin{Proposition}\label{l.summability}
Let us assume that $\cE$ is one-dimensional and
let $W$ be a set such that $\cE$ is uniformly contracted on $\bigcup_{s\in [0,1]}\varphi_s(W)$.
Then, there exist $C_\cE,\lambda_\cE>1$ such that any $T_\cF\geq 0$ large enough has the following property.

If there are $x\in K$ and integers $0\leq k\leq \ell$ such that
\begin{itemize}
\item[--] $(\varphi_{-\ell}(x),\varphi_{-k}(x))$ is a $T_\cF$-Pliss string,
\item[--] for any $j\in \{1,\dots,k-2\}$
either $\varphi_{-j}(x)\in W$ or the piece of orbit $(\varphi_{-\ell}(x),\varphi_{-j}(x))$ is not a $T_\cF$-Pliss string,
\end{itemize}
then $(\varphi_{-k}(x),x)$ is $(C_\cE,\lambda_\cE)$-hyperbolic for $\cE$.

\medskip

Similarly if there are $x\in K$ and $k\geq 0$ such that
\begin{itemize}
\item[--] $\varphi_{-k}(x)$ is $T_\cF$-Pliss,
\item[--] for any $j\in \{1,\dots,k-2\}$
either the point $\varphi_{-j}(x)$ is in $W$ or $\varphi_{-j}(x)$ is not $T_\cF$-Pliss,
\end{itemize}
then $(\varphi_{-k}(x),x)$ is $(C_\cE,\lambda_\cE)$-hyperbolic for $\cE$.
\end{Proposition}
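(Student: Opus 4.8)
The plan is to deduce hyperbolicity for $\cE$ from the combinatorial Pliss condition for $\cF$ via the $2$-domination (or rather, the domination inequality~\eqref{e.domination}), together with the uniform contraction of $\cE$ on the neighborhood of $W$. The point is that the ``bad'' indices $j$ — those where the orbit is neither in $W$ nor Pliss — contribute only a bounded multiplicative error to $\|DP_s|\cE(x)\|$, while the intervals governed by Pliss-type control along $\cF$ give genuine exponential decay along $\cF$, hence (by domination) even faster decay along $\cE$. First I would fix the time $s\in(0,k]$ at which we must estimate $\|DP_s|\cE(x)\|$, and split the corresponding backward piece of orbit $(\varphi_{-k}(x),\varphi_{-k+s}(x)\dots)$ — equivalently the forward piece from $\varphi_{-k}(x)$ to $x$ restricted to $[0,s]$ — according to the dichotomy in the hypothesis.

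Here are the key steps in order. (1) By the hypothesis, every index $j\in\{1,\dots,k-2\}$ either has $\varphi_{-j}(x)\in W$, where $\cE$ is uniformly contracted on $\bigcup_{t\in[0,1]}\varphi_t(W)$, contributing a factor $\le \tfrac12$ per unit time; or $(\varphi_{-\ell}(x),\varphi_{-j}(x))$ is not a $T_\cF$-Pliss string. (2) For the second type, I would invoke the Pliss lemma in its standard selection form: since the \emph{whole} backward piece $(\varphi_{-\ell}(x),\varphi_{-k}(x))$ \emph{is} a $T_\cF$-Pliss string, the set of non-Pliss times before $-k$ must be contained in intervals of length $\lesssim T_\cF$ following each Pliss time; more precisely, controlling the product $\prod \|DP_{-\tau_0}|\cF\|$ along these stretches against $\lambda^{-m\tau_0}$ shows the non-Pliss gaps have uniformly bounded ``debt'' in $\cF$. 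Hence along such a stretch the norm of $DP$ restricted to $\cF$ (backward) stays within a fixed constant of $\lambda^{-(\cdot)}$. (3) Combining, along $(\varphi_{-k}(x),x)$ we get $\|DP_{s}|\cF(\varphi_{-k}(x))\|^{-1}\ge C^{-1}\lambda'^{\,s}$ for some $\lambda'>1$ and uniform $C$ (the $W$-stretches and the Pliss-controlled $\cF$-stretches both expand $\cF$ backward; the bounded non-Pliss gaps cost only a constant). (4) Now apply domination~\eqref{e.domination}: for $t\ge\tau_0$, $\|DP_t(0).u\|\le\lambda^{-2t}\|DP_t(0).v\|$ with $u$ unit in $\cE$, $v$ unit in $\cF$; reading this backward along the piece gives $\|DP_s|\cE(\varphi_{-k}(x))\|\cdot(\text{something})\le \lambda^{-2s}$, and translating to the forward direction at $x$ yields $\|DP_s|\cE(x)\|\le C_\cE\lambda_\cE^{-s}$ for suitable constants depending only on the domination data and the contraction on $W$, not on $x$, $k$, $\ell$, once $T_\cF$ is large. (5) The second, ``one-sided'' statement is the special case $\ell=+\infty$ (or rather, the version where $\varphi_{-k}(x)$ is itself $T_\cF$-Pliss), and is handled identically — in fact more easily, since the Pliss property of $\varphi_{-k}(x)$ directly controls all of $\cF$ backward from it, and the same dichotomy on $j<k$ handles the stretch from $\varphi_{-k}(x)$ to $x$.

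The main obstacle I expect is step (2)–(3): bookkeeping the non-Pliss gaps. One must show that the hypothesis ``$(\varphi_{-\ell}(x),\varphi_{-k}(x))$ is a $T_\cF$-Pliss string'' forces, for $T_\cF$ large, that between consecutive indices $j$ of the ``good'' type (in $W$, or such that the longer piece to $-\ell$ \emph{is} Pliss) the gaps are bounded in a way that makes the accumulated $\cF$-contraction deficit bounded. This is where the precise definition of $(T,\gamma)$-Pliss string — with its shift parameter $s\in[0,T]$ — matters, and where one has to be careful that the Pliss property ``transfers'' from the $(\varphi_{-\ell},\varphi_{-j})$ pieces to the piece actually used; the choice of the threshold $T_\cF$ absorbs the constant $T$ in the definition. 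Once that combinatorial estimate is in hand, steps (1), (4), (5) are routine applications of the domination inequality and the definition of uniform contraction on $W$.
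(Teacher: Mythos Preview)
Your overall plan --- split according to the dichotomy, use uniform contraction on $W$-stretches, extract something from the Pliss structure on the rest --- is the right shape, but steps (2)--(4) contain genuine errors. In step~(2) the claim that ``the set of non-Pliss times\dots must be contained in intervals of length $\lesssim T_\cF$'' is false: these gaps can be arbitrarily long. The combinatorial idea you are missing is a \emph{concatenation} argument: if $(\varphi_{-\ell}(x),\varphi_{-n_2}(x))$ is a $0$-Pliss string and none of the intermediate $(\varphi_{-\ell}(x),\varphi_{-j}(x))$, $n_1<j<n_2$, is, then one shows inductively that $\prod_{n=0}^{m-1}\|DP_{\tau_0}|\cF(\varphi_{-n_2+n\tau_0}(x))\|\leq\lambda^{m\tau_0}$ for all $m<(n_2-n_1)/\tau_0$. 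Indeed if this first failed at $m$, the piece $(\varphi_{-n_2}(x),\varphi_{-n_2+m\tau_0}(x))$ would itself be a $0$-Pliss string, and concatenating it with $(\varphi_{-\ell}(x),\varphi_{-n_2}(x))$ would make an intermediate index $0$-Pliss --- a contradiction. This \emph{upper} bound on the $\cF$-product, plugged into~\eqref{e.domination}, gives $\|DP_{m\tau_0}|\cE(\varphi_{-n_2}(x))\|\leq\lambda^{-m\tau_0}$ along the entire gap, regardless of its length.

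Steps (3)--(4) are also misdirected. Domination~\eqref{e.domination} converts an \emph{upper} bound on $\|DP_t|\cF\|$ into $\cE$-contraction, not a lower bound ($\cF$-expansion) as you try to use it; and on $W$-stretches the hypothesis gives only $\cE$-contraction --- your assertion that ``the $W$-stretches\dots expand $\cF$ backward'' has no basis. One must therefore estimate $\cE$ directly block by block rather than first assemble global $\cF$-control. The role of ``$T_\cF$ large'' is also not what you describe: it does not bound the non-Pliss gaps, but rather guarantees (via the structure of the maximal $T_\cF$-Pliss intervals of indices) that the stretches on which the concatenation argument applies have length at least $T_\cF$; the choice of $T_\cF$ satisfying $C_0C_1\lambda^{-T_\cF}<\lambda_\cE^{-T_\cF}$ then lets the excess rate $\lambda/\lambda_\cE>1$ on those stretches absorb the multiplicative constants at block boundaries, so the block estimates telescope into the uniform $(C_\cE,\lambda_\cE)$-bound.
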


\begin{proof}
The proof is essentially contained in \cite[Lemma 9.20]{CP}.
Recall that $\lambda>1$ is the constant for the domination.
There exist $C_0,\lambda_0>1$ such that for any piece of orbit
$(y, \varphi_t(y))$ in $\bigcup_{s\in [0,1]}\varphi_s(W)$, one has
$\|DP_t{|\cE(y)}\|\leq C_0\lambda_0^{-t}$.
Let $C_1>1$ such that
\begin{equation}\label{C1}
\forall y\in K,~\forall s\in [-\tau_0,\tau_0],~~~\|DP_{s}{|\cE(y)}\|\leq C_1\lambda^{-s}.
\end{equation}
One takes $C_\cE=C_0^2C_1^3$ and $\lambda_\cE>1$ smaller than $\min(\lambda,\lambda_0)$.
One then chooses $T_\cF\geq 0$ large such that $C_0C_1\lambda^{-T_\cF}<\lambda_\cE^{-T_\cF}$.
\smallskip

Let $x\in K$, $0\leq k\leq \ell$ be as in the statement of the lemma. We introduce the set
$$\cP=\bigg\{j\in\{1,\dots,k-2\},\; (\varphi_{-\ell}(x),\varphi_{-j}(x)) \text{ is a } T_\cF\text{-Pliss string}\bigg\}.$$

The set $\cP$ decomposes into intervals $\{a_i,1+a_i,\dots,b_i\}\subset \{1,\dots,k-2\}$, with $i=1,\dots,i_0$, such that
$b_i+1<a_{i+1}$. By convention we set $b_0=0$.

\begin{Claim-numbered}
$b_i-a_i\geq T_\cF$ unless $\{a_i,\dots,b_i\}$ contains $1$ or $k-2$.
\end{Claim-numbered}
\begin{proof}
We only consider the case that $\{a_i,\dots,b_i\}$ does not contain $1$ or $k-2$.

 By maximality of each interval, $(\varphi_{-\ell}(x),\varphi_{-b_i}(x))$ has to be a $0$-Pliss string. This forces that $b_i-a_i\geq T_\cF$.
\end{proof}

\begin{Claim-numbered}\label{c.pliss}
Consider $n_1,n_2\in \{0,\dots,k\}$ with $n_1<n_2$ and such that
$(\varphi_{-\ell}(x),\varphi_{-n_2}(x))$ is a $0$-Pliss string and $(\varphi_{-\ell}(x),\varphi_{-j}(x))$ is not a $0$-Pliss string
for $n_1<j<n_2$. Then for any $0\leq m < (n_2-n_1)/\tau_0$,
$$\|DP_{m\tau_0}{|\cE(\varphi_{-n_2}(x))}\|\leq \lambda^{-m\tau_0}.$$
\end{Claim-numbered}
\begin{proof}
One checks inductively that
\begin{equation}\label{e.pliss}
\prod_{n=0}^{m-1}\|DP_{\tau_0}{|\cF(\varphi_{-n_2+n\tau_0}(x))}\|\leq \lambda^{m\tau_0}.
\end{equation}
Indeed if this inequality holds up to an integer $m-1$ and fails for $m$,
the piece of orbit $(\varphi_{-n_2}(x),\varphi_{-n_2+m\tau_0}(x))$ is a $0$-Pliss string.
It may be concatenate with $(\varphi_{-\ell}(x),\varphi_{-n_2}(x))$, implying that $(\varphi_{-\ell}(x),\varphi_{-n_2+m\tau_0}(x))$
is a $0$-Pliss string. This is a contradiction since $n_2-m\tau_0> n_1$.

The estimate of the claim follows from~\eqref{e.pliss} by domination.
\end{proof}

The proposition will be a consequence of the following properties.
\begin{Claim-numbered}\label{c.return0}
\begin{enumerate}

\item 
The piece of orbit $(\varphi_{-b_i}(x), \varphi_{-b_{i-1}}(x))$ is $(C_0C_1,\lambda_\cE)$-hyperbolic for $\cE$,
for any $i\in \{1,\dots,i_0\}$, unless $i=i_0$ and $b_{i_0}=k-2$.

Moreover if $i\neq 1$, then  $\|DP_{b_i-b_{i-1}}{|\cE(\varphi_{-b_i}(x))}\|\leq \lambda_\cE^{-(b_{i}-b_{i-1})}$.

\item If $b_{i_0}=k-2$, then $(\varphi_{-k}(x), \varphi_{-b_{i_{0}-1}}(x))$ is $(C_0C_1,\lambda_\cE)$-hyperbolic for $\cE$.

\item If $b_{i_0}<k-2$, then $(\varphi_{-k}(x), \varphi_{-b_{i_0}}(x))$ is $(C_0C_1,\lambda_\cE)$-hyperbolic for $\cE$.
\end{enumerate}
\end{Claim-numbered}
\begin{proof}
In order to check the first item, one introduces the smallest $j\in \{a_i,\dots,b_i\}$ such that
$(\varphi_{-\ell}(x),\varphi_{-j}(x))$ is a $0$-Pliss string: it exists unless $i=i_0$ and $b_{i_0}=k-2$.
By our assumptions, the piece of orbit $(\varphi_{-b_i}(x), \varphi_{-j}(x))$ is contained in $\bigcup_{s\in [0,1]}\varphi_s(W)$, hence is $(C_0,\lambda_\cE)$-hyperbolic for $\cE$.
Then Claim~\ref{c.pliss} gives
$\|DP_{m\tau_0}{|\cE(\varphi_{-j}(x))}\|\leq \lambda^{-m\tau_0}$ for any $m\in \{0,\dots, (j-b_{i-1})/\tau_0\}$.
One concludes the first part of item 1 by combining these estimates with~\eqref{C1}.
\smallskip

Note that one also gets the estimate:
$$\|DP_{b_i-b_{i-1}}|{\cE}(\varphi_{-b_i}(x))\|\leq C_0C_1\lambda_\cE^{-(b_{i}-b_{i-1})}\bigg(\frac \lambda {\lambda_\cE}\bigg)^{-(j-b_{i-1})}.$$
If $i\geq 2$, one gets $j-b_{i-1}\geq j-a_i= T_\cF$, hence by our choice of $T_\cF$:
$$\|DP_{b_i-b_{i-1}}|{\cE}(\varphi_{-b_i}(x))\|\leq C_0C_1\lambda_\cE^{-(b_{i}-b_{i-1})}\bigg(\frac \lambda {\lambda_\cE}\bigg)^{-T_\cF}
\leq \lambda_\cE^{-(b_{i}-b_{i-1})}.$$
This gives the second part of item 1.
\smallskip

The proofs of items 2 and 3 are similar to the proof of item 1:
for item 2, one introduces the smallest $j\geq a_{i_0}$ such that $(\varphi_{-\ell}(x),\varphi_{-j}(x))$ is a $0$-Pliss string;
for item 3, one introduces the smallest $j\geq k$ such that $(\varphi_{-\ell}(x),\varphi_{-j}(x))$ is a $0$-Pliss string and use the fact that
$(\varphi_{-\ell}(x),\varphi_{-k}(x))$ is a $T_\cF$-Pliss string.
\end{proof}

From Claim~\ref{c.return0}, one first checks that for each $i\in \{1,\dots,i_0\}$
$$\|DP_{k-b_i}{|\cE}(\varphi_{-k}(x))\|\leq C_0C_1\lambda_\cE^{k-b_i}.$$
For each $m\in \{0,\dots,k\}$, either there exists $i\in \{1,\dots,i_0\}$ such that $b_{i-1}\leq m\leq b_i$
or $b_{i_0}\leq m \leq k$. Using items 1 or 3, one concludes
$$\|DP_{k-m}{|\cE}(\varphi_{-k}(x))\|\leq C_0^2C_1^2\lambda_\cE^{k-m}.$$
Combining with~\eqref{C1}, one gets the required bound on $\|DP_{k-t}{|\cE}(\varphi_{-k}(x)\|$ for any $t\in [0,k]$.

This proves the lemma for pieces of orbits $(\varphi_{-\ell}(x),x)$ such that $(\varphi_{-\ell}(x),\varphi_{-k}(x))$ is
a $T_\cF$-Pliss string.
The proof for half orbits $\{\varphi_{-t}(x),t>0\}$ such that $\varphi_{-k}(x)$ is $T_\cF$-Pliss is similar.
The proposition is proved.
\end{proof}

\subsubsection{Pliss points in $\cN$}
The notion of Pliss points extends to the bundle $\cN$.

\begin{Definition}
Let us fix $C_\cF,\lambda_\cF>1$, $T_\cF\geq 0$
and consider $u\in K^-_\eta$.
A point $u$ is  \emph{$(T_\cF,\lambda_\cF)$-Pliss (for $\cF$)} if
there exists an integer $s\in [0,T_\cF]$ such that for any $m\in \NN$,
$$ \prod_{n=0}^{m-1}\|D P_{-\tau_0}{|\cF( P_{-(n\tau_0+s)}( u))}\| \leq \lambda_\cF^{-m\tau_0}.$$

\end{Definition}

By continuity and invariance of the spaces $\cF(u)$, one gets

\begin{Lemma}\label{l.cont3}
For any $\lambda_\cF\in (1,\lambda)$ and $T_\cF\geq 0$, there exists $\eta>0$ such that
for any $y\in K$ which is $T_\cF$-Pliss and for 
any point $u\in\cN_y$, if $u\in K_\eta^-$,
then $u$ is $(T_\cF,\lambda_\cF)$-Pliss.
\end{Lemma}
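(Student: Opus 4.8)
The plan is to keep the \emph{same} integer offset $s$ witnessing that $y$ is $(T_\cF,\lambda)$-Pliss and to compare, term by term, the product $\prod_n\|DP_{-\tau_0}|\cF(P_{-(n\tau_0+s)}(u))\|$ with the corresponding product along the zero-section over the orbit of $y$. The point is that if the whole backward $P$-orbit of $u$ stays $\eta$-close to the zero-section, then at each step the extended bundle $\cF$ (Lemma~\ref{Lem:extend-bundle}) and the derivative of $P_{-\tau_0}$ are close to their values on the zero-section, so each one-step factor for $u$ is only slightly larger than the one for $y$.

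First I would record a uniform comparison estimate. Since $DP_{\tau_0}(0_x)$ and its inverse are uniformly bounded over the compact base $K$, there is $c_2>0$ with $\|DP_{-\tau_0}|\cF(x)\|\ge c_2$ for every $x\in K$. By the continuity of the fibered flow for the $C^1$-topology and the continuity of the extended bundle $\cF$ on the compact set $K_{\eta_0}^-$, for every $\delta'>0$ there is $\eta\in(0,\eta_0)$ such that whenever $z'\in\cN_z\cap K_{\eta_0}^-$ satisfies $\|z'\|\le\eta$, the map $DP_{-\tau_0}(z')$ is $C^0$-close to $DP_{-\tau_0}(0_z)$ and the subspace $\cF(z')$ is close to $\cF(0_z)=\cF(z)$; consequently $\|DP_{-\tau_0}|\cF(z')\|$ and $\|DP_{-\tau_0}|\cF(0_z)\|$ differ additively by an arbitrarily small amount, and, combined with the lower bound $c_2$, this gives
$$\|DP_{-\tau_0}|\cF(z')\| \le (1+\delta')\,\|DP_{-\tau_0}|\cF(0_z)\|.$$

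Now fix $\lambda_\cF\in(1,\lambda)$ and $T_\cF\ge0$, choose $\delta'>0$ with $(1+\delta')\le(\lambda/\lambda_\cF)^{\tau_0}$, and let $\eta\in(0,\eta_0)$ be the resulting constant. Let $y\in K$ be $T_\cF$-Pliss. For each $t>0$ the string $(\varphi_{-t}(y),y)$ yields an integer offset in the finite set $\{0,\dots,T_\cF\}$, so along a sequence $t_k\to+\infty$ one offset $s$ repeats; letting $t_k\to+\infty$ one obtains a single integer $s\in[0,T_\cF]$ with $\prod_{n=0}^{m-1}\|DP_{-\tau_0}|\cF(\varphi_{-(n\tau_0+s)}(y))\|\le\lambda^{-m\tau_0}$ for all $m\in\NN$. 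Given $u\in\cN_y\cap K_\eta^-$, put $z'_n:=P_{-(n\tau_0+s)}(u)\in\cN_{\varphi_{-(n\tau_0+s)}(y)}$: its backward $P$-orbit remains $\eta$-close to the zero-section, hence $z'_n\in K_{\eta_0}^-$, $\|z'_n\|\le\eta$, and $\cF(z'_n)=DP_{-(n\tau_0+s)}(u)(\cF(u))$ is defined by Lemma~\ref{Lem:extend-bundle}. Applying the comparison estimate at each $z'_n$ and multiplying,
$$\prod_{n=0}^{m-1}\|DP_{-\tau_0}|\cF(z'_n)\| \le (1+\delta')^m\prod_{n=0}^{m-1}\|DP_{-\tau_0}|\cF(\varphi_{-(n\tau_0+s)}(y))\| \le \big((1+\delta')\lambda^{-\tau_0}\big)^m \le \lambda_\cF^{-m\tau_0},$$
which is precisely the assertion that $u$ is $(T_\cF,\lambda_\cF)$-Pliss, with the same offset $s$.

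The main obstacle is that the naive step-by-step comparison of the two derivative cocycles only provides \emph{additive} closeness of the one-step factors $\|DP_{-\tau_0}|\cF(\cdot)\|$; this is insufficient, because in a Pliss string these factors are expected to be $\le\lambda^{-\tau_0}<1$, so an additive error would swamp them and could not be iterated over arbitrarily many steps. The essential structural input that converts additive closeness into the multiplicative factor $(1+\delta')$ needed above is the uniform lower bound $\|DP_{-\tau_0}|\cF(x)\|\ge c_2$, which itself comes from the uniform boundedness of the tangent cocycle over the compact base; everything else is soft continuity together with the pigeonhole on the finite set of offsets.
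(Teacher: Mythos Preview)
Your proof is correct and is exactly the kind of ``continuity'' argument the paper has in mind: the paper omits the proof entirely, saying only that it is ``standard by continuity''. Your write-up supplies those details, including the pigeonhole step to extract a single offset $s$ from the finitely many integer offsets in $[0,T_\cF]$, and the conversion of additive closeness of the one-step norms into a multiplicative factor via the uniform lower bound on $\|DP_{-\tau_0}|\cF(0_z)\|$ over the compact base $K$; both ingredients are correct and necessary.
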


{The proof of Lemma~\ref{l.cont3} is standard by continuity, hence are omitted.}

\subsubsection{Unstable manifolds}\label{ss.unstable}
Pliss points have uniform unstable manifolds 
in the plaques
(see e.g.~\cite[Section 8.2]{abc-measure}).

\begin{Proposition}\label{p.unstable}
Consider a center-unstable plaque family $\cW^{cu}$ as given by Theorem~\ref{t.plaque}.
For any $\lambda_\cF>1$, $\beta_\cF>0$ and $T_\cF\geq 0$, there exists $\alpha>0$ and $\eta>0$ such that
for any $u\in K_\eta^-$,
if $u$ is  $(T_\cF,\lambda_\cF)$-Pliss,
then:
$$\forall t\geq 0,\quad \diam(P_{-t}(\cW^{cu}_{\alpha}(u)))\leq \beta_\cF\lambda_\cF^{-t/2}.$$
In particular, from Remark~\ref{r.plaque-invariance}, the image $P_{-t}(\cW^{cu}_{\alpha}(u))$ is contained in $\cW^{cu}(P_{-t}(u))$ for every $t\ge0$.
\end{Proposition}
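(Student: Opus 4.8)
The plan is to run the standard argument that turns a Pliss-type contraction estimate at the level of the linear flow $DP_{-t}(0_u)$ into a genuine nonlinear statement about the diameters of small plaque discs $\cW^{cu}_\alpha(u)$, using the distortion control of Lemma~\ref{Lem:schwartz} (applied to the flow $t\mapsto P_{-t}$ and the $C^2$ plaque family $\cW^{cu}$ tangent to the one-dimensional bundle $\cF$). The hypothesis that $u$ is $(T_\cF,\lambda_\cF)$-Pliss gives an integer $s\in[0,T_\cF]$ such that the products $\prod_{n=0}^{m-1}\|DP_{-\tau_0}|\cF(P_{-(n\tau_0+s)}(u))\|$ decay like $\lambda_\cF^{-m\tau_0}$; interpolating over the $\tau_0$-blocks and absorbing the bounded factor coming from the initial segment $[0,s]$ and the sub-$\tau_0$ remainders into a uniform constant, one gets $\|DP_{-t}|\cF(u)\|\le C_0\lambda_\cF^{-t}$ for all $t\ge 0$, with $C_0$ depending only on $T_\cF,\lambda_\cF$ and the flow. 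This is the infinitesimal version of the conclusion; Lemma~\ref{l.cont3} already packaged the passage from $T_\cF$-Pliss base points to $(T_\cF,\lambda_\cF)$-Pliss points in $\cN^-_\eta$, so I may take the Pliss condition directly on $u$.

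The key steps, in order, are as follows. First, fix $\beta_S>0$ from Lemma~\ref{Lem:schwartz} and choose $C_{Sum}$ large enough that the summed-length hypothesis will be met; this yields distortion constants $C_S,\eta_S$. Second, choose $\alpha>0$ small (and then $\eta>0$ small) so that $\cW^{cu}_\alpha(u)\subset B(0,\beta_S)$ and so that, for intervals $I\subset\cW^{cu}(u)$ of length $\le\alpha$, one has the comparison $|P_{-s}(I)|\le (1+\eta_S)\|DP_{-s}|\cF(u)\|\,|I|$ for $s\in[0,\tau_0]$ (continuity of the $C^2$ plaque family and of the fibered flow; this is exactly the kind of estimate used in the proof of Lemma~\ref{l.summability-hyperbolicity}). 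Third, run a bootstrap/induction on $t$: assuming $P_{-m}(\cW^{cu}_\alpha(u))\subset B(0,\beta_S)$ and the lengths stay summable up to time $t$, Lemma~\ref{Lem:schwartz}(1) gives $\|DP_{-t}(w)|\cW^{cu}(u)\|\le C_S\,\|DP_{-t}|\cF(u)\|\le C_SC_0\lambda_\cF^{-t}$ for every $w$ in the disc, hence $|P_{-t}(\cW^{cu}_\alpha(u))|\le C_SC_0\,\alpha\,\lambda_\cF^{-t}$; choosing $\alpha$ so that $C_SC_0\,\alpha\le\min(\beta_S,\beta_\cF)$ and so that $\sum_{m\ge0}C_SC_0\,\alpha\,\lambda_\cF^{-m}\le C_{Sum}$ closes the induction and simultaneously upgrades the bound to $\diam(P_{-t}(\cW^{cu}_\alpha(u)))\le\beta_\cF\lambda_\cF^{-t/2}$ (the exponent $-t/2$ giving room to spare, as in Lemma~\ref{l.summability-hyperbolicity}). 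Fourth, the ``in particular'' clause: since the forward-in-$P_{-t}$ iterates of $\cW^{cu}_\alpha(u)$ remain small and inside the plaques up to the time-one map, Remark~\ref{r.plaque-invariance} (applied to $\cW^{cu}$ and $P_{-t}$) promotes the $P_{-1}$-invariance to full flow invariance, giving $P_{-t}(\cW^{cu}_\alpha(u))\subset\cW^{cu}(P_{-t}(u))$.

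The main obstacle is the circularity inherent in the bootstrap: the distortion estimate of Lemma~\ref{Lem:schwartz} requires the hypotheses $P_{-m}(I)\subset B(0,\beta_S)$ and $\sum|P_{-m}(I)|\le C_{Sum}$, but those are exactly what we are trying to prove. The resolution — and the step that needs care — is the order of quantifiers in the choice of constants: one must first fix $\beta_S$, then $C_{Sum}$ (hence $C_S,\eta_S$, hence $C_0$), and only then choose $\alpha$ small enough that the geometric series $\sum C_SC_0\alpha\lambda_\cF^{-m}$ falls below $C_{Sum}$ and the individual terms below $\beta_S$; with this ordering the induction is self-consistent. A secondary technical point is handling the sub-$\tau_0$ time remainders and the interpolation between integer and real times $t$, but this is routine: the norms $\|DP_{-s}|\cF\|$ for $s\in[-\tau_0,\tau_0]$ are uniformly bounded, so these only cost a bounded multiplicative constant absorbed into $C_0$, and the loss from $\lambda_\cF^{-t}$ to $\lambda_\cF^{-t/2}$ comfortably covers it.
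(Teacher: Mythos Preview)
There is a genuine gap: you have inverted the roles of the two bundles. In the standing setting of Section~\ref{s.topological-hyperbolicity} (and already in Section~\ref{s.plaque}), it is $\cE$ that is one-dimensional, while $\cF$ has dimension $d-1=\dim\cN_x-1$, which is in general larger than $1$ (this is exactly why the rectangles in Definition~\ref{d.rectangle} are parametrized by $[0,1]\times B_{d-1}(0,1)$). Consequently $\cW^{cu}_\alpha(u)$ is a $(d-1)$-dimensional disc, not an interval, and Lemma~\ref{Lem:schwartz} --- a Denjoy--Schwartz distortion estimate for \emph{intervals} inside a $C^2$ one-dimensional plaque family --- simply does not apply. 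Even in the three-dimensional case where $\dim\cF=1$, you would still need $\cW^{cu}$ to be $C^2$; Theorem~\ref{t.plaque} only gives this for $\cW^{cs}$ under the hypothesis that $\cE$ is $2$-dominated, and no analogous $2$-domination of $\cF$ is assumed anywhere.

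The correct argument (which the paper does not write out, referring instead to~\cite[Section~8.2]{abc-measure}) is simpler and purely $C^1$. Your own Step~2 already contains the right idea: by $C^1$-continuity of the fibered flow and of the plaque family, there is $\delta_0>0$ such that any disc $D\subset\cW^{cu}(v)$ of diameter $\le\delta_0$ satisfies $\diam(P_{-\tau_0}(D))\le(1+\chi)\,\|DP_{-\tau_0}|\cF(v)\|\,\diam(D)$ for a fixed small $\chi$ with $(1+\chi)<\lambda_\cF^{1/2}$. Iterating this along the $\tau_0$-blocks and feeding in the Pliss product bound $\prod_{n=0}^{m-1}\|DP_{-\tau_0}|\cF(P_{-(n\tau_0+s)}(u))\|\le\lambda_\cF^{-m\tau_0}$ gives the exponential decay directly by induction, with the initial segment $[0,s]$ and the sub-$\tau_0$ remainders absorbed into a uniform constant exactly as you describe. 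No summability hypothesis, no $C^2$ distortion, and no appeal to Lemma~\ref{Lem:schwartz} is needed; this is the same mechanism as in the proof of Lemma~\ref{l.summability-hyperbolicity}, but run on the multi-dimensional bundle $\cF$ for the backward flow.
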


\subsubsection{Lipschitz holonomy and rectangle distortion}\label{ss.rectangle}
Let us assume that the bundle $\cE$ is one-dimensional.
It is well-known that for a one-codimensional invariant foliation
whose leaves are uniformly contracted, the holonomies between transversals are Lipschitz.
In order to state a similar property in our setting we define the notion of rectangle.

\begin{Definition}\label{d.rectangle}
A \emph{rectangle} $R\subset \cN_x$ is a subset which is homeomorphic to $[0,1]\times B_{d-1}(0,1)$
by a homeomorphism $\psi$,
where $B_{d-1}(0,1)$ is the $(\dim\cN_x-1)$-dimensional unit ball such that:
\begin{itemize}
\item[--] the set $\psi(\{0,1\}\times B_{d-1}(0,1))$ is a union of two $C^1$-discs tangent to $\cC^\cF$,
and is called the  \emph{$\cF$-boundary $\partial^{\cF}R$},
\item[--] the curve $\psi([0,1]\times \{0\})$ is $C^1$ and tangent to $\cC^\cE$.
\end{itemize}
A rectangle $R$ has \emph{distortion bounded by $\Delta>1$}
if for any two $C^1$-curves $\gamma,\gamma'\subset R$ tangent to $\cC^\cE$
with endpoints in the two connected components of $\partial^{\cF}R$, then
$$\Delta^{-1} |\gamma|\leq |\gamma'| \leq \Delta |\gamma|.$$
\end{Definition}

\begin{Proposition}\label{p.distortion}
Assume that the local fibered flow is $C^2$.
For any $T_\cF,\lambda_\cF>1$,
there exist $\Delta>0$ and $\beta>0$ with the following property.
For any $y$, and $t>0$, and $R\subset \cN_y$ such that:
\begin{itemize}
\item[--] $(\varphi_{-t}(y),y)$ is $(T_\cF,\lambda_\cF)$-Pliss for $\cF$,
\item[--] $P_{-s}(R)$ is a rectangle and has diameter smaller than $\beta$ for each $s\in [0,t]$,
\item[--] if $D_1,D_2$ are the two components of $\partial^\cF(P_{-t}(R))$, then
$$d(D_1,D_2)>10.\max(\diam(D_1),\diam(D_2)),$$
\end{itemize}
then the rectangle $R$ has distortion bounded by $\Delta$.
\end{Proposition}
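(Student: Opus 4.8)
The plan is to reduce the distortion of the rectangle $R$ to a bounded-distortion statement along one-dimensional center-stable curves, using the Denjoy--Schwartz estimate of Lemma~\ref{Lem:schwartz} together with the uniform unstable-manifold contraction given by the Pliss hypothesis (Proposition~\ref{p.unstable}). The main point is that when we pull back by $P_{-t}$, the $\cF$-boundary discs $D_1,D_2$ become exponentially small and well-separated, so the dynamics on $R$ in backward time looks, up to controlled error, like a product of a strong expansion transverse to $\cE$ and a one-dimensional dynamics along $\cE$; the distortion is then governed only by the one-dimensional part, which is exactly what Lemma~\ref{Lem:schwartz} controls.

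\medskip

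First I would fix an integer Pliss-time scale and choose $\beta>0$ small (smaller than the $\beta_S$ of Lemma~\ref{Lem:schwartz} and than the $\eta$ of Proposition~\ref{p.unstable}, and small enough that the cone fields $\cC^\cE,\cC^\cF$ are $\beta$-locally almost constant inside fibers). Let $\gamma,\gamma'\subset R$ be two $C^1$ curves tangent to $\cC^\cE$ joining the two components of $\partial^\cF R$. Since the backward images $P_{-s}(R)$ stay rectangles of diameter $<\beta$, the curves $P_{-s}(\gamma),P_{-s}(\gamma')$ remain tangent to $\cC^\cE$ for all $s\in[0,t]$ by invariance of the cone field (Lemma~\ref{Lem:extend-bundle}); in particular at every backward iterate both curves are graphs over $\cE$, and their endpoints lie in the two well-separated discs $D_1,D_2$ of $\partial^\cF(P_{-t}(R))$. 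The hypothesis $d(D_1,D_2)>10\max(\diam D_1,\diam D_2)$ together with the $2$-domination (which makes $\cW^{cu}$ a $C^1$, hence Lipschitz, plaque family and gives Lipschitz holonomy along $\cF$) lets me compare $|P_{-t}(\gamma)|$ and $|P_{-t}(\gamma')|$: both are within a universal factor (say $2$) of $d(D_1,D_2)$, since a curve tangent to $\cC^\cE$ joining $D_1$ to $D_2$ has length between $d(D_1,D_2)$ and $d(D_1,D_2)+\diam D_1+\diam D_2 \le 1.2\, d(D_1,D_2)$. So at time $-t$ the two lengths are comparable with constant $2$.

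\medskip

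The heart of the argument is to propagate this comparison forward to time $0$ with a distortion constant independent of $t$. For this I would realize $P_{-s}(\gamma)$ as (up to holonomy) an interval inside a center-stable plaque $\cW^{cs}(\cdot)$ and apply Lemma~\ref{Lem:schwartz}(1), whose hypothesis is that $\sum_{m}|P_m(I)|$ is bounded. The summability comes from the Pliss property of $(\varphi_{-t}(y),y)$ for $\cF$: combining Proposition~\ref{l.summability} (which turns a $T_\cF$-Pliss string into a $(C_\cE,\lambda_\cE)$-hyperbolic piece of orbit for $\cE$, after taking $\beta$ small so that $W$ is irrelevant here — we can take $W=\emptyset$ since we only need the Pliss-to-hyperbolic implication along the given orbit) with Lemma~\ref{l.summability-hyperbolicity}, I get that the forward-in-time lengths $|P_m(P_{-t}(\gamma))|$ decay geometrically and sum to at most $C'_\cE$ times $|P_{-t}(\gamma)|\le C'_\cE\cdot 1.2\,d(D_1,D_2)$, which is $\le$ some absolute constant $C_{Sum}$ once $\beta$ is small. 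Lemma~\ref{Lem:schwartz}(1) then yields, for the curve $\gamma$, $\|DP_t|_{P_{-t}(\gamma)}\|\le C_S\, |\gamma|/|P_{-t}(\gamma)|$ at each point, and similarly a lower bound, and the same for $\gamma'$; dividing, and using $|P_{-t}(\gamma)|\asymp_2 |P_{-t}(\gamma')|$, gives $|\gamma|/|\gamma'|\le 2C_S^2$. Setting $\Delta:=2C_S^2$ finishes the proof. (The holonomy step — identifying $P_{-s}(\gamma)$ with an interval of a genuine $\cW^{cs}$-plaque so that Lemma~\ref{Lem:schwartz} applies — uses Remark~\ref{r.plaque-invariance} and the coherence of Proposition~\ref{p.coherence}, together with the fact that a curve tangent to $\cC^\cE$ staying $\beta$-close to the zero section lies in such a plaque.)

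\medskip

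\textbf{Main obstacle.} The delicate point is the comparison of $|P_{-t}(\gamma)|$ and $|P_{-t}(\gamma')|$ at the final time: I need that \emph{any} $C^1$ curve tangent to $\cC^\cE$ crossing the rectangle $P_{-t}(R)$ from $D_1$ to $D_2$ has length within a universal factor of $d(D_1,D_2)$, which forces the $\cE$-direction to be genuinely transverse to the thin $\cF$-direction of $P_{-t}(R)$; this is where the separation hypothesis $d(D_1,D_2)>10\max(\diam D_i)$ and the Lipschitz character of the $\cF$-holonomy (needing $2$-domination and Proposition~\ref{p.unstable} to make $P_{-s}(R)$ collapse onto $\cW^{cu}$-plaques) are essential, and the bookkeeping of constants ($\beta$ small in terms of $C_\cE,\lambda_\cE,C_S,\eta_S$ and the cone apertures) has to be done carefully so that nothing depends on $t$.
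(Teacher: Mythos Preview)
Your reduction has a genuine gap at the step where you invoke Proposition~\ref{l.summability} ``with $W=\emptyset$'' to obtain $(C_\cE,\lambda_\cE)$-hyperbolicity of $(\varphi_{-t}(y),y)$ for $\cE$. That proposition does \emph{not} convert a $T_\cF$-Pliss string into $\cE$-hyperbolicity on the same orbit segment: its conclusion concerns the segment $(\varphi_{-k}(x),x)$ \emph{after} the Pliss piece, and its second hypothesis (no intermediate $T_\cF$-Pliss strings outside $W$) is essential to the proof (see Claim~\ref{c.pliss}). With $W=\emptyset$ and the given data you cannot rule out intermediate Pliss strings, so the proposition simply does not apply. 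More fundamentally, a $(T_\cF,\lambda_\cF)$-Pliss string controls only the backward $\cF$-contraction; domination gives $\cE$ contracted \emph{relative} to $\cF$, not absolutely, so there is no reason the forward $\cE$-lengths $|P_m(P_{-t}(\gamma))|$ should be summable uniformly in $t$. The diameter bound $|P_{-s}(\gamma)|<\beta$ only yields $\sum_m|P_m(I)|\le \beta t$, which blows up. Hence the hypothesis of Lemma~\ref{Lem:schwartz} is not available, and your Denjoy--Schwartz step collapses.

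Even if $\cE$-summability held, Lemma~\ref{Lem:schwartz} bounds the distortion \emph{within} a single curve; to pass from $|\gamma|\asymp_{C_S}\|DP_t(u)\|\cdot|P_{-t}(\gamma)|$ and the analogous estimate for $\gamma'$ to a bound on $|\gamma|/|\gamma'|$ you still need to compare $\|DP_t(u)\|$ for $u\in P_{-t}(\gamma)$ with $\|DP_t(u')\|$ for $u'\in P_{-t}(\gamma')$. This transverse comparison is exactly the content of the proposition, and your ``holonomy step'' sweeps it under the rug. The argument the paper cites (\cite[Lemma~3.4.1]{PS1}) proceeds differently: one uses that the Pliss hypothesis makes the \emph{$\cF$-widths} of $P_{-s}(R)$ decay like $\lambda_\cF^{-s/2}$, so for corresponding points $u\in P_{-s}(\gamma)$, $u'\in P_{-s}(\gamma')$ one has $\|u-u'\|\le C\lambda_\cF^{-s/2}$; the $C^2$ bound then gives $\big|\log\|DP_1|_{\cC^\cE}(u)\|-\log\|DP_1|_{\cC^\cE}(u')\|\big|\le C_2\lambda_\cF^{-s/2}$, and summing over $s$ yields a bound independent of $t$. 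In short, the summable quantity here is the transverse $\cF$-distance (which \emph{is} controlled by the Pliss assumption), not the $\cE$-length.
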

\begin{proof}
It is enough to prove the version of this result stated for a sequence of $C^2$-diffeomorphisms
with a dominated splitting. Then the argument is the same as~\cite[Lemma 3.4.1]{PS1}.
\end{proof}

\section{Topological hyperbolicity}\label{s.topological-hyperbolicity}

\noindent
{\bf Standing assumptions.}
In the whole section, $(\cN,P)$ is a $C^2$ local fibered flow over a topological flow $(K,\varphi)$
and $\pi$ is an identification compatible with $(P_t)$ on an open set $U$
such that:
\begin{enumerate}
\item[(A1)] there exists a dominated splitting $\cN=\cE\oplus \cF$ and the fibers of $\cE$ are one-dimensional,
\item[(A2)] $\cE$ is uniformly contracted on an open set $V$ containing $K\setminus U$,
\item[(A3)] $\cE$ is uniformly contracted over any periodic orbit $\cO\subset K$.
\end{enumerate}
From the last item and Proposition~\ref{p.2domination}, the bundle $\cE$ is $2$-dominated.
By Theorem~\ref{t.plaque}, one can fix a $C^2$-plaque family $\cW^{cs}$ tangent to $\cE$.

\smallskip

The goal of this section is to prove the following theorems (see Subsection~\ref{ss.conclusion-topological}):

\begin{Theorem}\label{Thm:recurrent-contraction}
Under the assumptions (A1), (A2), (A3), one of the following cases occurs:
\begin{enumerate}
\item[--] $K$ contains a normally expanded irrational torus.
\item[--] $\cE$ is \emph{topologically contracted} for recurrent points: for any $\rho>0$, there is $\varepsilon_0>0$ such that for any point $x\in K$ satisfying $\omega(x)=K$ (if it exists), the image $P_t({\cW}^{cs}_{\varepsilon_0}(x))$
is well-defined for any $t\ge 0$, has diameter smaller than $\rho$ and satisfies
$$\lim_{t\to+\infty} |P_t({\cW}^{cs}_{\varepsilon_0}(x))|=0.$$
\end{enumerate}

\end{Theorem}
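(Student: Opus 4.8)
The plan is to perform a dichotomy argument by assuming $\cE$ is \emph{not} topologically contracted for recurrent points and then building a normally expanded irrational torus. First I would fix a point $x$ with $\omega(x)=K$ (the conclusion is vacuous if no such point exists), and observe that by Proposition~\ref{p.2domination} combined with assumption (A3), $\cE$ is $2$-dominated, so $\cW^{cs}$ is a $C^2$-plaque family and the Denjoy--Schwartz distortion control of Lemma~\ref{Lem:schwartz} applies inside the one-dimensional plaques. Negating the second alternative means there is $\rho>0$ such that for every $\varepsilon_0>0$, either $P_t(\cW^{cs}_{\varepsilon_0}(x))$ leaves the $\rho$-ball or its length does not tend to $0$; after passing to a subsequence of times and using that $\cE$ is uniformly contracted on $V\supset K\setminus U$ (assumption (A2)), the bad behavior must be localized to returns inside $U$, and one extracts a sequence of ``long'' plaque iterates whose lengths stay bounded below.

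The core of the argument is then to analyze the dynamics on a limiting plaque segment. Using the summability estimates (Lemma~\ref{l.summability-hyperbolicity}, Lemma~\ref{l.summability}) together with the Pliss machinery of Section~\ref{ss:hyperbolicreturns}, I would show that if infinitely many returns of $x$ to $U$ fail to contract a definite-size center-stable plaque, then one can select iterates that are simultaneously close to the $0$-section and ``neutral'' along $\cE$: the products $\|DP_t|\cE\|$ neither contract uniformly nor blow up. By the closing lemma (Lemma~\ref{l.closing0}) and Corollary~\ref{c.closing0}, together with the non-existence of periodic orbits with all positive Lyapunov exponents (which would contradict (A3) via Proposition~\ref{p.2domination}), this neutral behavior cannot be shadowed by a hyperbolic periodic orbit; taking weak-$*$ limits of empirical measures one obtains a $\varphi$-invariant measure whose Lyapunov exponent along $\cE$ is zero and along $\cF$ is positive (the latter by $2$-domination, since $\cE$ being non-contracting on that measure forces $\cF$ to expand). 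One then has to upgrade this measure-theoretic statement to the topological structure of Definition~\ref{d.torus}: the support $\cT$ of such a measure carries a dominated splitting with $\cE$ one-dimensional and, using the identification $\pi_x$ and the Local Injectivity and Global invariance properties, the local image $\pi_x(\{z: d(x,z)<r\})$ is a $C^1$-curve tangent to $\cE(x)$ — exactly the third bullet of Definition~\ref{d.torus}.

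The remaining point is to show $\varphi|_\cT$ is topologically equivalent to an irrational flow on $\TT^2$. Here I would argue that $\cT$ has no singularities (a singularity in $\cT$ would, via the index-$1$ singular domination hypothesis used upstream to build $(\cN,P)$, force some contraction incompatible with the zero exponent, or more directly contradict the structure near $\sigma$), so $\cT$ is a regular flow invariant set with a one-dimensional non-contracting center-stable bundle and a uniformly expanded $\cF$; a Poincar\'e section argument combined with the absence of periodic orbits in $\cT$ (any periodic orbit in $\cT$ would be contracting along $\cE$ by (A3), contradicting the zero exponent of the ergodic measure it would have to be) and a Denjoy-type exclusion of wandering intervals (using Lemma~\ref{Lem:schwartz} and the $C^2$ hypothesis) shows the return map to a transversal circle is conjugate to an irrational rotation, whence the suspension is an irrational flow on $\TT^2$. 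Lemma~\ref{l.torus} then confirms consistency (the $\cE$-exponent is indeed zero on such a torus), and $\cT\subset K$ is the desired normally expanded irrational torus.

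\textbf{Main obstacle.} The delicate step is the passage from ``plaque lengths do not go to zero'' to the precise topological picture of an irrational torus: one must control the geometry of the plaques along the full forward orbit (not just along a subsequence), rule out the formation of a periodic orbit or of a wandering interval inside the limit set, and verify the $C^1$-curve condition of Definition~\ref{d.torus} via the identification $\pi$. Managing the distortion uniformly over arbitrarily long returns through $U$ — this is where the $C^2$ regularity and the Denjoy--Schwartz Lemma~\ref{Lem:schwartz} are essential — and carefully combining the Pliss-point selection with the closing lemma to force the zero exponent, is the technical heart of the proof.
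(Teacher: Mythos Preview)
Your plan diverges substantially from the paper's proof, and the divergence is not just stylistic: the step you flag as the ``main obstacle'' is in fact a genuine gap that your outline does not close. The paper never passes through an invariant measure with zero $\cE$-exponent and then tries to upgrade its support to a torus. Instead, it works entirely with \emph{geometric} objects --- $\delta$-intervals inside the $C^2$ plaques $\cW^{cs}$ --- and analyzes their returns. Negating topological contraction (or even just topological stability) produces a nontrivial $\delta$-interval $I$; Proposition~\ref{Prop:dynamicsofinterval} then forces a dichotomy: either $I$ lies in the unstable set of a \emph{periodic} $\delta$-interval, or one can extract a \emph{limit} $\delta$-interval $I_\infty$ with shifting returns to both sides (Proposition~\ref{p.shifting-return}), pass to an \emph{aperiodic} $\delta$-interval (Proposition~\ref{p.aperiodic0}), and then explicitly \emph{build} the torus $\cT$ by hand from the combinatorics of two returns acting like an interval-exchange (Proposition~\ref{p.aperiodic}). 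The Denjoy--Schwartz argument is used, but inside the one-dimensional return map on the $\delta$-interval, not on a global transversal to a hypothetical support. Once topological stability is established (Proposition~\ref{Pro:lyapunovstablity}), the failure of topological \emph{contraction} is handled through a separate mechanism (wandering rectangles and summability on an open set, Proposition~\ref{Pro:smallperiodic-interval}), and Theorem~\ref{Thm:recurrent-contraction} is deduced from Theorem~\ref{Thm:recurrent-contraction-bis} by noting that a point with $\omega(x)=K$ cannot lie in the proper forward-invariant set $K'$.

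The specific problem with your route is the sentence ``the local image $\pi_x(\{z:d(x,z)<r\})$ is a $C^1$-curve tangent to $\cE(x)$''. This is exactly the third bullet of Definition~\ref{d.torus}, and it does \emph{not} follow from having an ergodic measure with zero $\cE$-exponent and positive $\cF$-exponent: the support of such a measure could a priori be a complicated set (a lamination, a Cantor set of curves, etc.), and nothing in the identification axioms forces it to be locally a single $C^1$-arc. Likewise, showing that the support is a compact $2$-manifold with a minimal flow requires a concrete construction (building a curve $\overline\gamma$ in $K$ projecting homeomorphically to a subarc of the plaque, showing $\varphi_{(-\varepsilon,\varepsilon)}(\gamma)$ is open in $\cT$, and invoking surface-foliation theory), which is the content of Section~\ref{ss.topological}. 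Your Poincar\'e-section sketch presupposes that $\cT$ already has the local product structure of a surface, which is precisely what must be proved. In short: the ergodic-theoretic shortcut you propose does not, by itself, produce the topological conclusion, and the paper's $\delta$-interval machinery is there exactly to bridge that gap.
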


\begin{Corollary}\label{Cor:ergodic-measure}
Under the assumptions (A1), (A2), (A3), if there is an ergodic measure $\mu$ such that $\supp(\mu)$ does not contain a normally expanded irrational torus, then $\mu$ is topologically contracted along $\cE$: for any $\rho>0$ there is $\varepsilon_0>0$ such that for $\mu$-almost every $x\in K$, the image $P_t({\cW}^{cs}_{\varepsilon_0}(x))$
is well-defined for any $t\ge 0$, has diameter smaller than $\rho$ and satisfies
$$\lim_{t\to+\infty} |P_t({\cW}^{cs}_{\varepsilon_0}(x))|=0.$$
\end{Corollary}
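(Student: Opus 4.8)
\textbf{Proof plan for Corollary~\ref{Cor:ergodic-measure}.}
The plan is to deduce the corollary from Theorem~\ref{Thm:recurrent-contraction} by a standard reduction: replace the ambient flow $(K,\varphi)$ by the restriction to a smaller invariant compact set on which almost every point of $\mu$ has a dense orbit, so that ``$\mu$-almost every $x$'' can be treated as ``recurrent point with full $\omega$-limit set''. More precisely, let $\Lambda_0=\supp(\mu)$; it is $\varphi$-invariant and compact, and by the Birkhoff ergodic theorem and the ergodicity of $\mu$, the set of points $x\in\Lambda_0$ with $\omega(x)=\Lambda_0$ has full $\mu$-measure. The bundle $\cN$, the local fibered flow $P$, the identification $\pi$ on $U\cap\Lambda_0$, the dominated splitting $\cE\oplus\cF$, and the open set $V$ all restrict to $\Lambda_0$, and the standing assumptions (A1), (A2), (A3) are inherited: (A1) is immediate, (A2) holds with the same $V$ since $\Lambda_0\setminus(V\cap\Lambda_0)\subset K\setminus V$, and (A3) holds since every periodic orbit in $\Lambda_0$ is a periodic orbit in $K$. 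So Theorem~\ref{Thm:recurrent-contraction} applies to $(\Lambda_0,\varphi|_{\Lambda_0})$.

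Now run the dichotomy of Theorem~\ref{Thm:recurrent-contraction} on $\Lambda_0$. The first alternative says $\Lambda_0$ contains a normally expanded irrational torus $\cT$. But $\cT\subset\Lambda_0=\supp(\mu)$, which directly contradicts the hypothesis of the corollary that $\supp(\mu)$ contains no normally expanded irrational torus. Hence the second alternative must hold: for any $\rho>0$ there is $\varepsilon_0>0$ such that for every $x\in\Lambda_0$ with $\omega(x)=\Lambda_0$, the iterates $P_t(\cW^{cs}_{\varepsilon_0}(x))$ are defined for all $t\ge 0$, have diameter $<\rho$, and $|P_t(\cW^{cs}_{\varepsilon_0}(x))|\to 0$ as $t\to+\infty$. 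Since the set of such $x$ has full $\mu$-measure, this is exactly the assertion of the corollary.

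One small point to check is that the plaque family $\cW^{cs}$ over $\Lambda_0$ can be taken to be the restriction of the one over $K$: indeed $\cE$ is $2$-dominated over $\Lambda_0$ (again by (A3) and Proposition~\ref{p.2domination}, or simply by restricting the estimate from $K$), and the $C^2$ plaque family from Theorem~\ref{t.plaque} restricts from $K_\eta^+$ to its subset lying over $\Lambda_0$; the diameter bounds and contraction in Theorem~\ref{Thm:recurrent-contraction} are intrinsic to this plaque family, so they transfer unchanged.

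The only genuine content is therefore Theorem~\ref{Thm:recurrent-contraction} itself; the corollary is a packaging statement, and the main (minor) obstacle is just to make sure the passage to the invariant subset $\supp(\mu)$ preserves all the standing hypotheses and the compatibility of the identification $\pi$ — which it does, since all the relevant conditions are closed or hereditary under restriction to invariant compact subsets.
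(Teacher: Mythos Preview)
Your proposal is correct and follows exactly the paper's own approach: the paper's proof is the single sentence ``Corollary~\ref{Cor:ergodic-measure} is a direct consequence of Theorem~\ref{Thm:recurrent-contraction} in restriction to $\supp(\mu)$.'' You have simply (and correctly) spelled out the routine verifications that restriction to $\supp(\mu)$ preserves (A1)--(A3), the identification, and the plaque family, and that ergodicity gives a full-measure set of points with $\omega(x)=\supp(\mu)$.
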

Corollary~\ref{Cor:ergodic-measure} is a direct consequence of Theorem~\ref{Thm:recurrent-contraction}
in restriction to $\supp(\mu)$.
\begin{Remark}
From Remark~\ref{r.torus}, it is enough to assume that $\supp(\mu)$ is not a normally expanded irrational torus.
\end{Remark}
\medskip

The topological contraction holds on a larger set under some minimality condition.

\begin{Theorem}\label{Thm:topologicalcontracting}
Under the assumptions (A1), (A2), (A3), one of the following properties occurs:
\begin{enumerate}
\item[--] There exists a non empty proper invariant compact subset $K'\subset K$ such that $\cE|_{K'}$ is not uniformly
contracted.
\item[--] $K$ is a normally expanded irrational torus.
\item[--] $\cE$ is \emph{topologically contracted}: for any $\rho>0$ there is $\varepsilon_0>0$ such that the image $P_t({\cW}^{cs}_{\varepsilon_0}(x))$
is well-defined for any $t\ge 0$, $x\in K$, has diameter smaller than $\rho$, and satisfies
$$\lim_{t\to+\infty}\sup_{x\in K} |P_t({\cW}^{cs}_{\varepsilon_0}(x))|=0.$$
\end{enumerate}
\end{Theorem}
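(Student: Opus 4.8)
The plan is to prove Theorem~\ref{Thm:topologicalcontracting} by dichotomy analysis: assuming neither of the first two alternatives holds, we must establish uniform topological contraction of $\cE$. So suppose that every nonempty proper invariant compact subset $K'\subsetneq K$ has $\cE|_{K'}$ uniformly contracted, and that $K$ is not a normally expanded irrational torus. The first step is to feed this into the previous results: by Theorem~\ref{Thm:recurrent-contraction} (applied to $K$ itself), since $K$ contains no normally expanded irrational torus, the bundle $\cE$ is topologically contracted for recurrent points with full $\omega$-limit; more importantly, the hypothesis on proper subsets means that for \emph{any} point $x\in K$, the $\omega$-limit set $\omega(x)$ is either all of $K$ or a proper invariant compact subset on which $\cE$ is uniformly contracted. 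In the latter case, uniform contraction on $\omega(x)$ propagates backward along the orbit (using that $\cE$ is uniformly contracted on the open set $V\supset K\setminus U$ and a standard argument that an orbit spending all its eventual time near a set where $\cE$ contracts uniformly is itself $(C_\cE,\lambda_\cE)$-hyperbolic for $\cE$), so $x$ is $(C_\cE,\lambda_\cE)$-hyperbolic for $\cE$ for uniform constants. Then Lemma~\ref{l.summability-hyperbolicity} gives the desired contraction and summability of the plaques $P_t(\cW^{cs}_{\varepsilon_0}(x))$.

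The second step handles the points $x$ with $\omega(x)=K$. Here we cannot directly invoke uniform contraction, but Theorem~\ref{Thm:recurrent-contraction} tells us that for such $x$ (which exist only if $K$ itself has a point with full $\omega$-limit — otherwise we are entirely in the first case), $|P_t(\cW^{cs}_{\varepsilon_0}(x))|\to 0$; combined with the fact that all other points are uniformly hyperbolic for $\cE$, one upgrades pointwise contraction to a uniform statement. Concretely, the set of points that are $(C_\cE,\lambda_\cE)$-hyperbolic for $\cE$ (for fixed uniform constants) is closed and, by the above, is either all of $K$ or misses only points whose $\omega$-limit is $K$. If it is all of $K$ we are done immediately by Lemma~\ref{l.summability-hyperbolicity} with uniform $\varepsilon_0$. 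Otherwise, a compactness/contradiction argument: if uniform contraction failed, there would be points $x_n\in K$ and times $t_n\to\infty$ with $|P_{t_n}(\cW^{cs}_{\varepsilon_0}(x_n))|$ bounded below; passing to a limit and using the plaque family's continuity (Theorem~\ref{t.plaque}) together with the local invariance (Remark~\ref{r.plaque-invariance}), one produces an invariant compact set on which $\cE$ is not uniformly contracted — this set is proper (else it is a normally expanded irrational torus by Theorem~\ref{Thm:recurrent-contraction}, excluded) — contradicting our standing hypothesis. This is essentially the classical passage from ``no proper subset is non-contracted'' to ``the whole space is contracted'', adapted to the fibered setting.

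I expect the main obstacle to be the second step: making rigorous the claim that pointwise/recurrent contraction plus uniform hyperbolicity off the recurrent locus yields a \emph{uniform} $\varepsilon_0$ and a uniform rate. The subtlety is that the plaques $\cW^{cs}_{\varepsilon_0}(x)$ are only locally invariant under $P_1$, so one must control the excursions: use Lemma~\ref{Lem:schwartz} (distortion control) to ensure that once a plaque's forward iterates have small total length they stay inside the domain of local invariance and hence genuinely contract, and use Proposition~\ref{p.coherence} to glue the picture coherently across returns to the $r_0$-neighborhood of $K\setminus V$. The delicate point is the case $\omega(x)=K$ for a \emph{non-recurrent} $x$ (i.e. $x\notin\omega(x)=K$ is impossible, so actually $x$ is recurrent here, which is convenient): one must verify the hypothesis $\omega(x)=K$ in Theorem~\ref{Thm:recurrent-contraction} is exactly what applies, and that the $\varepsilon_0$ it produces, a priori depending on $\rho$ only, can be used uniformly over all such $x$ — which it can, since that theorem's $\varepsilon_0$ is already uniform. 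Assembling these pieces, together with the observation that the excluded alternatives are precisely the two obstructions appearing in Theorem~\ref{Thm:recurrent-contraction} and the absence of non-contracted proper subsets, completes the proof.

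\begin{proof}[Proof sketch]
Assume the first two alternatives fail: every nonempty proper invariant compact $K'\subsetneq K$ has $\cE|_{K'}$ uniformly contracted, and $K$ is not a normally expanded irrational torus. We show $\cE$ is topologically contracted.

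\emph{Step 1: uniform hyperbolicity off the fully recurrent locus.} Let $x\in K$. If $\omega(x)\neq K$, then $\omega(x)$ is a proper invariant compact set, so $\cE$ is uniformly contracted on $\omega(x)$, say with constants from some $t_0$. Since $\cE$ is also uniformly contracted on the open set $V\supset K\setminus U$, an orbit of $x$ eventually enters and stays in an arbitrarily small neighborhood of $\omega(x)\cup(K\setminus V)$ on which a uniform iterate of $DP_{|\cE}$ has norm $\le 1/2$; a standard bounded-distortion bookkeeping (using~\eqref{e.domination} to absorb the finitely many bad iterates) shows $x$ is $(C_\cE,\lambda_\cE)$-hyperbolic for $\cE$, with $C_\cE,\lambda_\cE>1$ independent of $x$. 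By Lemma~\ref{l.summability-hyperbolicity}, there is $\delta_\cE>0$ and $C'_\cE$ so that any interval $I\subset\cW^{cs}(x)$ of length $<\delta_\cE$ satisfies $|P_t(I)|\le C_\cE\lambda_\cE^{-t/2}|I|$ and $\sum_{m}|P_m(I)|\le C'_\cE|I|$.

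\emph{Step 2: the fully recurrent points.} If some $x_0$ has $\omega(x_0)=K$, then in particular $x_0$ is recurrent, so Theorem~\ref{Thm:recurrent-contraction} applies (the normally expanded irrational torus alternative there is excluded by hypothesis): for every $\rho>0$ there is $\varepsilon_0>0$, independent of the point, such that for every $x$ with $\omega(x)=K$ the plaque $P_t(\cW^{cs}_{\varepsilon_0}(x))$ is defined for all $t\ge 0$, has diameter $<\rho$, and $|P_t(\cW^{cs}_{\varepsilon_0}(x))|\to 0$.

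\emph{Step 3: uniformization.} Fix $\rho>0$ and let $\varepsilon_0>0$ be smaller than $\delta_\cE$ from Step 1 and than the constant from Step 2. For $x$ with $\omega(x)\neq K$, Step 1 gives $|P_t(\cW^{cs}_{\varepsilon_0}(x))|\le C_\cE\lambda_\cE^{-t/2}\varepsilon_0$, which is $<\rho$ and tends to $0$. For $x$ with $\omega(x)=K$, Step 2 gives the same. Hence for every $x\in K$ the image $P_t(\cW^{cs}_{\varepsilon_0}(x))$ is defined for all $t\ge 0$, has diameter $<\rho$, and $|P_t(\cW^{cs}_{\varepsilon_0}(x))|\to 0$. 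It remains to see the convergence is uniform in $x$. Suppose not: there are $x_n\in K$, $t_n\to\infty$ with $|P_{t_n}(\cW^{cs}_{\varepsilon_0}(x_n))|\ge c>0$. Since each orbit contracts, we may choose $t_n$ so that $|P_{s}(\cW^{cs}_{\varepsilon_0}(x_n))|\ge c/2$ for $s\in[t_n-n,t_n]$. Passing to a subsequence with $\varphi_{t_n-s}(x_n)\to y$ for each fixed $s$ and using the continuous dependence of the plaque family (Theorem~\ref{t.plaque}) together with Remark~\ref{r.plaque-invariance}, the orbit of $y$ admits a nontrivial arc in $\cW^{cs}(y)$ whose backward iterates under $P_1$ stay of length $\ge c/2$; the $\omega$-limit (in the base) of such $y$ carries an invariant compact set $K'$ on which $\cE$ is not uniformly contracted. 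By Step 1, $K'$ cannot be proper, so $K'=K$; then $K$ is forced to be a normally expanded irrational torus by Theorem~\ref{Thm:recurrent-contraction}, contradicting our hypothesis. Therefore the convergence is uniform, and $\cE$ is topologically contracted.
\end{proof}
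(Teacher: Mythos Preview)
Your argument has two genuine gaps. In Step~1 you assert that every $x$ with $\omega(x)\neq K$ is $(C_\cE,\lambda_\cE)$-hyperbolic for $\cE$ with constants \emph{independent of $x$}. This fails: the number of ``bad iterates'' before the orbit of $x$ enters a contracting neighborhood of $\omega(x)$ is not uniformly bounded, and more seriously the contraction rate on $\omega(x)$ itself need not be uniform over all proper $\omega$-limit sets. Indeed, under the standing hypotheses one may have ergodic measures $\mu_n$ with $\supp(\mu_n)\subsetneq K$ whose $\cE$-exponents tend to~$0$ (their weak limit being a measure with full support and zero exponent); then no single $\lambda_\cE>1$ works for all such points, and consequently no uniform $\delta_\cE$ exists for Lemma~\ref{l.summability-hyperbolicity}. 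The phrase ``standard bounded-distortion bookkeeping'' hides exactly the difficulty the theorem is about.

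The second gap is in Step~3. Extracting a limit from $P_{t_n}(\cW^{cs}_{\varepsilon_0}(x_n))$ yields (after correcting $\omega$ to $\alpha$) a $\rho$-interval at some $y$, hence a set on which $\cE$ is not uniformly contracted. If that set is proper you win, but if it equals $K$ you claim ``$K$ is forced to be a normally expanded irrational torus by Theorem~\ref{Thm:recurrent-contraction}''. That theorem says no such thing: its second alternative (topological contraction for points with $\omega(x)=K$) is perfectly compatible with $\cE$ being non-uniformly contracted on $K$. So no contradiction arises and the uniformization does not close. The paper's route is genuinely different here: it first shows $K$ is transitive, then invokes Theorem~\ref{Thm:recurrent-contraction-bis}, whose proof passes through Proposition~\ref{Pro:smallperiodic-interval} (the wandering-rectangle argument near the unstable set of a periodic $\delta$-interval) to produce a nonempty \emph{open} set $U_0$ on which summability, hence contraction, holds. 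The complement of the forward orbit of $U_0$ is then the proper forward-invariant $K'$ that either contradicts the hypothesis or, being uniformly contracted, lets one conclude. Your outline skips precisely this open-set/summability mechanism, which is the substantive content needed to pass from pointwise to uniform contraction.
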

\medskip

\noindent
{\bf Choice of constants.}
Let us remark that by assumption (A2), the bundle $\cE$ is also uniformly contracted on a neighborhood of ${\rm Closure}(V)$;
hence, on the set $\bigcup_{s\in [0,\varepsilon]}\varphi_s(V)$, for some $\varepsilon>0$ small.
By Remark~\ref{r.identification}.(b), one can rescale the time so that $\varepsilon=1$ and assume:
\begin{enumerate}
\item[(A2')] $\cE$ is uniformly contracted on $\bigcup_{s\in [0,1]}\varphi_s(V)$, where $V$ is an open set containing $K\setminus U$.
\end{enumerate}

Recall $\beta_0>0$ in the definition of identifications (Definition~\ref{Def:identification}).

As introduced in Subsection~\ref{ss.assumptions} we denote by $\tau_0\in \NN$ and $\lambda>1$ the constants associated to the $2$-domination $\cE\oplus \cF$.
Proposition~\ref{l.summability} associates to the set $W:=V$
the constants $T_\cF\geq 0$ defining Pliss points for $\cF$ and $C_{\cE},\lambda_{\cE}$
defining the hyperbolicity for $\cE$. We also choose arbitrarily $\lambda_{\cF}\in (1,\lambda)$.

Sections~\ref{s.plaque}
gives some constants $\alpha_\cE$ and $\alpha_\cF$ controlling the size of center-stable and center-unstable plaques. Proposition~\ref{p.unstable} gives $\alpha>0$ and $\eta>0$ such that for any point $u\in K^-_\eta$
which is $(T_\cF,\lambda_{\cF})$-Pliss for $\cF$, the backward iterates $P_{-t}(\cW_\alpha^{cu}(u))$ have diameter smaller than
$\alpha_\cF \lambda_{\cF}^{-t/2}$. 

We also consider small constants $\beta_\cF,r,\delta_0,\alpha'>0$ which will be chosen in this order during this section:
they control distances inside the spaces $\cN_x$, $K$, or $\cW^{cs}_x$.

\subsection{Topological stability and $\delta$-intervals}

\subsubsection{Dynamics of $\delta$-intervals}
We introduce a crucial notion for this section.

\begin{Definition}\label{Def:deltainterval} Consider $\delta\in (0,\delta_0]$ and $x\in K$.
A curve $I\subset {\cW}^{cs}_x$ (not reduced to a single point),
is called a \emph{$\delta$-interval}
if $0_x\in I$ and for any $t\ge 0$, one has
$$|P_{-t}(I)|\le\delta.$$
\end{Definition}

One example of $\delta$-interval is given by a periodic point $z\in K$
together with a non-trivial interval in $\cW^{cs}(z)$ that is periodic for $(P_t)$
and contains $0_z$.

\begin{Definition}
A $\delta$-interval $I$ is \emph{periodic}
if it coincides with $P_{-T}(I)$ for some $T>0$.

\noindent We say that a $\delta$-interval $I$ at $x$
is \emph{contained in the unstable set of some
periodic $\delta$-interval} if:
\begin{enumerate}
\item the $\alpha$-limit set $\alpha(x)\subset K$ of $x$ is the orbit of a periodic point $y$,
\item $y$ admits a periodic $\delta$-interval $\widehat I_y$,
\item the limit set of $P_{-t}(I)$ is the orbit of some (maybe trivial) interval $I_y\subset \widehat I_y$.
\end{enumerate}
\end{Definition}

The next property will be proved in Section~\ref{ss.Lyapunov}.
\begin{Lemma}\label{l.periodic}
{There is $\delta_0>0$ such that for any $\delta\in(0,\delta_0]$,}
for any periodic $\delta$-interval $I\subset \cN_q$, there exists $\chi>0$ with the following property.

Let $z$ be close to $q$, let $L\subset \cN_z$ be an arc which is close to $I$ in the Hausdorff topology
and contains $0_z$ and let $T>0$ such that $|P_{-t}(L)|\leq \delta$ for any $t\in [0,T]$.
Then $|P_{-T}(L)|>\chi$.
\end{Lemma}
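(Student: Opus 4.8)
The plan is to exploit the distortion control of Lemma~\ref{Lem:schwartz} together with the fact that a periodic $\delta$-interval has a hyperbolicity on $\cF$ (Pliss property) coming from the $2$-domination and the uniform contraction of $\cE$ over periodic orbits (assumption (A3)). First I would set up the constant $\delta_0$: since $\cE$ is uniformly contracted over the periodic orbit $\cO=\orb(q)$, the point $q$ (and hence all its iterates) is $(C_\cE,\lambda_\cE)$-hyperbolic for $\cE$ for suitable constants, so by Lemma~\ref{l.summability-hyperbolicity} the backward iterates of small intervals in $\cW^{cs}(q)$ are summable: $\sum_{m\ge 0}|P_{-m}(J)|\le C'_\cE|J|$ whenever $|J|\le \delta_\cE$. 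Choosing $\delta_0\le \delta_\cE$ guarantees this applies to any $\delta$-interval. Note the periodic $\delta$-interval $I$ itself, being invariant under $P_{-T_0}$ (for $T_0$ its period) and of length $\le\delta\le\delta_0$, has bounded total backward sum $\sum_{m=0}^{\infty}|P_{-m}(I)|\le C_{Sum}$ for an explicit $C_{Sum}$ depending only on $C'_\cE$ and $\delta_0$; this is the quantity that feeds into the Denjoy–Schwartz distortion estimate.

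Next I would run the distortion argument. Fix $C_{Sum}$ as above and let $C_S,\beta_S,\eta_S$ be the constants from Lemma~\ref{Lem:schwartz} applied to the flow $t\mapsto P_{-t}$ and the plaque family $\cW^{cs}$ (which is $C^2$ since $\cE$ is $2$-dominated). Let $L\subset\cN_z$ be an arc containing $0_z$, $\delta$-close to $I$ in Hausdorff distance, with $z$ close to $q$, and suppose $|P_{-t}(L)|\le\delta$ for $t\in[0,T]$. Because $z$ is close to $q$ and the orbit of $z$ shadows the periodic orbit of $q$ for the relevant time (using continuity of the fibered flow, Lemma~\ref{l.shadowingandhyperbolicity}, and the ``no shear'' Proposition~\ref{p.no-shear} to control the reparametrization), the piece of orbit $(\varphi_{-T}(z),z)$ is also $(C_\cE',\lambda_\cE')$-hyperbolic for $\cE$ with slightly worse constants, so by Lemma~\ref{l.summability-hyperbolicity} again $\sum_{m=0}^{[T]}|P_{-m}(L)|\le C_{Sum}$ and each $|P_{-m}(L)|\le\beta_S$ provided $\delta_0$ and the closeness of $z$ to $q$ are small enough. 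Then Lemma~\ref{Lem:schwartz}(1) gives, for the subinterval $J\subset L$ on one side of $0_z$,
$$\|DP_{-T}(0_z)|_{\cW^{cs}(z)}\|\le C_S\,\frac{|P_{-T}(L)|}{|L|}.$$

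Now I would close the argument by a lower bound on $\|DP_{-T}(0_z)|_{\cW^{cs}(z)}\|$. Since $I$ is a periodic $\delta$-interval at $q$, the derivative of $P_{-T_0}$ along $\cE$ at $0_q$ has modulus $\ge 1$ (otherwise the interval would shrink under iteration and could not be periodic); more precisely, writing the period of $I$ as $T_0$, the product of the expansion factors over one period is $\ge 1$, and by $2$-domination together with (A3) one in fact gets a definite exponential rate $\|DP_{-n T_0}(0_q)|_{\cE}\|\ge \kappa^{n}$ for some $\kappa>1$ depending only on $I$. By continuity of $DP_{-t}(0_\cdot)|_\cE$ and the fact that the orbit of $z$ shadows that of $q$, we get $\|DP_{-T}(0_z)|_{\cE}\|\ge \kappa^{T/T_0 - O(1)}$, which for $T$ large is at least some fixed $c>0$ (and indeed tends to infinity). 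Combining with the distortion inequality,
$$|P_{-T}(L)|\ \ge\ \frac{|L|}{C_S}\,\|DP_{-T}(0_z)|_{\cE}\|\ \ge\ \frac{|L|}{C_S}\,c,$$
and since $|L|$ is bounded below (being Hausdorff-close to the fixed nontrivial interval $I$), this yields $|P_{-T}(L)|>\chi$ for $\chi:=c\,|I|/(2C_S)$, say. The expansion $\kappa>1$ is what makes the lemma nontrivial even though it only claims a lower bound by a constant: one really uses that a periodic interval cannot be neutral, which is exactly where assumption (A3) and the resulting $2$-domination enter.

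The main obstacle I anticipate is the bookkeeping around the reparametrization $\theta$ when transferring the hyperbolicity and the summability bound from the periodic orbit of $q$ to the shadowing orbit of $z$: one must ensure that the ``no shear'' estimates (Proposition~\ref{p.no-shear}) and the Global invariance are applied on the right time intervals so that the time $T$ for $z$ corresponds correctly to an integer number of periods (up to bounded error) for $q$, and that all the relevant iterates stay in the $\beta_S$-ball so that Lemma~\ref{Lem:schwartz} is applicable. This is the kind of routine-but-delicate matching of time scales that the identification formalism of Section~\ref{s.fibered} is designed to handle, and I would lean on Remark~\ref{r.identification} and Proposition~\ref{p.no-shear} for it rather than redo it by hand.
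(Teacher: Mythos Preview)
Your first paragraph contains a concrete error: since the periodic interval $I$ satisfies $P_{-T_0}(I)=I$, one has $|P_{-mT_0}(I)|=|I|$ for every $m$, so $\sum_{m\ge 0}|P_{-m}(I)|=+\infty$, not $\le C_{Sum}$. Lemma~\ref{l.summability-hyperbolicity} controls \emph{forward} iterates from a point where $\cE$ is contracted; along the backward flow from $q$ the direction $\cE$ is expanded. The finite sum you actually need, $\sum_{m=0}^{[T]}|P_{-m}(L)|$, can indeed be bounded, but only by applying Lemma~\ref{l.summability-hyperbolicity} starting from $P_{-T}(L)\subset\cW^{cs}(\varphi_{-T}(z))$ and iterating forward, which yields the bound $C'_\cE|P_{-T}(L)|\le C'_\cE\delta$; this also tacitly assumes $L\subset\cW^{cs}(z)$, which the lemma does not require.

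The more serious gap is the step you label ``routine-but-delicate'' and then set aside: the claim that the backward orbit of $z$ shadows the periodic orbit of $q$ on $[0,T]$. The hypothesis $|P_{-t}(L)|\le\delta$ is a statement inside the fiber $\cN_{\varphi_{-t}(z)}$ and says nothing about where $\varphi_{-t}(z)$ sits in $K$; Global invariance would give the shadowing only once you know $\|P_{-t}(\pi_q(0_z))\|$ stays small in $\cN_q$, and that is precisely what must be established. In the paper this is the heart of the proof: one shows, via the closing lemma (Lemma~\ref{l.closing0}) together with (A3), that $\pi_q(K)$ cannot meet $B\setminus\cW^{cu}(0_q)$ for a neighborhood $B\subset R(I)$ of $0_q$ (otherwise $K$ would contain a periodic orbit with nonnegative $\cE$-exponent), hence $\pi_q(z)\in\cW^{cu}(0_q)$. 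Then $\pi_q(L)$ contains a sub-arc $J$ of definite length $\chi_0$ crossing $\cW^{cu}(0_q)$, and since $R(I)$ is backward invariant and $0_q$ is $\cE$-attracting, $J$ keeps a sub-arc of length $\ge\chi_0$ under each backward period; transferring back to $\cN_z$ by Global invariance gives $|P_{-T}(L)|>\chi$. This geometric argument makes the Denjoy--Schwartz detour unnecessary: once $\pi_q(z)\in\cW^{cu}(0_q)$ and $\pi_q(L)\subset R(I)$ are known, the length bound follows directly.
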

Proposition~\ref{Prop:dynamicsofinterval} below describes dynamics of $\delta$-intervals. It is an analogue to ~\cite[Theorem 3.2]{PS2}.

\begin{Proposition}\label{Prop:dynamicsofinterval}
{There is $\delta_0>0$ such that if there is a $\delta$-interval $I\subset \cW^{cs}_x$ for $\delta\in(0,\delta_0]$}
then
\begin{itemize}
\item[--] either $K$ contains a normally expanded irrational torus,
\item[--] or $I$ is contained in the unstable set of some periodic $\delta$-interval.
\end{itemize}
\end{Proposition}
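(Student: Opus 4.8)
\textbf{Proof plan for Proposition~\ref{Prop:dynamicsofinterval}.}
The plan is to run a dichotomy driven by the backward-orbit behaviour of the $\delta$-interval $I\subset\cW^{cs}_x$. First I would consider the family of compact arcs $P_{-t}(I)$, $t\ge 0$, all of length $\le\delta$ and all containing $0_{\varphi_{-t}(x)}$, and look at their accumulation in the Hausdorff topology along sequences $t_k\to+\infty$. This produces limit arcs $J\subset\cW^{cs}_z$ at points $z\in\alpha(x)$ with $|P_{-t}(J)|\le\delta$ for all $t\ge 0$ — that is, $\delta$-intervals again — and the orbit of the resulting ``limit interval'' is invariant under the backward flow. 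The key quantitative input here is Lemma~\ref{l.summability-hyperbolicity} (summability from hyperbolicity) together with the Denjoy--Schwartz distortion control of Lemma~\ref{Lem:schwartz}: if infinitely many backward iterates $\varphi_{-t}(x)$ fell outside $V$ and were $T_\cF$-Pliss for $\cF$, one could, via Proposition~\ref{l.summability}, promote this to $(C_\cE,\lambda_\cE)$-hyperbolicity of $\cE$ along a cofinal sequence of backward iterates, which forces $\sum_m|P_{-m}(I)|<\infty$, hence $|P_{-t}(I)|\to 0$, contradicting that $I$ is not a point (lengths cannot all stay positive and be summable unless they decay, and decay to a nondegenerate limit interval is impossible — here Lemma~\ref{l.periodic} rules out a nondegenerate periodic limit being approached from a shrinking family). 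So the ``generic'' backward behaviour of $x$ is obstructed, and $\alpha(x)$ must be small.

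Second I would analyse $\alpha(x)$ itself, which is a compact invariant set carrying an index-$1$ dominated structure. The goal is to show $\alpha(x)$ is either a normally expanded irrational torus (first alternative of the Proposition) or the orbit of a periodic point $y$ admitting a periodic $\delta$-interval. For this I would apply Theorem~\ref{Thm:recurrent-contraction}/Corollary~\ref{Cor:ergodic-measure} on $\alpha(x)$: any ergodic measure supported there is either carried by a normally expanded irrational torus or is topologically contracted along $\cE$. In the torus case we are done. Otherwise $\cE$ is topologically contracted on (pieces of) $\alpha(x)$, so no recurrent point of $\alpha(x)$ can carry a nontrivial backward-bounded arc in $\cW^{cs}$ — this pins the persistence of the limit arc onto a non-recurrent mechanism, and the only way an arc-length can survive all backward iterations inside $\cW^{cs}$ while $\cE$ contracts topologically at recurrent points is that $\alpha(x)$ is reduced to a periodic orbit, and that periodic orbit carries a nontrivial $P_{-T}$-invariant arc in its center-stable plaque, i.e.\ a periodic $\delta$-interval $\widehat I_y$. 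Here one uses that on a periodic orbit $\cE$ is uniformly contracted by (A3), so a backward-bounded arc in $\cW^{cs}(y)$ either shrinks to $0_y$ (then $I$ itself must shrink, contradiction) or is fixed up to finite period — giving the periodic $\delta$-interval via a one-dimensional-dynamics argument as in Proposition~\ref{p.fixed-point}.

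Third, with $\alpha(x)=\orb(y)$ and a periodic $\delta$-interval $\widehat I_y$ in hand, I would identify the limit of $P_{-t}(I)$. Passing to Hausdorff limits along the period, $P_{-nT}(I)$ accumulates on a family of arcs in $\cW^{cs}(\varphi_{-jT_0}(y))$; monotonicity of the one-dimensional dynamics of $\overline{P}_{-T}$ on $\cW^{cs}(y)$ forces the accumulation set to be a single interval $I_y\subset\widehat I_y$ (possibly trivial), exactly as in the last paragraph of the proof of Proposition~\ref{p.fixed-point}. Lemma~\ref{l.periodic} is used to guarantee the lengths do not degenerate in an uncontrolled way: it gives the definite lower bound $\chi$ showing that an arc staying $\delta$-bounded for a full period has not collapsed, so the limit interval $I_y$ is well defined and the three conditions in the definition of ``contained in the unstable set of some periodic $\delta$-interval'' hold. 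This yields the second alternative and completes the proof.

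\textbf{Main obstacle.} The delicate point is the promotion step: converting ``infinitely many backward $T_\cF$-Pliss iterates outside $V$'' into genuine uniform hyperbolicity of $\cE$ along a cofinal backward sequence, and then combining the resulting summability (Lemma~\ref{l.summability-hyperbolicity}) with Denjoy--Schwartz distortion (Lemma~\ref{Lem:schwartz}) to get $|P_{-t}(I)|\to 0$ — and in the complementary case, extracting from the \emph{failure} of this the conclusion that $\alpha(x)$ is tiny. Controlling the interplay between the combinatorics of Pliss returns, the identification structure near $U$, and the possibility that $\alpha(x)$ is a normally expanded irrational torus (where $\cE$ has zero exponent, Lemma~\ref{l.torus}) is where essentially all the work lies; the rest is one-dimensional dynamics inside plaques.
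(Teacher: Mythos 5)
Your proposal contains two genuine gaps that invalidate the overall strategy. The first is in your opening dichotomy: you claim that if infinitely many backward iterates of $x$ were $T_\cF$-Pliss and outside $V$, then Proposition~\ref{l.summability} would give $(C_\cE,\lambda_\cE)$-hyperbolicity along a cofinal backward sequence, forcing $\sum_m|P_{-m}(I)|<\infty$ and a contradiction, so that ``$\alpha(x)$ must be small.'' This is backwards. Lemma~\ref{Lem:hyperbolicreturns} shows that \emph{every} point carrying a $\delta$-interval has infinitely many such Pliss backward iterates outside $V$, and no contradiction ensues: Proposition~\ref{l.summability} only yields hyperbolicity of each block between consecutive Pliss returns with the \emph{same} constant $C_\cE$, so concatenation does not give contraction along the whole backward orbit; Lemma~\ref{l.summability-hyperbolicity} then gives $\sum_{m=n_k}^{n_{k+1}}|P_{n_k-m}(\widehat I_k)|\le C'_\cE|\widehat I_{k+1}|$ (the paper's~\eqref{e.bounded}), and full summability would additionally require $\sum_k|\widehat I_k|<\infty$ — which is precisely what the volume/disjointness argument (Lemma~\ref{Sub:disjointcase}, using the rectangles $R(\widehat I_k)$ foliated by unstable plaques) shows \emph{cannot} happen for maximal intervals. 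The correct output of this analysis is the existence of a non-degenerate limit $\delta$-interval $I_\infty$ at a Pliss point together with returns of $I_\infty$ near itself, not a constraint on the size of $\alpha(x)$.

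The second gap is circularity plus a missing mechanism. Theorem~\ref{Thm:recurrent-contraction} and Corollary~\ref{Cor:ergodic-measure}, which you invoke on $\alpha(x)$, are themselves proved from Proposition~\ref{Prop:dynamicsofinterval} (via Proposition~\ref{Pro:lyapunovstablity} and Theorem~\ref{Thm:recurrent-contraction-bis}), so they cannot be used here. Even granting them, the step ``therefore $\alpha(x)$ is reduced to a periodic orbit'' does not follow: $\alpha(x)$ could a priori be any transitive set. The paper's actual engine is the dichotomy on returns of $I_\infty$: the rectangle structure and a pigeonhole-on-volume argument produce returns $\widetilde P_{-t}$ of $I_\infty$ to itself; if some arbitrarily large returns are non-shifting, a fixed point of $\widetilde P_{-t}$ combined with the closing lemma (Lemma~\ref{l.closing0}) produces a genuine periodic orbit of $K$ whose periodic $\delta$-interval attracts $I$ (Proposition~\ref{p.periodic-return}); if all large returns shift, one manufactures deep returns shifting in both directions, hence an aperiodic interval with minimal circle dynamics (Propositions~\ref{p.shifting-return} and~\ref{p.aperiodic0}), from which the normally expanded irrational torus is built (Proposition~\ref{p.aperiodic}). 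Your proposal never supplies any mechanism by which the torus alternative would actually arise, which is the heart of the statement.
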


\begin{Remark}
In the first case one can even show that $\alpha(x)$ is a normally expanded irrational torus.
We will not use it.
\end{Remark}
\smallskip

\noindent
{\it Strategy of the proof of Proposition~\ref{Prop:dynamicsofinterval}.}
The next five subsections are devoted to the proof:
\begin{itemize}
\item[--] One introduces a \emph{limit} $\delta$-interval $I_\infty$
from the backward orbit of $I$ (Section~\ref{ss.limit}).
\item[--] $I_\infty$ has returns close to itself
(Section~\ref{ss.return-I-infty}). Under some ``non-shifting" condition, one gets a periodic $\delta$-interval
(Section~\ref{ss.criterion-periodic}) and the last case of the proposition holds.
\item[--] If the ``non-shifting" condition does not hold, there exists a normally expanded irrational torus
which attracts $x$, $I$ and $I_\infty$ by backward iterations
(Sections~\ref{ss.aperiodic} and~\ref{ss.topological}).
\end{itemize}
The conclusion of the proof is given in Section~\ref{ss.Lyapunov}.

\subsubsection{Topological stability}\label{sss.lyapunov-stable}
Before proving Lemma~\ref{l.periodic} and Proposition~\ref{Prop:dynamicsofinterval},
we derive a consequence.

\begin{Proposition}\label{Pro:lyapunovstablity}
If there is no normally expanded irrational torus,
then $\cE$ is \emph{topologically stable}:
there is $\varepsilon_0>0$ and for any $\varepsilon_1\in(0,\varepsilon_0)$, there is $\varepsilon_2>0$ such that 
$$\forall x\in K~\textrm{and}~\forall t>0,~~~P_t({\cW}^{cs}_{\varepsilon_2}(x))\subset {\cW}^{cs}_{\varepsilon_1}(\varphi_t(x)).$$
\end{Proposition}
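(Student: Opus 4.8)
The goal is a uniform topological contraction of the plaque family $\cW^{cs}$ valid on \emph{all} of $K$, starting from the hypothesis that there is no normally expanded irrational torus. The plan is to combine three ingredients already assembled: Proposition~\ref{Prop:dynamicsofinterval} (the description of $\delta$-intervals), Lemma~\ref{l.periodic} (the expansion estimate at periodic $\delta$-intervals), and the standard plaque-family uniqueness/coherence machinery of Section~\ref{s.plaque}. First I would argue by contradiction: suppose the conclusion fails, i.e. there is $\varepsilon_1\in(0,\varepsilon_0)$ such that for every $\varepsilon_2>0$ there are $x\in K$, $t>0$ with $P_t(\cW^{cs}_{\varepsilon_2}(x))\not\subset\cW^{cs}_{\varepsilon_1}(\varphi_t(x))$. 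Running this with $\varepsilon_2=1/n$ produces sequences $x_n\in K$, $t_n>0$, and points $z_n\in\cW^{cs}_{1/n}(x_n)$ whose forward orbit under $P$ first leaves the $\varepsilon_1$-ball at time $t_n$; by continuity of the plaques and compactness of $K$ one may take the \emph{last} time before $t_n$ at which $P_s(z_n)$ re-enters a very small ball, and reparametrize. The point is to extract, in the limit, a nontrivial curve $I\subset\cW^{cs}_y$ at some $y\in K$ whose \emph{backward} iterates stay of size $\le\delta$ for all time — that is, a $\delta$-interval in the sense of Definition~\ref{Def:deltainterval}. (Here $\varepsilon_0$ should be chosen $\le\delta_0/2$ so that the limiting curve genuinely has the defining property, and one uses that the complement of the cone $\cC^\cF$ is forward-invariant so the limiting curve is tangent to $\cC^\cE$ and lies in a plaque $\cW^{cs}(y)$.)

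Once a $\delta$-interval $I$ exists, Proposition~\ref{Prop:dynamicsofinterval} applies: since by hypothesis there is no normally expanded irrational torus, $I$ must be contained in the unstable set of some periodic $\delta$-interval. Thus $\alpha(y)$ is the orbit of a periodic point $q$, there is a periodic $\delta$-interval $\widehat I_q\subset\cN_q$, and the backward limit of $P_{-t}(I)$ is the orbit of some subinterval $I_q\subset\widehat I_q$. Now I would invoke Lemma~\ref{l.periodic}: it furnishes $\chi>0$ so that any arc $L$ near $\widehat I_q$ through the zero section whose backward iterates stay $\le\delta$ up to time $T$ must satisfy $|P_{-T}(L)|>\chi$. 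Feeding in $L=$ an appropriate backward iterate $P_{-t_0}(I)$ (which is Hausdorff-close to $\widehat I_q$ once $t_0$ is large, since $P_{-t}(I)$ accumulates on the orbit of $I_q\subset\widehat I_q$) and letting $T\to\infty$, we would conclude $|P_{-T-t_0}(I)|>\chi$ for all large $T$. But a $\delta$-interval has, by definition, $|P_{-s}(I)|\le\delta$ for every $s\ge0$; taking $\delta_0$ (hence $\delta$) smaller than the $\chi$ produced for the relevant periodic $\delta$-intervals gives a contradiction — provided one first checks there are only finitely many "shapes" of periodic $\delta$-intervals to worry about, or, more carefully, that the $\chi$ of Lemma~\ref{l.periodic} can be taken uniform, e.g. by a compactness argument on the (compact) space of periodic $\delta$-intervals of $K$ together with the observation that $I$ need only be compared to the \emph{one} periodic $\delta$-interval it limits onto.

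\textbf{Main obstacle.} The delicate point is the extraction step: producing, from the failure of topological stability, a genuine $\delta$-interval rather than merely a curve whose backward iterates stay small \emph{for a long but finite time}. The naive limit of $P_s(z_n)$ might degenerate (shrink to a point) or the "last re-entry time" bookkeeping might not survive the limit. I would handle this exactly as in the analogous argument for surface diffeomorphisms (\cite[Theorem 3.2]{PS2} and its ancestors in \cite{Man85,PS1}): track the curve $J_n\subset\cW^{cs}(x_n)$ of the \emph{smallest} forward time needed to reach size $\varepsilon_1$, rescale so that $|J_n|$ is bounded below, use the uniform distortion control of Lemma~\ref{Lem:schwartz} to keep the geometry of $P_{-s}(J_n)$ under control as $s$ ranges backward, and pass to a subsequential limit; the resulting curve is nontrivial and its backward orbit is trapped in size $\le\delta$ because each $J_n$ had that property up to the (diverging) time it existed. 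A secondary subtlety is that $\delta_0$ must be fixed \emph{before} $\varepsilon_0$, and the $\chi$'s of Lemma~\ref{l.periodic} depend on the periodic $\delta$-interval, so the quantifier order — first shrink $\delta_0$ using Lemma~\ref{l.periodic} uniformly over periodic $\delta$-intervals, then set $\varepsilon_0,\varepsilon_1,\varepsilon_2$ — has to be arranged carefully, but this is bookkeeping rather than a real difficulty.
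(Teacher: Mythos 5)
Your overall route is the paper's: argue by contradiction, extract from the failure of uniform stability a sequence of curves $I_n\subset\cW^{cs}(x_n)$ with $|I_n|\to 0$ reaching size $\delta$ for the first time at $T_n$, pass to the limit of $P_{T_n}(I_n)$ to get a nontrivial $\delta$-interval $I$ (nontrivial precisely because its length is exactly $\delta$ at the escape time, which disposes of your degeneration worry), invoke Proposition~\ref{Prop:dynamicsofinterval} to place $I$ in the unstable set of a periodic $\delta$-interval, and then use Lemma~\ref{l.periodic}. Up to that point the proposal matches the paper.

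The final step, however, has a genuine gap. You apply Lemma~\ref{l.periodic} to backward iterates of the \emph{limit} interval $I$ and try to contradict $|P_{-s}(I)|\le\delta$ by arranging $\delta<\chi$. This cannot be arranged: the quantifiers in Lemma~\ref{l.periodic} give $\chi$ \emph{after} the periodic $\delta$-interval is fixed, and that periodic $\delta$-interval is only produced after $\delta$ has been fixed, so "shrink $\delta_0$ below $\chi$" is circular. Your fallback — uniformity of $\chi$ over all periodic $\delta$-intervals by compactness — is not available either: periodic $\delta$-intervals can be arbitrarily short (this is exactly the regime treated later in Proposition~\ref{Pro:smallperiodic-interval}, where $\delta_n$-intervals with $\delta_n\to0$ occur), and the $\chi$ produced by the proof of Lemma~\ref{l.periodic} degenerates with them. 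Restricting attention to "the one periodic $\delta$-interval $I_0$ that $I$ limits onto" does not help: the conclusion $|P_{-T}(I)|>\chi$ for all large $T$ is perfectly consistent with $|P_{-T}(I)|\le\delta$ when $\chi\le\delta$; it merely says the backward limit of $I$ is a nontrivial subinterval of $I_0$, which is expected, not contradictory.

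The correct closing move — available inside your own setup — is to apply Lemma~\ref{l.periodic} not to $I$ but to the finite-time curves $L_n=P_{T_n}(I_n)$. Since $P_{-t}(I)$ accumulates on the orbit of a subinterval of $I_0$, there is a fixed $S>0$ such that $P_{-S}(L_n)$ is Hausdorff-close to $I_0$ for $n$ large; Lemma~\ref{l.periodic} (with the single $\chi=\chi(I_0)>0$) then gives $|P_{-(T_n-S)}(P_{-S}(L_n))|=|I_n|>\chi$ for all large $n$. The contradiction is with $|I_n|\to0$, not with $|P_{-s}(I)|\le\delta$, and no comparison between $\chi$ and $\delta$ is needed.
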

\begin{proof}
By Remark~\ref{r.plaque-invariance} it is enough to check that 
$|P_t({\cW}^{cs}_{\varepsilon_2}(x))|$ is bounded by $\varepsilon_1$.
{One can choose $\delta_0$ small so that Proposition~\ref{Prop:dynamicsofinterval} holds.} We argue by contradiction. If the topological stability does not hold, then there exist {$\delta\in(0,\delta_0]$}
and a sequence $(x_n)$ in $K$
and, for each $n$, an interval $I_n\subset {\cW}^{cs}(x_n)$ containing $0$
and a time $T_n>0$ such that:
\begin{itemize}
\item[--] $|I_n|\to 0$ as $n\to +\infty$.
\item[--] $|P_{T_n}(I_n)|=\delta$
and $|P_{t}(I_n)|<\delta$ for all $0<t<T_n$.
\end{itemize}
Taking a subsequence, one can assume that $(\varphi_{T_n}(x_n))$
converges to a point $x\in K$ and $(P_{T_n}(I_n))$
to an interval $I$. We have $|I|=\delta$ and
$|P_{-t}(I)|\le\delta$ for all $t>0$, so that
$I$ is an $\delta$-interval.

Since there is no normally expanded irrational torus, the second case of Proposition~\ref{Prop:dynamicsofinterval} is satisfied: $I$ is contained in the unstable set of periodic $\delta$-interval $I_0$. Set $L_n=P_{T_n}(I_n)$, $n$ large. There is $S>0$ such that for $n$ large enough, one has that $P_{-S}(L_n)$ is close to $I_0$.
Lemma~\ref{l.periodic} implies that $P_{-T_n+S}(P_{-S}(L_n))$
has length uniformly bounded away from $0$.
This contradicts the fact that the length of $I_n=P_{-T_n}(L_n)$
goes to $0$ when $n\to \infty$. 
\end{proof}
\subsection{Limit $\delta$-interval $I_\infty$}\label{ss.limit}
One can obtain infinitely many $\delta$-intervals
with length uniformly bounded away from zero at points of the backward orbit
of a $\delta$-interval.
The goal of this section is to prove this property.

\begin{Proposition}\label{p.limit}
If $\delta>0$ is small enough,
for any $x\in K$ and any $\delta$-interval $I$ at $x$,
there exists an increasing sequence $(n_k)$ in $\NN$
and $\delta$-intervals $\widehat I_k$ at $\varphi_{-n_k}(x)$ such that:
\begin{itemize}
\item[--] $P_{-n_k}(I)$ and $P_{n_\ell-n_k}(\widehat I_{\ell})$
are contained in $\widehat I_{k}$ for any $\ell\leq k$,
\item[--] $\varphi_{-n_k}(x)$ is $T_\cF$-Pliss
and belongs to $K\setminus V$,  for any $k\geq 0$,
\item[--] $(\widehat I_k)$ converges to some $\delta$-interval $I_\infty$ at some point $x_\infty\in K\setminus V$.
\end{itemize}
\end{Proposition}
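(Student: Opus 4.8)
The plan is to build the sequence $(n_k)$ recursively, each time pushing backwards in time until we find a ``good'' return, i.e. a time at which the backward orbit of $x$ is both $T_\cF$-Pliss for $\cF$ and located in $K\setminus V$, and such that a slightly enlarged copy of the previous $\delta$-interval is still a $\delta$-interval. The main input is that $\cE$ is uniformly contracted on $\bigcup_{s\in[0,1]}\varphi_s(V)$ (assumption (A2$'$)) together with the summability/hyperbolicity machinery of Section~\ref{ss:hyperbolicreturns}: because $I$ is a $\delta$-interval, the backward iterates $P_{-t}(I)$ stay of length $\le\delta$, so for $\delta$ small the endpoints stay in the region where $\cW^{cs}$ is defined and the lengths $|P_{-t}(I)|$ cannot shrink to zero forever — otherwise the whole backward orbit of $0_x$ would have $\cE$ uniformly contracted in backward time, contradicting domination (a backward Pliss point must eventually occur). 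Concretely, I would first invoke a Pliss-type lemma: for a half-orbit $(\varphi_{-t}(x))_{t\ge 0}$, if $\cE$ is \emph{not} uniformly contracted in backward time along it, then there are infinitely many times $n$ at which $\varphi_{-n}(x)$ is $T_\cF$-Pliss for $\cF$; and among those, using that $\cE$ is uniformly contracted on $\bigcup_{s\in[0,1]}\varphi_s(V)$, one can moreover arrange (by the argument already packaged in Proposition~\ref{l.summability}) that infinitely many of them lie in $K\setminus V$. The dichotomy ``$\cE$ uniformly contracted in backward time vs.\ not'' is handled by observing that if it \emph{were} uniformly contracted in backward time, then $|P_{-t}(I)|\to 0$ and $I$ is already degenerate in the limit; but one can argue from $|I|>0$ and the Denjoy--Schwartz distortion control (Lemma~\ref{Lem:schwartz}) that the lengths $|P_{-t}(I)|$ stay bounded below along a subsequence, so the bad case does not occur.

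Second, I would run the recursion. Suppose $n_0<n_1<\dots<n_{k}$ have been constructed together with $\delta$-intervals $\widehat I_0,\dots,\widehat I_k$ at the corresponding points, with $\widehat I_j$ a $\delta$-interval at $\varphi_{-n_j}(x)$ containing $P_{-n_j}(I)$ and $P_{n_\ell-n_j}(\widehat I_\ell)$ for $\ell\le j$, and with $\varphi_{-n_j}(x)$ a $T_\cF$-Pliss point in $K\setminus V$. To produce $n_{k+1}$: consider $P_{-t}(\widehat I_k)$ for $t\ge 0$. As above, there are arbitrarily large times $m$ at which $\varphi_{-(n_k+m)}(x)$ is $T_\cF$-Pliss and in $K\setminus V$. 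I claim one can choose such an $m$ (call $n_{k+1}=n_k+m$) so that $P_{-m}(\widehat I_k)$, which is an arc of length $\le\delta$ in $\cW^{cs}(\varphi_{-n_{k+1}}(x))$ containing $0$, is a $\delta$-interval: its own backward iterates are backward iterates of $\widehat I_k$, hence still of length $\le\delta$. Then set $\widehat I_{k+1}:=P_{-m}(\widehat I_k)$. The containment properties are automatic: $P_{-n_{k+1}}(I)=P_{-m}(P_{-n_k}(I))\subset P_{-m}(\widehat I_k)=\widehat I_{k+1}$, and similarly $P_{n_\ell-n_{k+1}}(\widehat I_\ell)\subset\widehat I_{k+1}$ for $\ell\le k+1$. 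The only subtlety is ensuring the $\widehat I_k$ do not collapse — for this I would insist, using Lemma~\ref{l.summability-hyperbolicity} at the Pliss/hyperbolic point $\varphi_{-n_{k+1}}(x)$ (where $\cE$ is $(C_\cE,\lambda_\cE)$-hyperbolic in forward time, by Proposition~\ref{l.summability} applied to the string from $\varphi_{-\ell}(x)$), that the forward iterates of $\widehat I_{k+1}$ are summable, hence $\widehat I_{k+1}$ has length bounded below by a uniform constant times $|\widehat I_k|$ or, better, by a uniform constant independent of $k$. This is the step where the choice of the Pliss point matters: we must select $n_{k+1}$ to be the \emph{first} suitable return after $n_k$ so that the intermediate orbit segment spends its time either in $V$ (where $\cE$ contracts forward, hence expands the relevant lengths backward in a controlled way) or at non-Pliss points, exactly the situation covered by Proposition~\ref{l.summability}.

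Third, pass to the limit. By compactness of $K$ and of the space of arcs (say in the Hausdorff topology on subsets of the fibers, using a chart trivializing $\cN$ near accumulation points), extract a subsequence along which $\varphi_{-n_k}(x)\to x_\infty\in K$ and $\widehat I_k\to I_\infty$. Since each $\widehat I_k$ has length in $[c,\delta]$ for a uniform $c>0$, the limit $I_\infty$ is a genuine (non-degenerate) arc in $\cW^{cs}(x_\infty)$ containing $0_{x_\infty}$; and $|P_{-t}(I_\infty)|\le\delta$ for all $t\ge 0$ by continuity of $P_{-t}$ and the fact that each $\widehat I_k$ is a $\delta$-interval, so $I_\infty$ is a $\delta$-interval. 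Since $K\setminus V$ is closed and each $\varphi_{-n_k}(x)\in K\setminus V$, the limit point $x_\infty$ lies in $K\setminus V$ as well. This yields all three conclusions.

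\textbf{Main obstacle.} The delicate point is the uniform lower bound on $|\widehat I_k|$: a priori each backward passage could contract the interval, and only the interplay between the Pliss condition for $\cF$, the uniform contraction of $\cE$ on $\bigcup_{s\in[0,1]}\varphi_s(V)$, and the resulting $(C_\cE,\lambda_\cE)$-hyperbolicity for $\cE$ (Proposition~\ref{l.summability}) — fed into the summability estimate of Lemma~\ref{l.summability-hyperbolicity} and the distortion estimate of Lemma~\ref{Lem:schwartz} — prevents the lengths from degenerating. Getting the bookkeeping of ``first good return'' right, so that the hypotheses of Proposition~\ref{l.summability} are literally met at each stage, is the technical heart of the argument; everything else is compactness and continuity.
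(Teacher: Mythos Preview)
Your recursion and the passage to the limit are fine, and the existence of infinitely many $T_\cF$-Pliss backward iterates in $K\setminus V$ is exactly Lemma~\ref{Lem:hyperbolicreturns}. The genuine gap is your uniform lower bound on $|\widehat I_k|$, and the tools you invoke do not supply it. With your definition $\widehat I_{k+1}=P_{-(n_{k+1}-n_k)}(\widehat I_k)$, the forward hyperbolicity from Proposition~\ref{l.summability} and Lemma~\ref{l.summability-hyperbolicity} gives only
\[
|\widehat I_k|=|P_{n_{k+1}-n_k}(\widehat I_{k+1})|\le C_\cE\,\lambda_\cE^{-(n_{k+1}-n_k)/2}\,|\widehat I_{k+1}|,
\]
i.e.\ $|\widehat I_{k+1}|\ge C_\cE^{-1}\lambda_\cE^{(n_{k+1}-n_k)/2}|\widehat I_k|$. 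When consecutive Pliss times are close (nothing forbids $n_{k+1}-n_k=1$ repeatedly), the factor $C_\cE^{-1}$ accumulates multiplicatively and your sequence $|\widehat I_k|$ may well tend to $0$. Summability of forward iterates (which is what Lemma~\ref{l.summability-hyperbolicity} actually yields) and Denjoy--Schwartz distortion give nothing against this, because they control ratios along a \emph{single} orbit segment, not the product over infinitely many segments.

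The paper's argument is structurally different and uses two ingredients you do not have. First, the $\widehat I_k$ are taken to be \emph{maximal} $\delta$-intervals at $\varphi_{-n_k}(x)$ (not merely $P_{-n_k}(I)$). Second, one builds rectangles $R(\widehat I_k)\subset\cN_{\varphi_{-n_k}(x)}$ foliated by local unstable plaques (Proposition~\ref{p.rectangle}), with ${\rm Volume}(R(\widehat I_k))\ge C_R|\widehat I_k|$. A volume/pigeonhole argument (Lemma~\ref{Sub:disjointcase}) shows that if the projected rectangles were pairwise disjoint then $\sum_k|\widehat I_k|<\infty$; feeding this summability into Denjoy--Schwartz (Lemma~\ref{Lem:schwartz}) lets one enlarge $\widehat I_k$ to a strictly bigger $\delta$-interval, contradicting \emph{maximality}. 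Hence rectangles must intersect, and a separate geometric argument on how they overlap (again exploiting maximality) rules out $|\widehat I_k|\to 0$. Maximality is thus the mechanism that converts the distortion control into a genuine lower bound; without it, and without the rectangle/volume step, your ``main obstacle'' remains unresolved.
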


\subsubsection{Existence of hyperbolic returns}
\begin{Lemma}\label{Lem:hyperbolicreturns}
If $\delta>0$ is small enough,
any point $x\in K$ which admits a $\delta$-interval
has infinitely many backward iterates $\varphi_{-n}(x)$, $n\in \NN$, in $K\setminus V$ that are $T_\cF$-Pliss.
\end{Lemma}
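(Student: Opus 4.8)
The plan is to run a Pliss-lemma argument along the backward orbit of $x$, using the $\delta$-interval only to bound the backward expansion of $\cE$. Fix a $\delta$-interval $I\subset\cW^{cs}_x$ as in Definition~\ref{Def:deltainterval}; note that $P_{-n}(I)$ is then a $\delta$-interval at $\varphi_{-n}(x)$ for every $n\ge 0$. The first step is to show that the length constraint $|P_{-t}(I)|\le\delta$ prevents $\cE$ from expanding fast backward: since $I$ is tangent to $\cC^\cE$ and all of $P_{-t}(I)$ stays in the $\delta$-ball around the zero section, the cone field stays invariant on $K^-_{\eta_0}$ (Lemma~\ref{Lem:extend-bundle}), and because the maps $P_{-1}$ are $C^1$-close to their linearizations $DP_{-1}(0)$ on a $\delta$-neighbourhood of the zero section (with $C^1$-distance $O(\delta)$), one gets $|P_{-t}(I)|\ge c\,(1-L\delta)^t\,\|DP_{-t}|_{\cE(x)}\|\,|I|$ for uniform constants $c,L>0$. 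Combined with $|P_{-t}(I)|\le\delta$ this yields $\limsup_{t\to+\infty}\tfrac1t\log\|DP_{-t}|_{\cE(x)}\|\le\varepsilon(\delta)$ with $\varepsilon(\delta)\to0$ as $\delta\to0$; the same holds at each $\varphi_{-n}(x)$.

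Next I would feed this into domination to get strong average contraction of $\cF$ backward. Setting $b_n=\log\|DP_{-\tau_0}|_{\cF(\varphi_{-n\tau_0}(x))}\|$, the domination inequality~\eqref{e.domination} applied at $\varphi_{-n\tau_0}(x)$ gives $b_n\le -2\tau_0\log\lambda+\log\|DP_{-\tau_0}|_{\cE(\varphi_{-n\tau_0}(x))}\|$; since $\cE$ is one-dimensional and invariant, summing telescopes the $\cE$-terms into $\log\|DP_{-m\tau_0}|_{\cE(x)}\|$, so by Step~1 $\limsup_m\tfrac1m\sum_{n<m}b_n\le -2\tau_0\log\lambda+\tau_0\varepsilon(\delta)$. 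Choosing $\delta$ small so that $\tau_0\varepsilon(\delta)\le\tfrac12\tau_0\log\lambda$, the average is $\le -\tau_0\log\lambda-c_0$ with $c_0=\tfrac12\tau_0\log\lambda>0$, and $b_n$ is bounded above by compactness of $K$. The Pliss Lemma then gives a positive lower density of integers $n$ with $\prod_{i=0}^{m-1}\|DP_{-\tau_0}|_{\cF(\varphi_{-(n+i)\tau_0}(x))}\|\le\lambda^{-m\tau_0}$ for all $m\ge1$; call $n\tau_0$ a \emph{good time}. Each good time makes $\varphi_{-n\tau_0}(x)$ a $T_\cF$-Pliss point (shift $s=0$), so there are already infinitely many $T_\cF$-Pliss backward iterates.

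It remains to place infinitely many of these outside $V$. Two ingredients are used. First, the backward orbit of $x$ cannot eventually stay inside $\bigcup_{s\in[0,1]}\varphi_s(V)$: by (A2') $\cE$ is uniformly contracted there, so $\|DP_{-t}|_{\cE(x)}\|$ would grow at an exponential rate bounded below by a positive constant, contradicting the sub-exponential bound of Step~1 once $\delta$ is small. Second, the shift parameter $s\in[0,T_\cF]$ in the definition of a $T_\cF$-Pliss point (enlarging $T_\cF$ if necessary, which is harmless since Proposition~\ref{l.summability} only requires it large) lets one trade a good time lying inside $V$ for a $T_\cF$-Pliss iterate at the first subsequent exit of the orbit from $V$; controlling the excursion into $V$ uses again the uniform contraction of $\cE$ on $\bigcup_s\varphi_s(V)$ (the estimates behind Proposition~\ref{l.summability}). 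By the first ingredient these exits happen at arbitrarily large backward times, so one obtains infinitely many $T_\cF$-Pliss iterates $\varphi_{-n}(x)\in K\setminus V$, proving the lemma.

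The routine parts are Steps~1--2: the cone/distortion estimate turning $|P_{-t}(I)|\le\delta$ into $\limsup\tfrac1t\log\|DP_{-t}|_{\cE(x)}\|\le\varepsilon(\delta)$, and the telescoping-plus-Pliss-Lemma bookkeeping. The main obstacle is the last step: producing Pliss iterates \emph{outside} $V$. Positive density of Pliss times is easy, but a priori all of them could lie inside $V$; one must combine the impossibility of the backward orbit being trapped in $V$, the shift parameter in the Pliss-point definition, and the contraction of $\cE$ on $\bigcup_s\varphi_s(V)$ to propagate or absorb the Pliss estimate across the $V$-excursions. The granularity ($t_0$ versus $\tau_0$) and the combinatorics of the excursions are where care is needed.
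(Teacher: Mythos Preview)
Your Steps 1 and 2 are essentially the paper's argument: bound $\|DP_{-t}|_{\cE(x)}\|$ by $(1+\chi)^t\delta/|I|$ using the $\delta$-interval, feed this into the domination \eqref{e.domination}, and apply the Pliss lemma to obtain infinitely many $0$-Pliss backward iterates $\varphi_{-k\tau_0}(x)$. So far so good.

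The gap is in Step 3. Your ``trade'' mechanism via the shift parameter $s\in[0,T_\cF]$ only works when the excursion of the orbit inside $V$ between the $0$-Pliss time and the next exit has length at most $T_\cF$. Nothing prevents excursions in $V$ from being arbitrarily long, and you cannot ``enlarge $T_\cF$ if necessary'': $T_\cF$ is a fixed constant chosen once at the start of Section~\ref{s.topological-hyperbolicity} (see the Choice of constants), uniform over all points and all orbits. Your appeal to ``the estimates behind Proposition~\ref{l.summability}'' is where the real content should be, but you have not said what it does; in particular, uniform forward contraction of $\cE$ on $V$ does not by itself produce the backward $\cF$-Pliss inequality at the exit point.

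The paper's fix is to argue by contradiction rather than constructively. Fix a large $0$-Pliss time $\varphi_{-k\tau_0}(x)$ and suppose that \emph{no} iterate $\varphi_{-n}(x)$ with $1\le n\le k\tau_0$ is simultaneously in $K\setminus V$ and $T_\cF$-Pliss. Then the hypotheses of Proposition~\ref{l.summability} (second version, with $W=V$) are satisfied on the whole segment, and it outputs that $(\varphi_{-k\tau_0}(x),x)$ is $(C_\cE,\lambda_\cE)$-hyperbolic for $\cE$. This forces $|I|\le \delta\,C_\cE\,(1+\chi)^{k\tau_0}\lambda_\cE^{-k\tau_0}$, which tends to $0$ as $k\to\infty$ (since $1+\chi<\lambda_\cE$), contradicting $|I|>0$. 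Thus there must be a $T_\cF$-Pliss iterate in $K\setminus V$ before time $-k\tau_0$; since $k$ is arbitrarily large, there are infinitely many. The point is that Proposition~\ref{l.summability} already packages the combinatorics you were trying to do by hand.
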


\begin{proof}
Since the backward iterates $P_{-n}(I)$, $n\in \NN$, of a $\delta$-interval $I$
are still $\delta$-intervals, it is enough to show that
any point $x$ has at least one backward iterate by $\varphi_{1}$ in $K\setminus V$ that is $T_\cF$-Pliss. The proof is done by contradiction.

{Let $\chi>0$ such that
$1+\chi<\min(\lambda,\lambda_{\cE}).$}
If $\delta_1>0$ is small enough, then for any $\delta$-interval $I$ with $\delta<\delta_1$,
at any point $x$, we have
$$\|DP_{-t}{|\cE(x)}\|\leq (1+\chi)^t\;\frac{|P_{-t}(I)|}{|I|}\leq\frac{(1+\chi)^t\;\delta}{|I|},~~~\forall t\ge0.$$
With the domination estimate~\eqref{e.domination} of Section~\ref{ss.assumptions}, one gets for any $k\geq 0$:
$$\prod_{j=0}^{k-1}\|DP_{-\tau_0}{|\cF(\varphi_{-j\tau_0}(x))}\| \leq \frac{(1+\chi)^{k\tau_0}\;\lambda^{-2k\tau_0}\;\delta}{|I|}.$$
Using $(1+\chi)<\lambda$ and
Pliss lemma (see~\cite[Lemma 11.8]{Man87}), there exists an arbitrarily large integer $i$ such that for any $k\geq 0$ one has:
$$\prod_{j=0}^{k-1}\|DP_{-\tau_0}{|\cF({\varphi_{-(j+i)\tau_0}(x)})}\| \leq \lambda^{-k\tau_0}.$$
This proves that $x$ has arbitrarily large $0$-Pliss
backward iterates by $\varphi_{\tau_0}$.

Let us fix any of these $0$-Pliss backward iterates $\varphi_{-k\tau_0}(x)$.
By contradiction, we assume that there is no iterate $\varphi_{-n}(x)$ in $K\setminus V$ that is
$T_\cF$-Pliss
with $1\leq n\leq k\tau_0$. We can thus apply Proposition~\ref{l.summability} with $W=V$; for any such $n$, one gets
$$\|DP_{n}{|\cE(\varphi_{-k\tau_0}(x))}\|\leq C_{\cE}\lambda_{\cE}^{-n}.$$
As before, this implies that
$$|I|\leq |P_{-k\tau_0}(I)|C_{\cE}\lambda_{\cE}^{-k\tau_0}(1+\chi)^{k\tau_0}\leq \delta\; C_{\cE}\; (1+\chi)^{k\tau_0}\; \lambda_{\cE}^{-k\tau_0}.$$
Since $(1+\chi)/\lambda_{\cE}<1$ and $k$ is arbitrarily large, one gets $|I|=0$ which is a contradiction.
\end{proof}

\subsubsection{Rectangles associated to $\delta$-intervals of Pliss points}

When $\delta>0$ is smaller than $\eta$, any point $u$ in a $\delta$-interval $I$
belongs to $K^-_\eta$.
Consequently, it has a space $\cF(u)$ and a center-unstable plaque $\cW^{cu}(u)$.
Moreover by Lemma~\ref{l.cont3}, if its base point $x\in K$ is $T_\cF$-Pliss,
then $u$ is $(T_\cF,\lambda_{\cF})$-Pliss.

We will need to build a rectangle $R(I)$ foliated by unstable plaques for each $\delta$-interval $I$
above a Pliss point $x\in K$. As before $d$ denotes the dimension of the fibers $\cN_x$.

\begin{Proposition}\label{p.rectangle}
Fix $\beta_\cF\in (0,\beta_0/4)$.
There exist $\alpha_{min},\delta_0,C_{R}>0$  such that
for any $\delta\in (0,\delta_0]$, any $T_\cF$-Pliss point $x\in K\setminus V$ and any $\delta$-interval $I$ at $x$,
we associate a rectangle $R(I)\subset \cN_x$ which is the image of $[0,1]\times B_{d-1}(0,1)$
by a homeomorphism $\psi$ such that:
\begin{enumerate}
\item $\psi\colon [0,1]\times \{0\}\to \cN_x$ is a $C^1$-parametrization of $I$,
\item each $u\in R(I)$ belongs to a (unique) leaf
$\psi(\{z\}\times B_{d-1}(0,1))$ denoted by $W^u_{R(I)}(u)$;
it is contained in $\cW^{cu}(u)$
(and $\cW^{cu}(\psi(z,0))$), and it contains a disc with radius $\alpha_{min}$,

\item ${\rm Volume}(R(I))\geq C_{R}.|I|$ inside the fiber $\cN_x$,
\item for any $t>0$, ${\rm Diam}(P_{-t}(W^u_{R(I)}(u)))<\beta_\cF\lambda_{\cF}^{-t/2}$; hence $P_{-t}(R(I))\subset B(0_{\varphi_{-t}(x)},2\beta_\cF)\subset \cN_{\varphi_{-t}(x)}$.
\end{enumerate}
Moreover, if $x,x'$ are $T_\cF$-Pliss points with $\delta$-intervals $I,I'$ and if $t,t'>0$
satisfy:
\begin{itemize}
\item[--] $\varphi_{-t}(x),\varphi_{-t'}(x')$ belong to $K\setminus V$ and are $r_0$-close,
\item[--] $P_{-t}(R(I))$ and the projection of $P_{-t'}(R(I'))$ by $\pi_{\varphi_{-t}(x)}$ intersect,
\end{itemize}
then, the foliations of $P_{-t}(R(I))$ and $\pi_{\varphi_{-t}(x)}\circ P_{-t'}(R(I'))$
coincide:
if $\pi_{\varphi_{-t}(x)}P_{-t'}(W^u_{R(I')}(u'))$ and $P_{-t}(W^u_{R(I)}(u))$ intersect, they are contained
in a same $C^1$-disc tangent to $\cC^\cF$.
\end{Proposition}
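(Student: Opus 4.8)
The plan is to build the rectangle $R(I)$ as a union of center-unstable plaque pieces issued from points of $I$, and then to establish the coherence of the foliations by comparing them along their long backward orbits, where the Pliss condition forces uniform expansion and therefore geometric separation of the $\cF$-boundaries. Since $I\subset\cW^{cs}_x$ and the base point $x$ is $T_\cF$-Pliss, Lemma~\ref{l.cont3} gives that every $u\in I$ (for $\delta<\eta$) is $(T_\cF,\lambda_\cF)$-Pliss for $\cF$. First I would invoke Proposition~\ref{p.unstable} with the constants $\lambda_\cF,\beta_\cF,T_\cF$: it provides $\alpha>0$ and $\eta>0$ such that $\diam(P_{-t}(\cW^{cu}_\alpha(u)))\le\alpha_\cF\lambda_\cF^{-t/2}$ for every $(T_\cF,\lambda_\cF)$-Pliss point $u$, and moreover (Remark~\ref{r.plaque-invariance}) $P_{-t}(\cW^{cu}_\alpha(u))\subset\cW^{cu}(P_{-t}(u))$. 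I would then take a $C^1$ parametrization $\psi(\cdot,0)\colon[0,1]\to I$ of $I$ and, for each $z$, attach the plaque $\cW^{cu}_\alpha(\psi(z,0))$ to obtain $\psi(\{z\}\times B_{d-1}(0,1))$. One must check that $R(I)$ is genuinely a rectangle in the sense of Definition~\ref{d.rectangle}: the $\cF$-boundary is the union of the two plaques over the endpoints of $I$ — these are $C^1$ discs tangent to $\cC^\cF$ — and the "core" curve is $I$ itself, tangent to $\cC^\cE$. Disjointness of the leaves (so that $\psi$ is a homeomorphism onto its image) follows because distinct leaves over distinct points of $I$ stay in disjoint cones $\cC^\cF$ and the plaques have bounded size; one reduces $\alpha$ and $\delta_0$ if necessary. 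This yields items (1), (2) and (4), with $\alpha_{min}$ a fixed fraction of $\alpha$; item (3) is then immediate from a Fubini-type estimate since each fiber-leaf has volume bounded below and $I$ has length $|I|$ — this gives the constant $C_R$.

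The substantive part is the "Moreover" clause: coherence of the foliations of $P_{-t}(R(I))$ and $\pi_{\varphi_{-t}(x)}\circ P_{-t'}(R(I'))$ when these intersect near a point of $K\setminus V$. The idea is to push the intersecting leaves $W^u_{R(I)}(u)$ and $W^u_{R(I')}(u')$ further backward and compare them. By item (4) both are exponentially contracted under $P_{-s}$ for all $s\ge0$, so after the identification $\pi_{\varphi_{-t}(x)}$ (which, for $r_0$ small, distorts cones only slightly — the same kind of estimate already used in the proof of Proposition~\ref{p.coherence}) both lie in the cone $\cC^\cF$ at $\varphi_{-t}(x)$ and remain in $\cC^\cF$ under all further backward iterates, by invariance of $\overline{\cC^\cF}$ (Lemma~\ref{Lem:extend-bundle}). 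Then I would apply the Global invariance (Remark~\ref{r.identification}.(g)) to reparametrize the two backward orbits so that they shadow each other, and use the uniqueness/coherence machinery: two $\cC^\cF$-tangent discs that are identified at a point and stay close under all backward iterates must coincide inside a common $C^1$-disc tangent to $\cC^\cF$. Concretely this is the center-unstable analogue of Proposition~\ref{p.uniqueness} and Proposition~\ref{p.coherence}, applied to the flow $t\mapsto P_{-t}$ with the roles of $\cE$ and $\cF$ exchanged: the domination makes the distance between the two discs shrink exponentially faster than their distance to the common zero-section point, which forces them to be equal.

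The main obstacle I anticipate is bookkeeping the interplay between the two times $t,t'$ and the reparametrization $\theta$ furnished by Global invariance. The two rectangles sit over orbits issued from possibly different Pliss points $x,x'$ and are iterated for possibly different durations $t,t'$ before their backward orbits are compared; one needs the reparametrization to be uniformly bi-Lipschitz (so that the exponential estimates from Proposition~\ref{p.unstable} survive the time change, cf.\ Lemma~\ref{l.shadowingandhyperbolicity}) and one must verify that the "stay in $\cC^\cF$ forever backward" property is not destroyed at the finite times $t,t'$ where the identification is applied — this is where the hypothesis that $\varphi_{-t}(x),\varphi_{-t'}(x')\in K\setminus V$ and the choice of $r_0\ll d(K\setminus U,K\setminus V)$ enter, guaranteeing the identification $\pi$ is defined and well-behaved at the comparison point. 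Once the cone-invariance and the bi-Lipschitz shadowing are in place, the coherence conclusion follows from the uniqueness argument exactly as in Proposition~\ref{p.uniqueness}. I would also reduce $\delta_0$ one last time at the end so that all plaques involved remain within the domain where Theorem~\ref{t.plaque} and Lemma~\ref{Lem:extend-bundle} apply.
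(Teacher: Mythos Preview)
Your overall strategy matches the paper's: build $R(I)$ by attaching center-unstable plaques along $I$ (using Lemma~\ref{l.cont3} and Proposition~\ref{p.unstable}), then derive the coherence of foliations from the uniqueness machinery. Two points, however, are handled differently in the paper and your versions are not quite complete.

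\emph{Disjointness of the leaves.} Your claim that ``distinct leaves over distinct points of $I$ stay in disjoint cones $\cC^\cF$'' is not a correct statement: all leaves are tangent to the \emph{same} cone field $\cC^\cF$, and a cone argument alone does not prevent two nearby plaques from intersecting. The paper argues instead that two plaques $\psi(\{s\}\times B_{d-1}(0,1))$ and $\psi(\{s'\}\times B_{d-1}(0,1))$ with $s\neq s'$ cannot lie in a common $C^1$-disc tangent to $\cC^\cF$ (since they meet the transverse curve $I$ at distinct points), and then invokes the coherence Proposition~\ref{p.coherence} (in its center-unstable version) to conclude they are disjoint. This step requires that $x$ has arbitrarily large backward iterates in the $r_0$-neighborhood of $K\setminus V$, which is supplied by Lemma~\ref{Lem:hyperbolicreturns}; you do not mention this input. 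Injectivity of $\psi$ then follows, and the homeomorphism property by invariance of domain.

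\emph{Volume estimate.} The paper does not use your Fubini over the $\cF$-leaves. It first applies Proposition~\ref{p.distortion} (with the constants $T_\cF,\lambda_\cF$) to get a uniform distortion bound $\Delta$ on $R(I)$, and then integrates along curves tangent to $\cC^\cE$: every such curve joining the two components of $\partial^\cF R(I)$ has length at least $\Delta^{-1}|I|$, and Fubini over a $(d-1)$-parameter family of such curves yields $\mathrm{Volume}(R(I))\ge C_R|I|$. Your alternative (each $\cF$-leaf has $(d-1)$-volume bounded below, integrate over $I$) can be made to work, but it needs a co-area argument with controlled Jacobian for the holonomy projection $R(I)\to I$; the paper's route via Proposition~\ref{p.distortion} avoids this.

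For the ``Moreover'' clause you essentially re-derive what the paper packages in one line as ``a direct consequence of the coherence (Proposition~\ref{p.coherence})''. Your elaboration is correct, and the bookkeeping you worry about (reparametrization via Global invariance, preservation of the cone condition through $\pi$) is exactly what is hidden inside the proof of Proposition~\ref{p.coherence}.
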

\begin{Remark-numbered}\label{r.intersection}
Assume {that} $\beta_\cF$ and $r>0$ {are} small enough.
By the Global invariance,
under the assumptions of the last part of Proposition~\ref{p.rectangle},
and assuming $d(\varphi_{-t}(x),\varphi_{-t'}(x'))<r$,
there exists $\theta\in \lip$ such that $\theta(0)=0$ and 
\begin{itemize}
\item[--] the distance $d(\varphi_{-\theta(s)-t'}(x'),\varphi_{-s-t}(x))$ for $s>0$
remains bounded (and arbitrarily small if $\beta_\cF>0$ and
$r$ have been chosen small enough),
\item[--]  $P_{-s}\circ \pi_{\varphi_{-t}(x)}=\pi_{\varphi_{-s-t}(x)}\circ P_{-\theta(s)}$ on $P_{-t'}(R(I'))$
when $\varphi_{-s-t}(x)$ and $\varphi_{-\theta(s)-t'}(x')$ belong to $U$.
\end{itemize}
\end{Remark-numbered}

\begin{proof}[Proof of Proposition~\ref{p.rectangle}]
By Proposition~\ref{p.distortion}, one associates to $T_{\cF},\lambda_{\cF}$
some constants $\Delta,\beta$. One can reduce $\beta_\cF$ to ensure $2\beta_\cF<\beta$.

By Proposition~\ref{p.unstable} one associates to $T_\cF$ and $\beta_\cF$,
some quantities $\alpha',\eta'$. We will assume that $\delta$ is smaller than $\eta'$
so that for any $u$ in the $\delta$-interval $I$ and any $t\geq 0$ we have
a backward exponential contraction of the unstable plaque of size $\alpha'$:
$$\forall t\geq 0,\quad \diam(P_{-t}(\cW^{cu}_{\alpha'}(u)))\leq \beta_\cF\lambda_{\cF}^{-t/2}.$$
Thus, for $t$ large enough, the third item of Proposition~\ref{p.distortion} is satisfied by domination and backward contraction.

One parametrizes the curve $I$ by $[0,1]$.
From the plaque-family Theorem~\ref{t.plaque}, there exists a continuous
map $\psi\colon [0,1]\times \RR^{d-1}\to \cN_x$
such that $\psi([0,1]\times \{0\})=I$.
Up to rescaling $\RR^{d-1}$, the image $\psi(\{s\}\times B_{d-1}(0,1))$ is small,
hence is contained in $\cW^{cu}_{\alpha'}(\psi(s,0))$ for each $s\in [0,1]$.

Note that two plaques $\psi(\{s\}\times B_{d-1}(0,1)),\psi(\{s'\}\times B_{d-1}(0,1))$, for $s\neq s'$,
can not be contained in a same $C^1$-disc tangent to $\cC^\cF$ since they intersect
the transverse curve $I$ at two different points.
By coherence, they are disjoint: indeed since the backward orbit of
$x$ has arbitrarily large backward iterates in $K\setminus V$
(see Lemma~\ref{Lem:hyperbolicreturns}), Proposition~\ref{p.coherence} applies.

Hence $\psi$
is injective on $[0,1]\times B_{d-1}(0,1)$ and is a homeomorphism on its image $R(I)$ by the invariance of domain theorem.
In particular $R(I)$ satisfies Definition~\ref{d.rectangle} and is a rectangle.
The coherence again implies that for any $u\in \psi(\{s\}\times B_{d-1}(0,1))$,
the plaque $\cW^{cu}(u)$ contains $\psi(\{s\}\times B_{d-1}(0,1))$.
By compactness, there exists $\alpha_{min}>0$ (which does not depend on $x$ and $I$) such that
$\psi(\{s\}\times B_{d-1}(0,1))$ contains $\cW^{cu}_{\alpha_{min}}(\psi(s,0))$ for any $s\in [0,1]$.

The rectangle $R(I)$ has distortion bounded by $\Delta$ (from Proposition~\ref{p.distortion}),
hence one bounds ${\rm Volume}(R(I))$ from below by using Fubini's theorem and integrating along curves
tangent to $\cC^\cE$. 
This gives the four items of the lemma.

The last part is a direct consequence of the coherence (Proposition~\ref{p.coherence}).
\end{proof}

\subsubsection{Maximal $\delta$-intervals}
In the setting of Proposition~\ref{p.limit},
let us consider all the integers $n_0=0< n_1<n_2<\cdots<n_k<\cdots$ such that
the backward iterate $x_k:=\varphi_{-n_k}(x)$ belongs to $K\setminus V$
and is $T_\cF$-Pliss.
We introduce inductively some maximal $\delta$-intervals $\widehat{I}_{k}$ at these iterates
such that $I\subset \widehat I_0$ and $P_{n_k-n_{k+1}}(\widehat I_k)\subset \widehat I_{k+1}$.
We denote $R_k=R(\widehat{I}_{k})$ as in Proposition~\ref{p.rectangle}.

Lemma~\ref{l.summability-hyperbolicity} associates to $C_{\cE},\lambda_{\cE}$ some constants
$C'_{\cE},\delta_\cE$. Let us assume $\delta<\delta_\cE$.
From the definition of the sequence $(n_k)$, Proposition~\ref{l.summability}
implies that $(\varphi_{-n_{k+1}}(x),\varphi_{-n_{k}}(x))$ is $(C_{\cE},\lambda_{\cE})$-hyperbolic for $\cE$ and for each $k$.
Lemma~\ref{l.summability-hyperbolicity} then gives
\begin{equation}\label{e.bounded}
\sum_{m=n_k}^{n_{k+1}}|P_{n_k-m}(\widehat I_{k})|
\leq C'_{\cE}\; |\widehat I_{k+1}|.
\end{equation}

\subsubsection{Non-disjointness}

\begin{Lemma}[Existence of intersections]\label{Sub:disjointcase}
If $\delta_0>0$ is small enough, {then} for any $\delta\in (0,\delta_0]$, for any $r>0$
there exist $k<\ell$ arbitrarily large such that
$d(x_k,x_\ell)<r$ and the interior of $\pi_{x_k}({R}_\ell)$ and the interior of $R_k$ intersect.
\end{Lemma}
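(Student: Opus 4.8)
The plan is to exploit two facts established earlier: the uniform lower bound on the volume of the rectangles $R_k=R(\widehat I_k)$ (item 3 of Proposition~\ref{p.rectangle}: $\mathrm{Volume}(R_k)\ge C_R|\widehat I_k|$), and the fact that the intervals $\widehat I_k$ have length uniformly bounded \emph{below}. This second fact is where Lemma~\ref{l.periodic} and the summability estimate~\eqref{e.bounded} enter. Indeed, from Lemma~\ref{Lem:hyperbolicreturns} the sequence $(n_k)$ is infinite, and by maximality of each $\widehat I_k$ together with~\eqref{e.bounded} the lengths $|\widehat I_k|$ cannot go to zero: otherwise, since $P_{n_k-n_{k+1}}(\widehat I_k)\subset \widehat I_{k+1}$ and the total sum of iterated lengths is controlled by $C'_\cE|\widehat I_{k+1}|$, the intervals would contract to a point contradicting that $I\subset \widehat I_0$ has positive length. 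Hence there is $\ell_0>0$ with $|\widehat I_k|\ge \ell_0$ for all $k$, and therefore $\mathrm{Volume}(R_k)\ge C_R\ell_0>0$ uniformly.

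**Key steps.** First I would record the uniform volume lower bound just described. Second, I would use compactness of $K$ and the finite upper bound on the diameters of the rectangles (item 4 of Proposition~\ref{p.rectangle} with $t=0$: each $R_k$ lies in $B(0_{x_k},2\beta_\cF)\subset\cN_{x_k}$): passing to a subsequence, the base points $x_k=\varphi_{-n_k}(x)\in K\setminus V$ converge to some $x_\infty\in K\setminus V$, and the rectangles $R_k$ converge (in the Hausdorff topology inside the bundle $\cN$, using a local trivialization near $x_\infty$) to a limit rectangle $R_\infty\subset\cN_{x_\infty}$ with $\mathrm{Volume}(R_\infty)\ge C_R\ell_0>0$. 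Third — the pigeonhole / Poincar\'e-recurrence step — I would fix $r>0$ and cover a neighborhood of $K\setminus V$ by finitely many balls of radius $r/2$; infinitely many $x_k$ lie in one such ball, and among those, since their rectangles all have volume $\ge C_R\ell_0$ while sitting inside the fixed-radius ball $B(0,2\beta_\cF)$ of the (finitely many, up to the identification $\pi$) nearby fibers, two of them — say $R_k$ and $\pi_{x_k}(R_\ell)$ with $k<\ell$ both arbitrarily large and $d(x_k,x_\ell)<r$ — must have intersecting interiors, for a disjoint family of open sets of volume $\ge C_R\ell_0$ inside a bounded region is necessarily finite. Here the continuity of the identification $\pi$ (from Definition~\ref{Def:identification}, together with Remark~\ref{r.identification}(a) allowing $r_0$ to be reduced) guarantees that $\pi_{x_k}(R_\ell)$ is a well-defined rectangle of comparable volume in $\cN_{x_k}$ whenever $d(x_k,x_\ell)<r\le r_0$.

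**Main obstacle.** The delicate point is the \emph{uniform} volume lower bound, i.e. showing $|\widehat I_k|\not\to 0$. The estimate~\eqref{e.bounded} controls $\sum_{m=n_k}^{n_{k+1}}|P_{n_k-m}(\widehat I_k)|$ by $C'_\cE|\widehat I_{k+1}|$, so if $|\widehat I_{k+1}|\to 0$ along a subsequence one would want to conclude $|\widehat I_k|\to 0$, and iterating back to $k=0$ contradict $|\widehat I_0|\ge |I|>0$; but one must be careful that the implication propagates through \emph{all} indices between $0$ and the large index, not just consecutive ones — this is exactly what the telescoping sum in~\eqref{e.bounded} delivers, since $|\widehat I_k|$ appears as the first term $|P_{0}(\widehat I_k)|$ of the sum bounded by $C'_\cE|\widehat I_{k+1}|$, giving $|\widehat I_k|\le C'_\cE|\widehat I_{k+1}|$ directly, hence $|I|\le |\widehat I_0|\le (C'_\cE)^{k}|\widehat I_{k}|$ would be impossible if $|\widehat I_k|$ decayed. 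Wait — that inequality goes the wrong way to be useful as stated; the correct reading is that maximality of $\widehat I_k$ forces $|\widehat I_k|$ to be at least a definite fraction, and Lemma~\ref{l.periodic}-type expansion is what prevents collapse. I would therefore combine: (i) the trivial bound $|\widehat I_{k}|\ge |P_{n_k-n_{k+1}}(\widehat I_k)|\cdot(\text{contraction rate})^{-1}$ from hyperbolicity of the segment $(\varphi_{-n_{k+1}}(x),\varphi_{-n_k}(x))$, and (ii) the fact that $P_{n_k-n_{k+1}}(\widehat I_k)\subset\widehat I_{k+1}$ with $I\subset\widehat I_0$, to propagate a uniform lower bound forward. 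Getting this bookkeeping exactly right, and verifying that the constants $C'_\cE,\lambda_\cE$ are genuinely independent of $k$ (which follows from Proposition~\ref{l.summability} applied uniformly with $W=V$), is the technical heart; the pigeonhole argument itself is then routine.
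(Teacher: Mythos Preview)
Your argument has a genuine circularity. You want to use a uniform lower bound $|\widehat I_k|\ge \ell_0>0$ as input to a volume pigeonhole, but that lower bound is \emph{not available} at this point in the paper: the ``Non-shrinking property'' is the next lemma, and its proof \emph{uses} Lemma~\ref{Sub:disjointcase}. You cannot invoke it here. You yourself notice the problem (``that inequality goes the wrong way''): from~\eqref{e.bounded} and the hyperbolicity of the segments $(\varphi_{-n_{k+1}}(x),\varphi_{-n_k}(x))$ you only get bounds of the form $|\widehat I_k|\le C'_\cE|\widehat I_{k+1}|$, which do not rule out $|\widehat I_k|\to 0$. Your fallback (i)--(ii) does not fix this, and Lemma~\ref{l.periodic} is irrelevant since there is no periodic $\delta$-interval in sight.

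The paper's proof takes a completely different route: it argues by contradiction and exploits \emph{maximality} of the $\widehat I_k$ rather than a lower bound on their length. If for all large $k<\ell$ with $d(x_k,x_\ell)<r$ the projected rectangles were disjoint, then projecting everything to fibers over a finite set $Z\subset U$ gives $\sum_k|\widehat I_k|<\infty$ from the volume bound. Combining this with~\eqref{e.bounded} yields a uniform bound $\sum_{m\ge 0}|P_{-m}(\widehat I_k)|\le C_{Sum}$, and now the Denjoy--Schwartz Lemma~\ref{Lem:schwartz} furnishes $\eta_S>0$ such that the enlarged interval $J\supset\widehat I_k$ with $|J|=(1+\eta_S)|\widehat I_k|$ satisfies $|P_{-m}(J)|\le 2|P_{-m}(\widehat I_k)|$ for all $m$; for $k$ large this makes $J$ a $\delta$-interval strictly containing $\widehat I_k$, contradicting maximality. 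The key idea you are missing is this Denjoy--Schwartz enlargement step --- the contradiction comes from maximality, not from a preexisting size lower bound.
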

\begin{proof}
Since $(\pi_{x,y})$ is a continuous family of diffeomorphisms on the open set
$U$ containing $K\setminus V$, the following property holds provided $r$ is small enough:

For any $x,y\in K\setminus V$ with $d(x,y)<r$, the projection $\pi_{y}\colon \cN_x\to\cN_y$ satisfies:
$$\pi_{y}(B(0,\beta_0/2))\subset B(0,\beta_0),$$
$${\rm det}(D\pi_{x,y})(u)\leq 2 \text{ for any } u\in B(0,\beta_0/2).$$
By compactness, one can find a finite set $Z\subset U$
such that any $x\in K\setminus V$ satisfies $d(x,z)<r/2$ for some $z\in Z$.
For each point $x_k$ we associate some $z_k\in Z$ such that
$d(x_k,z_k)<r/2$. Since $R_k\subset B(0,2\beta_\cF)\subset \cN_{x_k}$ with $\beta_\cF<\beta_0/4$ and from
Proposition~\ref{p.rectangle}, we have
$$\pi_{z_{k}}(R_k)\subset B(0,\beta_0)\subset \cN_{z_k},$$
$${\rm Volume}(\Interior(\pi_{z_{k}}(R_k)))\geq \frac 1 2 {\rm Volume}(\Interior(R_k)) \geq \frac {C_R} {2} |\widehat I_k|.$$

Let us assume by contradiction that the statement of the lemma does not hold.
One deduces that there exists $s\geq 0$ such that for any $z\in Z$ and any $k,\ell\geq s$
such that $z_k=z_\ell=z$ we have
$$\pi_{z}(\Interior(R_k))\cap \pi_{z}(\Interior( R_\ell))=\emptyset.$$
In particular if $C_{Vol}$ denotes the supremum of $\text{Volume}(B(0_x,\beta_0))$ over $x\in K$,
$$\sum_{k=1}^\infty|\widehat{I}_k|\le 2C_R^{-1}\;C_{Vol}\text{Card}(Z).$$

With~\eqref{e.bounded} we get for any $k$:
\begin{equation}\label{e.sum}
\sum_{m=0}^{+\infty}|P_{-m}(\widehat I_k)|\leq
C'_{\cE}\sum_{\ell=k}^\infty|\widehat{I}_\ell|\leq C_{Sum}:=2C_R^{-1}\;C_{Vol}\text{Card}(Z)\; C'_{\cE}.
\end{equation}
By Denjoy-Schwartz Lemma~\ref{Lem:schwartz} one gets $\eta_S>0$ and for $k$ large one can
introduce an interval $J\subset \cW^{cs}(x_k)$ containing $\widehat I_k$ and of length
equal to $(1+\eta_S)|\widehat I_k|$. One gets
$$|P_{-m}(J)|\leq 2|P_{-m}(\widehat I_k)|,~~~\forall m\ge 0.$$
From~\eqref{e.sum}, $\sup_{m\geq 0} |P_{-m}(\widehat I_k)|$ is arbitrarily small for $k$ large,
hence $|P_{-t}(J)|$ is smaller than $\delta$ for any $t>0$. This proves
that $J$ is a $\delta$-interval, contradicting the maximality of $\widehat I_k$.
\end{proof}

\subsubsection{Non-shrinking property}
\begin{Lemma}
If $\delta_0$ is small enough, 
the length $|\widehat I_k|$ does not go to zero as $k\to \infty$.
\end{Lemma}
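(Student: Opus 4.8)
The plan is to argue by contradiction: suppose along a subsequence $|\widehat I_{k}|\to 0$. I would combine this with the intersection statement from Lemma~\ref{Sub:disjointcase} and the volume estimate from Proposition~\ref{p.rectangle}. Recall that $R_k=R(\widehat I_k)$ has ${\rm Volume}(R_k)\geq C_R|\widehat I_k|$ but, crucially, each unstable leaf $W^u_{R_k}(u)$ of $R_k$ contains a disc of the \emph{uniform} radius $\alpha_{min}$, independently of $|\widehat I_k|$. So if $|\widehat I_k|$ is small, $R_k$ is a very ``thin'' rectangle: short in the $\cE$-direction, but of fixed size $\alpha_{min}$ in the $\cF$-directions. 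The idea is that such a thin rectangle cannot be consistently avoided by all the other rectangles $\pi_{x_k}(R_\ell)$ with the coherence/foliation-matching property, and in fact the backward iterates will force $\widehat I_k$ to grow under $P_{-t}$ for a uniformly bounded time, contradicting that $\widehat I_k$ is a $\delta$-interval and, after passing to a maximal interval, contradicting Denjoy--Schwartz summability exactly as in Lemma~\ref{Sub:disjointcase}.

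More precisely, first I would fix $r>0$ small enough so that the projections $\pi_{x_k,x_\ell}$ distort volumes and $\cC^\cF$-cones boundedly (as in the proof of Lemma~\ref{Sub:disjointcase} and Remark~\ref{r.intersection}), and apply Lemma~\ref{Sub:disjointcase} to get, for arbitrarily large $k<\ell$, indices with $d(x_k,x_\ell)<r$ and ${\rm Int}(\pi_{x_k}(R_\ell))\cap {\rm Int}(R_k)\neq\emptyset$. By the last part of Proposition~\ref{p.rectangle}, the unstable foliations of $R_k$ and of $\pi_{x_k}(R_\ell)$ then coincide where they meet: an intersecting pair of leaves lies in a common $C^1$-disc tangent to $\cC^\cF$. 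Since each leaf of $R_\ell$ contains a disc of radius $\alpha_{min}$, the projection $\pi_{x_k}(R_\ell)$ crosses $R_k$ completely in the $\cE$-direction, i.e.\ it contains leaves joining the two components of $\partial^\cF R_k$; by the bounded distortion of $R_k$ (again Proposition~\ref{p.distortion}, built into Proposition~\ref{p.rectangle}) all such crossing $\cC^\cE$-curves have length comparable to $|\widehat I_k|$. Transporting back by $P_{n_k-n_\ell}$ (using Remark~\ref{r.intersection} to match $P_{-s}\circ\pi_{x_k}$ with $\pi_{\varphi_{-s-n_k}(x)}\circ P_{-\theta(s)}$ along the orbit, so the foliations stay matched and the $\cC^\cF$-boundaries stay far apart), the central curve of $R_\ell$ — which has length $|\widehat I_\ell|$ — is carried to a $\cC^\cE$-curve inside $P_{n_k-n_\ell}(R_k)$ that crosses it, hence of length comparable to $|P_{n_k-n_\ell}(\widehat I_k)|$. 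Thus $|\widehat I_\ell|$ and $|P_{n_k-n_\ell}(\widehat I_k)|$ are comparable up to a uniform constant.

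Now the contradiction: if $|\widehat I_k|\to 0$ along a subsequence, then by \eqref{e.bounded} and the telescoping over the blocks $[n_k,n_{k+1}]$ one gets $\sum_{m\ge 0}|P_{-m}(\widehat I_k)|\to 0$ as $k\to\infty$ (the tail $\sum_{\ell\ge k}|\widehat I_\ell|$ is small because of the comparability just established together with the summability \eqref{e.sum} — or, more directly, because the maximality of the $\widehat I_\ell$ together with Lemma~\ref{l.periodic} would already be violated). Then, exactly as at the end of the proof of Lemma~\ref{Sub:disjointcase}, Denjoy--Schwartz (Lemma~\ref{Lem:schwartz}) lets us thicken $\widehat I_k$ to an interval $J$ with $|J|=(1+\eta_S)|\widehat I_k|$ and $|P_{-t}(J)|\le 2|P_{-t}(\widehat I_k)|<\delta$ for all $t>0$, so $J$ is a $\delta$-interval strictly larger than $\widehat I_k$, contradicting the maximality of $\widehat I_k$.

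The main obstacle I expect is the bookkeeping in the previous paragraph: one must carefully transport the matched unstable foliations of $R_\ell$ and $R_k$ backward in time while controlling (i) that the $\cC^\cF$-boundary components stay $10\max(\diam)$-separated so that Proposition~\ref{p.distortion} keeps applying along the whole orbit segment, and (ii) that the reparametrization $\theta$ from Remark~\ref{r.intersection} does not destroy the comparison of lengths. Making ``$\pi_{x_k}(R_\ell)$ crosses $R_k$ in the $\cE$-direction'' precise — i.e.\ that a thin rectangle intersected by a fixed-width rectangle with matching foliation must be crossed all the way through — is the geometric heart of the argument; everything else is a repetition of the Denjoy--Schwartz contradiction already used for Lemma~\ref{Sub:disjointcase}.
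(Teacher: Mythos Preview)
Your setup is right --- contradiction, Lemma~\ref{Sub:disjointcase} to find intersecting rectangles, and the matching of unstable foliations --- but the central geometric claim is wrong. You assert that $\pi_{x_k}(R_\ell)$ ``crosses $R_k$ completely in the $\cE$-direction'' because its unstable leaves have size $\alpha_{min}$. The unstable leaves lie in the $\cF$-direction; the $\cE$-width of $R_\ell$ is $|\widehat I_\ell|$, which is just as small as $|\widehat I_k|$ under the contradiction hypothesis. So both rectangles are thin in the $\cE$-direction and fat in the $\cF$-direction; nothing forces one to cross the other. Consequently your comparability of $|\widehat I_\ell|$ with $|P_{n_k-n_\ell}(\widehat I_k)|$ is not established, and the downstream summability and Denjoy--Schwartz thickening collapse. (Also note that~\eqref{e.sum} is proved in Lemma~\ref{Sub:disjointcase} only under the disjointness hypothesis; you cannot invoke it here.)

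The paper's argument replaces your crossing claim by a two-case analysis of how $\widehat I_k$ and $\pi_{x_k}(\widehat I_\ell)$ sit relative to one another once the foliations are matched. Either an endpoint leaf $W^u_{R_k}(u)$ of $R_k$ meets $\pi_{x_k}(\widehat I_\ell)$ at an interior point, in which case the intersection $\pi_{x_k}(R(\widehat I_\ell))\cap \cW^{cs}(x_k)$ gives an interval $J$ overlapping $\widehat I_k$ and strictly extending it; Global invariance shows $\widehat I_k\cup J$ is still a $\delta$-interval, contradicting maximality directly. Or the endpoint leaves of the two rectangles coincide, so that $\widetilde P_{-T}:=\pi_{x_k}\circ P_{n_k-n_\ell}$ maps $R_k$ into itself; iterating this return one finds points $\varphi_{-t_i}(x_k)$ close to $x_k$ projecting into $R_k$, and for large $j$ one produces a $\delta/2$-interval near $x_j$ of length uniformly bounded away from zero that strictly contains $\widehat I_j$ --- again contradicting maximality. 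You should replace your crossing step by this dichotomy; the second case in particular has no analogue in your outline.
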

\begin{proof}
We first introduce an integer $N\geq 1$ such that $\beta_\cF\lambda_{\cF}^{-N/2}$ is much smaller
than $\alpha_{min}$.

We argue by contradiction. Assume that $|\widehat I_k|$ is arbitrarily small as $k$ is large.
From~\eqref{e.bounded} we deduce that for any
$\delta'\in (0,\delta)$, if $k$ is large enough, then $\widehat I_k$
is a $\delta'$-interval.

By Lemma~\ref{Sub:disjointcase},
there exist $k\neq \ell$ large such that $x_k,x_\ell$ are arbitrarily close
and we have that $\pi_{x_k}(\Interior({R}_\ell))\cap \operatorname{Interior}({R}_k)\neq\emptyset$.
By Remark~\ref{r.intersection}, the unstable foliations of
$\pi_{x_k}({R}_\ell)$ and ${R}_k$ coincide on the intersection,
hence one of the following cases occurs (see Figure~\ref{f.shrinking}).
\begin{enumerate}

\item There exists an endpoint $u$ of $\widehat I_k$
such that $W^{u}_{R_k}(u)$ intersects
$\pi_{x_k}(\widehat I_\ell)$ at a point which is not endpoint.

\item The endpoints of $\widehat I_k$ and $\pi_{x_k}(\widehat I_\ell)$
have the same unstable manifolds.

\end{enumerate}

\begin{figure}[ht]
\begin{center}
\includegraphics[width=13cm]{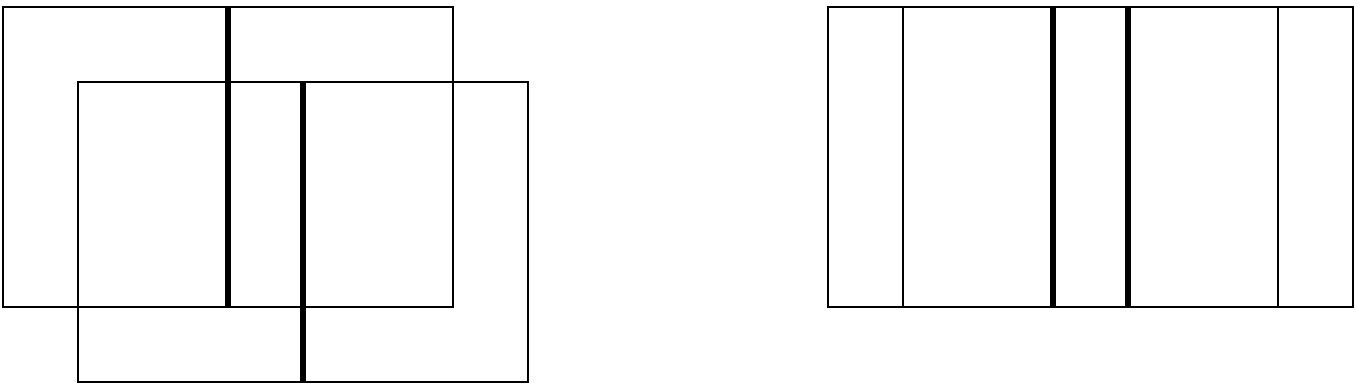}
\begin{picture}(0,0)
\put(-315,110){\small$\widehat I_k$}
\put(-315,15){\small$u$}
\put(-90,110){\small$\widehat I_k$}
\put(-295,90){\small$\pi_{x_k}(\widehat I_\ell)$}
\put(-70,110){\small$\pi_{x_k}(\widehat I_\ell)$}
\end{picture}
\end{center}
\caption{Non-shrinking\label{f.shrinking}}
\end{figure}

In the first case, by Remark~\ref{r.intersection}, there exists
a homeomorphism $\theta$ of $[0,+\infty)$ such that
$\varphi_{-t}(x_k)$ and $\varphi_{-\theta(t)}(x_\ell)$ are close for any $t>0$,
and $P_{-t}(\pi_{x_k}(R( I_\ell)))$ remains in a neighborhood
of $\varphi_{-t}(x_k)$ which is arbitrarily small
if $\delta'$ and $d(x_k,x_\ell)$ are small enough.
The rectangle $\pi_{x_k}(R(\widehat I_\ell))$
intersects $\cW^{cs}(x_k)$ along an interval $J$,
which meets $\widehat I_k$.
This proves that the union of
$J$ with $\widehat I_k$ is a $\delta$-interval
and contradicts the maximality since $J$ is not contained
in $\widehat I_k$ in this first case.

In the second case, without loss of generality, we assume $n_\ell>n_k$ and set
$T:=n_\ell-n_k$.
We introduce the map $\widetilde P_{-T}:=\pi_{x_k}\circ P_{n_k-n_\ell}$.
Since $r$ has been chosen small enough and since the endpoints of $\widehat I_k$ and $\pi_{x_k}(\widehat I_\ell)$
have the same unstable manifolds, the iterates $\widetilde P_{-T}^i(\widehat I_k)$, $i\geq 0$,
are all contained in $R_k$. 
Hence by the Global invariance, there exists
a sequence of times $0<t_1<t_2<\dots$ going to $+\infty$
such that
\begin{itemize}
\item[--] $\varphi_{-t_1}(x_k)=x_\ell$,
\item[--] $(\varphi_{-t}(x_k))_{t_i\leq t\leq t_{i+1}}$ shadows $(\varphi_{-t}(x_k))_{0\leq t\leq t_{1}}$,
\item[--] $\varphi_{-t_i}(x_k)$ is close to $x_k$
and projects by $\pi_{x_k}$ in $R_k$.
\end{itemize}
Note that the differences $t_{i+1}-t_i$ are uniformly bounded in $i$ by some constant
$T_0$. Since $\delta'$ can be chosen arbitrarily small (provided $k$ is large),
for $t=t_i$ arbitrarily large,
the interval $J=\pi_{\varphi_{-t_i}(x_k)}(R_k)\cap \cW^{cs}(\varphi_{-t_i}(x_k))$
contains $0$ and is a $\delta/2$-interval.
One can choose $t_i$ and a backward iterate $x_j$
such that $t_i\leq n_j\leq t_i+T_0$.
Since $P_{t_i-n_j}(J)$ is a $\delta/2$-interval and
$\widehat I_j$ is a $\delta'$-interval,
$\widehat I_j\cup P_{t_i-n_j}(J)$ is a $\delta$-interval.
As $j$ can be chosen arbitrarily large, $|\widehat I_j|$
is arbitrarily small, whereas $|P_{t_i-n_j}(J)|$ is uniformly bounded away from zero
(since $n_j-t_i$ is bounded). Consequently $\widehat I_j\cup P_{t_i-n_j}(J)$
is strictly larger than $\widehat I_j$, contradicting the maximality.
\end{proof}

\subsubsection{Existence of limit intervals}
The Proposition~\ref{p.limit} now follows easily from the previous lemmas,
up to extracting a subsequence from the sequence of Pliss times $(n_k)$.
\qed

\subsection{Returns of $\delta$-intervals}\label{ss.return-delta}

\subsubsection{Definition of returns and of shifting returns}

We now introduce the times which will allow to induce the dynamics near a $\delta$-interval.
\begin{Definition}\label{d.return}
Let $x\in K\setminus V$ be a $T_\cF$-Pliss point
and $I$ be a $\delta$-interval at $x$.
A time $t>0$ is a \emph{return} of $I$ if
\begin{itemize}
\item[--] $\varphi_{-t}(x)$ and $x$ are $r_0$-close, and $\Interior(R(I))\cap \Interior(\pi_x\circ P_{-t}(R(I)))\neq \emptyset$,
\item[--] for any $z,z'\in I$ such that
$\pi_x\circ P_{-t}(W^{u}_{R(I)}(z))\cap W^{u}_{R(I)}(z')\neq \emptyset$, we have
$$\pi_x\circ P_{-t}(W^{u}_{R(I)}(z))\subset W^{u}_{R(I)}(z').$$
\end{itemize}
We then denote by $\widetilde P_{-t}$ the map $\pi_x\circ P_{-t}\colon R(I)\to \cN_x$.
\smallskip

\noindent
A sequence of returns $(t_n)$ is \emph{deep} if
$t_n\to +\infty$ and if one can find one sequence $(x_n)$ in $K$ with
$\pi_x(x_n)\in R(I)$ such that
{ $\widetilde P_{-t_n}\circ \pi_x(x_n)\to 0_x$ as $n\to +\infty$}.

\end{Definition}

\begin{Remark-numbered}\label{r.deep}
When $(t_n)$ is a sequence of deep returns,
$\widetilde P_{-t_n}\circ \pi_x(R(I))$ gets arbitrarily close to $\cW^{cs}(x)$ as $n\to +\infty$.
\emph{ Indeed, since $t_n$ is large, $\widetilde P_{-t_n}\circ \pi_x(R(I))$ is thin and contained in a small neighborhood of
$\pi_x(\cW^{cs}(\varphi_{-t'_n}(x_n)))$, where $\widetilde P_{-t_n}\circ \pi_x(x_n)=\pi_x(\varphi_{-t'_n}(x_n))$.
Moreover as $\widetilde P_{-t_n}\circ \pi_x(x_n)\to 0_x$, the plaque $\pi_x(\cW^{cs}(\varphi_{-t'_n}(x_n)))$
gets close to $\cW^{cs}(x)$.}
\end{Remark-numbered}

\begin{Lemma}\label{l.return-existence}
If $\delta_0$ is small enough, 
a $T_\cF$-Pliss point $x\in K\setminus V$ with a $\delta$-interval $I$ {for $\delta\in (0,\delta_0]$},
some $t>2\log(\beta_\cF/\delta_0)/\log(\lambda_{\cF})$ satisfying $d(x,\varphi_{-t}(x))<r_0$ and  $z\in I$ satisfying $\pi_{x}\circ P_{-t}(z)\in \Interior(R(I))$
and $d(\pi_{x}\circ P_{-t}(z),I)<\delta_0$, {then  the time $t$ is a return.} 
\end{Lemma}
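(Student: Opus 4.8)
The goal is to verify that the time $t$ satisfies the two conditions in Definition~\ref{d.return} of a return. The first condition is immediate from the hypotheses: we are given $d(x,\varphi_{-t}(x))<r_0$, and since $z\in I\subset R(I)$ with $\pi_x\circ P_{-t}(z)\in \Interior(R(I))$, the point $\pi_x\circ P_{-t}(z)$ lies in $\Interior(R(I))\cap \Interior(\pi_x\circ P_{-t}(R(I)))$, so this intersection is nonempty. The work is entirely in the second condition, which asks that whenever an unstable leaf $\pi_x\circ P_{-t}(W^u_{R(I)}(w))$ of the pushed-forward rectangle meets an unstable leaf $W^u_{R(I)}(w')$ of $R(I)$, the former is \emph{contained} in the latter.

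The plan is to apply the last part of Proposition~\ref{p.rectangle} (the ``foliation coherence'' statement). First I would check that its hypotheses are met with $I=I'$, $x=x'$, $t'=t$: both $x$ and $\varphi_{-t}(x)$ lie in $K\setminus V$ (here one uses that $x\in K\setminus V$ and, if needed, that $\varphi_{-t}(x)$ can be assumed in $K\setminus V$ up to shrinking $r_0$ so that a small $r_0$-neighborhood of $K\setminus V$ stays inside $U$ — actually the cleaner route is to invoke the version of Proposition~\ref{p.rectangle} with the projection $\pi_{\varphi_{-t}(x)}$ replaced by $\pi_x$ directly, since $x$ is the common base point), they are $r_0$-close by hypothesis, and $P_{-t}(R(I))$ meets $\pi_x$ applied to... — more precisely, one needs $P_{-t}(R(I))$ and the projection of $P_{-t}(R(I))$ to intersect, which is exactly what $\pi_x\circ P_{-t}(z)\in \Interior(R(I))$ gives after noting $z\in R(I)$. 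Then Proposition~\ref{p.rectangle} tells us that the two foliations coincide: any two intersecting leaves $\pi_x\circ P_{-t}(W^u_{R(I)}(w))$ and $W^u_{R(I)}(w')$ lie in a common $C^1$-disc tangent to $\cC^\cF$. To upgrade ``lie in a common disc'' to the containment required by Definition~\ref{d.return}, I would use the size control from Proposition~\ref{p.rectangle}(4): the hypothesis $t>2\log(\beta_\cF/\delta_0)/\log\lambda_\cF$ forces $\beta_\cF\lambda_\cF^{-t/2}<\delta_0$, so every leaf of $\pi_x\circ P_{-t}(R(I))$ has diameter $<\delta_0$, which (for $\delta_0$ small, using that each leaf of $R(I)$ contains a disc of the uniform radius $\alpha_{min}$ by Proposition~\ref{p.rectangle}(2)) is much smaller than the width of a leaf of $R(I)$; combined with the fact that $\pi_x\circ P_{-t}(z)$ is $\delta_0$-close to $I$ hence well inside $R(I)$, the pushed leaf cannot escape through the $\cF$-boundary of $R(I)$, so it is genuinely contained in $W^u_{R(I)}(w')$.

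The main obstacle I anticipate is the bookkeeping around the coherence of unstable foliations along the projection $\pi_x$: one must ensure that $\pi_x\circ P_{-t}$ actually maps the leaves of $R(I)$ to $C^1$-discs tangent to the cone $\cC^\cF$ (this is built into Proposition~\ref{p.rectangle} via the cone invariance and the Global invariance, but one should cite it carefully) and that ``intersecting $\Rightarrow$ nested'' is not spoiled by a leaf of the image rectangle being long and thin in a way that wraps around — ruled out precisely by the quantitative diameter bound from item (4) together with the $\delta_0$-smallness of the distance from $\pi_x\circ P_{-t}(z)$ to $I$. A secondary point is making sure the constant $\delta_0$ chosen here is compatible with (i.e. not larger than) the $\delta_0$ produced by Propositions~\ref{p.rectangle} and~\ref{p.distortion}; since the statement only claims existence of a small $\delta_0$, one simply takes the minimum of all the relevant thresholds.
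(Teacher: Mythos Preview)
Your proposal is correct and follows essentially the same approach as the paper: verify the first item of Definition~\ref{d.return} directly from the hypotheses, then use the diameter bound $\beta_\cF\lambda_\cF^{-t/2}<\delta_0$ from Proposition~\ref{p.rectangle}(4) together with $\delta_0\ll\alpha_{min}$ and the hypothesis $d(\pi_x\circ P_{-t}(z),I)<\delta_0$ to see that the pushed-forward leaves cannot reach the boundary of the leaves of $R(I)$, and conclude containment via the coherence statement at the end of Proposition~\ref{p.rectangle}. Your worry about whether $\varphi_{-t}(x)\in K\setminus V$ is legitimate but harmless: the coherence (Proposition~\ref{p.coherence}) underlying that last part only needs the base point to lie in the $r_0$-neighborhood of $K\setminus V$, which is guaranteed by $d(x,\varphi_{-t}(x))<r_0$ and $x\in K\setminus V$.
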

\begin{proof}
The leaves $W^u_{R(I)}(z)$ are tangent to $\cC^\cF$ and have uniform size $\alpha_{min}$.
The interval $I$ is tangent to $\cC^\cE$ and has size less than $\delta_0$, chosen much smaller than $\alpha_{min}$.
Assuming that $t>0$ is large enough, the images $P_{-t}(W^u_{R(I)}(z))$ have diameter smaller than $\beta_\cF\lambda_{\cF}^{t/2}<\delta_0$,
so $P_{-t}(R(I))$ has diameter smaller than $2\delta_0$. If $d(\pi_{x}\circ P_{-t}(z),I)<\delta_0$,
the images $P_{-t}(W^u_{R(I)}(z))$ can not intersect the boundary of the leaves $W^u_{R(I)}(z')$.
From the last part of Proposition~\ref{p.rectangle},
we get that $P_{-t}(W^u_{R(I)}(z))$ is disjoint or contained in $W^u_{R(I)}(z')$ for each $z,z'\in I$.
\end{proof}
\medskip

To each return, one associates a one-dimensional map $S_{-t}:I\to \cW^{cs}_x$ as follows.
\begin{Proposition}\label{p.one-dimensional}
Assume that $\beta_\cF,r,\delta_0$ are small enough and that $\delta$ is much smaller than $\delta_0$.
Consider a return $t$ of a $\delta$-interval $I\subset \cN_x$ and $d(\varphi_{-t}(x),x)<r$.

Then there exists a $\delta_0$-interval $J\subset \cW^{cs}(x)$ containing $I$
and a continuous injective map $S_{-t}\colon I\to J$ such that
for each $u\in I$, the point $\widetilde P_{-t}(u)$ belongs to $W^{u}_{R(J)}(S_{-t}(u))$.
\end{Proposition}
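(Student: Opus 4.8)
The plan is to produce the interval $J$ and the map $S_{-t}$ by projecting the rectangle dynamics to the center-stable leaf. First I would observe that, because $t$ is a return, the set $\widetilde P_{-t}(R(I))=\pi_x\circ P_{-t}(R(I))$ has its unstable foliation matching that of $R(I)$ on the overlap (the last part of Proposition~\ref{p.rectangle} together with Remark~\ref{r.intersection}), and since $t$ is large each leaf $P_{-t}(W^u_{R(I)}(z))$ is thin (diameter $<\beta_\cF\lambda_\cF^{-t/2}$), so $\widetilde P_{-t}(R(I))$ is a thin ``horizontal'' strip lying in a small neighborhood of $\pi_x(\cW^{cs}(\varphi_{-t}(x)))$, which in turn is close to $\cW^{cs}(x)$. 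Then for each $u\in I$, the leaf $W^u_{R(I)}(u)$ is tangent to $\cC^\cF$, so its image $\widetilde P_{-t}(u)$ lies on a $\cC^\cF$-tangent disc, and I want to record the point of $\cW^{cs}(x)$ over which this disc sits. Since the unstable leaves through points of $\cW^{cs}(x)$ fill a neighborhood and are pairwise disjoint (coherence, Proposition~\ref{p.coherence}), there is at most one such point; existence follows from the fact that $\widetilde P_{-t}(u)$ is $\delta_0$-close to $I\subset\cW^{cs}(x)$ and the unstable plaques have uniform size $\alpha_{min}\gg\delta_0$, so the $\cC^\cF$-tangent disc through $\widetilde P_{-t}(u)$ crosses $\cW^{cs}(x)$. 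This defines $S_{-t}(u)$, and $\widetilde P_{-t}(u)\in W^u_{R(J)}(S_{-t}(u))$ by construction, where $J$ is the $\delta_0$-interval produced next.

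Second, I would construct $J$. The image $\widetilde P_{-t}(I)$, being contained in $\cW^{cs}(\varphi_{-t}(x))$ pushed forward and projected, is a $C^1$ curve tangent to $\cC^\cE$; its $S_{-t}$-shadow in $\cW^{cs}(x)$ is a sub-curve $S_{-t}(I)$. I take $J$ to be the smallest interval of $\cW^{cs}(x)$ containing both $I$ and $S_{-t}(I)$ (and $0_x$). To see $J$ is a $\delta_0$-interval I must estimate $|P_{-s}(J)|$ for all $s\ge 0$: on the part coming from $I$ this is $\le\delta<\delta_0$ since $I$ is a $\delta$-interval; on $S_{-t}(I)$, I use that $\widetilde P_{-t}(I)$ has length comparable to $|P_{-t}(I)|\le\delta$ (the projection $\pi_x$ is uniformly bi-Lipschitz near $0$ for $r_0$ small, and the rectangle distortion bound $\Delta$ from Proposition~\ref{p.distortion} controls how lengths of $\cC^\cE$-curves compare across the unstable foliation), and then that backward iterates of $S_{-t}(I)$ shadow backward iterates of $\widetilde P_{-t}(I)$ up to the reparametrization $\theta$ of Remark~\ref{r.intersection}, hence stay of length $\lesssim\delta$; choosing $\delta$ much smaller than $\delta_0$ absorbs the constants $\Delta$ and the bi-Lipschitz factor. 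Injectivity and continuity of $S_{-t}$ follow from injectivity and continuity of $\widetilde P_{-t}$ on $R(I)$ (it is a composition of the diffeomorphism $P_{-t}$ and the identification $\pi_x$) together with the fact that distinct points of $I$ land on distinct unstable leaves (coherence again), and distinct unstable leaves of $R(J)$ cross $\cW^{cs}(x)$ at distinct points.

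Then I would have to define $R(J)$ itself: this is just the rectangle associated to the $\delta_0$-interval $J$ by Proposition~\ref{p.rectangle} (applicable since $x\in K\setminus V$ is $T_\cF$-Pliss and $\delta_0\le\delta_0$ in the sense of that proposition), whose unstable leaves $W^u_{R(J)}(w)$, $w\in J$, extend the leaves $W^u_{R(I)}$ by coherence; this is why $\widetilde P_{-t}(u)$, which a priori lies on a $\cC^\cF$-tangent disc through $S_{-t}(u)$ of size $\le 2\beta_\cF$, actually lies in the prescribed leaf $W^u_{R(J)}(S_{-t}(u))$. The main obstacle, I expect, is the length estimate showing $J$ is a genuine $\delta_0$-interval: one has to propagate the smallness of $|P_{-t}(I)|$ through the projection $\pi_x$ and through all further backward iterates, keeping track of the rectangle-distortion constant $\Delta$ and ensuring that the shadowing reparametrization $\theta\in\lip$ (so with controlled Lipschitz constant) does not let lengths drift; this is where the hypothesis ``$\delta$ much smaller than $\delta_0$'' and the largeness of $t$ (so that $\widetilde P_{-t}(R(I))$ is genuinely thin) are both used. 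The rest — existence of the point $S_{-t}(u)$, injectivity, continuity — is routine given coherence and the uniform size $\alpha_{min}$ of unstable plaques versus the much smaller $\delta_0$.
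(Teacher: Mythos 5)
Your proposal follows essentially the same route as the paper: project the thin curve $\widetilde P_{-t}(I)$ along the (backward-contracted) unstable plaques of its points onto $\cW^{cs}(x)$ to obtain $I'=S_{-t}(I)$, set $J=I\cup I'$, verify that $J$ is a $\delta_0$-interval via Global invariance and the smallness of the backward iterates of $I$ and of the unstable arcs, and invoke coherence to identify these plaques with the leaves $W^u_{R(J)}$. The only difference is cosmetic (you define $S_{-t}(u)$ pointwise first and assemble $J$ afterwards, while the paper builds $I'$ and $J$ first and then defines $S_{-t}$ as unstable-holonomy composed with $\widetilde P_{-t}$), and you correctly identify the length estimate for $J$ as the point requiring $\delta\ll\delta_0$.
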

Notice that from Definition~\ref{d.return}, $S_{-t}(I)$ intersects $I$ along a non-trivial interval.
\begin{proof}
Assuming $\beta_\cF,r$ small enough,
By Remark~\ref{r.intersection},
the backward orbits
of $\varphi_{-t}(x)$ and $x$ stay at an arbitrarily small distance.
The $\delta$-interval $P_{-t}(I)$ projects by $\pi_x$
on a set $X$ whose backward iterates by $P_{-s}$ are contained in
$B(0_{\varphi_{-s}(x)},\delta_0/2)$ (using the Global invariance).
Since $x$ is a $T_\cF$-Pliss point, any $u\in X$ is $(T_\cF,\lambda_{\cF})$-Pliss (see Lemma~\ref{l.cont3})
and has an unstable plaque $\cW^{cu}_{\alpha}(u)$
whose backward iterates under $P_{-t}$ have diameter smaller than $\alpha_\cF\lambda_{\cF}^{-t/2}$.
One can thus project {the arc} $X$ along the plaques
$\cW^{cu}_\alpha(u)$ of points $u\in X$ to a connected set $I'\subset \cW^{cs}(x)$ which intersects $I$.
If $\delta_0$ is small enough,
and $P_{-s}(I\cup I')$ has diameter smaller than $2\diam(P_{-s}(R(I)\cup \widetilde P_{-t}(I)))<\delta_0$.
Thus $J=I\cup I'$ is a $\delta_0$-interval.

By the coherence (Proposition~\ref{p.rectangle}, item 2),
the plaques $\cW^{cu}_\alpha(u)$ of 
$u\in X$ intersect $R(J)$ along a leaf $W^u_{R(J)}(u)$.
Note that each plaque intersect $I'\subset J$ by construction.
Moreover since $d(x, \varphi_{-t}(x))$ is small, 
$X$ does not intersect the boundary of the
leaves $W^u_{R(J)}(u)$. Thus $X\subset R(J)$.

Any point in $X=\widetilde P_{-t}(I)$ can thus be projected to $J$ along the leaves of $R(J)$.
The map $S_{-t}$ is the composition of this projection with $\widetilde P_{-t}$.
\end{proof}

Deepness has been introduced for the following statement.
\begin{Lemma}\label{l.return}
Let $(t_n)$ be a deep sequence of returns of a $\delta$-interval $I$
and let $J$ be a $\delta_0$-interval containing $I$ such that
$S_{-t_n}(I)\subset J$ for each $n$.
Then, there exists $n_0\geq 1$ with the following property.
If $n(1),\dots,n(\ell)$ is a sequence of integers with $n(i)\geq n_0$
and if there exists $u$ in the interior of $J$ satisfying for each $1\leq i\leq \ell$
$$S_{-t_{n(i)}}\circ\dots\circ S_{-t_{n(1)}}(u)\in \operatorname{Interior}(J),$$
then there exists a return $t>0$ such that
$S_{-t}=S_{-t_{n(\ell)}}\circ\dots\circ S_{-t_{n(1)}}$.
\end{Lemma}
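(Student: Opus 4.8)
The plan is to compose the returns one at a time and show inductively that each partial composition $S_{-t_{n(i)}}\circ\dots\circ S_{-t_{n(1)}}$ is realized by a genuine return $t^{(i)}=t_{n(i)}'+\dots+t_{n(1)}'$ of the flow, where $t_{n(j)}'$ is a ``reparametrized'' version of $t_{n(j)}$ obtained by following the shadowing orbit. The key point is that a return of $I$ is characterized by two conditions in Definition~\ref{d.return}: (i) $\varphi_{-t}(x)$ is $r_0$-close to $x$ with intersecting rectangles, and (ii) the unstable-leaf matching condition inside $R(I)$. Condition (ii) is automatic once $P_{-t}(R(I))$ is thin and lands within $\delta_0$ of $I$ (this is exactly Lemma~\ref{l.return-existence}, which needs $t$ large — hence the role of $n_0$). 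So the work is to verify condition (i) for the composed time.

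First I would use deepness: since $(t_n)$ is a deep sequence of returns, Remark~\ref{r.deep} gives that $\widetilde P_{-t_n}\circ\pi_x(R(I))$ is thin and gets arbitrarily $C^1$-close to $\cW^{cs}(x)$ as $n\to\infty$; fix $n_0$ large enough that for $n\geq n_0$ this set lies in an arbitrarily small neighborhood of $\cW^{cs}_{\delta_0}(x)$ and that Lemma~\ref{l.return-existence} applies to any time $\geq t_{n_0}$ (so $t_{n_0}>2\log(\beta_\cF/\delta_0)/\log\lambda_\cF$). Next, for the inductive step, suppose $S_{-t^{(i-1)}}=S_{-t_{n(i-1)}}\circ\dots\circ S_{-t_{n(1)}}$ is a return with associated map $\widetilde P_{-t^{(i-1)}}\colon R(I)\to\cN_x$. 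Apply the Global invariance (Remark~\ref{r.identification}.(g), as packaged in Remark~\ref{r.intersection}): the orbit realizing the return $t_{n(i)}$ at the base point $x$ can be transported, via the reparametrization $\theta$, to an orbit segment issuing from $\varphi_{-t^{(i-1)}}(x)$ — because that point is $r_0$-close to $x$ and the rectangles $P_{-t^{(i-1)}}(R(I))$ and $R(I)$ match along their unstable foliations (Proposition~\ref{p.rectangle}, last part; Remark~\ref{r.intersection}). This produces a time $t_{n(i)}'$ with $\varphi_{-t^{(i-1)}-t_{n(i)}'}(x)$ close to $x$ and with $\pi_x\circ P_{-t^{(i-1)}-t_{n(i)}'}$ acting on $R(I)$ exactly as $\widetilde P_{-t_{n(i)}}\circ\widetilde P_{-t^{(i-1)}}$ does on the projected rectangle; in particular its effect on the central curve $I$ is the composition $S_{-t_{n(i)}}\circ S_{-t^{(i-1)}}$ of the one-dimensional maps (using the coherence of the unstable foliations to see the projections agree). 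Set $t^{(i)}=t^{(i-1)}+t_{n(i)}'$.

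It remains to check $t^{(i)}$ is a return. Condition (ii) holds by Lemma~\ref{l.return-existence}, since $t^{(i)}\geq t_{n(i)}'$ is large (each $n(j)\geq n_0$), $P_{-t^{(i)}}(R(I))$ is thin, and by the hypothesis $S_{-t_{n(i)}}\circ\dots\circ S_{-t_{n(1)}}(u)\in\operatorname{Interior}(J)$ together with deepness the image lands within $\delta_0$ of $I$; condition (i) was arranged in the transport step. Finally, one must confront the one genuine obstacle: controlling the accumulated reparametrization. The Global invariance only gives $\theta\in\lip_{1+\rho}$ with $\theta(0)=0$ at each step, but composing $\ell$ of them could in principle drift the correspondence between ``$\delta$-interval time'' and ``flow time''. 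Here one invokes Proposition~\ref{p.no-shear} (No shear inside orbits): since $\varphi_{-t^{(i)}}(x)$ returns $r_0$-close to $x$ with $x\in U$, any two reparametrizations of the same shadowed orbit that agree at $0$ stay within distance $1/2$, so the times $t_{n(j)}'$ differ from $t_{n(j)}$ by a uniformly bounded amount and, crucially, no spurious shifting accumulates. This guarantees that the partial returns $t^{(i)}$ are strictly increasing, eventually exceed any bound, and that at the last stage $t=t^{(\ell)}$ is a bona fide return with $S_{-t}=S_{-t_{n(\ell)}}\circ\dots\circ S_{-t_{n(1)}}$, as claimed. I expect this shear-control step — ensuring the reparametrizations compose without drift — to be the main difficulty; everything else is a fairly direct bookkeeping of the rectangle/foliation machinery already set up.
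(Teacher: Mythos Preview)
Your inductive scheme is correct and is the same as the paper's: build $t^{(i)}$ step by step via the Global invariance (Remark~\ref{r.intersection}) and check that each $t^{(i)}$ is a return. However, you have misidentified the main difficulty. The ``shear-control'' concern is a non-issue and Proposition~\ref{p.no-shear} is not needed: at each step $\varphi_{-t^{(i-1)}}(x)$ is already $r_0$-close to $x$ (that is what being a return means), so the Global invariance is applied \emph{fresh}, not composed. One obtains a single $\theta$ with $|\theta(0)|\le 1/4$ (via Local injectivity) and sets $t^{(i)}=t^{(i-1)}+\theta(t_{n(i)})$; then $\varphi_{-t^{(i)}}(x)$ shadows $\varphi_{-t_{n(i)}}(x)$, hence is close to $x$. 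No drift accumulates.

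For condition~(ii), the paper argues directly with diameters and Proposition~\ref{p.rectangle} rather than via Lemma~\ref{l.return-existence}: the image $\widetilde P_{-t^{(i)}}(R(I))$ has diameter $<2\delta$ and contains the point $\widetilde P_{-t_{n(i)}}\circ\widetilde P_{-t^{(i-1)}}(u)$, which lies in $\widetilde P_{-t_{n(i)}}(R(I))$; by deepness (Remark~\ref{r.deep}) the latter is in the $2\delta$-neighborhood of $I$, so $\widetilde P_{-t^{(i)}}(R(I))$ lies in the $4\delta$-neighborhood of $I$ and cannot meet the boundary of any leaf $W^u_{R(I)}(z')$ (radius $\ge\alpha_{min}\gg\delta_0$). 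Your appeal to Lemma~\ref{l.return-existence} would also work, but note it asks for a point $z\in I$ (not $J$) with $\pi_x\circ P_{-t}(z)$ close to $I$; to use it cleanly you should track $\widetilde P_{-t^{(i)}}(0_x)$ rather than the point $u$ of the hypothesis, since $u$ is only assumed in $\operatorname{Interior}(J)$.
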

The return $t$ will be called \emph{composition}
of the returns $t_{n(0)},\dots,t_{n(\ell)}$.
\begin{proof}
Note first that the image $\widetilde P_{-t}(R(I))$ associated to a large return $t$
has diameter smaller than $2\delta$. So if this set contains a point $\widetilde P_{-t}(u)$
$\delta'$-close to $I$, then $\widetilde P_{-t}(R(I))$ is contained in the $2\delta+\delta'$-neighborhood of $I$.

The lemma is proved by induction.
The composition $S_{-t_{n(\ell-1)}}\circ\dots\circ S_{-t_{n(1)}}$ is associated
to a return $t'>0$. The point $\widetilde P_{-t'}(u)$
belongs to the image $\widetilde P_{-t_{n(\ell-1)}}(R(I))$,
hence (since $t_{n(\ell-1)}$ is large by deepness and Remark~\ref{r.deep}),
is $2\delta$-close to $I$.
By the Local injectivity and Remark~\ref{r.intersection}, there exists an increasing homeomorphism
$\theta$ such that $|\theta(0)|\leq 1/4$ and
$\varphi_{\theta(-s)-t'}(x)$ shadows $\varphi_{-s}(x)$.
Hence for $t=t'+\theta(t_{n(\ell)})$ the point $\varphi_{-t}(x)$ is close to $x$
and $\widetilde P_{-t}=\widetilde P_{-t_{n(\ell)}}\circ \widetilde P_{-t'}$
by the Global invariance. The first item of Definition~\ref{d.return}
is satisfied.

The second one is implied by Proposition~\ref{p.rectangle}:
if $\widetilde P_{-t}(W^u_{R(I)}(z))$ intersects $W^u_{R(I)}(z')$,
then these two discs match. The set $\widetilde P_{-t}(R(I))$ has diameter smaller than $2\delta$;
it contains the point $\widetilde P_{-t_{n(\ell)}}\circ \widetilde P_{-t'}(u)$;
this last point also belongs to $\widetilde P_{-t_{n(\ell)}}(R(I))$ which is included in
the $2\delta$-neighborhood of $I$. Hence $\widetilde P_{-t}(R(I))$ is contained in the $4\delta$-neighborhood of $I$
and can not intersect the boundary of the disc $W^u_{R(I)}(z')$.
This gives $\widetilde P_{-t}(W^u_{R(I)}(z))\subset W^u_{R(I)}(z')$.

This proves that $t$ is a return
such that $\widetilde P_{-t}=\widetilde P_{-t_{n(\ell)}} \circ \widetilde P_{-t'}$:
the one-dimensional
map associated to $t$ coincides with the composition of the one-dimensional maps
of the returns $t_{n(\ell)}$ and $t'$ as required.
\end{proof}

\begin{Definition}
A return $t$ is \emph{shifting} if the one-dimensional map $S_{-t}$ has no fixed point.

\emph{Let us fix an orientation on $\cW^{cs}(x)$.
It is preserved by $S_{-t}$ when $t$ is shifting.}
\smallskip

\noindent
A return \emph{shifts to the right} (resp.
\emph{to the left}) if it is a shifting return and if there exists
$u\in I$ that can be joined to $S_{-t}(u)$ by a positive arc
(resp. negative arc)  of $\cW^{cs}(x)$.
\end{Definition}

\subsubsection{Criterion for the existence of periodic $\delta$-intervals}\label{ss.criterion-periodic}

The following proposition shows that under the setting of Proposition~\ref{p.limit},
if the interval $I$ has a large non-shifting return, then 
$I$ is contained in the unstable set of some periodic $\delta$-interval.

\begin{Proposition}\label{p.periodic-return}
Let $I$ be a $\delta$-interval at a point $x\in K$ and
let $J$ be a $3\delta$-interval at a $T_\cF$-Pliss point $y\in K\setminus V$
having large non-shifting returns.
If $\pi_y\circ P_{-s}(I)$ intersects $R(J)$ for some $s\geq 0$,
then $I$ is contained in the unstable set of some periodic $\delta$-interval.
\end{Proposition}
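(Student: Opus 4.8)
The plan is to use the non-shifting returns of the $3\delta$-interval $J$ at the Pliss point $y$ to produce a periodic $\delta$-interval via the closing lemma (Corollary~\ref{c.closing0} or the limit-dynamics Proposition~\ref{p.fixed-point}), and then to show that $I$ falls into the unstable set of it. First I would fix the data: since $\pi_y\circ P_{-s}(I)$ intersects $R(J)$, the projection of $P_{-s}(I)$ lies inside $R(J)$ (its diameter being $<2\delta$, much smaller than the uniform leaf-size $\alpha_{min}$, so it cannot reach the $\cF$-boundary), and its backward iterates stay uniformly small by Remark~\ref{r.intersection} and the Global invariance. Projecting $\pi_y\circ P_{-s}(I)$ along the unstable leaves of $R(J)$ onto $\cW^{cs}(y)$ gives a sub-interval $I'\subset J$ containing a point of $I'\cap J$; moreover every backward iterate of $P_{-s}(I)$ is shadowed (up to a $\mathrm{Lip}$-reparametrization $\theta$) by the corresponding backward iterate of $x$, so it suffices to understand the backward dynamics of $J$ itself under the one-dimensional maps $S_{-t}$ associated to its returns.

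Next I would exploit the hypothesis that $J$ has arbitrarily large non-shifting returns. Pick a sequence $(t_n)$ of such returns with $t_n\to+\infty$; each $S_{-t_n}$ is an orientation-preserving one-dimensional map of $J$ with a fixed point, and by Lemma~\ref{l.return} (after checking the sequence is deep — I would choose the returns so that $\widetilde P_{-t_n}\circ\pi_y$ pushes a well-chosen point toward $0_y$, or alternatively take $y\in\alpha(x)$) compositions of them are again returns. Using that $S_{-t_n}$ fixes a point of the compact interval $J$ and that these maps are uniformly close to inclusions on longer and longer backward time intervals, one extracts a point $q\in J$ and a time $T>0$ with $\varphi_{-T}(y)$ close to $y$, $\pi_y(\varphi_{-T}(y))\in R(J)$, and $\widetilde P_{-T}$ fixing (the leaf through) $q$; the Denjoy–Schwartz distortion control Lemma~\ref{Lem:schwartz} together with the summability estimate~\eqref{e.bounded} guarantees the backward iterates of the interval $[0_y,q]$ stay $\le\delta$, so it is a genuine $\delta$-interval. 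Feeding $x$, $T$, the fixed point $p$ of $\widetilde P_{-T}$, and a sequence $y_k\to$ (iterates of $x$ projecting toward $p$) into the closing lemma (Lemma~\ref{l.closing0} / Corollary~\ref{c.closing0}) yields a periodic point $z\in K$ with $\pi_y(z)=p$ and a periodic $\delta$-interval $\widehat I_z\subset\cW^{cs}(z)$ obtained as the corresponding periodic interval inside $R(J)$ fixed by the induced map.

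Finally I would verify that $I$ lies in the unstable set of this periodic $\delta$-interval in the sense of the definition: that $\alpha(x)$ is the orbit of $z$, that $z$ carries the periodic $\delta$-interval $\widehat I_z$, and that $P_{-t}(I)$ accumulates on (the orbit of) a sub-interval $I_z\subset\widehat I_z$. The first point follows because the backward orbit of $x$ shadows that of $y$ past every return and the returns become denser and denser near the periodic orbit — so $\omega$-limits in backward time collapse onto $\mathrm{Orb}(z)$; one may need Proposition~\ref{p.no-shear} to rule out a shear/drift along orbits that would prevent convergence. The last point is where the non-shifting hypothesis is essential: because each $S_{-t_n}$ has a fixed point, the image $S_{-t_n}(I')\cap J$ never slides off, so the backward iterates of the interval cannot escape to the boundary of $J$; using the uniqueness/coherence of plaques (Propositions~\ref{p.uniqueness} and~\ref{p.coherence}) one shows the $\alpha$-limit of $P_{-t}(I)$ is a $P_{-T'}$-invariant sub-interval of $\widehat I_z$, where $T'$ is the period of $z$. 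The main obstacle I expect is the bookkeeping in this last step — controlling that the reparametrizations $\theta$ coming from successive applications of Global invariance compose into a single orientation-preserving homeomorphism of $\RR^+$ and that the shadowing is uniform enough for the limit interval to be genuinely invariant — rather than the closing-lemma input, which is essentially ready-to-use.
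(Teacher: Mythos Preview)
Your overall architecture is right --- use a non-shifting return of $J$ to produce a periodic point via the closing lemma, then show $\alpha(x)$ is a periodic orbit and that the backward iterates of $I$ converge to a periodic sub-interval --- but there is a genuine gap in the middle step, and the first step is overcomplicated.

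For the first step, you do not need a sequence of returns, deepness, Lemma~\ref{l.return}, or Denjoy--Schwartz. A \emph{single} large non-shifting return $t$ of $J$ already gives a leaf $W^u_{R(J)}(u)$ mapped into itself by $\widetilde P_{-t}$; the backward contraction along $\cF$ forces a fixed point $v\in R(J)$ of $\widetilde P_{-t}$. One then shows (by iterating $\widetilde P_{-t}$ and using that the limit set of $\widetilde P_{-t}^k(J)$ lies in a single Lipschitz arc tangent to $\cC^\cE$, via Proposition~\ref{p.uniqueness}) that a subsequence of $\widetilde P_{-t}^k(0_y)$ converges to a fixed point $p$, and Lemma~\ref{l.closing0} yields a periodic point $q\in K$ with $\pi_y(q)=p$. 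Your invocation of summability~\eqref{e.bounded} and Denjoy--Schwartz to make $[0_y,q]$ a $\delta$-interval is not needed here and does not obviously work, since~\eqref{e.bounded} is tied to the specific sequence of Pliss times in Proposition~\ref{p.limit}, not to an arbitrary $3\delta$-interval.

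The real gap is your claim that $\alpha(x)$ is the orbit of the periodic point $z$ you just built. This is not automatic: the backward orbit of $x$ shadows the backward orbit of $y$, which in turn shadows the orbit of $q$, so $\pi_q(x)$ has bounded backward $P$-orbit in $\cN_q$ --- but $\pi_q(x)$ need not lie on $\cW^{cu}(0_q)$, so there is no reason for $P_{-t}(\pi_q(x))\to 0$. The paper handles this by invoking Proposition~\ref{p.fixed-point} (limit dynamics in periodic fibers): if $0_q\notin\cW^{cu}(\pi_q(x))$, then $P_{-t}(\pi_q(x))$ converges to the orbit of some \emph{other} $P_{2T}$-fixed point $p'\in\cN_q$, and a second application of Lemma~\ref{l.closing0} produces a possibly different periodic point $q_0\in K$ with $\alpha(x)=\mathrm{Orb}(q_0)$. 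The periodic $\delta$-interval is then built at $q_0$, not at $q$, as the limit of $\pi_{q_0}\circ P_{-\theta^{-1}(\ell T')}(I)$ (using Proposition~\ref{p.uniqueness} to see the limit set is a single interval in $\cW^{cs}(q_0)$). Your sketch conflates $q$ and $q_0$ and does not explain how to pass from ``$\pi_q(x)$ has bounded backward orbit'' to ``$\alpha(x)$ is a single periodic orbit''; Proposition~\ref{p.fixed-point} is precisely the missing ingredient.
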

\begin{proof}
By assumption there exist $t>0$ large and $u\in J$
such that $W^{u}_{R(J)}(u)$ is mapped into itself by
$\widetilde P_{-t}$. This implies that $\widetilde P_{-t}$ has a fixed point $v$ in $R(J)$.

Since $t$ is a large return, assuming the $\beta_\cF$ is small enough, there exists (by the local injectivity)
$t_1\in [t-1/4,t+1/4]$ such that $d(\varphi_{-t_1}(y),y)<r/2$. This allows (up to modifying $t$)
to assume that $d(\varphi_{-t}(y),y)<r/2$.

\begin{Claim}
There is a periodic point 
$q\in K$ that is 
$r_0$ close to $y$ such that $\pi_y(q)$ is a fixed point of $\widetilde P_{-t}$ in $R(J)$.
\end{Claim}

\begin{proof}[Proof of the claim.]
We build inductively 	a homeomorphism $\theta$ of $[0,+\infty)$
such that:
\begin{itemize}
\item[--] for each $k\geq 0$ and each $s\in [0,t]$, we have
$d(\varphi_{-\theta(s)}(y),\varphi_{-\theta(kt+s)}(y))<r_0/2$,
\item[--] $d(\varphi_{-t_k}(y),y)<r$ where $t_k:=\theta(kt)$ and $r$ is the constant in Remark~\ref{r.intersection},
\item[--] we have $\pi_{y}\circ P_{-t_k}=(\widetilde P_{-t})^k$.
\end{itemize}
Since $d(\varphi_{-t}(y),y)<r/2$, one defines $t_1=t$ and $\theta(s)=s$ for $s\in [0,t]$.

Let us then assume that $\theta$ has been built on $[0,kt]$.
Since $P_{-s}(v)\in P_{-s}(R(J))$ is $\beta_\cF$-close to $0_{\varphi_{-s}(y)}$ for any $s\geq 0$
and since $\pi_{\varphi_{-t_k}(y)}(v)=P_{-t_k}(v)$, Remark~\ref{r.intersection} applies and defines the homeomorphism $\theta$
on $[kt,(k+1)t]$ such that
$d(\varphi_{-s}(y),\varphi_{-\theta(kt+s)}(y))<r_0/4$ for $s\in [0,t]$.
By the local injectivity, one can choose $t_{k+1}$ with
$|t_{k+1}-\theta(kt)|\leq 1/4$,
$d(y,\varphi_{t_{k+1}}(y))<r$ and one can
modify $\theta$ near $(k+1)t$
so that $\theta((k+1)t)=t_{k+1}$.

Since $t_1=t$ is large,
and since
$d(\varphi_{-\theta(kt+s)}(y),\varphi_{-\theta((k+1)t+s)}(y))<r_0$,
the No shear property (Proposition~\ref{p.no-shear}) implies inductively $t_k-t_{k-1}\ge 2$ for each $k\ge 1$.
In particular $t_k\to+\infty$.

By the dominated splitting,
the limit set $\Lambda$ of the curves $\pi_{y}\circ P_{-t_k}(J)=\widetilde P_{-t}^k(J)$
is a union of (uniformly Lipschitz) curves in $\cN_{y}$, containing $v$
and tangent to $\cC^\cE(y)$.

One can apply Proposition~\ref{p.uniqueness}
to a periodic sequence of diffeomorphisms
$f_0,\dots,f_{[t]}$ where $f_i$
coincides with $P_{-1}$ on $\cN_{\varphi_{-m}(y)}$
for $0\leq i <t-1$
and $f_{[t]}$ coincides with
$\pi_{y}\circ P_{t-[t]+1}$ on $\cN_{\varphi_{-[t]+1}(y)}$.
We have $\widetilde P_{-t}=f_{[t]}\circ \dots\circ f_0$
and any curve in the limit set $\Lambda$ remain bounded and
tangent to $\cC^\cE(y)$ under iterations by
$(\widetilde P_{-t})^{-1}$. Consequently, they are all contained in a
same Lipschitz arc, invariant by $\widetilde P_{-t}$.
One deduces that
a subsequence of
$(\widetilde P_{-t})^k(0_y)$
converges in $\cN_{y}$
to a fixed point $p$ of $R(J)$.
\medskip

By Lemma~\ref{l.closing0}, there exists a periodic point $q\in K$
such that $\pi_{y}(q)=p\in R(J)$ is the fixed point of $\widetilde P_{-t}$.
\end{proof}

Now we prove that $I$ is contained in the unstable set of some periodic $\delta$-interval. Without loss of generality, we take $s=0$.
Since $\pi_y(q)$ and $\pi_y(x)$ belong to $R(J)$, by choosing $\beta_\cF$ small enough and by the local invariance,
one can assume that the distances $d(y,q)$ and $d(y,x)$ are smaller than any given constant.
In particular, the projection by $\pi_y$ of center-unstable plaques $\cW^{cu}(u)$ of point $u\in \cN_q$
is tangent to the cone $\cC^\cF$.

\begin{Claim}
There exists a periodic point $q_0\in K$ such that $\alpha(x)$ is the orbit of $q_0$.
\end{Claim}
\begin{proof}[Proof of the Claim.]
By the assumptions, there is $u_0\in I$ such that $\pi_y(u_0)$ is contained in $R(J)$. 
Since $\beta_\cF$ and $\delta$ are small,
by the Global invariance, there is $\theta_y\in\lip$ such that $|\theta_y(0)|\leq 1/4$ and
$d(\varphi_{-s}(x),\varphi_{-\theta_y(s)}(y))<r_0/2$ for any $s>0$.

In a similar way, there exists $\theta_q\in\lip$ such that $|\theta_q(0)|\leq 1/4$ and
$d(\varphi_{-t}(y),\varphi_{-\theta_q(t)}(q))$
remains small for any $t>0$.
By defining $\theta=\theta_q\circ \theta_y$, one deduces that
$d(\varphi_{-\theta(t)}(q),\varphi_{-t}(x))$ is small too for any $t>0$.

By {using the} Global invariance twice,
$\|P_{-t}(\pi_{y}(x))\|$ remains small in the fiber $\cN_{\varphi_{-t}(y)}$
and 
$\|P_{-t}(\pi_{q}(x))\|$ remains small in the fiber $\cN_{\varphi_{-t}(q)}$.
In particular $\pi_q(x)$ belongs to $K^-_\eta$ and has
a center unstable plaque $\cW^{cu}(\pi_q(x))$.

Let us first assume that $0_q\in\cW^{cu}(\pi_q(x))$.
Since $\|P_{-t}(\pi_{q}(x))\|$ remains small when $t\to +\infty$,
the local invariance of the plaque families implies that
$0_{\varphi_{-t}(q)}\in P_{-t}(\cW^{cu}(\pi_q(x)))$ for any $t>0$.
Projecting to the fiber of $\varphi_{-\theta_q^{-1}(t)}(y)$,
and using the Global invariance, one deduces that
$P_{-t}(\pi_y(x))$ and $P_{-t}(\pi_y(q))$ 
is connected by a small arc tangent to $\cC^\cF$ for any $t>0$.
By Proposition~\ref{p.uniqueness}, this shows that $\pi_y(x)$ and $\pi_y(q)$ belong to a same leaf $W^u_{R(J)}(u)$ of $R(J)$.
Consequently, $d(P_{-t}(\pi_y(x)),P_{-t}(\pi_y(q)))\to 0$ as $t\to +\infty$.
The Global invariance shows that $d(P_{-t}(\pi_q(x)),0)\to 0$ as $t\to +\infty$.
The Local injectivity then implies that $\varphi_{-t}(x)$ converges to the orbit of $q$.

If $\cW^{cu}(\pi_q(x))$ does not contains $0_q$,
by Proposition~\ref{p.fixed-point},
there exists a point $p\in \cN_q$ which is fixed by $P_{-2T}$ (where $T$ is the period of $q$),
such that $P_{-t}(\pi_{q}(x))$ converges to the orbit of $p$ as $t\to +\infty$.
By the Global invariance,
$P_{-\ell T}(\pi_{q}(x))$ coincides with $\pi_q(\varphi_{-\theta^{-1}(\ell T)}(x))$
for $\ell\in\NN$.
Lemma~\ref{l.closing0} implies that there exists a periodic point $q_0\in K$ such that
$\varphi_{-\theta^{-1}(\ell T)}(x)$ converges to $q_0$ as $\ell\to  +\infty$.
Since $\theta$ is a bi-Lipschitz homeomorphism, one deduces that
the backward orbit of $x$ converges to the orbit of the periodic point $q_0$.
\end{proof}

Up to replacing $x$ and $I$ by large backward iterates, there is $\theta\in \lip$
s.t. $d(\varphi_{-t}(x),\varphi_{-\theta(t)}(q_0))$ is small for any $t>0$
and by the Global invariance, 
the intervals $P_{-t}\circ \pi_{q_0}(I)$
are curves tangent to the cone $\cC^\cE(\varphi_{-t}(q_0))$
which remain small for any $t>0$.
When $t=\ell T'$ where $T'$ is the period of $q_0$,
they converge to a limit set which is a union of curves
tangent to the cone $\cC^\cE(q_0)$ and contain $0_{q_0}$.
By Proposition~\ref{p.uniqueness}, this limit set is an interval
$ I_{q_0}\subset \cW^{cs}(q_0)$ that is fixed by $P_{T'}$.
By the Global invariance,
$I_{q_0}$ is the limit of $\pi_y(P_{-\theta^{-1}(\ell T')}(I))$
as $\ell\to +\infty$.
This implies that the backward orbit of $I$ converges to the orbit
of the $\delta$-interval $I_{q_0}$.
\end{proof}

\subsubsection{Returns of limit intervals}\label{ss.return-I-infty}
We now prove that the interval $I_\infty$ has always returns.
Moreover, in the case $I_{\infty}$ is not contained in a $3\delta$-interval having arbitrarily large non-shifting return
(so that Proposition~\ref{p.periodic-return} can not be applied to some $\widehat I_k$ and an interval $J$ containing $I_{\infty}$),
we also prove that there exist shifting returns for $I_\infty$, both to the right and to the left.

\begin{Proposition}\label{p.shifting-return}
Under the setting of Proposition~\ref{p.limit}, 
\begin{itemize}
\item[--] either $I_\infty$ is contained in a $3\delta$-interval having arbitrarily large non-shifting return,
\item[--] or $I_{\infty}$ has {two deep sequences of shifting returns { one shifting to the left and the other one to the right.}}
\end{itemize}
\end{Proposition}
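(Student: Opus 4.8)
The plan is to establish first that $I_\infty$ carries a deep sequence of returns, and then to settle the dichotomy according to whether the one–dimensional maps attached to those returns have fixed points. I keep the notation of Proposition~\ref{p.limit}: $I_\infty$ is the Hausdorff limit of maximal $\delta$-intervals $\widehat I_k$ based at the $T_\cF$-Pliss points $x_k=\varphi_{-n_k}(x)\in K\setminus V$, with $x_k\to x_\infty$, $P_{-(n_\ell-n_k)}(\widehat I_k)\subset\widehat I_\ell$ for $k<\ell$, and I write $R_k=R(\widehat I_k)$ for the rectangles of Proposition~\ref{p.rectangle}.

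\emph{Step 1: $I_\infty$ has a deep sequence of returns.} I would apply Lemma~\ref{Sub:disjointcase} to the sequence $(x_k,\widehat I_k)$ with $r=r_m\downarrow 0$, obtaining indices $k_m<\ell_m\to\infty$ with $d(x_{k_m},x_{\ell_m})\to 0$ and $\Interior(\pi_{x_{k_m}}(R_{\ell_m}))\cap\Interior(R_{k_m})\neq\emptyset$; set $t_m:=n_{\ell_m}-n_{k_m}\to\infty$. As $x_{k_m}$ and $x_{\ell_m}=\varphi_{-t_m}(x_{k_m})$ both converge to $x_\infty$ and carry the nearly equal $\delta$-intervals $\widehat I_{k_m},\widehat I_{\ell_m}\to I_\infty$, whose backward iterates stay $\le\delta$, the Global invariance forces the backward orbits of $x_{k_m}$, $x_{\ell_m}$ and $x_\infty$ to shadow one another for all positive time; hence $d(\varphi_{-t_m}(x_\infty),x_\infty)\to 0$ (after a bi-Lipschitz reparametrization absorbing bounded shifts), in particular $x_\infty\in\alpha(x_\infty)$. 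Combining this recurrence with the coherence of the plaque families (Propositions~\ref{p.coherence} and~\ref{p.rectangle}) and Lemma~\ref{l.return-existence} certifies the large times $t_m$ as genuine returns of $I_\infty$. These returns are deep, with witness $y_m:=x_\infty$: indeed $\widetilde P_{-t_m}(0_{x_\infty})=\pi_{x_\infty}(\varphi_{-t_m}(x_\infty))\to 0_{x_\infty}$, and consequently the one–dimensional maps $S_{-t_m}\colon I_\infty\to J$ of Proposition~\ref{p.one-dimensional} satisfy $S_{-t_m}(0_{x_\infty})\to 0_{x_\infty}$.

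\emph{Step 2: the dichotomy.} Each $S_{-t_m}$ is an orientation-preserving injection and $S_{-t_m}(0_{x_\infty})\to 0_{x_\infty}$. If infinitely many $S_{-t_m}$ have a fixed point, these are arbitrarily large non-shifting returns of $I_\infty$; extracting a convergent subsequence of the images $S_{-t_m}(I_\infty)$ and adjoining its limit to $I_\infty$ yields a $3\delta$-interval $J'\supset I_\infty$ (it is a $3\delta$-interval because the $P_{-s}$-image of a $\delta$-rectangle is thin, so its spine has length $\le 2\delta$) to which cofinitely many of these $S_{-t_m}$ extend with their fixed point lying in $I_\infty\subset J'$; thus $J'$ has arbitrarily large non-shifting returns and the first alternative holds. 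Otherwise, discarding finitely many, every $S_{-t_m}$ is fixed-point free and therefore shifts strictly to one side of $I_\infty$, the side being read off from the sign of $S_{-t_m}(0_{x_\infty})\neq 0_{x_\infty}$. It remains to exclude that the returns are eventually all on the same side, say all shifting to the right. Here I would run the composition machinery of Lemma~\ref{l.return}: on a $3\delta$-interval $J'$ enlarging $I_\infty$ (built as above from the return-images) the induced return dynamics maps $J'$ into itself while pushing $I_\infty$ strictly rightward, which forces a fixed point in $J'$ for each such $t_m$ — again producing arbitrarily large non-shifting returns of a $3\delta$-interval, i.e.\ the first alternative. Hence, unless the first alternative holds, infinitely many of the deep returns $(t_m)$ shift to the right and infinitely many shift to the left, and these two subsequences are the required deep sequences of shifting returns.

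\emph{Where the difficulty lies.} The delicate point is Step~1: Lemma~\ref{Sub:disjointcase} only yields overlaps between rectangles based at \emph{distinct} (though arbitrarily close) points $x_k$, and one must convert these into genuine returns of the single interval $I_\infty$ — equivalently, show that the overlap of $R(I_\infty)$ with its own projected backward image persists in the limit and that the backward orbit of $x_\infty$ actually recurs. This is exactly where the $\delta$-interval structure is indispensable: the uniform smallness of \emph{all} backward iterates is what makes the Global invariance deliver the shadowing between the orbits of $x_{k_m}$, $x_{\ell_m}$ and $x_\infty$, and Proposition~\ref{p.rectangle} the matching of the unstable foliations required to invoke Lemma~\ref{l.return-existence}. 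By contrast, Step~2 is a one-dimensional combinatorial argument in the style of~\cite{PS2}.
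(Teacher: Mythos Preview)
Your Step 1 takes an unnecessary detour. The paper does not invoke Lemma~\ref{Sub:disjointcase} at all here: since the $x_k=\varphi_{-n_k}(x)$ converge to $x_\infty$, for large $k$ all the $\pi_{x_\infty}(\widehat I_k)$ lie inside the rectangle $R(L)$ of a single $\tfrac32\delta$-interval $L$ at $x_\infty$, and any pair $k<\ell$ (both large, $\ell-k$ large) gives a return of $L$ directly via Lemma~\ref{l.return-existence} and the Global invariance. So the existence of deep returns of $I_\infty$ (via $L$) drops out immediately from the convergence $\widehat I_k\to I_\infty$; you do not need to manufacture rectangle overlaps at moving base points and then transport them to $x_\infty$.

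The real gap is in Step 2. Your argument to exclude ``all shifts on one side'' does not work: you assert that on an enlarged $3\delta$-interval $J'$ the map $S_{-t_m}$ sends $J'$ into itself, hence has a fixed point. But nothing guarantees $S_{-t_m}(J')\subset J'$; a right-shifting injection will typically push the right end of $J'$ \emph{out} of $J'$, and composing right-shifting returns only makes this worse. The paper's mechanism is entirely different and exploits the nesting $P_{-(n_\ell-n_k)}(\widehat I_k)\subset\widehat I_\ell$. Projecting the endpoints, one shows the left endpoints $a_k$ of the holonomy-projected $\widehat I_k$ form a monotone sequence with limit $a_\infty$, and that the return taking $\pi_{x_\infty}(x_k)$ to $\pi_{x_\infty}(x_\ell)$ satisfies $a_\infty<S_{-t}(a_k)<a_k$; this is what certifies these as returns of $I_\infty$ (not just of $L$) shifting, say, left. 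To get a right-shifting return, one first applies a fixed left-shifting return $t$ to send $a_k$ to some $\bar a_k<a_\infty$, then observes that the return $s$ carrying $\pi_{x_\infty}(\varphi_{-t'}(x_k))$ to $\pi_{x_\infty}(x_\ell)$ must satisfy $S_{-s}(\bar a_k)=S_{-\tilde t}(a_k)\in(a_\infty,a_k)$ (where $\tilde t$ is the return from $x_k$ to $x_\ell$), so $S_{-s}$ moves $\bar a_k<a_\infty$ to a point $>a_\infty$ and hence shifts right. The two directions therefore come from a factorisation trick using the concrete orbit structure, not from an abstract fixed-point argument on an invariant interval.
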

\begin{proof} We assume that the first case does not hold and one chooses an orientation, hence an order on
$\cW^{cs}_{x_\infty}$. Denote by $a_\infty<b_\infty$ the endpoints of $I_\infty$.
There exists a $\frac 3 2\delta$-interval $L$ in $\cW^{cs}_{x_\infty}$ such that $R(L)$ contains all the
$\pi_{x_\infty}(\widehat I_k)$, $k$ large.
One can thus project $\pi_{x_\infty}(\widehat I_k)$ to $\cW^{cs}_{x_\infty}$ by the unstable holonomy
and denotes $a_k<b_k$ the endpoints of this projection. In the same way one denotes
$c_k\in [a_k,b_k]$ the projection of $\pi_{x_\infty}(x_k)$.
By Lemma~\ref{l.return-existence}, for any $k<\ell$ such that $k$ and $\ell-k$ are large,
there exists a (large) return $t>0$ such that $\widetilde P_{-t}(\pi_{x_\infty}(x_k))=\pi_{x_\infty}(x_\ell)$.
Since all large returns of $L$ are shifting, such iterates $x_k,x_\ell$ of $x$ do not project by $\pi_{x_\infty}$ to a same
unstable manifold.
Hence one can assume without loss of generality that
for each $k<\ell$ we have
$c_k>c_\ell$.

\begin{Claim}
$I_\infty$ admits a deep sequence of returns $t>0$ shifting to the left.
\end{Claim}
\begin{proof}
For each $k<\ell$, there exists
a return $t>0$ of $L$ such that
$\widetilde P_{-t}(\pi_{x_\infty}(x_k))=\pi_{x_\infty}(x_\ell)$.
This return shifts to the left.
We claim that it is a return for $I_\infty$.
Denote $a'_k=S_{-t}(a_k)$.
The return $t$ has been chosen so that
$\pi_{x_\infty}\circ P_{-n_\ell+n_k}(\widehat I_k)= \widetilde P_{-t}\circ\pi_{x_\infty}(\widehat I_k)$.
Since $\widehat I_\ell$ contains $P_{-n_\ell+n_k}(\widehat I_k)$,
this gives $a_\ell\leq a'_k$ and in particular $a_\ell<a_k$.
Repeating this argument, one gets a decreasing subsequence $(a_i)$ containing $a_k$ and $a_\ell$.
It converges to $a_\infty$ so that $a_\infty< a'_k<a_k$.
This implies that both $a_k$ and $a'_k$ belong to $I_\infty$ and that $t>0$ is also a return for $I_\infty$.
Note that when $k$ is large and $\ell$ much larger, the time $t>0$ is large and the intervals
$\pi_{x_\infty}(\widehat I_k), \pi_{x_\infty}(\widehat I_\ell)$ are close to $\cW^{cs}_{x_\infty}$.
In particular the sequence of returns $t>0$ one obtains by this construction is deep.
\end{proof}
\medskip

Let us fix a return $t$ shifting left as given by the previous claim.
We then choose $k\geq 1$ large and $\ell$ much larger and build a return which shifts to the right.
As explained above we have $a_\infty<a_\ell<a_k$. Let us denote by
$\bar a_\infty<\bar a_\ell<\bar a_k$ their images by $S_{-t}$. Since $S_{-t}$ shifts left, for $k$ large
$\bar a_k$ which is close to $\bar a_\infty$ satisfies $\bar a_k<a_\infty$.
See Figure~\ref{f.return}.
Let us denote $t'>0$ a time such that the pieces of orbits $\varphi_{[-t',0]}(x_k)$
and $\varphi_{[-t,0]}(x_\infty)$ remain close (up to reparametrization), so that
$\pi_{x_\infty}\circ \varphi_{-t'}(x_k)=\widetilde P_{-t}\circ \pi_{x_\infty}(x_k)$.

\begin{figure}[ht]
\begin{center}
\includegraphics[width=10cm]{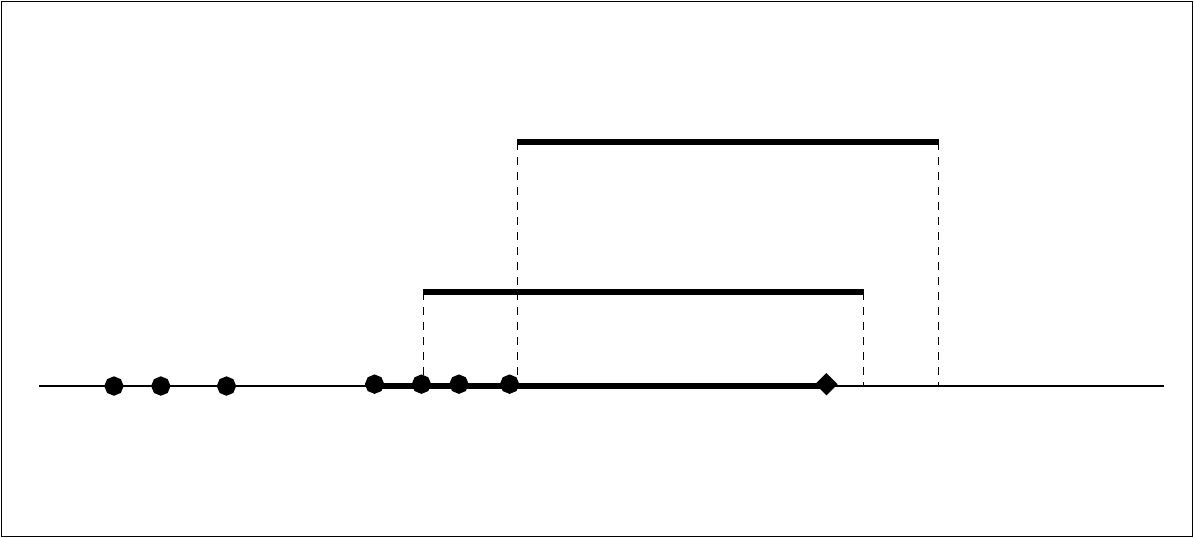}
\begin{picture}(0,0)
\put(-207,25){\scriptsize$a_\infty$}
\put(-192,25){\scriptsize$a_\ell$}
\put(-182,25){\scriptsize$a'_k$}
\put(-170,25){\scriptsize$a_k$}
\put(-95,25){\scriptsize$b_\infty$}
\put(-265,25){\scriptsize$\bar a_\infty$}
\put(-251,25){\scriptsize$\bar a_\ell$}
\put(-238,25){\scriptsize$\bar a_k$}
\put(-150,40){$I_\infty$}
\put(-130,65){$\pi_{x_\infty}(\widehat I_\ell)$}
\put(-110,100){$\pi_{x_\infty}(\widehat I_k)$}
\put(-280,110){$\cN_{x_\infty}$}
\end{picture}
\end{center}
\caption{Shifting to the two sides\label{f.return}}
\end{figure}

The rectangle associated to the $3\delta$-interval $L\cup S_{-t}(L)$
contains $\pi_{x_\infty}\circ \varphi_{-t'}(x_k)$ and $\pi_{x_\infty}(x_\ell)$.
In particular there exists a return $s>0$ such that
$\widetilde P_{-s}(\pi_{x_\infty}\circ \varphi_{-t'}(x_k))=\pi_{x_\infty}(x_\ell)$.
Since $\ell-k$ is large, $s$ is large and is a shifting return by our assumptions.
We denote $a_\infty'<a_k'$ the images of $\bar a_\infty<\bar a_k$ by $S_{-s}$.
Note that there exists a time $s'>0$ such that
$\pi_{x_\infty}\circ \varphi_{-s'}(\varphi_{-t'}(x_k))=\widetilde P_{-s}\circ\pi_{x_\infty}(P_{-t'}(x_k))=\pi_{x_\infty}(x_\ell)$.
By the local injectivity, one can choose $s'$ such that
$t'+s'=n_\ell-n_k$. In particular $S_{-s}\circ S_{-t}$
coincides with the one-dimensional map $S_{-\widetilde t}$ associated to the return $\widetilde t>0$
sending $\pi_{x_\infty}(x_k)$ to $\pi_{x_\infty}(x_\ell)$.
Note that $t'$ is a return as considered in the proof of the previous claim;
in particular we have proved that $a_\infty< S_{-\widetilde t}(a_k)<a_k$.
Hence $a_\infty< a'_k<a_k$.

We have obtained $\bar a_k<a_\infty<a'_k$, so that $S_{-s}$ shifts to the right
and $a_\infty<a'_\infty$.
On the other hand $a'_\infty<a'_k<a_k<b_\infty$.
So this gives $a'_\infty\in (a_\infty, b_\infty)$ which implies that $S_{-s}$
is a return of $I_\infty$ which shifts to the right as required.
{ Since $\pi_{x_\infty}\circ \varphi_{-t'}(x_k)\in R(I)$ and $\pi_{x_\infty}(x_\ell)\to 0_{x_\infty}$,
the sequence of returns $s$ one may build by this construction is deep.}
\end{proof}

\subsection{Aperiodic $\delta$-intervals}\label{ss.aperiodic}
We introduce the $\delta$-intervals which will produce normally expanded irrational tori.

\begin{Definition}\label{d.aperiodic}
A $\delta$-interval $J$ at $x\in K\setminus V$
is \emph{aperiodic} if there exist returns $t_1,t_2>0$
and intervals $J_1,J_2\subset J$ such that:
\begin{enumerate}
\item[--] $J_1,J_2$ have disjoint interior and $J=J_1\cup J_2$,
\item[--] $\widetilde P_{-t_1}(J_1), \widetilde P_{-t_2}(J_2)$
have disjoint interior and $J=\widetilde P_{-t_1}(J_1)\cup \widetilde P_{-t_2}(J_2)$,
\item[--] any non-empty compact set $\Lambda\subset J$ such that
$\widetilde P_{-t_1}\left(\Lambda\cap J_1\right)\subset \Lambda
\text{ and } \widetilde P_{-t_2}\left(\Lambda\cap J_2\right)\subset \Lambda$
coincides with $J$.
\end{enumerate}
\end{Definition}

We prove here that the second case of the Proposition~\ref{p.limit} gives aperiodic $\delta$-intervals.
\begin{Proposition}\label{p.aperiodic0}
Let $x\in K\setminus V$ be a $T_\cF$-Pliss point
and let $I$ be a $\delta$-interval whose large returns are all shifting
and which admit a deep sequence of returns $(t_n^l)$ shifting to the left
and another one $(t^r_n)$ shifting to the right.

Then $\alpha(x)$ contains a point $x'\in K\setminus V$
having an aperiodic  $\delta$-interval.
\end{Proposition}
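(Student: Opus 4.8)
The plan is to build the aperiodic $\delta$-interval as a limit of images of $I$ under compositions of deep returns, using the two sequences $(t_n^l)$ and $(t_n^r)$ to force the ``covering'' conditions (1) and (2) of Definition~\ref{d.aperiodic} and the shifting property to force the minimality condition (3). First I would fix a large $n_0$ as in Lemma~\ref{l.return} and only use returns $t_n^l,t_n^r$ with index $\geq n_0$, so that their admissible compositions are again returns with well-defined one-dimensional maps $S_{-t}$. The key geometric observation is that a shifting-left return and a shifting-right return of $I$, once both are deep (hence $\widetilde P_{-t}(R(I))$ is thin and lies in a $2\delta$-neighborhood of $I$, and $S_{-t}(I)$ meets $I$ in a nontrivial interval that is pushed to one side), have images $S_{-t^l}(I)$ and $S_{-t^r}(I)$ which between them cover a $\delta$-interval containing $I$: the left endpoint is dragged strictly left by $S_{-t^l}$ and the right endpoint strictly right by $S_{-t^r}$, exactly as in the endpoint bookkeeping ($a_\infty<a_k'<a_k$, etc.) already carried out in the proof of Proposition~\ref{p.shifting-return}. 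Passing to a subsequence of the Pliss times and a Hausdorff limit of the relevant $\delta$-intervals $\widehat I_k$ at the iterates $x_k=\varphi_{-n_k}(x)$, one obtains a point $x'\in K\setminus V$ in $\alpha(x)$ (it lies in $K\setminus V$ because every $x_k$ does, and $K\setminus V$ is closed) carrying a limit $\delta$-interval $J$; the two deep sequences of shifting returns of $I$ converge, after projection by $\pi_{x'}$ and the unstable holonomy, to returns $t_1,t_2$ of $J$.

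Next I would verify the three items of Definition~\ref{d.aperiodic} for $J$ with these $t_1,t_2$. For items (1) and (2): set $J_1,J_2$ to be the two subintervals of $J$ cut out by the preimage under $\widetilde P_{-t_1}$ of the common endpoint, so that $\widetilde P_{-t_1}(J_1)$ and $\widetilde P_{-t_2}(J_2)$ tile $J$; the estimates on endpoints from Proposition~\ref{p.shifting-return} (one return shifts the left endpoint of $I_\infty$ strictly left, the other shifts the right endpoint strictly right, while both images stay inside the ambient $3\delta$-interval) are exactly what guarantees that the two images together cover all of $J$ with disjoint interiors. For item (3): suppose $\Lambda\subset J$ is nonempty compact with $\widetilde P_{-t_1}(\Lambda\cap J_1)\subset\Lambda$ and $\widetilde P_{-t_2}(\Lambda\cap J_2)\subset\Lambda$; I want $\Lambda=J$. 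Because $t_1$ shifts to the left and $t_2$ to the right and the associated one-dimensional maps $S_{-t_i}$ are orientation-preserving with no fixed point on $J$ (shifting returns have no fixed point, by definition), the itinerary map sending a point of $J$ to the sequence of symbols in $\{1,2\}$ it realizes under the induced full-branch expanding-type dynamics is injective-ish in the sense needed: the forward orbit of any point visits both $J_1$ and $J_2$ arbitrarily often, so $\Lambda$ being forward-invariant and containing one point forces its closure to be all of $J$. Concretely I would argue that the complement $J\setminus\Lambda$ is open and forward-invariant under the inverse branches, but the two inverse branches are contractions whose images cover $J$, so any nonempty open invariant set must be everything or empty — a Cantor-function / no-wandering-interval argument in the one-dimensional quotient, which is where Lemma~\ref{Lem:schwartz} (Denjoy--Schwartz) and the summability from Lemma~\ref{l.summability-hyperbolicity} enter to rule out a wandering interval.

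I expect the main obstacle to be item (3) — the minimality of $J$ under the two-branch induced dynamics. The difficulty is that $S_{-t_1},S_{-t_2}$ are only defined and well-behaved on the $\delta$-interval $J$ and their compositions are returns only when Lemma~\ref{l.return}'s hypotheses hold (all intermediate indices large, all intermediate images landing in the interior of the ambient $\delta_0$-interval); one must check that the combinatorially allowed compositions are rich enough to realize every itinerary, and that no nontrivial subinterval of $J$ can wander. This is handled by combining: (a) deepness, which makes each $\widetilde P_{-t_i}(R(I))$ thin and ensures $S_{-t_i}(I)\cap I$ is a definite interval, so the two branches genuinely overlap and cover; (b) the no-shear property (Proposition~\ref{p.no-shear}) and the local injectivity to control reparametrizations when composing; and (c) the distortion control of Lemma~\ref{Lem:schwartz} together with the summability $\sum_m|P_{-m}(\widehat I_k)|\leq C_{Sum}$ to forbid a wandering interval. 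The remaining parts — existence of $x'$, existence of the limit interval, convergence of returns — are Hausdorff-compactness arguments essentially identical to those already used in Proposition~\ref{p.limit} and Proposition~\ref{p.shifting-return}, so I would treat them briefly and concentrate the writing on items (1)--(3).
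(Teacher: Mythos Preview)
Your plan has the right overall architecture but is missing the key preparatory step that does most of the work in the paper. You propose to take an arbitrary deep left-shifting return $t^l$ and an arbitrary deep right-shifting return $t^r$ of $I$, pass to a limit, and then \emph{define} $J_1,J_2$ so that the tiling conditions (1) and (2) of Definition~\ref{d.aperiodic} hold. But for a generic pair of shifting returns there is no reason the domains $J_1,J_2$ and the images $\widetilde P_{-t_1}(J_1),\widetilde P_{-t_2}(J_2)$ should simultaneously tile $J$ with disjoint interiors: one will typically overlap or leave a gap. The paper addresses this by first proving a separate lemma (their Lemma~\ref{l.aperiodic0}) which \emph{reduces} $I$ to a subinterval $L$ and replaces the original returns by carefully chosen \emph{compositions} $t_1,t_2$, through a two-step endpoint-moving argument, so that on $L$ one has exactly $L=L_1\cup L_2$ and $L=S_{-t_1}(L_1)\cup S_{-t_2}(L_2)$ with disjoint interiors. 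Your ``set $J_1,J_2$ to be the two subintervals cut out by the preimage under $\widetilde P_{-t_1}$ of the common endpoint'' does not achieve this without that reduction.

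Your argument for item (3) also has a gap. You assert that ``the two inverse branches are contractions whose images cover $J$'', but the maps $S_{-t_i}$ are not contractions in any a priori sense. The paper instead identifies the endpoints of $L$ to obtain a circle, observes that $S_{-t_1},S_{-t_2}$ glue to a single orientation-preserving circle map $f$ with no periodic point (because all compositions of the chosen returns are shifting), and then runs a Denjoy--Schwartz argument to exclude a wandering interval and conclude $f$ is minimal. Finally, to pass from the structure on $L$ (stated for the holonomy maps $S_{-t_i}$) to the aperiodic structure on the limit interval $J$ at $x'$ (stated for the genuine maps $\widetilde P_{-t_i}$), the paper needs and proves that $\widetilde P_{-t_1}$ and $\widetilde P_{-t_2}$ \emph{commute}; this step is absent from your plan and is what guarantees that the limit $J_i'=\widetilde P_{-t_i}(J_i)$ is well-defined independently of the itinerary used to approach $x'$.
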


We first select good returns for $I$.
\begin{Lemma}\label{l.aperiodic0}
Under the setting of Proposition~\ref{p.aperiodic0}, there exists a $\delta$-interval $L\subset I$, returns $t_1,t_2>0$
and intervals $L_1,L_2\subset L$
such that
\begin{itemize}
\item[--] $L_1,L_2$ have disjoint interiors and $L=L_1\cup L_2$,
\item[--] $S_{-t_1}(L_1), S_{-t_2}(L_2)$ have disjoint interior and
$L=S_{-t_1}(L_1)\cup S_{-t_2}(L_2)$,
\item[--] any non-empty compact set $\Lambda\subset L$ such that
$S_{-t_1}\left(\Lambda\cap L_1\right)\subset \Lambda
\text{ and } S_{-t_2}\left(\Lambda\cap L_2\right)\subset \Lambda$
coincides with $L$.
\end{itemize}
Moreover $t_1,t_2$ are compositions of large returns inside the sequences
$(t_n^l)$ and $(t^r_n)$.
\end{Lemma}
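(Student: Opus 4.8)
The plan is to build the "aperiodic configuration" by combining one deep return that shifts to the left with one that shifts to the right, in such a way that the two image intervals of the one-dimensional map $S$ cover the whole $\delta$-interval. First I would work with the limit interval $I$ and its one-dimensional dynamics on $\cW^{cs}(x)$, which has a fixed orientation. Let me write $a<b$ for the endpoints of $I$ (or of the $\delta_0$-interval $J$ containing all relevant images). The key elementary observation is this: if $S_{-t}$ is a shifting return (no fixed point) that preserves orientation, then on the interval $I$ it acts as a strictly monotone map with $S_{-t}(u)>u$ for all $u$ (shift to the right) or $S_{-t}(u)<u$ for all $u$ (shift to the left). Since $I$ is a limit interval and the returns are deep, the images $S_{-t}(I)$ intersect $I$ along a non-trivial subinterval (by Definition~\ref{d.return} and Remark~\ref{r.deep}); deepness moreover means that one endpoint of $S_{-t}(I)$ is pushed arbitrarily close to the "far" endpoint of $I$ as $t\to\infty$ along the sequence. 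Concretely, if $t^r_n$ is a deep right-shifting return, then $S_{-t^r_n}(a)$ converges to $b$ (the right endpoint); symmetrically $S_{-t^l_n}(b)$ converges to $a$.

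Next I would choose the composition. Pick a deep left-shifting return $t_l$ (a composition of returns from $(t^l_n)$, legal by Lemma~\ref{l.return}) so that $S_{-t_l}(b)$ is very close to $a$, i.e. $S_{-t_l}(b)<c$ for $c$ slightly to the right of $a$. Then pick a deep right-shifting return $t_r$ (a composition from $(t^r_n)$, again via Lemma~\ref{l.return}) so that $S_{-t_r}(a)>S_{-t_l}(a)$, i.e. the right-image of the left endpoint overshoots the left-image of the left endpoint; because $t_r$ can be taken arbitrarily deep, $S_{-t_r}(a)$ can be pushed arbitrarily close to $b$, so this is achievable. Now set $L:=[a,b]$ (or a slightly shrunk limit $\delta$-interval), let $p\in(a,b)$ be the unique point with $S_{-t_l}(p)=a$ — wait, more carefully: one wants the two images $S_{-t_l}(L)$ and $S_{-t_r}(L)$ to together cover $L$. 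Since $S_{-t_l}$ is an orientation-preserving homeomorphism onto its image with $S_{-t_l}(a)$ near $a$ from the left-shift and $S_{-t_l}(b)$ near $a$, its image is an interval whose right endpoint is near $a$... That is not right either. Let me restate: because the returns are deep and shifting, the correct picture is that $S_{-t_l}(L)$ is an interval containing $a$ in its interior and extending leftward, while its intersection with $L$ is $[a,S_{-t_l}^{-1}\text{-type endpoint}]$. The clean way: choose $t_l,t_r$ deep so that $S_{-t_l}(L)\cap L$ and $S_{-t_r}(L)\cap L$ are two subintervals of $L$ whose union is $L$ and whose interiors can be arranged disjoint by intersecting with a suitable splitting point $q$. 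Then define $L_1:=S_{-t_l}^{-1}(L\cap[a,q])$ and $L_2:=S_{-t_r}^{-1}(L\cap[q,b])$, shrink $L$ slightly to an invariant subinterval if needed so that $L=L_1\cup L_2$ with disjoint interiors and $S_{-t_1}(L_1)\cup S_{-t_2}(L_2)=L$. The third (minimality) bullet then follows because the induced $1$-D system $(S_{-t_1},S_{-t_2})$ on $L$ with these covering/disjointness properties is (semi-conjugate to) a full shift on two symbols, hence topologically transitive, so any nonempty compact forward-invariant set equals $L$; this is exactly the standard argument that an expanding "baker-like" interval map has no proper closed invariant set of full support. One must also verify $t_1:=t_l$ and $t_2:=t_r$ are genuine returns of $L$ (not just of $I$) — this follows from Lemma~\ref{l.return} once we note $L\subset I$ and the relevant points land in $\operatorname{Interior}(J)$, plus Proposition~\ref{p.one-dimensional} to get the associated one-dimensional maps.

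The main obstacle I expect is the bookkeeping to make the two image intervals cover $L$ with disjoint interiors \emph{simultaneously}: one shift-left return and one shift-right return each individually only guarantees that its image overlaps $L$ in \emph{some} subinterval, and a priori these two overlaps might both sit near the same endpoint or fail to cover a middle gap. Overcoming this is where deepness is essential — it lets us push $S_{-t_l}(b)$ as close to $a$ as we like and $S_{-t_r}(a)$ as close to $b$ as we like, so that the two overlap-intervals sweep across the whole of $L$ from the two ends and necessarily meet; then a continuity/intermediate-value argument produces the splitting point $q$ and, after replacing $L$ by $\bigcap_{n\ge0}(S_{-t_1},S_{-t_2})^{-n}L$-type maximal invariant interval (which is still a nontrivial $\delta$-interval by the non-shrinking estimates, i.e. Lemma~\ref{l.periodic}-type control), yields the exact covering. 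A secondary but routine point is checking that the compositions $t_1,t_2$ constructed from $(t^l_n)$ and $(t^r_n)$ are admissible in the sense of Lemma~\ref{l.return} (the intermediate points stay in $\operatorname{Interior}(J)$), which is where one uses that all these returns are deep and large.
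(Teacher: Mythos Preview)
Your proposal has a genuine gap in the minimality argument (the third bullet), which is actually the heart of the lemma. You claim that the induced system $(S_{-t_1},S_{-t_2})$ on $L$ is ``(semi-conjugate to) a full shift on two symbols'' and hence has no proper closed invariant set. This is incorrect: the maps $S_{-t_1}$ and $S_{-t_2}$ are each defined only on their respective domains $L_1,L_2$, which have \emph{disjoint} interiors, so at each point of $L$ there is essentially a unique choice of which map to apply. After identifying the endpoints of $L$, the combined map is a single degree-one monotone circle map $f$ (continuous or with one discontinuity, depending on whether the images or the domains exactly partition $L$), not an expanding Markov system. Since all large returns are shifting, $f$ has no periodic point and hence irrational rotation number --- but such maps can perfectly well have a Cantor minimal set (Denjoy examples), so transitivity does \emph{not} follow from combinatorics alone.

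The paper's proof handles this via the Denjoy--Schwartz argument, which is where the $C^2$ hypothesis and the bounded-distortion estimate (Lemma~\ref{Lem:schwartz}, Proposition~\ref{p.distortion}) are essential. One assumes there is a wandering interval $J$ in the complement of the minimal set; summability of the lengths of its backward orbit gives bounded distortion, and then the standard Schwartz trick (enlarge $J_{-n}$ slightly, show its image covers itself) produces a fixed point for some return, contradicting the assumption that all large returns are shifting. Your ``baker-like'' intuition would only apply if the maps were uniformly expanding on their domains, which is not available here. A secondary issue is that your reduction to the configuration $L=L_1\cup L_2$ with $S_{-t_1}(L_1)\cup S_{-t_2}(L_2)=L$ is quite vague (you yourself backtrack twice); the paper obtains it by a careful two-step shrinking of $I$, possibly replacing one return by a composition, rather than by a direct deepness-and-intermediate-value argument.
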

\begin{proof}
Let us consider two large returns $s,t>0$ shifting to the right and to the left respectively.
They can be chosen inside the sequences $(t_n^r)$ and $(t^l_n)$, hence by Lemma~\ref{l.return}
the compositions of the maps $S_{-s}, S_{-t}$, when they are defined, have no fixed point in $I$.
Denote $D_s=I\cap S_{-s}^{-1}(I)$ and $I_s=S_{-s}(D_s)$
and denote $D_t, I_t$  the domain and the image of $S_{-t}$ in $I$.

\smallskip

\emph{Step 1.} One will reduce the interval $I$ so that the assumptions of the Proposition~\ref{p.aperiodic0} still hold
but moreover either $D_s\cup D_t$ or $I_s\cup I_t$ coincides with $I$. See Figure~\ref{f.reduced-interval}.

\begin{figure}[ht]
\begin{center}
\includegraphics[width=12cm]{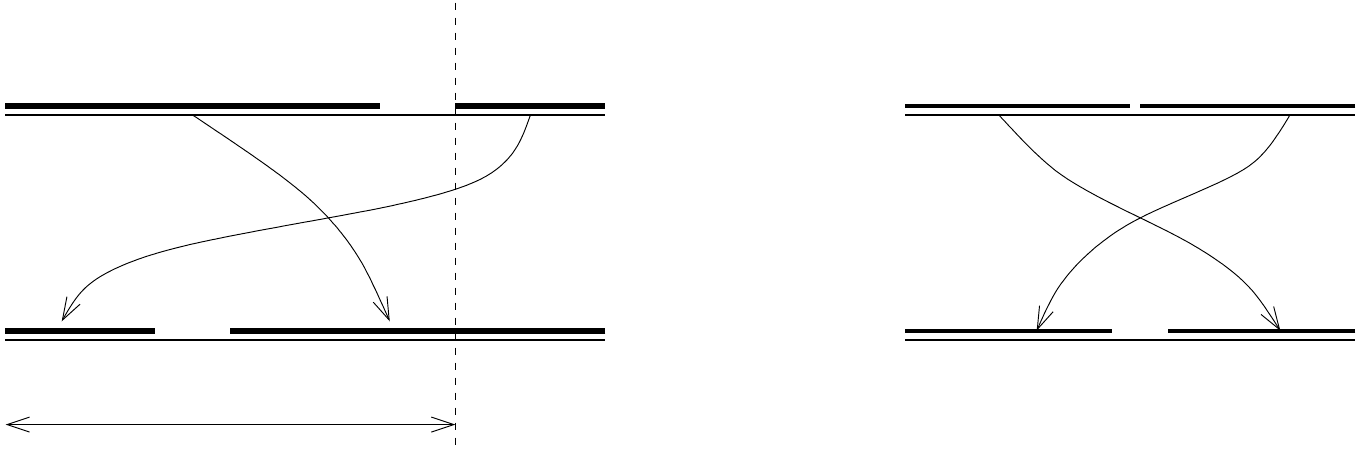}
\begin{picture}(0,0)
\put(-310,93){\scriptsize$D_s$}
\put(-215,93){\scriptsize$D_t$}
\put(-325,20){\scriptsize$I_t$}
\put(-245,20){\scriptsize$I_s$}
\put(-100,93){\scriptsize$D'_s$}
\put(-25,93){\scriptsize$D'_t$}
\put(-95,20){\scriptsize$I'_t$}
\put(-30,20){\scriptsize$I'_s$}
\put(-360,85){$I$}
\put(-360,28){$I$}
\put(-130,85){$I'$}
\put(-130,28){$I'$}
\put(-342,45){$S_{-t}$}
\put(-250,45){$S_{-s}$}
\put(-290,-10){$I'$}
\put(-132,45){$S_{-s}\circ S_{-t}$}
\put(-30,45){$S_{-s}$}
\end{picture}
\end{center}
\caption{Aperiodicity\label{f.reduced-interval}}
\end{figure}

Note that both of these two sets contain the endpoints of $I$.
If both are not connected one reduces $I$ in the following way.
Without loss of generality $0_x$ does not belong to $D_t$.
One can thus move the right point of $I$ (and $D_t)$ inside $D_t$ to the left and reduce $I$ which still contains $0_x$.
This implies that the right endpoints of $D_s,D_t,I_s,I_t,I$ move to the left whereas the left endpoints
remain unchanged. At some moment one of the two intervals $D_s,D_t$ becomes trivial.
We obtain this way new intervals $I',D'_s,D'_t,I'_s,I'_t$.
Note that both can not be trivial simultaneously since otherwise $S_{-s}\circ S_{-t}$ preserves the right endpoint
of the new interval: this one dimensional map is associated to a large return of $I$ which has a fixed point - a contradiction.

Let us assume for instance that $D'_t$ (and $I'_t$) has become a trivial interval (the case
$D'_s$ is trivial is similar): the interval $I'$ is bounded by the left endpoint of $I$
and the left endpoint of $D_t$. Moreover the map $S_{-t}$ sends the right endpoint of $I'$ to its left endpoint.
The map $S_{-t}\circ S_{-s}$ is associated to a return $t'$ of $I'$ which sends the right endpoint of $D'_s$
to the left endpoint of $I'$, hence which shifts left and whose domain coincides with $I'\setminus D'_s$.
We keep the return $s$. We have shown that $D'_s\cup D'_{t'}$ coincides with $I'$.
\medskip

\emph{Step 2.} One again reduces 
$I$ so that the assumptions of Proposition~\ref{p.aperiodic0} still hold, still $D_s\cup D_t$ or $I_s\cup I_t$ coincide with $I$ and furthermore $D_s,D_t$
(resp. $I_s,I_t$) have disjoint interior.

One follows the same argument as in step 1. For instance one moves the right endpoint of $I$ to the left.
Different cases can occur:
\begin{itemize}
\item[--] the point $0_x$ becomes the right endpoint of $I$,
we then exchange the orientation of $I$ and reduce $I$ again; this case will not
appear anymore;
\item[--] the new domains or the new images have disjoint interior; when this occurs
either $D_s\cup D_t$ or $I_s\cup I_t$ coincide with $I$.
\end{itemize}
Note that the domains $D_s,D_t$ can not become trivial as in step 1. Indeed
if for instance $D_t$ (and $I_t$) becomes trivial, since $I$ still coincides
with $D_s\cup D_t$ or $I_s\cup I_t$, one gets that $I$ coincides with $D_s$ or $I_s$,
proving that $R(I)$ contains a fixed unstable manifold - a contradiction.
\medskip

\emph{Step 3.} We have now obtained a $\delta$-interval $L\subset I$ and two
returns $t_1,t_2$, shifting to the left and the right respectively, whose domains $L_1,L_2$
and images $S_1(L_1)$, $S_2(L_2)$ by $S_1:=S_{-t_1}$ and $S_2:=S_{-t_2}$
have disjoint interiors and which satisfy one of the two cases:
\begin{description}
\item[Case 1] $L_1\cup L_2=L$,
\item[Case 2] $S_1(L_1)\cup S_2(L_2)=L$.
\end{description}
After identifying the two endpoints of $L$,
one can define an increasing map $f$ on the circle which
coincides with $S_1$ (resp. $S_2$) on the interior of $L_1$ (resp. $L_2$):
in the first case it is injective and has one discontinuity (and $f^{-1}$ is continuous)
whereas in the second case $f$ is continuous.
By our assumptions on $s$ and $t$, $f$ has no periodic point.
We will prove that $f$ is a homeomorphism
conjugated to a minimal rotation. This will conclude the proof of the lemma.

We discuss the case 2 (the first case is very similar, arguing with $f^{-1}$ instead of $f$).
Poincar\'e theory of orientation preserving circle homeomorphisms extends to continuous increasing maps.
Since $f$ has no periodic point, there exists a unique minimal set $K$.
Let us assume by contradiction
that $K$ is not the whole circle. Let $J$ be a component of its complement.
It is disjoint from its preimages $J_{-n}=f^{-n}(J)$ and (up to replacing $J$ by one of its backward iterate),
$f\colon J_{-n}\to J_{-n+1}$ is a homeomorphism for each $n\geq 0$.
We now use the Denjoy-Schwartz argument to find a contradiction.

Let us fix $n\geq 0$.
For each $0\leq k\leq n$, the interval $f^k(J_{-n})$ is contained in one of the domains
$L_1$ or $L_2$. Hence $f^k|_{J_{-n}}$ coincides with a composition of the maps $S_1$ and $S_2$
and is associated to a return $s_k>0$ of $L$. Note that $\widetilde P_{-s_k}(J_{-n})=J_{k-n}$
is a $C^1$-curve in $R(L)$ tangent to the cone field $\cC^\cE$.
By Proposition~\ref{p.distortion}, there exists $\Delta>0$ such that any sub-rectangle of $R(L)$, bounded by two leaves
$W^u_{R(L)}(u), W^u_{R(L)}(u')$ has distortion bounded by $\Delta$.
Hence
$\widetilde P_{-s_k}(J_{-n})$ has length bounded by $\Delta.|J_{k-n}|$.
Consequently the sum $\sum_{k=0}^n |P_{-s_k}(J_{-n})|$ is uniformly bounded
as $n$ increases. The difference $s_{k+1}-s_k$ is uniformly bounded also,
so that there exists a uniform bound $C_{Sum}$ satisfying
$$\sum_{0\leq m <s_n} |P_{-m}(J_{-n})|<C_{Sum}.$$
From Lemma~\ref{Lem:schwartz}, there exists an interval $\widehat J_{-n}\subset \cW^{cs}(x)$
containing $J_{-n}$
satisfying $|\widehat J_{-n}|\leq 2|J_{-n}|$ such that
each component of
$P_{-s_n}(\widehat J_{-n}\setminus J_n)$ has length
$\eta_S|P_{-s_n}(J_{-n})|$,
where $\eta_S>0$ is a small uniform constant. Note that the projection through unstable holonomy
of $\widetilde P_{-s_n}(\widehat J_{-n})$ in $L$ contains a uniform neighborhood $\widehat J=S_{-s_n}(\widehat J_{-n})$ of $J$.

The large integer $n$ can be chosen so that the small interval
$J_{-n}$ is arbitrarily close to $J$.
Consequently, $\widehat J_{-n}$ is contained in $\widehat J$. This means
$S_{-s_n}(\widehat J_{-n})\supset \widehat J_{-n}$ implying that $\widehat J_{-n}$ contains
a fixed point of the map $S_{-s_n}$.
This is a contradiction since we have assumed that all the returns are shifting.
As a consequence $f$ is a minimal homeomorphism, which implies the lemma.
\end{proof}
\smallskip

\begin{proof}[Proof of Proposition~\ref{p.aperiodic0}]
Let us consider some intervals $L_1,L_2,L$ and some returns $t_1,t_2$ as in
Lemma~\ref{l.aperiodic0}. We denote $\widetilde P_i=\widetilde P_{-t_i}$
and $S_i=S_{-t_i}$, $i=1,2$.
Since $t_1,t_2$ are large inside deep sequences of returns,
the compositions $S_{i_k}\circ\dots\circ S_{i_1}$,
when they are defined, are associated to returns of $L$
(see Lemma~\ref{l.return}).
\medskip

\begin{Claim}
$\widetilde P_1$ and $\widetilde P_2$ commute.
\end{Claim}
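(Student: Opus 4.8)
The plan is to turn both compositions into genuine return maps and then show that these two return maps coincide. Since $t_1$ (resp.\ $t_2$) is, by Lemma~\ref{l.aperiodic0}, a composition of large returns taken from the deep sequence $(t_n^l)$ (resp.\ $(t^r_n)$), Lemma~\ref{l.return} applies iteratively: $\widetilde P_{-t_2}\circ\widetilde P_{-t_1}$ equals the return map $\widetilde P_{-a}=\pi_x\circ P_{-a}$ associated to some return $a>0$ of $L$, and likewise $\widetilde P_{-t_1}\circ\widetilde P_{-t_2}=\widetilde P_{-b}=\pi_x\circ P_{-b}$ for a return $b>0$. On the base arc these maps induce, through the unstable holonomy of $R(L)$, the one-dimensional maps $S_2\circ S_1$ and $S_1\circ S_2$. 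By Lemma~\ref{l.aperiodic0} the circle homeomorphism $f$ obtained by glueing $S_1$ on $L_1$ and $S_2$ on $L_2$ is minimal and conjugate to a rotation, so wherever they are both defined $S_2\circ S_1$ and $S_1\circ S_2$ both coincide with $f^2$; hence the base maps of $\widetilde P_{-a}$ and $\widetilde P_{-b}$ agree on their common domain. It then remains to promote this to an equality of the rectangle maps, i.e.\ to show $\widetilde P_{-a}=\widetilde P_{-b}$.

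The key point for this promotion is that $a$ and $b$ record the two orderings of the same ``figure\nobreakdash-eight'' obtained by concatenating the $t_1$\nobreakdash-loop and the $t_2$\nobreakdash-loop based near $x$. Unwinding Lemma~\ref{l.return} and the Global invariance, I would express $a$ and $b$ through reparametrizations of the orbit of $x$ that nearly fix the basepoint, and then apply the No shear property (Proposition~\ref{p.no-shear}) to the reparametrization comparing the two concatenation orders: being a reparametrization of a single orbit that shadows it and almost fixes its basepoint, it cannot drift, so $\varphi_{-a}(x)$ and $\varphi_{-b}(x)$ lie on a common orbit arc of bounded length with both endpoints $r_0$\nobreakdash-close to $x$ (rescaling time as in Remark~\ref{r.identification}.(b)--(c) one may assume this length is $\le 2$). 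By Proposition~\ref{p.rectangle} the sets $P_{-t}(R(L))$ stay inside $B(0_{\varphi_{-t}(x)},2\beta_\cF)\subset B(0,\beta_0)$ for every $t\ge 0$, so, writing $\varphi_{-b}(x)=\varphi_\tau(\varphi_{-a}(x))$ with $|\tau|\le 2$, the Local invariance of the identification (Definition~\ref{d.compatible}(3)) gives $\pi_x\circ P_\tau=\pi_x$ on $P_{-a}(R(L))$, whence $\widetilde P_{-b}=\pi_x\circ P_\tau\circ P_{-a}=\pi_x\circ P_{-a}=\widetilde P_{-a}$ on $R(L)$. Thus $\widetilde P_1\circ\widetilde P_2$ and $\widetilde P_2\circ\widetilde P_1$ agree wherever both are defined, which is the commutation.

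The main obstacle is precisely this reparametrization bookkeeping in the second step. Shadowing the two orbit pieces against a common figure\nobreakdash-eight template only gives one\nobreakdash-sided control through Proposition~\ref{p.no-shear}, so one must run the argument symmetrically (also for the inverse reparametrization, and directly for the pair $\varphi_{-a}(x),\varphi_{-b}(x)$) in order to pin $a$ and $b$ to the \emph{same} return, rather than to two returns separated by some bounded but non\nobreakdash-trivial time. Care is also needed because over long times the flow $\varphi$ is not uniformly continuous, so all estimates must be channelled through the contracting unstable plaques and the Pliss property that produced $R(L)$. By contrast, the first step and the final deduction from Local invariance are routine manipulations of the identification cocycle.
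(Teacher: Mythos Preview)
Your argument has a genuine gap in the first step. The claim that ``wherever they are both defined $S_2\circ S_1$ and $S_1\circ S_2$ both coincide with $f^2$'' is not correct. The circle map $f$ is defined piecewise: $f=S_1$ on $L_1$ and $f=S_2$ on $L_2$. Hence $f^2(u)=S_2\circ S_1(u)$ only when $u\in L_1$ \emph{and} $S_1(u)\in L_2$, while $f^2(u)=S_1\circ S_2(u)$ only when $u\in L_2$ \emph{and} $S_2(u)\in L_1$. These two conditions are satisfied simultaneously only at the common endpoint $c$ of $L_1$ and $L_2$, so the minimality/rotation structure of $f$ gives you agreement of the base maps at a single point, not on a common domain. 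Knowing that $f$ is topologically conjugate to a rotation does not make the original interval maps $S_1,S_2$ commute: the conjugacy need not intertwine $S_1$ and $S_2$ individually with translations.

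The paper's proof uses exactly this single point of agreement at $c$ and then argues by contradiction, avoiding your reparametrization bookkeeping entirely. If the two compositions $\widetilde P_{-s}$ and $\widetilde P_{-s'}$ differ, their values at $c$ lie on the same unstable leaf of $R(L)$ (since $S_1\circ S_2(c)=S_2\circ S_1(c)$), and one is obtained from the other by a further map $\pi_x\circ P_{-(s'-s)}$. Now the deepness is exploited: since $t_1,t_2$ come from deep sequences, they may be pushed to $+\infty$. If $s'-s$ stayed bounded along such a sequence, a limit would produce a genuine fixed point of a return of $I$, contradicting the standing hypothesis that all large returns are shifting. If $s'-s$ is large, Lemma~\ref{l.return-existence} turns this displacement into a large return which fixes an unstable leaf, again contradicting the shifting assumption. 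This dichotomy replaces your No-shear/figure-eight argument, and it works because it only needs agreement at one point rather than on an interval.
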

\begin{proof}
Both compositions $\widetilde P_1\circ \widetilde P_2$ and $\widetilde P_2\circ \widetilde P_1$
are associated to returns $s,s'>0$.
Note that the common endpoint of $L_1$ and $L_2$ has the same image by
$S_1\circ S_2$ and $S_2\circ S_1$.
If the two compositions do not coincide, the two returns are different,
for instance $s'>s$, but there exists two points $u,u'$ in $R(L)$
in a same unstable manifold such that $u=\pi_x\circ P_{-s'+s}\circ \pi_{\varphi_{-s}(x)}(u')$.
Note that since $t_1,t_2$ can be obtained from deep sequences of returns,
$u,u'$ are arbitrarily close to $L$. The time $s'-s$ can be assumed to be
arbitrarily large: otherwise, we let $t_1,t_2$ go to $+\infty$
in the deep sequences of returns; if $s'-s$ remain bounded,
the points $u,u'$ converge to a point of $L$
a point which is fixed by some limit of the $\pi_x\circ P_{-s'+s}$. This is a contradiction
since the returns of $I$ are non-shifting.
Since $s'-s$ is large, {Lemma~\ref{l.return-existence}} implies that there is a large return sending
$u$ on $u'$, which is a contradiction since large returns are shifting.
Consequently the two returns are the same and the compositions coincide.
\end{proof}
\medskip

By the properties on $S_1,S_2$, there is $(i_k)\in \{1,2\}^\NN$
such that for each $k$.  
$$\widetilde P_{i_k}\circ \dots\circ\widetilde P_{i_1}(0_x)\in R(L).$$
From Lemma~\ref{l.return}, for each $k$ there exists a return $s_k$ such that
$$\widetilde P_{-s_k}=\widetilde P_{i_k}\circ \dots\circ\widetilde P_{i_1}.$$
The dynamics of $S_1,S_2$ is minimal in $L$,
hence the iterates $x_k:=\varphi_{-s_k}(x)$ have
a subsequence $x_{k(j)}$ converging
to a point $x'\in K\setminus V$ such that $\pi_x(x')$ belongs to the unstable manifold of $x$.
{We define the intervals $J,J_1,J_2$} as limits of the iterates of $L,L_1,L_2$
by the maps $P_{-s_{k(j)}}$. In particular $J$ is a $\delta$-interval and the
first item of Definition~\ref{d.aperiodic} holds.

By Remark~\ref{r.intersection}, there exist times $t'_1,t'_2$
such that backward orbit of $x$ during time $[-t_i,0]$
is shadowed by the backward orbit of $x'$ during the time $[-t'_i,0]$.
By the Global invariance, the maps
$\pi_{x'}\circ P_{-t'_i}$, $i=1,2$, from a neighborhood of $0_{x'}$ to $\cN_{x'}$
are conjugated to $\widetilde P_i$ by the projection $\pi_x$ and will be still
denoted by $\widetilde P_i$.
In particular, the map {$P_{-s_{k(j)}}$} from a neighborhood of $0_x\in \cN_x$
to $\cN_{x'}$ coincides with $\widetilde P_{i_{k(j)}}\circ \dots\circ\widetilde P_{i_1}\circ \pi_{x'}$
and $\pi_{x'}\circ \widetilde P_{i_{k(j)}}\circ \dots\circ\widetilde P_{i_1}$.
\medskip

\begin{Claim}
$J$ is aperiodic.
\end{Claim}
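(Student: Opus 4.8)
The plan is to verify the three items of Definition~\ref{d.aperiodic} for the $\delta$-interval $J$ at $x'$, with returns $\tau_1:=t_1'$, $\tau_2:=t_2'$ and the intervals $J_1,J_2$ already produced; the first item has been checked above, so only the second and third remain. The main tool throughout is the conjugacy recorded just before the claim — $P_{-s_{k(j)}}$ intertwines $\pi_{x'}$ with $\widetilde P_{i_{k(j)}}\circ\dots\circ\widetilde P_{i_1}$ — together with the coherence of the unstable foliations of the rectangles (Propositions~\ref{p.coherence} and~\ref{p.rectangle}) and the bounded-distortion estimate (Proposition~\ref{p.distortion}).

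First I would check that $\tau_1$ and $\tau_2$ are genuine returns of $J$ at $x'$. Since $t_i$ is a return of $L$, the point $\varphi_{-t_i}(x)$ is $r_0$-close to $x$; by the shadowing of Remark~\ref{r.intersection} the point $\varphi_{-\tau_i}(x')$ is then close to $x'$, and $R(J)$ is a Hausdorff limit of the rectangles $\pi_{x'}\circ P_{-s_{k(j)}}(R(L))$, whose unstable foliations are coherent with those of $\pi_x\circ P_{-t_i}(R(L))$. Hence the intersection-of-interiors and leaf-matching conditions of Definition~\ref{d.return} pass to the limit, so $\widetilde P_{-\tau_i}$ is defined on $R(J)$ and, under $\pi_{x'}$, coincides with the map $\widetilde P_i$ on $\cN_{x'}$ introduced above.

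The second item then follows by passing the second bullet of Lemma~\ref{l.aperiodic0} to the limit: the relation $S_{-t_1}(L_1)\cup S_{-t_2}(L_2)=L$ with disjoint interiors becomes the relation $\widetilde P_{-\tau_1}(J_1)\cup \widetilde P_{-\tau_2}(J_2)=J$ with disjoint interiors (understood, as always, up to the unstable holonomy of $R(J)$, i.e. for the one-dimensional maps $S_{-\tau_i}$). The one subtlety is to ensure that $J$ and the $J_i$ do not collapse to points along the subsequence: this is where Proposition~\ref{p.distortion} enters — the rectangles $P_{-s}(R(L))$ have uniformly bounded distortion, so the covering relation for $L$ prevents $|P_{-s_{k(j)}}(L)|$ from tending to $0$.

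For the third item, take a non-empty compact set $\Lambda\subset J$ with $\widetilde P_{-\tau_i}(\Lambda\cap J_i)\subset\Lambda$ for $i=1,2$. Transporting $\Lambda$ through the conjugacy and projecting along the unstable leaves of $R(L)$ produces a non-empty compact set $\Lambda'\subset L$ with $S_{-t_i}(\Lambda'\cap L_i)\subset\Lambda'$, and the third bullet of Lemma~\ref{l.aperiodic0} then forces $\Lambda'=L$, hence $\Lambda=J$. Equivalently, this is the statement that the minimal circle homeomorphism $f$ built from $S_1,S_2$ in the proof of Lemma~\ref{l.aperiodic0} admits no proper non-empty closed invariant set. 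So the hard part is really the first two paragraphs — the limiting and identification bookkeeping that turns the return structure $(t_i,L_i)$ of $L$ into an honest aperiodic return structure $(\tau_i,J_i)$ of $J$ at $x'$ with no interval degenerating; once that is in place, the third item is inherited essentially for free from Lemma~\ref{l.aperiodic0}.
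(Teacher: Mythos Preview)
Your overall strategy coincides with the paper's: transfer the three items of Definition~\ref{d.aperiodic} from $L$ to $J$ via the limiting/conjugacy relationship, and deduce minimality on $J$ from minimality on $L$. The third item is indeed inherited essentially for free, and the paper does this exactly as you describe, via a conjugacy $L\to J$ built from $\pi_{x'}$ composed with unstable holonomy.

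The gap is in the second item. Writing $J_i':=\lim P_{-s_{k(j)}}(S_i(L_i))$, it is immediate that $J_1'\cup J_2'=J$ with disjoint interiors, since this passes to the limit from the corresponding property of $S_1(L_1),S_2(L_2)$ in $L$. But Definition~\ref{d.aperiodic} asks for $\widetilde P_1(J_1)\cup\widetilde P_2(J_2)=J$, so you must show $J_i'=\widetilde P_i(J_i)$. Since $J_i=\lim P_{-s_{k(j)}}(L_i)$ and $\widetilde P_i$ is continuous, this amounts to
\[
\lim P_{-s_{k(j)}}\bigl(\widetilde P_i(L_i)\bigr)\;=\;\widetilde P_i\bigl(\lim P_{-s_{k(j)}}(L_i)\bigr),
\]
i.e.\ to commuting $\widetilde P_i$ past $P_{-s_{k(j)}}=\widetilde P_{i_{k(j)}}\circ\dots\circ\widetilde P_{i_1}$. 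This is exactly where the earlier Claim that $\widetilde P_1$ and $\widetilde P_2$ commute is used, and it is the heart of the argument. The intertwining relation you cite (that $P_{-s_{k(j)}}$ agrees with both $\widetilde P_{i_{k(j)}}\circ\dots\circ\widetilde P_{i_1}\circ\pi_{x'}$ and $\pi_{x'}\circ\widetilde P_{i_{k(j)}}\circ\dots\circ\widetilde P_{i_1}$) does not by itself give this; you need the commutation Claim explicitly. The paper's proof singles this out as the key step, and so should yours.
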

\begin{proof}
The projections by $\pi_{x'}$ of $L_1,L_2$ and $S_1(L_1), S_2(L_2)$
have images under the maps {$P_{-s_{k(j)}}$}
which converge to subintervals of $J$:
{by definition,} the first ones are $J_1,J_2$ whereas
{by Global invariance} the last ones
denoted by $J'_1,J'_2$ have disjoint interior and satisfy $J=J'_1\cup J'_2$.
{Since $S_1(L_1)$ and $\widetilde P_1(L_1)$ have the same projection by unstable holonomy,}
Note that $J'_1,J'_2$ are also the limits of
$\widetilde P_1(L_1)$ and $\widetilde P_2(L_2)$
under the maps {$P_{-s_{k(j)}}$}.
Since $\widetilde P_1$ and $\widetilde P_2$ commute this implies that
$J'_1=\widetilde P_1(J_1)$ and $J'_2=\widetilde P_2(J_2)$.
Consequently the second item of the Definition~\ref{d.aperiodic} holds.

Let us consider the projection $\varphi$ from $L$
to $J$ obtained as composition of $\pi_{x'}$ with the unstable holonomy.
Note that it conjugates the orbit of $0_x$ in $L$ by $S_1,S_2$ with the orbit
of $0_{x'}$ in $J$ by $\widetilde P_1,\widetilde P_2$.
Passing to the limit $\varphi$ induces a conjugacy between the dynamics of $S_1,S_2$
on $L$ and $\widetilde P_1,\widetilde P_2$ on $J$.
The third item is thus a consequence of Lemma~\ref{l.aperiodic0}.
\end{proof}

The proof of Proposition~\ref{p.aperiodic0} is now complete.
\end{proof}

\subsection{Construction of a normally expanded irrational torus}\label{ss.topological}
The whole section is devoted to the proof of the next proposition.
\begin{Proposition}\label{p.aperiodic}
If $x\in K\setminus V$ has an aperiodic $\delta$-interval $J$,
then the orbit of $x$ is contained in a minimal set $\cT\subset K$
which is a normally expanded irrational torus, and $\pi_{x}(\cT\cap B(x,r_0/2))\supset J$.
\end{Proposition}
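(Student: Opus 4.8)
The plan is to read off from the aperiodicity data a minimal exchange of two intervals---equivalently an irrational rotation of a circle---together with a piecewise constant roof function, and to realize the associated suspension flow inside $(K,\varphi)$ as the set $\cT$.

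First I would set up the one-dimensional model. Identifying the two endpoints of $J$ to a single point yields a topological circle $\SS$, and by item~2 of Definition~\ref{d.aperiodic} the maps $S_{-t_1}$ on $\operatorname{int}(J_1)$ and $S_{-t_2}$ on $\operatorname{int}(J_2)$ glue to an increasing bijection $f\colon \SS\to\SS$, i.e. a $2$-interval exchange of the circle. Item~3 of Definition~\ref{d.aperiodic} says exactly that $f$ has no proper nonempty closed invariant subset; in particular $f$ has no periodic orbit, and being a minimal exchange of two intervals it is a circle homeomorphism, topologically conjugate to an irrational rotation of $\SS$. I also record the roof function $\rho\colon\SS\to\RR_{>0}$ with $\rho\equiv t_i$ on $J_i$; the suspension flow $(\SS_{f,\rho},\Phi)$ is a flow on a $2$-torus, and---re-parametrizing $\rho$ to a constant, which alters a suspension flow only up to topological equivalence---it is topologically equivalent to an irrational linear flow on $\TT^2$.

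Then I would embed $(\SS_{f,\rho},\Phi)$ into $(K,\varphi)$. The rectangle $R(J)$ splits as $R(J_1)\cup R(J_2)$ with disjoint interiors, and---using item~2 of Definition~\ref{d.aperiodic} together with the backward contraction of the unstable plaques of the Pliss point $x$ given by Proposition~\ref{p.unstable}---the maps $\widetilde P_{-t_i}$ send $R(J_i)$ into $R(J)$, contracting the $\cF$-direction while realizing $f$ in the $\cE$-direction. For a point $u$ on the cross-section, its $f$-itinerary $(j_k(u))_k$ produces returns $s_{k+1}=s_k+t_{j_k(u)}$ of $J$ (each a return by Lemma~\ref{l.return}), and Remark~\ref{r.intersection} together with the Global invariance shows that the corresponding pieces of backward $\varphi$-orbit of $x$ stay uniformly close; a compactness and extraction argument then attaches to $u$ a point of $K$, and flowing along $\varphi$ defines a map $h\colon\SS_{f,\rho}\to K$. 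The contraction of the $\cF$-plaques forces a single itinerary to determine a single point, while distinct itineraries eventually separate in the $\cE$-direction because $f$ is injective; hence $h$ is continuous and injective, it intertwines $\Phi$ with $\varphi$ up to time-reparametrization, and $\cT:=h(\SS_{f,\rho})$ is a compact flow-invariant set. Since $f$ is minimal, $\cT$ is a minimal set, $h$ is a topological equivalence onto $\varphi|_{\cT}$, and the orbit of $x$---associated to the class of $0_x\in J$---lies in $\cT$. The dominated splitting $\cN|_{\cT}=\cE\oplus\cF$ with one-dimensional $\cE$ is inherited from assumption (A1), and the third bullet of Definition~\ref{d.torus} holds because, by construction and by the local invariance of the identification $\pi$, the points of $\cT$ near $x$ project under $\pi_x$ onto a $C^1$-curve in $\cW^{cs}_x$ through $0_x$ that contains $J$; in particular $\pi_x(\cT\cap B(x,r_0/2))\supset J$. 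Thus $\cT$ is a normally expanded irrational torus.

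The main obstacle is this second step: constructing $h$ and proving it is a homeomorphism onto a closed $\varphi$-invariant set. The delicate points are the continuity and injectivity of the coding---which rest on the quantitative control available here (backward contraction of the unstable plaques of Pliss points, bounded distortion of the rectangles $R(J)$ from Proposition~\ref{p.distortion}, and the Global invariance used to transport these estimates between nearby backward orbits of $x$)---and the identification of $\cT$ with a genuine mapping torus, so that the classical fact that a minimal flow obtained by suspending a two-interval exchange is topologically equivalent to an irrational linear flow on $\TT^2$ can be invoked.
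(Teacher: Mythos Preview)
Your overall strategy---build the abstract suspension model first and then embed it into $(K,\varphi)$---is dual to the paper's approach, which instead defines $\cT$ directly as the $\varphi$-saturation of the set $\cC=\{z\in K:\ d(z,x)\le r_0/2,\ \pi_x(z)\in J\}$ and then proves \emph{a posteriori} that $\cT$ is a topological $2$-torus carrying a minimal flow. Both viewpoints are reasonable, but the place where your outline breaks down is exactly the step you flag as ``the main obstacle'', and the tools you list are not enough to close it.

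The sentence ``a compactness and extraction argument then attaches to $u$ a point of $K$'' does not produce a well-defined map $h$. Extraction from the sequence $\varphi_{-s_{k_n}}(x)$ yields \emph{some} accumulation point $z$ with $\pi_x(z)=u$, but nothing you have written rules out a second accumulation point $z'$, with the same projection, lying on a different local $\varphi$-orbit. The backward contraction of the $\cF$-plaques only shows that inside $\cN_x$ a given backward itinerary determines a unique point of $R(J)$; it says nothing about how many local $\varphi$-orbits in $K$ sit over that point under $\pi_x$. Likewise, bounded distortion and the Global invariance let you transport estimates between nearby orbits, but do not by themselves force two such orbits to coincide. Consequently neither the well-definedness nor the continuity of $h$ follows from the ingredients you cite, and the claim that ``the points of $\cT$ near $x$ project under $\pi_x$ onto a $C^1$-curve'' is precisely the conclusion you are trying to establish, not something available ``by construction''.

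The paper handles exactly this difficulty, but from the other direction. It first shows that $\pi_x\colon\cC\to J$ is surjective (density of the $f$-orbit of $0_x$), and that every point of $\cC$ returns in bounded time to any prescribed subinterval of $J$. The heart of the argument is then an explicit construction of a \emph{section curve} $\bar\gamma\subset\cC$ that projects homeomorphically onto a subinterval of $J$: one builds increasing finite sets $Y_k\subset\cC$ whose $\pi_x$-images are $\varepsilon_k$-dense in a fixed subarc, arranging via the Local injectivity and the ``No small period'' property that consecutive points of $Y_{k+1}$ are $2^{-k}r_0$-close in $K$; the closure of $\bigcup_k Y_k$ is the desired curve. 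One then proves that $\varphi_{(-\varepsilon,\varepsilon)}(\gamma)$ is \emph{open} in $\cT$---this is where the possibility of a second local orbit over a given $u$ is finally excluded, using the coherence of plaques (Proposition~\ref{p.coherence}) to show any nearby point of $\cT$ must lie on the flow of $\gamma$. Only after $\cT$ is seen to be a compact surface with a minimal flow does one invoke the classification of surface foliations to conclude it is a $2$-torus with an irrational flow. In short: your missing lemma is the construction of a continuous section of $\pi_x|_{\cC}$ over an arc of $J$, and that is the substantive content of the paper's proof.
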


\begin{proof}
The definition of {aperiodic $\delta$-interval}
is associated to two returns $t_1,t_2>0$.
As before we denote $\widetilde P_1=\widetilde P_{-t_1}$ and $\widetilde P_2=\widetilde P_{-t_2}$.

Let us define $\cC$ to be the compact set of points $z\in K$ such that
$$d(z, x)\leq r_0/2 \text{ and } \pi_{x}(z)\in J.$$
The map $z\mapsto \pi_{x}(z)$ is continuous from $\cC$ to $J$.
We also define the invariant set $\cT$ of points whose orbit meets $\cC$.

For any $u\in J$, we define the sets
$$\Lambda_u^+=\{v\in J,~\exists n\in\NN,~k(1),\dots,k(n)\in\{1,2\},~{\rm s.t.}~\widetilde P_{k(n)}\circ \widetilde P_{k(n-1)}\circ \cdots\circ\widetilde P_{k(1)}u=v\},$$
$$\Lambda_u^-=\{v\in J,~\exists n\in\NN,~k(1),\dots,k(n)\in\{1,2\},~{\rm s.t.}~\widetilde P^{-1}_{k(n)}\circ \widetilde P^{-1}_{k(n-1)}\circ\cdots\circ \widetilde P^{-1}_{k(1)}u=v\}.$$

By the definition of aperiodic interval and of $\Lambda_u^+$ and $\Lambda_u^-$,
the set of accumulation points of $\Lambda_u^+$ and $\Lambda_u^-$ are $J$.

\begin{Claim-numbered}\label{c.surjective}
The map $z\mapsto \pi_{x}(z)$ from $\cC$ to $J$ is surjective.
\end{Claim-numbered}
\begin{proof} 
This follows from the fact that the closure of $\Lambda^+_{0_x}$ is $J$,
and that $\widetilde P_{k(n)}\circ \widetilde P_{k(n-1)}\circ\cdots\circ \widetilde P_{k(1)}.0_x$
belongs to the projection of the orbit of $x$ by $\pi_x$.
\end{proof}

\begin{Claim-numbered}\label{c.return}
For any non-trivial interval $J'\subset J$ there is $T>0$ such that any $z\in \cC$ has iterates
$\varphi_{t^+}(z)$, $\varphi_{-t^-}(z)$ with $t^+,t^-\in (1, T)$ which belong to $\cC$
and project by $\pi_{x}$ in $J'$.
\end{Claim-numbered}
\begin{proof} For $z\in \cC$, since $\Lambda_{\pi_x(z)}^+$ is dense in $J$, there {are}
$u_0=\pi_{x}(z)$, $u_1$,\dots, $u_{\ell(z)}$ in $J$ such that
\begin{itemize}

\item[--] $u_{\ell(z)}$ belongs to the interior of $J'$,

\item[--] For $0\le i\le \ell(z)-1$, there is $k(i)\in\{1,2\}$ such that $\widetilde P_{k(i)}(u_i)=u_{i+1}$.
\end{itemize}

Thus by compactness, there is a uniform $\ell$ such that $\ell(z)\le \ell$ for any $z\in{\cC}$. Moreover by the Global invariance, there is $t(i)>0$ such that $\pi_x\varphi_{-t(i)}(z_i)$ is close to $u_i$ and $t(i+1)-t(i)$ is smaller than $2\max(t_1,t_2)$.

Since $\pi_{x}(z)\in J$ there exists
a $2$-Lipschitz homeomorphism $\theta$ of $[0,+\infty)$ such that
$\varphi_{-\theta(t)}(z)$ is close to $\varphi_{-t}(x)$ for each $t\geq 0$.
This proves that $\varphi_{-\theta(t(\ell))}(z)$ projects by $\pi_{x}$ on $u_\ell$,
hence belongs to $\cC$. Since $\ell$ and the differences $t(i+1)-t(i)$ are bounded
and since $\theta$ is $2$-Lipschitz, the time $t^-:=\theta(t(\ell))$ is bounded by a constant which only
depends on $J'$, as required.

The time $t^+$ is obtained in a similar way since $\Lambda_z^-$ is dense in $J$ for any $z\in J$.
\end{proof}

Set $\cT=\cup_{t\in\RR}\varphi_t(\cC)$. From the previous claim, there exists $T>0$ such that
$\cT=\varphi_{[0,T]}(\cC)$. Since $\cC$ is compact, $\cT$ is also compact.
Note that (using Claim~\ref{c.return}) any point in $\cT$ has arbitrarily large forward iterates in $\cC$,
whose projection by $\pi_x$ belongs to $J\subset \cN_x$.
Since $x\in K\setminus V$,
by choosing $\delta>0$ small enough, the local injectivity implies:
\begin{Claim-numbered}\label{c.large-returns}
Any point in $\cT$ has arbitrarily large forward iterates in the $r_0$-neighborhood of $K\setminus V$.
\end{Claim-numbered}

\begin{Claim-numbered}
There exists a curve $\overline \gamma\subset \cC\cap B(x,r_0/2)$ which
projects homeomorphically by $z\mapsto \pi_{x}(z)$ on a non-trivial compact interval of $\operatorname{Interior}(J)$ .
\end{Claim-numbered}
\begin{proof}
We note that:
\begin{enumerate}
\item\label{i.delta} By the ``No small period" assumption, for $k\in\NN$, there exists $\varkappa_k>0$ such that for any $y,y'\in \cC$
and $t\in [0,1]$ satisfying $d(y,y')<\varkappa_k$ and $d(y,\varphi_t(y'))<\varkappa_k$,
then for any $s\in [0,t]$ we have $d(y,\varphi_s(y'))<2^{-k-1}r_0$.

\item\label{i.epsilon} For each $\varkappa_k>0$ as in Item~\ref{i.delta}, there exists $\varepsilon_k>0$ with the following property.
For any $y\in \cC$ satisfying $B(y,\varkappa_k)\subset B(x,r_0)$ and
for any $u\in J$ such that $d(u,\pi_{x}(y))<\varepsilon_k$,
then there is $y'\in B(y,\varkappa_k/2)\cap\cC$ such that $\pi_x(y')=u$. Indeed, by the Local injectivity property, there is $\beta_k>0$ such that for any points $w\in B(y,r_0)$, if $\|\pi_y w\|<\beta_k$, then there is $t\in[-1/2,1/2]$ such that $d(\varphi_t(w),y)<\varkappa_k/2$. By the uniform continuity of the identification $\pi$, for $\varepsilon_k>0$ such that for any $v_1,v_2\in {\cN}_x$, if $\|v_1-v_2\|<\varepsilon_k$, then $\|\pi_y(v_1)-\pi_y(v_2)\|<\beta_k$.  Now for any $u\in J$ such that $d(u,\pi_{x}(y))<\varepsilon_k$, by Claim~\ref{c.surjective} there is $y_0\in B(x,r_0/2)\cap\cC$  such that $\pi_x(y_0)=u$ and $d(\pi_x(y_0),\pi_x(y))<\varepsilon_k$. Hence $d(\pi_y(y_0),0_y)=d(\pi_y\circ\pi_x(y_0),\pi_y\circ\pi_x(y))<\beta_k$. By using the Local injectivity property, $d(\varphi_t(y_0),y)<\varkappa_k/2$ for some $t\in[-1/2,1/2]$. Set $y'=\varphi_t(y_0)$. By Local invariance, we have that $\pi_x(y')=u$.

\item\label{i.lift} By letting $k\to\infty$ and $\beta_k\to 0$, one deduces that
for any $u\in J$ and any $y\in \cC$ such that $\pi_{x}(y)$ is close to $u$,
there exists $y'$ close to $y$ such that $\pi_{x}(y')=u$.
\end{enumerate}

We build inductively an increasing sequence of finite sets $Y_k$ in $\cC$
satisfying:
\begin{itemize}
\item[--] $\pi_{x}$ is injective on $Y_k$,
\item[--] for any $y,y'\in Y_k$ such that
$\pi_{x}(y)$, $\pi_{x}(y')$ are consecutive points of $\pi_{x}(Y_k)$ in $J$,
$$d(\pi_{x}(y),\pi_{x}(y'))<\varepsilon_k.$$
\item[--] if moreover $y''\in Y_{k+1}$ satisfies that $\pi_{x}(y'')$ is
between  $\pi_{x}(y)$ and $\pi_{x}(y')$ in $J$,
then $y''$ is $2^{-k}r_0$-close to $y$ and $y'$.
\end{itemize}
Let us explain how to obtain $Y_{k+1}$ from $Y_k$.
We fix $y,y'\in Y_k$ such that $\pi_{x}(y)$, $\pi_{x}(y')$ are consecutive points of $\pi_{x}(Y_k)$ in $J$ and define the points in $Y_{k+1}$
which project between $\pi_{x}(y)$, $\pi_{x}(y')$.
By items~\ref{i.epsilon},
we introduce a finite set $y''_1,y''_2,\dots,y''_j$
of points in $B(y, \varkappa_k/2)$ such that

\begin{itemize}

\item[--] $y_1''=y$ and $y_j''=y'$,
\item[--] $\pi_{x}\{y_1'',y_2'',\dots,y_j''\}$
is $\varepsilon_{k+1}$-dense in the arc of $J$ bounded by $\pi_{x}(y)$ and $\pi_{x}(y')$,
\item[--] $\pi_{x}(y''_i),\pi_{x}(y''_{i+1})$ 
are consecutive points of the projections in $J$ for $i\in \{1,\dots,j-1\}$,

\end{itemize}

The distance of $y_i''$ and $y_{i+1}''$ may be larger than $r_0 2^{-k-2}$. We need to modify the construction. By Item~\ref{i.epsilon}, there is $t_i\in[-1,1]$ such that $z_i=\varphi_{t_i}(y_i'')$ is $\varkappa_{k+1}/2$-close to $y_{i+1}''$. Choose $n\in\NN$ large enough. Consider the points $\{\varphi_{mt_i/n}(y_i'')\}_{m=0}^n$.  By using the item~\ref{i.lift}
there exists a finite collection $X_i$ of points that
are arbitrarily close to the set $\{\varphi_{mt_i/n}(y_i'')\}_{m=0}^n$ such that they
have distinct projection by $\pi_{x}$
and such that any two such point with consecutive projections are 
$2^{-k-2}r_0$-close.
By the item~\ref{i.delta}, the set $X_i$
is contained in $B(y,2^{-k-1}r_0)$.
Since $d(y,y')<2^{-k-1}r_0$, it is also contained in $B(y',2^{-k}r_0)$.
The set  of points of $Y_{k+1}$which project between $\pi_{x}(y)$, $\pi_{x}(y')$ 
is the union of the $\{y_i'',y_{i+1}''\}\cup X_i$ for any $i$.

Let us define $Y=\cup Y_k$.
The restriction of $\pi_{x}$ to $Y$ is injective
and has a dense image in a non-trivial interval $J'$
contained in $\operatorname{Interior}(J)$.
Its inverse $\chi$ is uniformly continuous:
indeed for any $k$, any $y,y'\in Y_k$
with consecutive projection, and any
$y''\in Y$ such that $\pi_{x}(y'')$
is between $\pi_{x}(y)$, $\pi_{x}(y')$,
the distance $d(y'',y)$ is smaller than $2^{-k+1}r_0$.
As a consequence $\chi$ extends continuously to $J'$
as a homeomorphism such that $\pi_{x}\circ \chi=\id_{J'}$.
The curve $\bar \gamma$ is the image of $\chi$.
\end{proof}

Let $\gamma$ be the open curve obtained by removing the endpoints of $\overline \gamma$.
By the Local invariance,
for $\varepsilon>0$ small,
$\varphi$ is injective on $[-\varepsilon,\varepsilon]\times \overline\gamma$;
its image is contained in $\cC$ and is homeomorphic to the ball $[0,1]^2$. This is because a continuous bijective map from a compact space to a Hausdorff space is a homeomorphism. Thus $\varphi$ is a homeomorphism from $(-\varepsilon,\varepsilon)\times \gamma$ to its
image.

\begin{Claim-numbered}\label{c.open}
The set $\varphi_{(-\varepsilon,\varepsilon)}(\gamma)$ is open in $\cT$.
\end{Claim-numbered}
\begin{proof} Let us fix $z_0\in \varphi_{t}(\gamma)$
for some $t\in (-\varepsilon,\varepsilon)$ and let us consider any point $z\in \cT$ close to $z_0$.
We have to prove that $z$ belongs to the open set $\varphi_{(-\varepsilon,\varepsilon)}(\gamma)$.
Note that
$d(z, x)<r_0$ and $\pi_{x}(z)$ belongs to $R(J)$.
By Claim~\ref{c.return} and the definition of $\cT$,
the point $z$ has a negative iterate $z'=\varphi_{-s}(z)$ in $\cC$, with $s$ bounded, so that $\pi_x(z')\in J$.

One deduces that $\pi_{x}(z)$ belongs to $J$.
Otherwise, $\pi_x(z)\in R(J)\setminus J$.
Since $z$ and $x$ have arbitrarily large returns in the $r_0$-neighborhood of $K\setminus V$
(Claim~\ref{c.large-returns}), the Proposition~\ref{p.coherence} can be applied and
$\pi_x\circ P_{s}\circ \pi_{z'}(J)$ is disjoint from $J$.
Since $s$ is bounded, the distance $d(\pi_x\circ P_{s}\circ \pi_{z'}(J),J)$
is bounded away from zero. We have $\pi_x(z_0)\in J$ whereas
$\pi_x(z)\in P_{s}\circ \pi_{z'}(J)$ which contradicts the fact that
$z$ is arbitrarily close to $z_0$.

Since $z$ is close to $z_0$ and the curve $\gamma$ is open,
we have $\pi_{x}(z)\in \pi_{x}(\gamma)$
and $z\in \varphi_{t'}(\gamma)$ for some $|t'|<1/2$ by the Local injectivity.

By the Local invariance $\varphi_{-t}(z)$ and $\varphi_{-t'}(z)$
have the same projection by $\pi_{x}$.
If $z$ is close enough to $z_0$, both are arbitrarily close to $\gamma$.
Since $\pi_{x}$ is injective on $\gamma$ one deduces
that $d(\varphi_{-t}(z),\varphi_{-t'}(z))$ is arbitrarily small.
By the ``No small period" assumption,  this implies that $|t'-t|$ is arbitrarily small.
Hence $|t'|<\varepsilon$
proving that $z\in \varphi_{(-\varepsilon,\varepsilon)}(\gamma)$, as required.
\end{proof}

By Claim~\ref{c.return}, any point $z\in \cT$ has a backward  iterate in $\cC$
which project by $\pi_{x}$ in $\pi_{x}(\gamma)$, hence 
by Local injectivity has an iterate $\varphi_{-t}(z)$ in $\gamma$. Since $\varphi_{-t}$
is a homeomorphism, one deduces that $z$ has an open neighborhood of the form
$\varphi_{t}(\varphi_{(-\varepsilon,\varepsilon)}(\gamma))$ which is homeomorphic
to the ball $(0,1)^2$. As a consequence, $\cT$ is a compact topological surface.

Since any forward and backward orbit of $\cT$ meets the small open set $\varphi_{(-\varepsilon',\varepsilon')}(\gamma')$, for any $\varepsilon'\in (0,\varepsilon)$
and $\gamma'\subset \gamma$,
the dynamics induced by $(\varphi_{t})$ on $\cT$ is minimal.

From classical results on foliations on surfaces (see~\cite[Theorem 4.1.10, chapter I]{hector-hirsch-foliation}),
we get:
\begin{Claim-numbered}
$\cT$ is homeomorphic to the torus $\TT^2$ and
the induced dynamics of $(\varphi_t)_{t\in \RR}$ on $\cT$
is topologically conjugated to the suspension of an irrational rotation of the circle.
\end{Claim-numbered}

By Claim~\ref{c.open},
$\varphi_{(-\varepsilon,\varepsilon)}(\gamma)$
is a neighborhood of $x$ in $\cT$
and 
$\pi_{x}(\gamma)$
is contained in $\cW^{cs}(x)$, hence is a $C^1$-curve
tangent to $\cE(x)$ at $0_{x}$.
Hence $\cT$ is a normally expanded irrational torus.
Moreover $\pi_{x}(\cT\cap B(x,r_0/2))$ contains $J$
by Claim~\ref{c.surjective}.
This ends the proof of Proposition~\ref{p.aperiodic}.
\end{proof}

\subsection{Proof of the topological stability}\label{ss.Lyapunov}
We prove Proposition~\ref{Prop:dynamicsofinterval} and Lemma~\ref{l.periodic} that imply the topological stability
in Section~\ref{sss.lyapunov-stable}.

\begin{proof}[Proof of Proposition~\ref{Prop:dynamicsofinterval}]
By Proposition~\ref{p.limit}, there is a sequence of $\delta$-intervals $(\widehat I_k)$
at $T_\cF$-Pliss backward iterates $\varphi_{-n_k}(x)$ in $K\setminus V$ and converging to a
$\delta$-interval $I_\infty$ at a $T_\cF$-Pliss point $x_\infty$.
\medskip

Let us assume first that all large returns of $I_\infty$ are shifting.
By Proposition~\ref{p.shifting-return}, $I_\infty$ has two deep sequences of returns,
one shifting to the right and one shifting to the left.
Proposition~\ref{p.aperiodic0} then implies that $\alpha(x_\infty)$ contains a point $x'\in K\setminus V$
with an aperiodic $\delta$-interval.
By Proposition~\ref{p.aperiodic}, the orbit of $x'$ is contained in a normally expanded
irrational torus $\cT$. We have proved that the first case of Proposition~\ref{Prop:dynamicsofinterval} holds.
\hspace{-1cm}\mbox{}
\medskip

Let us assume then that $I_\infty$ admits arbitrarily large non-shifting returns.
Proposition~\ref{p.periodic-return} implies that $I$ is contained in the unstable set of some periodic $\delta$-interval $J$.
\end{proof}

\begin{proof}[Proof of Lemma~\ref{l.periodic}]
Let us consider the rectangle $R(I)$: it is mapped into itself by $P_{-2s}$, where $s$ is the period of $q$.
By assumption (A3), $\cE$ is contracted over the orbit of $q$.
One deduces that there exists a neighborhood $B$ of $0_q$ in $R(I)$
such that for any $u\in B\setminus \cW^{cu}(0_q)$, the backward orbit of $u$
by $(P_t)$ converges towards a periodic point of $I$ which is not contracting along $\cE$.
So Lemma~\ref{l.closing0} implies that $\pi_q(K)$ is disjoint from $B\setminus \cW^{cu}(0_q)$.

For any interval $L\subset \cN_z$ as in Lemma~\ref{l.periodic}, the point $\pi_q(z)$
does not belong to $B\setminus \cW^{cs}(0_q)$. Hence $\pi_q(L)$ contains an interval $J$
which is contained in $B$,
meets $\cW^{cu}(0_q)$ and has a length larger than some constant $\chi_0>0$.
The backward iterate $P_{-2s}(J)$ still contain an interval $J'$ having this property
since $q$ is attracting along $\cE$.

Since $\pi_q(J)$ intersects $R(I)$, there exists $\theta\in \lip$
such that $d(\varphi_{-t}(z),\varphi_{-\theta(t)}(q))$ remain small for any $t\in [0,T]$.
By the Global invariance, if $k$ is the largest integer such that $\theta(T)>ks$,
then $P_{-\theta^{-1}(ks)}(L))$ contains an interval of length larger than $\chi_0/2$.
Since $\theta^{-1}(T)$ is bounded, this implies that $P_{-T}(L)$ has length larger
than some constant $\chi>0$.
\end{proof}

\subsection{Proof of the topological contraction}
We prove a proposition that will allow us to conclude the topological contraction.

\begin{Definition}
$K$ admits \emph{arbitrarily small periodic intervals} if for any $\delta>0$, there is a periodic point $p\in K$, whose orbit supports a periodic $\delta$-interval.
\end{Definition}

\begin{Proposition}\label{Pro:smallperiodic-interval}
Let us assume that:
\begin{itemize}
\item[--] assumptions (A1), (A2), (A3) hold,
\item[--] $K$ does not contain a normally expanded irrational torus,
\item[--] $K$ is transitive,
\item[--] there exists periodic points $p_n\in K$ and positive numbers $\delta_n\to 0$
such that $p_n$ has a periodic $\delta_n$-interval.
\end{itemize}
Then there are  $C_0>0$, $\varepsilon_0>0$, and 
a non-empty open set $U_0\subset K$ such that for any $z\in U_0$, we have

$$\sum_{k\in\NN}|P_k(\cW^{cs}_{\varepsilon_0}(z))|<C_0.$$

\end{Proposition}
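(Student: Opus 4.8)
The plan is to reduce the statement to a forward hyperbolicity property of orbits for the bundle $\cE$, and then to exploit the periodic $\delta_n$-interval — together with the return/rectangle machinery of Sections~\ref{ss.return-delta}--\ref{ss.topological} and the no-torus hypothesis — in order to produce the open set $U_0$.

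\emph{Reduction.} Since $K$ contains no normally expanded irrational torus, Proposition~\ref{Pro:lyapunovstablity} provides a scale: there is $\varepsilon_1>0$ (which we may take smaller than the constant $\delta_\cE$ below) and a radius $\varepsilon_0\in(0,\varepsilon_1)$ with $P_t(\cW^{cs}_{\varepsilon_0}(x))\subset\cW^{cs}_{\varepsilon_1}(\varphi_t(x))$ for all $x\in K$ and $t\ge0$. Let $C_\cE,\lambda_\cE>1$ and $T_\cF\ge0$ be the constants furnished by Proposition~\ref{l.summability} applied with $W:=V$ (by (A2') the bundle $\cE$ is uniformly contracted on $\bigcup_{s\in[0,1]}\varphi_s(V)$), and let $C'_\cE>1$, $\delta_\cE>0$ be the constants attached to $C_\cE,\lambda_\cE$ by Lemma~\ref{l.summability-hyperbolicity}. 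I claim it is enough to produce a nonempty open set $U_0\subset K$ and an integer $N\ge0$ such that $\varphi_N(z)$ is a $(C_\cE,\lambda_\cE)$-hyperbolic point for $\cE$ for every $z\in U_0$. Indeed, for such $z$ set $J_N:=P_N(\cW^{cs}_{\varepsilon_0}(z))$: by Proposition~\ref{Pro:lyapunovstablity} and Remark~\ref{r.plaque-invariance} this is an interval of $\cW^{cs}(\varphi_N(z))$ through $0_{\varphi_N(z)}$ of length $<\varepsilon_1<\delta_\cE$, so Lemma~\ref{l.summability-hyperbolicity} gives $\sum_{m\ge0}|P_m(J_N)|\le C'_\cE|J_N|\le C'_\cE\varepsilon_1$, while the first $N$ plaques contribute at most $N\varepsilon_1$ by the choice of $\varepsilon_0$. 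Hence $\sum_{k\in\NN}|P_k(\cW^{cs}_{\varepsilon_0}(z))|<C_0:=(N+C'_\cE)\varepsilon_1$, as required.

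\emph{Producing $U_0$.} Fix $n$ large enough that $\delta_n$ is smaller than every threshold met above and in Lemmas~\ref{Lem:hyperbolicreturns}, \ref{l.periodic}, \ref{Lem:schwartz} and Propositions~\ref{p.limit}, \ref{p.rectangle}, \ref{p.aperiodic0}; write $q:=p_n$, $I:=I_n$ for its periodic $\delta_n$-interval and $\cO$ for the orbit of $q$. By (A3) the bundle $\cE$ is uniformly contracted over $\cO$, so each point of $\cO$ is $(\widehat C,\widehat\lambda)$-hyperbolic for $\cE$ with uniform constants; shrinking $\lambda_\cE$ (which only relaxes the conclusion of Proposition~\ref{l.summability}) we may assume the points of $\cO$ are $(C_\cE,\lambda_\cE)$-hyperbolic for $\cE$. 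By Proposition~\ref{l.summability} with $W=V$, a point $x$ is $(C_\cE,\lambda_\cE)$-hyperbolic for $\cE$ as soon as $x$ is $T_\cF$-Pliss for $\cF$ and every $T_\cF$-Pliss iterate $\varphi_m(x)$, $m\ge1$, lies in $V$. The goal is to check that these two properties hold simultaneously on a full neighborhood $U_0$ of $q$, so that $N=0$ works. The non-shrinking Lemma~\ref{l.periodic} (applied to the periodic $\delta_n$-interval $I$) and the rectangles $R(\,\cdot\,)$ of Proposition~\ref{p.rectangle}, together with the smallness of $\delta_n$, confine the possible failure of the second property to a set near $\cO$ that is ``rigid'' in the sense that any $\delta$-interval shadowing $\cO$ is forced to be a sub-interval of (a translate of) $I$; the Denjoy--Schwartz distortion Lemma~\ref{Lem:schwartz} is used throughout to keep interval lengths uniformly controlled.

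\emph{Main obstacle.} The delicate point — and the bulk of the work — is to upgrade this to an honestly \emph{open} neighborhood with \emph{uniform} constants, rather than merely to a residual set. The strategy is by contradiction: if in every neighborhood of $\cO$ one could find a point whose forward orbit has a $T_\cF$-Pliss return outside $V\subset K$, then (as in the limiting argument of the proof of Proposition~\ref{Pro:lyapunovstablity}, passing also to the backward orbit) one would obtain a $\delta$-interval shadowing $\cO$ with arbitrarily large returns; since $\cE$ is \emph{contracted} over $\cO$, the criterion of Proposition~\ref{p.periodic-return} cannot apply, so by Proposition~\ref{p.shifting-return} there are deep sequences of shifting returns to both sides, whence by Propositions~\ref{p.aperiodic0} and~\ref{p.aperiodic} a normally expanded irrational torus inside $K$, contradicting the hypothesis. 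Transitivity of $K$ enters here to run the $\alpha$-limit arguments of Proposition~\ref{p.aperiodic0}. Matching the returns of the nearby $\delta$-intervals with those of $I$ — using the coherence of the center-unstable plaques (Proposition~\ref{p.rectangle}) and the ``no shear'', local-injectivity and local-invariance properties of the identification — is the technical heart of this step. Once $U_0$ and the uniform $N$ are in hand, the estimate of the Reduction step closes the proof.
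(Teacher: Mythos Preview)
Your Reduction step is sound: if you could find an open set $U_0$ and an $N$ with $\varphi_N(z)$ uniformly $(C_\cE,\lambda_\cE)$-hyperbolic for $\cE$ for all $z\in U_0$, the summability would follow from Lemma~\ref{l.summability-hyperbolicity}. The problem is that your ``Producing $U_0$'' step cannot work as stated, and this is not a technical obstacle but a genuine failure of the strategy.

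You want to take $U_0$ a neighborhood of the periodic orbit $\cO$ of $q=p_n$ and argue that every forward $T_\cF$-Pliss iterate of a point of $U_0$ lies in $V$. But this already fails for $q$ itself. Since $q$ carries a periodic $\delta_n$-interval, Lemma~\ref{Lem:hyperbolicreturns} forces infinitely many backward (hence, by periodicity, forward) iterates of $q$ to be $T_\cF$-Pliss and to lie in $K\setminus V$. So $\cO$ meets $K\setminus V$ at Pliss times and your hypothesis for applying Proposition~\ref{l.summability} never holds on any neighborhood of $\cO$. More broadly, the conclusion you are trying to reach --- uniform forward hyperbolicity of $\cE$ on an open set --- is strictly stronger than what the Proposition asserts (bounded sums of plaque lengths), and is in general false here: the paper explicitly reduces to the case where $\cE$ is \emph{not} uniformly contracted. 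Your contradiction paragraph does not repair this: Proposition~\ref{p.periodic-return} is a sufficient condition (non-shifting return $\Rightarrow$ unstable set of a periodic interval), not an obstruction, and contraction of $\cE$ along $\cO$ does not prevent it from applying.

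The paper's argument is structurally different. It does not look for forward hyperbolicity at all. Instead, using transitivity it finds a point $x\in K\setminus V$ escaping from $\cO$ along its unstable set, and exploits the fact that the non-zero endpoint of the periodic interval is repelling along $\cE$: by (A3) this endpoint cannot be shadowed by a point of $K$, so one half-neighborhood $Q\subset\cN_x$ satisfies $Q\cap\pi_x(K)=\emptyset$. For $z$ near $x$ one builds rectangles $R_n(z)$ over the interval $J^0(z)\subset\cW^{cs}(z)$ projecting into $Q$, at each forward Pliss time $n\in H(z)$; because $Q$ is disjoint from $\pi_x(K)$, these rectangles are pairwise disjoint (wandering), giving a volume bound and hence $\sum_{n\in H(z)}|J^0_n(z)|<C_H$. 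Proposition~\ref{l.summability} is then used only to bridge \emph{between consecutive} Pliss times, yielding $\sum_k|P_k(J^0(z))|<C_{Sum}$, and Denjoy--Schwartz (Lemma~\ref{Lem:schwartz}) transfers this to $\cW^{cs}_{\varepsilon_0}(z)$. The open set $U_0$ is a neighborhood of $x$, not of $\cO$.
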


In the next 3 subsections, we assume that the setting of Proposition~\ref{Pro:smallperiodic-interval} holds.
We also assume that $\cE$ is not uniformly contracted (since otherwise Proposition~\ref{Pro:smallperiodic-interval} holds by Lemma~\ref{l.summability-hyperbolicity}).
Proposition~\ref{Pro:smallperiodic-interval} is proved in Section~\ref{ss.sum}, and then
Theorems~\ref{Thm:recurrent-contraction} and~\ref{Thm:topologicalcontracting} are proved in Section~\ref{ss.conclusion-topological}.

\subsubsection{The unstable set of periodic points}

\begin{Lemma}\label{l.unstable-set}
For any $\beta>0$, there exist:
\begin{itemize}
\item[--] a periodic point $p\in K\setminus V$ (with period $T$),
\item[--] a point $x\in K\setminus \{p\}$ which is $r_0/2$-close to $p$ and whose $\alpha$-limit set is the orbit of $p$,
\item[--] $r_x>0$ and a connected component $Q$ of $B(0_x,r_x)\setminus \cW^{cu}(x)$ in $\cN_x$ {such that $Q\cap \pi_x(K)=\emptyset$ and the diameter of $P_{-t}(Q)$ is smaller than $\beta$ for each $t>0$.}
\end{itemize}
\end{Lemma}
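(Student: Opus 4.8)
My plan is to combine the ``gap'' construction already internal to the proof of Lemma~\ref{l.periodic} with a non-isolation argument drawn from transitivity. Given $\beta>0$, I would first choose $n$ large so that $\delta_n$ is below all the thresholds used below and $\delta_n<\beta$. Since $p_n$ carries a periodic $\delta_n$-interval it admits a $\delta_n$-interval, so Lemma~\ref{Lem:hyperbolicreturns} provides iterates of $p_n$ lying in $K\setminus V$ that are $T_\cF$-Pliss; take $p$ to be one of them, with period $T$, and let $I\subset\cW^{cs}(p)\subset\cN_p$ be the corresponding periodic $\delta_n$-interval. Because $p$ is $T_\cF$-Pliss the bundle $\cF$ is uniformly contracted under backward iteration along the orbit of $p$, so combined with (A3) the periodic point $0_p$ is hyperbolic for the linear flow $(DP_t(0))$ ($\cE$ contracted, $\cF$ expanded), and by Proposition~\ref{p.unstable} and Remark~\ref{r.plaque-invariance} the plaque $\cW^{cu}(0_p)$ is its genuine unstable manifold. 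Note also that, since $\cE$ is not uniformly contracted while (A3) contracts $\cE$ over every periodic orbit, $K$ is not reduced to the orbit of $p$; being transitive it is connected, hence the orbit of $p$ is not isolated in $K$.

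Next I would establish a gap near $0_p$. The rectangle $R(I)$ of Proposition~\ref{p.rectangle} is sent into itself by $P_{-2T}$, and since $\cE$ is contracted over the orbit of $p$ there is a neighbourhood $B$ of $0_p$ inside $R(I)$ such that for every $u\in B\setminus\cW^{cu}(0_p)$ the backward orbit $P_{-t}(u)$ converges to the orbit of a periodic point of $I$ that is not contracted along $\cE$ --- this is precisely the computation carried out in the proof of Lemma~\ref{l.periodic}. If some $z\in K$ close to $p$ satisfied $\pi_p(z)\in B\setminus\cW^{cu}(0_p)$, the iterates $\varphi_{-2kT}(z)$ would form a sequence in $K$ near $p$ whose $\pi_p$-images converge to such a non-$\cE$-contracting fixed point of $\pi_p\circ P_{-2T}$, and Lemma~\ref{l.closing0} would then produce a periodic point of $K$ along which $\cE$ is not contracted, contradicting (A3). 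Thus $\pi_p(K)$ is disjoint from $B\setminus\cW^{cu}(0_p)$; fixing a small ball $B(0_p,r_p)\subset B$, each of the two components $Q_p$ of $B(0_p,r_p)\setminus\cW^{cu}(0_p)$ is disjoint from $\pi_p(K)$, and since $Q_p\subset R(I)$ the diameter of $P_{-t}(Q_p)$ is bounded by $\delta_n$ plus the exponentially shrinking size of the unstable leaves, hence is $<\beta$ for all $t>0$ (after enlarging $n$ and shrinking $r_p$ if needed).

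Then I would produce $x$ and transport the gap. Using that the orbit of $p$ is not isolated, pick $z_k\in K$ off the orbit of $p$ with $z_k\to p$. Then $\pi_p(z_k)\to 0_p$, $\pi_p(z_k)\neq 0_p$ by the Local injectivity, and the preceding paragraph forces $\pi_p(z_k)\in\cW^{cu}(0_p)$ once $k$ is large. Since $\cW^{cu}(0_p)$ is the genuine unstable manifold of the hyperbolic point $0_p$, the backward orbit of $z_k$ converges to the orbit of $p$, so its $\alpha$-limit set is that orbit; set $x:=z_k$ for $k$ so large that $x$ is $r_0/2$-close to $p$, so $x\in K\setminus\{p\}$ has the required position and asymptotics. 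Finally, as $x$ is close to $p\in K\setminus V$ and its backward orbit shadows the orbit of $p$, the identification $\pi_{x,p}\colon\cN_x\to\cN_p$ is a diffeomorphism near $0$ (with inverse $\pi_{p,x}$) carrying $\cW^{cu}(x)$ into $\cW^{cu}(0_p)$ by the coherence, using $\pi_p(x)\in\cW^{cu}(0_p)$; take $r_x>0$ small and let $Q$ be the component of $B(0_x,r_x)\setminus\cW^{cu}(x)$ whose image under $\pi_{x,p}$ lies in $Q_p$. Then $Q\cap\pi_x(K)=\emptyset$ because $\pi_{x,p}(\pi_x(y))=\pi_p(y)$ for $y\in K$ near $p$, and by the Global invariance $P_{-t}(Q)$ is carried, through a bounded-distortion identification between close fibres, into $P_{-\theta(t)}(Q_p)$ for a reparametrisation $\theta$, so $\operatorname{diam}(P_{-t}(Q))<\beta$ for every $t>0$, giving all the conclusions.

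The conceptual core is the gap statement, where (A3) and the closing lemma interact, but that step is essentially lifted from the proof of Lemma~\ref{l.periodic}. I expect the main obstacle to be the part where $x$ is produced: one must know that \emph{every} point of $K$ clustering at $p$ actually lies on the strong unstable manifold $\cW^{cu}(0_p)$ and is backward-asymptotic to the orbit of $p$, and this is exactly what the hyperbolicity of $0_p$ --- itself obtained from $p$ being a $T_\cF$-Pliss point --- is there to guarantee; checking this carefully, together with the bookkeeping needed to transport the gap and its backward contraction estimate along the shadowing relation, is where the real work lies.
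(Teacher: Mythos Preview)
Your overall plan is sound, and the ``gap'' paragraph (that $\pi_p(K)$ misses one side of $\cW^{cu}(0_p)$ near $0_p$) is correct and is exactly the mechanism the paper uses for $Q$. The genuine problem is in your construction of $x$.

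The periodic $\delta$-interval $I$ has $0_p$ as an \emph{endpoint}: since $\cE$ is contracted at $p$, the map $P_{-T}$ pushes points of $I$ away from $0_p$ toward the other endpoint, and the paper normalises $I$ so that only the two endpoints are fixed. Hence the rectangle $R(I)$ sits on \emph{one} side of $\cW^{cu}(0_p)$, and the neighbourhood $B\subset R(I)$ you borrow from Lemma~\ref{l.periodic} cannot contain a full ball $B(0_p,r_p)$; only the half lying in $R(I)$ is covered. Your closing-lemma argument therefore shows $\pi_p(K)$ avoids \emph{that} half, but says nothing about the other component of $B(0_p,r_p)\setminus\cW^{cu}(0_p)$. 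Consequently, for $z_k\to p$ off the orbit of $p$, nothing forces $\pi_p(z_k)\in\cW^{cu}(0_p)$: it may sit on the uncontrolled side, its backward $(P_t)$-orbit may simply leave the local chart, and you cannot conclude $\alpha(z_k)=\orb(p)$. (Hyperbolicity of $0_p$ alone does not help: a saddle forbids backward-bounded orbits off the unstable manifold, but it does not force $\pi_p(K)$ to lie on that manifold.)

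The paper sidesteps this one-sidedness by producing $x$ differently: using transitivity it takes points $x_n\to p$ and follows them \emph{forward} until they first exit an $r_p$-neighbourhood of $\orb(p)$ at time $t_n\to\infty$, then sets $x=\lim\varphi_{t_n}(x_n)$. The whole backward half-orbit of $x$ then stays $r_p$-close to $\orb(p)$, so by the Global invariance $\|P_{-t}(\pi_p(x))\|$ remains small for all $t>0$, which forces $\pi_p(x)\in\cW^{cu}(0_p)$ directly --- no two-sided gap is needed. After that, your transport of the one-sided gap from $\cN_p$ to $\cN_x$ and the diameter estimate go through essentially as you wrote them.
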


\begin{proof} 
Since $K$ admits arbitrarily small periodic intervals, there is a periodic point $p\in K$ with period $T>0$ and a periodic $\delta$-interval
$I\subset {\cN}_p$ for $\delta$ small.
Since $\cE$ is uniformly contracted in $V\supset K\setminus U$ (assumption (A2)), one can replace $p$ by one of its iterates so that $p\in K\setminus V$.
By Lemma~\ref{Lem:hyperbolicreturns} (and the continuity of $(P_t)$),
\begin{itemize}
\item[--] the restriction of $\cF$ to the orbit of $I$ by ($P_t)$ is an expanded bundle. 
\end{itemize}
Since $\cE$ is uniformly contracted over the orbit of $0_p$ (by (A3)),
one can assume that:

\begin{itemize}
\item[--] Only the endpoints of $I$ are fixed by $P_{T}$. One is $0_p$, the other one attracts any point of $I\setminus \{0_p\}$
by negative iterations of $P_T$.
\item[--] There is $\beta_p>0$ such that for any $u\in \cN_p$ satisfying $\|P_{-t}(u)\|<\beta_p$ for each $t>0$,
then $u$ is in the unstable manifold of $0_p$ for $P_t$. 
\end{itemize}
Let $r_p>0$ such that $B(p,r_p)\subset U$ and $P_{-t}\circ \pi_p(B(p,r_p))\subset B(0_{\varphi_{-t}(p)},\beta_p)$ for each $t\in [0,T]$.\hspace{-1cm}\mbox{}
\medskip

Since $K$ is transitive, there are sequences $(x_n)$ in $K$ and $(t_n)$ in $(0,+\infty)$ such that
\begin{itemize}
\item[--] $\lim_{n\to\infty}x_n=p$ and $\lim_{n\to\infty}t_n=+\infty$,
\item[--] $d(\varphi_{t}(x_n), \orb(p))<r_p$ for $t\in (0,t_n)$ and $d(\varphi_{t_n}(x_n), \orb(p))=r_p$ for each $n$.
\end{itemize}
Taking a subsequence if necessary, we let $x:=\lim \varphi_{t_n}(x_n)$.
By the Global invariance, $P_{-t}\circ \pi_p(x)\subset B(0_{\varphi_{-t}(p)},\beta_p)$ for each $t>0$, hence $\pi_p(x)$ lies in the unstable manifold of $0_p$.
Combined with the Local injectivity, there exists $\theta\in \lip$ such that $\theta(0)=0$ and $d(\varphi_{\theta(t)}(x),\varphi_t(p))\to 0$ as $t\to -\infty$.
Hence the $\alpha$-limit set of $x$ is $\orb(p)$.
\medskip

We now consider the dynamics of $P_{T}$ in restriction to $\cN_p$. The periodic interval $I$ is normally expanded.
Consequently, for $r_x$ small, one of the components $Q$ of $B(0_x,r_x)\setminus \cW^{cu}(x)$ in $\cN_x$ has an image by $\pi_x$ contained in the unstable set of $I\setminus \{0_p\}$.

Let us assume by contradiction that there exists $y\in U$ such that $\pi_x(y)\in Q$.
The backward orbit of $\pi_p(y)$ by $P_T$ converges to the endpoint $v$ of $I$ which is not attracting along $\cE$.
Lemma~\ref{l.closing0} and the Global invariance imply that the backward orbit of $y$ converges to a periodic orbit in $K$ whose eigenvalues
at the period for the fibered flow coincide with the eigenvalues at the period of the fixed point $v$ for $P_T$. This is a contradiction since all the eigenvalues of
$v$ are non-negative whereas $\cE$ is uniformly contracted over the periodic orbits of $K$.
So $Q$ is disjoint from $\pi_x(K)$.
\medskip

One can choose $p$ and $I$ such that any iterates $P_{-t}(I)$ has diameter smaller than $\beta/2$.
Reducing $r_x$, one can ensure that the iterates of $P_{-t}(Q)$ have diameter smaller than $\beta$
until some time, where it stays close to the orbit of $I$ and has diameter smaller than $2\sup_t\diam(P_{-t}(I))$.
This concludes the proof of the lemma.
\end{proof}

\subsubsection{Wandering rectangles}\label{ss.wandering-rectangles}
One chooses $\delta\in (0,r_0/2)$,  $\beta,\varepsilon>0$, and a component $Q$
as in Lemma~\ref{l.unstable-set} such that
\begin{itemize}
\item[--] if $\varphi_{-t}(x)$, $t>1$, is $2\delta$-close to $x$, then
$Q\cap \pi_x(P_{-t}(Q))=\emptyset$;
\item[--] if $y,z\in K$ are close to $x$ and $\theta\in \lip$ satisfies
$d(\varphi_{\theta(t)}(y),\varphi_t(z))<\delta$ for $t\in [-2,0]$, then $\theta(0)-\theta(-2)>3/2$;
\item[--] $\beta>0$ associated to a shadowing at scale $\delta$ as in the Global invariance (Remarks~\ref{r.identification}.e);
\item[--] for any point $z$, the forward iterates of $\cW^{cs}_\varepsilon(z)$ have length smaller than $\beta/3$ and than
 the constant $\delta_\cE$ given by Lemma~\ref{l.summability-hyperbolicity}
 (this is possible since $\cE$ is topologically stable);
\item[--] the backward iterates of $Q$ have diameter smaller than $\beta/2$.
\end{itemize}

For $z$ close to $x$, one considers the closed curve $J(z)\subset \cW^{cs}(z)$ of length $\varepsilon$
bounded by $0_z$, such that $\pi_x(J(z))$ intersects $Q$.
Since $\pi_x(z)$ does not belong to $Q$ (by Lemma~\ref{l.unstable-set}), the unstable manifold of $0_p$
intersects $\pi_p(J(z))$, defining two disjoint arcs $J(z)=J^0(z)\cup J^1(z)$ such that $J^1(z)$ is bounded by $0_z$, disjoint from $Q$
and $\pi_x(J^0(z))\subset Q$.
\medskip

Let $H(z)$ denote the set of integers $n\geq 0$ such that $\varphi_n(z)\in K\setminus V$ and
$(z,\varphi_n(z))$ is $T_\cF$-Pliss. For $n\in H(z)$,
we set $J_n(z)=P_n(J(z))$ and $J^{0}_n(z)=P_n(J^{0}(z))$.

\begin{Lemma}\label{l.construction-rectangle}
There is $C_R>0$ with the following property.
For any $z$ close to $x$ and any $n\in H(z)$,
there exists a rectangle $R_n(z)\subset \cN_{\varphi_n(z)}$ which is the image of $[0,1]\times B_{d-1}(0,1)$ by a homeomorphism $\psi_n$ such that
(see Figure~\ref{f.topological-hyperbolicity}):
\begin{enumerate}

\item\label{i.construction-rectangle} $\text{Volume\;}(R_n(z))>C_R.|J^0_n(z)|$,
\item the preimages $P_{-t}(R_n(z))$ for $t\in [0,n]$ have diameter smaller than $\beta/2$,
\item $\pi_x\circ P_{-n}(R_n(z))$ is contained in $Q$.
\end{enumerate}
\end{Lemma}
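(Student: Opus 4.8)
The plan is to build $R_n(z)$ as the image under $P_n$ of a rectangle $\widehat R(z)\subset \cN_z$ sitting above the arc $J^0(z)$, foliated by center-unstable plaques, exactly in the spirit of Proposition~\ref{p.rectangle}. First I would observe that $J^0(z)$ lives in a $\delta$-interval (its backward iterates under $P_t$ stay small because $\cW^{cs}_\varepsilon$ is forward-contracted by topological stability, Proposition~\ref{Pro:lyapunovstablity}), and that $\varphi_n(z)$ is a $T_\cF$-Pliss point in $K\setminus V$ by the definition of $H(z)$. Hence by Lemma~\ref{l.cont3} every point of $J^0_n(z)=P_n(J^0(z))$ is $(T_\cF,\lambda_\cF)$-Pliss for $\cF$, and by Proposition~\ref{p.unstable} the plaques $\cW^{cu}_{\alpha}(u)$ of these points contract backwards: $\diam(P_{-t}(\cW^{cu}_\alpha(u)))\le \beta_\cF\lambda_\cF^{-t/2}$. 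Taking $\beta_\cF$ small enough that this stays below $\beta/2$, and using the coherence (Proposition~\ref{p.coherence}, applicable because the backward orbit of $\varphi_n(z)$ has arbitrarily large iterates in $K\setminus V$ — this follows from $\alpha(x)=\orb(p)$ and $p\in K\setminus V$), I would conclude that the plaques over distinct points of $J^0_n(z)$ are pairwise disjoint, so the union is a genuine rectangle $R_n(z)$ homeomorphic to $[0,1]\times B_{d-1}(0,1)$ by the invariance-of-domain argument, with a uniform transverse size $\alpha_{min}>0$ by compactness. This already gives items (2) (for $t\in[0,n]$, via the Pliss contraction applied backwards from time $n$) and the existence of the parametrization $\psi_n$.

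Next, for item~\eqref{i.construction-rectangle}, I would invoke Proposition~\ref{p.distortion}: since $(\varphi_{-n}(\varphi_n(z)),\varphi_n(z))=(z,\varphi_n(z))$ is $(T_\cF,\lambda_\cF)$-Pliss and the backward iterates $P_{-t}(R_n(z))$ have small diameter with well-separated $\cF$-boundary components (guaranteed for large times by domination, exactly as in the proof of Proposition~\ref{p.rectangle}), the rectangle $R_n(z)$ has distortion bounded by a uniform $\Delta$. Then Fubini's theorem, integrating the transverse volume of the plaques (each of radius $\ge\alpha_{min}$) along curves tangent to $\cC^\cE$, yields $\mathrm{Volume}(R_n(z))\ge C_R\,|J^0_n(z)|$ for a uniform $C_R>0$.

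For item~(3) I would track $R_n(z)$ all the way back to $\cN_z$: the backward orbit of $\varphi_n(z)$ shadows the backward orbit of $p$, so by the local invariance of the plaque families $P_{-n}(R_n(z))$ lands in a small neighborhood of $\cW^{cu}(x)$ above a point close to $x$, with transverse fibers over $J^0(z)$; by construction $\pi_x(J^0(z))\subset Q$, and since $Q$ is an open component of $B(0_x,r_x)\setminus\cW^{cu}(x)$ whose backward iterates stay of diameter $<\beta$ (Lemma~\ref{l.unstable-set}), choosing $r_x,\varepsilon,\beta_\cF$ small enough forces $\pi_x\circ P_{-n}(R_n(z))$ to remain inside $Q$. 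The main obstacle I anticipate is the bookkeeping needed to make item~(2) hold uniformly for \emph{all} $t\in[0,n]$ and not merely for large $t$: one must combine the backward Pliss contraction of the unstable plaques (good for times close to $n$) with the boundedness of the total length $\sum |P_{-m}(J^0_n(z))|$ coming from Lemma~\ref{l.summability-hyperbolicity} and the hyperbolicity of $(z,\varphi_n(z))$ for $\cE$ (controlling the $\cE$-direction over the whole orbit segment), so that the diameter of $P_{-t}(R_n(z))$ — which is comparable to the sum of an $\cE$-length and an $\cF$-diameter — stays below $\beta/2$ throughout; this is where the choice of constants $\beta_\cF,\delta_0$ in the standing conventions must be exploited.
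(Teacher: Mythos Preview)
Your proposal is correct and follows essentially the same route as the paper, which explicitly says the construction is ``very similar to the proof of Proposition~\ref{p.rectangle}'' and proceeds as you describe: build a rectangle $R_0$ at $z$ over $J^0(z)$ with small center-unstable discs whose interior projects by $\pi_x$ into $Q$, build $R_n$ at the Pliss iterate $\varphi_n(z)$ so that $P_{-n}(R_n)\subset R_0$, and get the volume bound from Proposition~\ref{p.distortion} plus Fubini.

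One simplification worth noting: your ``main obstacle'' for item~(2) is not really an obstacle. The diameter of $P_{-t}(R_n(z))$ is at most the length of $P_{n-t}(J^0(z))$ plus the diameter of the unstable disc through each of its points. The first term is $<\beta/3$ for \emph{every} $t\in[0,n]$ by the standing choice of $\varepsilon$ at the start of Section~\ref{ss.wandering-rectangles} (forward iterates of $\cW^{cs}_\varepsilon(z)$ have length $<\beta/3$); the second term is $\le\beta_\cF\lambda_\cF^{-t/2}$ for \emph{every} $t\ge 0$ by Proposition~\ref{p.unstable}, with no small-$t$/large-$t$ dichotomy. So once $\alpha'$ (equivalently $\beta_\cF$) is chosen small enough, item~(2) follows immediately --- no summability argument or extra bookkeeping is needed. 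Also, your coherence justification via $\alpha(x)=\orb(p)$ concerns $x$, not the arbitrary nearby point $z$; the paper sidesteps this by arguing only over the finite orbit segment $[0,n]$ and using the containment $P_{-n}(R_n)\subset R_0$, rather than invoking Proposition~\ref{p.coherence}.
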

\begin{figure}[ht]
\begin{center}
\includegraphics[width=10cm]{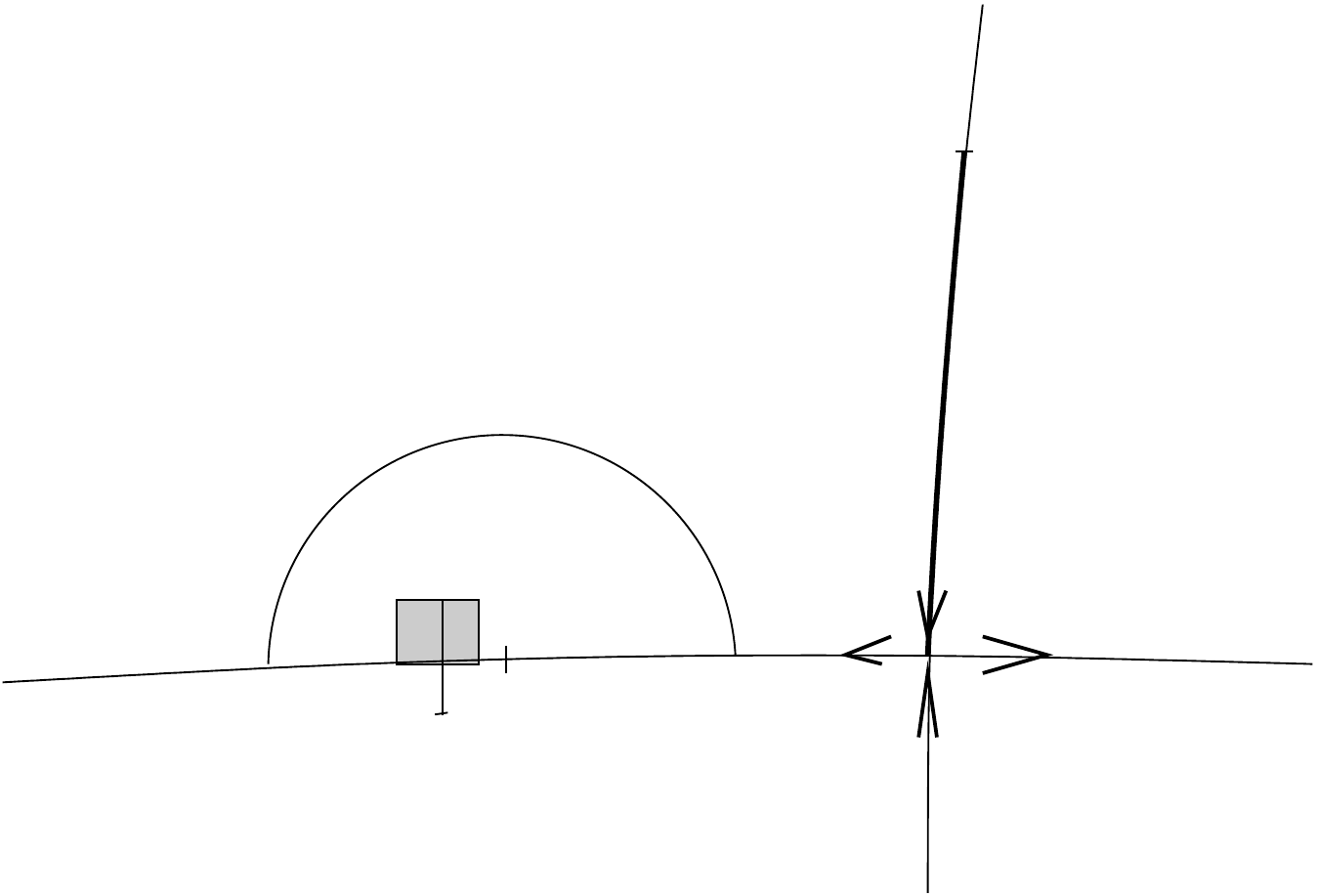}
\begin{picture}(0,0)
\put(-82,41){$p$}
\put(-75,120){$I$}
\put(-182,40){$\pi_p(x)$}
\put(-222,35){$\pi_p(z)$}
\put(-210,70){\scriptsize$\pi_p(R_0(z))$}
\put(-210,105){\scriptsize$\pi_p(Q)$}
\end{picture}
\end{center}
\caption{Wandering rectangles\label{f.topological-hyperbolicity}}
\end{figure}
\begin{proof}
The construction is very similar to the proof of Proposition~\ref{p.rectangle}.
Let us fix $\alpha'>0$ and $\alpha_{min}>0$ much smaller.
One considers a rectangle in $\cN_z$ whose interior projects by $\pi_x$ in $Q$,
given by a parametrization $\psi_0$ such that $\psi_0([0,1]\times \{0\})=J^0(z)$
and each disc $\psi_0(\{u\}\times B_{d-1})$ is tangent to the center-unstable cones,
has diameter smaller than $\alpha'$ and contains a center-unstable ball centered at $J^0(z)$
and with radius much larger than $\alpha_{min}$. Moreover $\pi_p\circ \psi_0(\{0_p\}\times B_{d-1})$ is contained in
the unstable manifold of $p$.

Since the center-unstable cone-field is invariant,
at any iterate $\varphi_n(z)$ such that $(z,\varphi_n(z))$ is $T_\cF$-Pliss,
one can build a similar rectangle $R_n$ such that $P_{-t}(R_n)$, $0<t<n$, has center-unstable disc
of diameter smaller than $\alpha'$ and $P_{-n}(R_n)\subset R_0$.

Since the forward iterates of $J^0(z)$ have length smaller than $\beta/3$,
by choosing $\alpha'$ small enough, one guaranties that the $P_{-t}(R_n)$, $0<t<n$,
have diameter smaller than $\beta/2$, and that $P_{-n}(R_n)\subset R_0(z)$ are contained in $\pi_z(Q)$.
The estimate on the volume is obtained from Fubini theorem and the distortion estimate
given by Proposition~\ref{p.distortion}.
\end{proof}

\begin{Lemma}\label{l.rectangle-disjoint}
For each $z$ close to $x$ and each $n< m$ in $H(z)$,
if $d(\varphi_n(z),\varphi_m(z))<\delta$, then $\pi_{\varphi_n(z)}(R_m(z)) \cap R_n(z)=\emptyset$.
\end{Lemma}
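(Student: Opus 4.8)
I would argue by contradiction: suppose $w\in R_n(z)$ and $w=\pi_{\varphi_n(z)}(u)$ for some $u\in R_m(z)$. Recall from Lemma~\ref{l.construction-rectangle} that $P_{-t}(R_n(z))$ stays within $\beta/2$ of the zero section for $t\in[0,n]$ and $P_{-n}(R_n(z))\subset R_0(z)\subset \pi_z(Q)$, and likewise $P_{-t}(R_m(z))$ stays within $\beta/2$ for $t\in[0,m]$ and $P_{-m}(R_m(z))\subset R_0(z)\subset \pi_z(Q)$. In particular $\|P_{-s}(w)\|<\beta$ for $s\in[0,n]$, $\|P_{-s}(u)\|<\beta$ for $s\in[0,m]$, and $\pi_x(P_{-n}(w))\in Q$. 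The idea is to flow $w$ and $u$ backwards, landing both inside (iterates of) $\pi_z(Q)$, and to exploit the fact, guaranteed by Lemma~\ref{l.unstable-set} and the first bullet in the choice of constants of Section~\ref{ss.wandering-rectangles}, that $Q$ is \emph{wandering}: when the orbit of $x$ returns $2\delta$-close to $x$ at a time $t>1$, then $Q$ is disjoint from $\pi_x$ of the corresponding iterate of $Q$.

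The key step is to apply the Global invariance (Definition~\ref{d.compatible}.4, in the form of Remark~\ref{r.identification}.g) to $y=\varphi_m(z)\in K\setminus V\subset U$, $y'=\varphi_n(z)$ with $d(y,y')<\delta$, to the vectors $u\in\cN_y$ and $w=\pi_{y'}(u)\in\cN_{y'}$ (so $\pi_y(w)=u$), and to the intervals $[-m,0]$ and $[-n,0]$. This yields $\theta\in\lip$ with $\theta(0)=0$ shadowing the two backward orbits, and, since the base point $\varphi_{-m}(y)=z$ lies in $U$, the relation $P_{-m}(u)=\pi_z\bigl(P_{\theta(-m)}(w)\bigr)$, together with the fact that $\varphi_{n+\theta(-m)}(z)$ is $\delta$-close to $z$. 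Writing $\tau:=n+\theta(-m)$ and using $P_{-n}(w)\in\pi_z(Q)$, one gets $P_{\theta(-m)}(w)=P_\tau\bigl(P_{-n}(w)\bigr)$, a $\tau$-iterate of a point of $\pi_z(Q)$, so that $\pi_z(P_{-m}(u))$ — which projects by $\pi_x$ into $Q$ — also equals $\pi_x$ of a $\tau$-iterate of a subset of $\pi_z(Q)$; since $z$ can be taken as close to $x$ as we wish, this says $Q$ meets $\pi_x$ of a $\tau$-iterate of $Q$ with the base point $2\delta$-close to $x$ (when $\tau$ is large one first transfers the wandering property of $Q$ along the nearby orbit of $z$). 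Now I would apply the No shear Proposition~\ref{p.no-shear}, viewing $\theta$ as relating two parametrizations of the single orbit of $z\in U$: it forces either $|\tau|\le\tfrac12$, in which case Proposition~\ref{p.no-shear} also gives $|m-n|\le\tfrac12$, contradicting $n<m$ (integers); or $|\tau|>2$, in which case the previous paragraph contradicts the wandering property of $Q$. Either way we reach a contradiction, so $\pi_{\varphi_n(z)}(R_m(z))\cap R_n(z)=\emptyset$.

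The main obstacle is the bookkeeping of the reparametrization $\theta$: one must keep the relevant orbit segments inside $U$ so that the identification remains compatible with the flow, track precisely \emph{which} iterate of $Q$ appears when $\pi_{\varphi_n(z)}(R_m(z))$ is flowed back only $n$ steps rather than $m$ (this is exactly where the hypothesis $n<m$ is converted into $\tau\neq 0$), and transport the wandering property of $Q$, which is stated for the orbit of $x$, to the near-return $\varphi_\tau(z)$ of the orbit of $z$. Handling the two signs of $\tau$ (forward versus backward return) uniformly — exchanging, if necessary, the roles of the two iterates of $Q$ to land in the form stated in Section~\ref{ss.wandering-rectangles} — is the other point requiring care. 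The cone-coherence of the foliations of $R_n(z)$ and $\pi_{\varphi_n(z)}(R_m(z))$ on their intersection (Proposition~\ref{p.coherence}) is used implicitly to make the flow-back of the \emph{whole} rectangle, not just of the point $w$, behave well.
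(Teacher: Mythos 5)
Your proposal is correct and follows essentially the same route as the paper: argue by contradiction, use the Global invariance to produce a reparametrization $\theta$ matching the two backward orbit segments issued from $\varphi_n(z)$ and $\varphi_m(z)$, invoke the No shear property (Proposition~\ref{p.no-shear}) to force the resulting return time of the orbit of $z$ to exceed $2$ in absolute value, and then contradict the wandering property of $Q$. The one step you only sketch --- transferring the near-return of the orbit of $z$ to a near-return of the orbit of $x$ with return time still larger than $1$ --- is precisely the paper's second application of the Global invariance combined with the quantitative condition $\theta(0)-\theta(-2)>3/2$ fixed in the choice of constants of Section~\ref{ss.wandering-rectangles}, so you have correctly identified all the needed ingredients.
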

\begin{proof}
Let us assume by contradiction that $\pi_{\varphi_m(z)}(R_m(z))$ and $R_n(z)$ intersect.

Since $P_{-t}(R_n(z)\cup J_n(z))$, $t\in [0,n]$, and $P_{-t}(R_m(z)\cup J_m(z))$, $t\in [0,m]$, have diameter smaller than $\beta$,
the Global invariance (Remark~\ref{r.identification}.(g)) applies: there is $\theta\in \lip$ such that
\begin{itemize}
\item[--] $|\theta(n)-m|<1/4$,
\item[--] for any $t\in [0,n]$, one has $d(\varphi_{t}(z),\varphi_{\theta(t)}(z))<\delta$,
\item[--] $\pi_x\circ P_{\theta(0)-m}(R_m(z))$ intersects $\pi_x\circ P_{-n}(R_n(z))$, hence $Q$.
\end{itemize}
In particular $\theta(n)>n+1/2$ and Proposition~\ref{p.no-shear} gives $\theta(0)>2$.

Since the backward iterates of $Q$ by $P_t$ have diameter smaller than $\beta$,
the Global invariance (item (g) in Remarks~\ref{r.identification})
can be applied to the points $x$ and $\varphi_{\theta(0)}(z)$.
It gives $\theta'\in \lip$ with $|\theta'(\theta(0))|\leq 1/4$ such that
$d(\varphi_{\theta'(t)}(x),\varphi_{t}(z))<\delta$ for each $t\in [0,\theta(0)]$.
Moreover $\pi_x\circ P_{\theta'(0)}(Q)$ intersects $R_0(z)$, hence $Q$.
We have $d(\varphi_{\theta'(0)}(x),x)<2\delta$ and $1/4>\theta'(\theta(0))>\theta'(0)+3/2$ by our choice of $\delta$
at the beginning of Section~\ref{ss.wandering-rectangles}

We proved that $\pi_x\circ P_{-t}(Q)\cap Q\neq \emptyset$ for some $t>1$ such that $d(\varphi_{-t}(x),x)<2\delta$.
This contradicts the choice of $Q$. The rectangles $\pi_{\varphi_n(z)}(R_m(z))$ and $R_n(z)$ are thus disjoint.
\end{proof}

As a consequence of Lemma~\ref{l.rectangle-disjoint}, one gets

\begin{Corollary}\label{c.volume-bounded}
There exits $C_H>0$ such that for any $z$ close to $x$,
$$\sum_{n\in H(z)}|J^0_{n}(z)|< C_H.$$
\end{Corollary}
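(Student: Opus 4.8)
The plan is to deduce Corollary~\ref{c.volume-bounded} from Lemma~\ref{l.rectangle-disjoint} together with the volume estimate of Lemma~\ref{l.construction-rectangle} by a pigeonhole/covering argument in the ambient manifold $K$, exactly in the spirit of the proof of Lemma~\ref{Sub:disjointcase}. First I would fix $z$ close to $x$ and consider the set $H(z)$ of Pliss returns. By compactness of $K\setminus V$ and since $\delta>0$ is fixed, I choose a finite set $Z\subset U$ such that every point of $K\setminus V$ lies within $\delta/2$ of some point of $Z$. To each $n\in H(z)$ I assign a point $z_n\in Z$ with $d(\varphi_n(z),z_n)<\delta/2$, so that (reducing $r$ if necessary, as in the proof of Lemma~\ref{Sub:disjointcase}) the projection $\pi_{z_n}$ maps $R_n(z)$ into a ball $B(0,\beta_0)\subset\cN_{z_n}$ with Jacobian determinant bounded by $2$; hence
$$\mathrm{Volume}(\pi_{z_n}(R_n(z)))\geq \tfrac12\,\mathrm{Volume}(R_n(z))\geq \tfrac{C_R}{2}\,|J^0_n(z)|$$
by Lemma~\ref{l.construction-rectangle}.\eqref{i.construction-rectangle}.

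Next, I would argue that for each fixed $z_*\in Z$ the projected rectangles $\{\pi_{z_*}(R_n(z)):n\in H(z),\ z_n=z_*\}$ have pairwise disjoint interiors. Indeed, if $n<m$ are two such indices then $d(\varphi_n(z),\varphi_m(z))\leq d(\varphi_n(z),z_*)+d(z_*,\varphi_m(z))<\delta$, so Lemma~\ref{l.rectangle-disjoint} gives $\pi_{\varphi_n(z)}(R_m(z))\cap R_n(z)=\emptyset$; composing with the identification cocycle relation $\pi_{z_*}\circ\pi_{\varphi_n(z)}=\pi_{z_*}$ on the relevant balls (Definition~\ref{Def:identification}), this transfers to disjointness of $\pi_{z_*}(R_m(z))$ and $\pi_{z_*}(R_n(z))$ inside $\cN_{z_*}$. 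Summing the volume lower bounds over all $n\in H(z)$ with $z_n=z_*$ and using that the total volume is at most $C_{Vol}:=\sup_{y\in K}\mathrm{Volume}(B(0_y,\beta_0))$, we get $\sum_{n:\,z_n=z_*}|J^0_n(z)|\leq 2C_R^{-1}C_{Vol}$. Finally I sum over the finitely many $z_*\in Z$ to obtain
$$\sum_{n\in H(z)}|J^0_n(z)|\leq 2C_R^{-1}\,C_{Vol}\,\mathrm{Card}(Z)=:C_H,$$
a bound independent of $z$, which is the assertion.

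The main point requiring care — and the only place where something could go wrong — is the step transferring Lemma~\ref{l.rectangle-disjoint} from the fiber $\cN_{\varphi_n(z)}$ to the common fiber $\cN_{z_*}$: one must check that $\pi_{z_*}$ restricted to the relevant subset of $\cN_{\varphi_n(z)}$ (which contains both $R_n(z)$ and $\pi_{\varphi_n(z)}(R_m(z))$) is injective, so that disjointness is preserved rather than created or destroyed. This is guaranteed by the cocycle identity in Definition~\ref{Def:identification} once $d(\varphi_n(z),z_*)<r_0$ and the diameters of the rectangles (bounded by $\beta/2<\beta_0$ after the choices of Section~\ref{ss.wandering-rectangles}) are small enough, together with the uniform bound on $\det D\pi$; all of these hold after the reductions of $r_0,\beta_0,\delta,r,\beta$ already performed. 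No new obstacle arises: the argument is the standard "disjoint sets of definite volume in a space of bounded volume" counting, packaged through the identification structure exactly as in Lemma~\ref{Sub:disjointcase}.
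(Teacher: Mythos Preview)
Your proposal is correct and follows essentially the same route as the paper's own proof: cover $K\setminus V$ by a finite set $Z\subset U$ at scale $\delta/2$, push the rectangles $R_n(z)$ into the fibers over $Z$ with Jacobian bounded by $2$, invoke the disjointness from Lemma~\ref{l.rectangle-disjoint} and the volume lower bound from Lemma~\ref{l.construction-rectangle}, and sum to obtain $C_H=2C_R^{-1}C_{Vol}\,\mathrm{Card}(Z)$. Your extra paragraph on transferring disjointness through the cocycle identity of the identification is a welcome elaboration of a step the paper leaves implicit.
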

\begin{proof}
As in the proof of Lemma~\ref{Sub:disjointcase}, one fixes a finite set $Z\subset U$ such that
any point $z\in U$ is $\delta/2$-close to a point of $Z$. Let $C_{Vol}$ be a bound on the volume
of the balls $B(0_z,\beta_0)\subset \cN_z$ over $z\in K$ and let $C_H=2C_R^{-1}C_{Vol}\text{Card} (Z)$.
Since identifications are $C^1$, up to reducing $r_0$, one can assume that the modulus of their Jacobian is smaller than $2$.

The statement now follows
from the item~\ref{i.construction-rectangle} of Lemma~\ref{l.construction-rectangle} and the disjointness of the rectangles (Lemma~\ref{l.rectangle-disjoint}).
\end{proof}

\subsubsection{Summability. Proof of Proposition~\ref{Pro:smallperiodic-interval}}\label{ss.sum}

\begin{Lemma}\label{l.summability-topological-hyperbolicity}
There exists $C_{sum}>0$ such that for any point $z$ close to $x$,  
we have
$$\forall n\geq 0,~~~\sum_{k=0}^n |P_k(J^0(z))|< C_{sum}.$$
\end{Lemma}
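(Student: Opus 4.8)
The plan is to split the orbit of $J^0(z)$ into two regimes according to whether the current iterate is a ``good'' return (i.e. lies in $H(z)$) or not, and to control the total length contributed by each regime separately. First I would recall the constants: by Proposition~\ref{l.summability} applied with $W=V$, the hyperbolic times for $\cE$ are governed by $C_\cE,\lambda_\cE>1$, and Lemma~\ref{l.summability-hyperbolicity} then associates $C'_\cE,\delta_\cE>0$ so that hyperbolic pieces of orbit give a geometric bound on sums of lengths of iterated subintervals of $\cW^{cs}$. I would choose $\varepsilon$ small enough (as already arranged at the start of Section~\ref{ss.wandering-rectangles}) that $|P_k(J^0(z))|\le |P_k(\cW^{cs}_\varepsilon(z))|<\delta_\cE$ for all $k\ge0$, so that Lemma~\ref{l.summability-hyperbolicity} is applicable to any subinterval of $J(z)$ over any piece of orbit that is $(C_\cE,\lambda_\cE)$-hyperbolic for $\cE$.

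Next I would use the structure of the set $H(z)$ of Pliss return times. Enumerate $H(z)=\{n_1<n_2<\cdots\}$ (finite or infinite). Between two consecutive elements $n_i<n_{i+1}$, no intermediate iterate is simultaneously in $K\setminus V$ and $T_\cF$-Pliss; hence Proposition~\ref{l.summability} (the ``half-orbit'' version, with $W=V$) shows that the piece of orbit $(\varphi_{n_i}(z),\varphi_{n_{i+1}}(z))$ — more precisely its time-reversal in the fibered flow, viewed from $\varphi_{n_{i+1}}(z)$ — is $(C_\cE,\lambda_\cE)$-hyperbolic for $\cE$, so that $\|DP_m|_{\cE(\varphi_{n_i}(z))}\|\le C_\cE\lambda_\cE^{-m}$ for $0\le m\le n_{i+1}-n_i$. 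Applying Lemma~\ref{l.summability-hyperbolicity} to the interval $J^0_{n_i}(z)=P_{n_i}(J^0(z))\subset\cW^{cs}(\varphi_{n_i}(z))$ yields
$$\sum_{m=0}^{n_{i+1}-n_i}|P_m(J^0_{n_i}(z))|=\sum_{k=n_i}^{n_{i+1}}|P_k(J^0(z))|\le C'_\cE\,|J^0_{n_i}(z)|.$$
The same bound handles the initial block $0\le k\le n_1$ (using that $(z,\varphi_{n_1}(z))$ satisfies the hypotheses of Proposition~\ref{l.summability}, after possibly replacing $z$ by a point whose backward behaviour near $\orb(p)$ is controlled, exactly as in Lemma~\ref{l.unstable-set}) and, if $H(z)$ is finite with largest element $n_{\max}$, the final block $k\ge n_{\max}$ (which is then uniformly contracted, again by Proposition~\ref{l.summability}).

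Summing over the blocks gives
$$\sum_{k=0}^{n}|P_k(J^0(z))|\le C'_\cE\sum_{i\ge1}|J^0_{n_i}(z)|+C'_\cE|J^0(z)|+(\text{finite-block term})\le C'_\cE\,C_H+2C'_\cE,$$
where the key input is Corollary~\ref{c.volume-bounded}, which bounds $\sum_{n\in H(z)}|J^0_n(z)|$ by the uniform constant $C_H$ coming from the disjointness of the wandering rectangles $R_n(z)$. Setting $C_{sum}:=C'_\cE C_H+2C'_\cE$ (enlarged if necessary to absorb the finite-block contribution, which is itself bounded by $C'_\cE$ times the length of a single iterate, hence by $C'_\cE\varepsilon$) gives a bound independent of $z$ and $n$. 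The main obstacle I anticipate is not the geometric-series estimate itself but the bookkeeping at the two ``ends'': verifying that the stretch of orbit before the first Pliss return, and the tail after the last one (when $H(z)$ is finite), are genuinely hyperbolic/contracted for $\cE$ — this requires invoking Proposition~\ref{l.summability} with the correct combinatorial hypotheses (``either in $W=V$ or not Pliss'') and using that $\alpha(x)=\orb(p)$ with $\cE$ uniformly contracted over $\orb(p)$ by (A3), exactly as set up in Lemma~\ref{l.unstable-set}. Once those endpoints are dealt with, Corollary~\ref{c.volume-bounded} does all the real work.
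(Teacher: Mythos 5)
Your proposal is correct and follows essentially the same route as the paper: decompose the forward orbit into blocks between consecutive times of $H(z)$, use Proposition~\ref{l.summability} to get $(C_\cE,\lambda_\cE)$-hyperbolicity of each block, apply Lemma~\ref{l.summability-hyperbolicity} to sum the lengths over each block, and conclude with Corollary~\ref{c.volume-bounded}. Your extra care with the initial block and with a possibly finite $H(z)$ is legitimate (both cases are covered by the same application of Proposition~\ref{l.summability}), and your bound $C'_\cE|J^0_{n_i}(z)|$ per block is the literal output of Lemma~\ref{l.summability-hyperbolicity}; it is absorbed by Corollary~\ref{c.volume-bounded} just as in the paper.
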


\begin{proof}

Let us denote by $n_0=0<n_1<n_2<\dots$ the integers in $H(z)$.
By Proposition~\ref{l.summability}, for any $i$, the piece of orbit $(\varphi_{n_i}(z),\varphi_{n_{i+1}}(z))$
is $(C_{\cE},\lambda_{\cE})$-hyperbolic for $\cE$.

Let $\delta_\cE, C'_{\cE}$ be the constants associated to $C_{\cE},\lambda_{\cE}$ by Lemma~\ref{l.summability-hyperbolicity}.
We have built $J(z)$ such that any forward iterate has length smaller than $\delta_\cE$.
Hence Lemma~\ref{l.summability-hyperbolicity} implies that
$$\sum_{k=n_i}^{n_{i+1}} |P_k(J^0(z))|< C_{\cE}'|J^0_{n_{i+1}}|.$$
With Corollary~\ref{c.volume-bounded}, one deduces
$$\sum_{k=0}^n |P_k(J^0(z))|< C_{Sum}:=C_{\cE}'C_{H}.$$
\end{proof}

We can now end the proof of the proposition.

\begin{proof}[Proof of the Proposition~\ref{Pro:smallperiodic-interval}]

Let $\eta_S>0$ be the constant associated to $C_{Sum}$ by Lemma~\ref{Lem:schwartz}.
If $z$ belongs to a small neighborhood $U_0$ of $x$
and if $\varepsilon_0$ is small enough,
the intervals $\cW^{cs}_{\varepsilon_0}(z)$ and $J^0(z)$ are both contained in an interval $\widehat J(z)\subset \cW^{cs}(z)$
such that $|\widehat J(z)|\leq (1+\eta_S)|J^0(z)|$.
Combining Lemma~\ref{Lem:schwartz} with Lemma~\ref{l.summability-topological-hyperbolicity}, one gets
$$\sum_{k=0}^n |P_k(\cW^{cs}_{\varepsilon_0}(z))|\leq  \sum_{k=0}^n |P_k(\widehat J(z))|< 2\sum_{k=0}^n |P_k(J^0(z))|<C_0:=2C_{sum}.$$
\end{proof}

\subsubsection{Proof of Theorems~\ref{Thm:recurrent-contraction} and~\ref{Thm:topologicalcontracting}}\label{ss.conclusion-topological}
Theorems~\ref{Thm:recurrent-contraction} and~\ref{Thm:topologicalcontracting}
will follow from the following.

\begin{Theorem}\label{Thm:recurrent-contraction-bis}
Let us assume that $K$ is transitive, satisfies assumptions (A1), (A2), (A3),
does not contain a normally hyperbolic irrational torus, and that $\cE$ is not uniformly contracted.
Then, there exists a forward invariant compact set $K'\subsetneq K$ (possibly empty) such that:
\begin{enumerate}
\item[--] $\cE|_{K'}$ is not uniformly contracted.
\item[--] $\cE$ is \emph{topologically contracted} on $K\setminus K'$:
for any $\rho>0$ there is $\varepsilon_0>0$ such that for any point $x\in K\setminus K'$, the image $P_t({\cW}^{cs}_{\varepsilon_0}(x))$
is well-defined for any $t\ge 0$, has diameter smaller than $\rho$ and satisfies
$$\lim_{t\to+\infty} |P_t({\cW}^{cs}_{\varepsilon_0}(x))|=0.$$
\end{enumerate}
\end{Theorem}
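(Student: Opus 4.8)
The plan is to distinguish two cases according to whether $K$ admits arbitrarily small periodic intervals, i.e.\ whether for every $\delta>0$ some periodic orbit of $K$ carries a periodic $\delta$-interval. Suppose first that it does \emph{not}: then there is $\delta_1>0$ such that no periodic point carries a periodic $\delta$-interval with $\delta\le\delta_1$. I would first deduce that $K$ has no $\delta$-interval at all for $\delta\le\delta_2:=\min(\delta_0,\delta_1)$, since such a $\delta$-interval would, by Proposition~\ref{Prop:dynamicsofinterval} and the absence of normally expanded irrational tori, be contained in the unstable set of a periodic $\delta$-interval, contradicting the choice of $\delta_1$. Then, exactly as in the proof of Proposition~\ref{Pro:lyapunovstablity} (use the topological stability of $\cE$; at a point where $|P_t(\cW^{cs}_{\varepsilon_2}(x))|$ does not tend to $0$, extract along $t_n\to+\infty$ a limit interval $I$ of definite length with $|P_{-t}(I)|\le\delta_2$ for all $t\ge0$, i.e.\ a $\delta_2$-interval, a contradiction), one gets that $\cE$ is topologically contracted on all of $K$, and we take $K'=\emptyset$ (the first conclusion being then vacuous).

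Suppose now that $K$ admits arbitrarily small periodic intervals, say periodic points $p_n$ carry periodic $\delta_n$-intervals with $\delta_n\to0$. Then all hypotheses of Proposition~\ref{Pro:smallperiodic-interval} hold, and it provides constants $C_0,\varepsilon_0>0$ and a non-empty open set $U_0\subset K$ with $\sum_{k\in\NN}|P_k(\cW^{cs}_{\varepsilon_0}(z))|<C_0$ for every $z\in U_0$. I would set
$$K':=K\setminus\bigcup_{t\ge0}\varphi_{-t}(U_0),$$
which is closed (hence compact), forward invariant, and proper since it misses $U_0\ne\emptyset$. For the topological contraction on $K\setminus K'$: given $\rho>0$, use the topological stability of $\cE$ (Proposition~\ref{Pro:lyapunovstablity}) to pick $\varepsilon_0'\in(0,\varepsilon_0]$, uniform in $x$, with $P_t(\cW^{cs}_{\varepsilon_0'}(x))\subset\cW^{cs}_{\min(\rho,\varepsilon_0)}(\varphi_t(x))$ for all $x\in K$ and $t\ge0$; then for $x\in K\setminus K'$ there is $T\ge0$ with $\varphi_T(x)\in U_0$, and since for each $k\ge0$ the interval $P_{T+k}(\cW^{cs}_{\varepsilon_0'}(x))$ is a sub-interval of $P_k(\cW^{cs}_{\varepsilon_0}(\varphi_T(x)))$, one gets $\sum_{k}|P_{T+k}(\cW^{cs}_{\varepsilon_0'}(x))|<C_0$ and hence $|P_t(\cW^{cs}_{\varepsilon_0'}(x))|\to0$ (interpolating integer and real times with the uniform bound on the time-one maps $P_s$, $s\in[0,1]$).

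It remains to check, when $K'\ne\emptyset$, that $\cE|_{K'}$ is not uniformly contracted, and this is the delicate point. Arguing by contradiction, assume it is. Since $\cE$ is not uniformly contracted on $K$, the variational principle for the one-dimensional continuous linear cocycle $(DP_t|_{\cE})$ over $(K,\varphi)$ produces an ergodic measure $\mu$ with $\lambda^\cE(\mu):=\int\log\|DP_1|_{\cE(x)}\|\,d\mu(x)\ge0$; its support is not contained in $K'$ (otherwise $\cE|_{K'}$ would not be uniformly contracted), so $\mu(K\setminus K')>0$, and a Poincar\'e-recurrence argument shows that for $\mu$-a.e.\ $x$ the forward orbit enters $U_0$ at arbitrarily large times. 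As in the previous paragraph this gives, for $\mu$-a.e.\ $x$, a time $T(x)$ with $\sum_{n\ge0}|P_{T(x)+n}(\cW^{cs}_{\varepsilon_0'}(x))|<C_0$, and the Denjoy--Schwartz estimate of Lemma~\ref{Lem:schwartz} (applied along the orbit of $\varphi_{T(x)}(x)$, whose plaque obeys the summation bound $C_0$, so that $\|DP_n|_{\cE}\|\le C_S|P_n(\cW^{cs}_{\varepsilon_0})|/\varepsilon_0$) then yields $\sum_{n\ge0}\|DP_n|_{\cE(x)}\|<\infty$ for $\mu$-a.e.\ $x$. But Jensen's inequality gives $\int\|DP_n|_{\cE(x)}\|\,d\mu\ge\exp(n\lambda^\cE(\mu))\ge1$ for all $n$, so $\sum_n\int\|DP_n|_{\cE(x)}\|\,d\mu=+\infty$ and, by Tonelli, $\sum_n\|DP_n|_{\cE(x)}\|=+\infty$ on a set of positive measure --- a contradiction. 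Hence $\cE|_{K'}$ is not uniformly contracted.

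The part I expect to be the main obstacle is precisely this last step: the topological contraction on $K\setminus K'$ is a fairly mechanical consequence of Proposition~\ref{Pro:smallperiodic-interval} together with the topological stability, whereas showing that $\cE$ genuinely fails to be uniformly contracted on $K'$ requires converting the qualitative summability carried by $U_0$ into quantitative information on Lyapunov exponents, in particular ruling out the borderline case $\lambda^\cE(\mu)=0$ --- which is the purpose of the Jensen/Tonelli comparison above. One must also take care that $\varepsilon_0'$ and the distortion constants from Lemma~\ref{Lem:schwartz} can be chosen uniformly over the relevant points.
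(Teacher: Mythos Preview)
Your overall organization is close to the paper's: reduce to the case where arbitrarily small periodic $\delta$-intervals exist (your Case~1 handles the complementary situation cleanly), then invoke Proposition~\ref{Pro:smallperiodic-interval} to get the open set $U_0$, define $K'=\bar K$ as the set of points whose forward orbit misses $U_0$, and verify topological contraction on $K\setminus K'$ via the summability and topological stability. All of this is essentially the paper's argument and is fine.

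The gap is in your final step, exactly where you flagged it. From Jensen you correctly get $\int\|DP_n|_{\cE}\|\,d\mu\ge 1$ for every $n$, hence $\sum_n\int\|DP_n|_{\cE}\|\,d\mu=+\infty$, and Tonelli gives $\int\bigl(\sum_n\|DP_n|_{\cE(x)}\|\bigr)\,d\mu=+\infty$. But this does \emph{not} force $\sum_n\|DP_n|_{\cE(x)}\|=+\infty$ on a set of positive measure: a nonnegative function can be finite everywhere yet have infinite integral (your sum, though finite for each $x$ because of the entrance time $T(x)$ into $U_0$, is not uniformly bounded in $x$ since the contribution up to time $T(x)$ and the factor $\|DP_{T(x)}|_{\cE(x)}\|$ are uncontrolled). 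So no contradiction is reached, and you have not shown that $\cE|_{K'}$ fails to be uniformly contracted when $K'\neq\emptyset$.

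The paper sidesteps this entirely and you can too: do not try to prove that $\cE|_{\bar K}$ is non-uniformly contracted. Instead, split once more. If $\cE$ is \emph{not} uniformly contracted on $\bar K$, set $K'=\bar K$; both conclusions hold by what you already proved. If $\cE$ \emph{is} uniformly contracted on $\bar K$, set $K'=\emptyset$: for $z\in\bar K$ the forward orbit stays in $\bar K$, so uniform contraction there gives topological contraction of the plaque; for $z\in K\setminus\bar K$ you already have it from the summability in $U_0$. Either way the theorem follows, and the delicate $\lambda^\cE(\mu)=0$ borderline never needs to be analyzed.
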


\begin{proof}[Proof of Theorem~\ref{Thm:recurrent-contraction}]
Let us assume that we are not in the first case of the theorem, that $\cE$ is not topologically contracted (otherwise the second case holds trivially) and that $K$ is transitive (otherwise there is not point $x\in K$ such that $\omega(x)=K$).
Theorem~\ref{Thm:recurrent-contraction-bis} then applies and provides a subset $K'\subsetneq K$.
Any point $x$ such that $\omega(x)=K$ belongs to $K\setminus K'$ and satisfies the second case of the theorem.
\end{proof}

\begin{proof}[Proof of Theorem~\ref{Thm:topologicalcontracting}]
Let us assume that we are not in the two first cases case of the theorem and that $\cE$ is not topologically contracted (otherwise the third case holds trivially).

The set $K$ is transitive. Indeed since $\cE$ is not uniformly contracted,
there exists an ergodic measure whose Lyapunov exponent along $\cE$ is non-negative.
$\cE$ is uniformly contracted on each proper invariant subset of $K$ (because we are not in the first case of the theorem).
Hence the support of the measure coincides with $K$ and $K$ is transitive.

Theorem~\ref{Thm:recurrent-contraction-bis} then applies and provides $\varepsilon_0$ and a forward invariant subset $K'\subsetneq K$.
Since the first case of the theorem does not hold, $\cE$ is uniformly contracted on $K'$.
In particular, (maybe after decreasing $\varepsilon_0$) for any $z\in K'$ the conclusion of the theorem holds.
For any $z\in K\setminus K'$, the conclusion of the theorem holds also by Theorem~\ref{Thm:recurrent-contraction-bis}.
Theorem~\ref{Thm:topologicalcontracting} is now proved.
\end{proof}

\begin{proof}[Proof of Theorem~\ref{Thm:recurrent-contraction-bis}]
Let us fix $\delta>0$ arbitrarily small (in particular smaller than $\rho$). Since $\cE$ is topologically stable,
there is $\varepsilon>0$ such that for any $x\in K$ and any $t>0$, one has
$$P_t({\cW}^{cs}_{\varepsilon}(x)) \subset {\cW}^{cs}_{\delta}(\varphi_t(x)).$$
Since the topological contraction fails, there are
 $(x_n)$ in $K$, $(t_n)\to +\infty$ and $\chi>0$ such that
$$\chi<|P_{t_n}({\cW}^{cs}_{\varepsilon}(x_n))| \text{ and } |P_{t}({\cW}^{cs}_{\varepsilon}(x_n))|<\delta,~\forall t>0.$$
Let $I=\lim_{n\to\infty}P_{t_n}({\cW}^{cs}_{\varepsilon}(x))$. It is a $\delta$-interval and by Proposition~\ref{Prop:dynamicsofinterval},
it is contained in the unstable set of a periodic $\delta$-interval since $K$ contains no normally expanded irrational tori.
This proves that $K$ admits arbitrarily small periodic intervals and Proposition~\ref{Pro:smallperiodic-interval} applies. 

One gets a non-empty open set $U_0\subset K$ such that at any $z\in U_0$ a summability holds in the $\cE$ direction, i.e., there are $C_0>0$ and $\delta_0>0$, such that for any $z\in U_0$, we have that
$$\sum_{k\in\NN}|P_k(\cW^{cs}_{\delta_0}(z))|<C_0.$$
This certainly implies that $\lim_{k\to\infty}|P_k(\cW^{cs}_{\delta_0}(z))|=0$ for any $z\in U_0$.

Let $\bar K$ be the set of points whose forward orbit does not intersect $U_0$: it is forward invariant and compact.
If $\cE$ is not uniformly contracted on $\bar K$ we set $K'=\bar K$ and the conclusion of the theorem holds.

If $\cE$ is not uniformly contracted on $\bar K$, we set $K'=\emptyset$.
(Maybe after decreasing $\varepsilon_0$) for any $z\in \bar K$ the conclusion of the theorem holds
by uniform contraction and for any $z\in K\setminus \bar K$ the conclusion of the theorem holds
by what we proved above for points in $U_0$.
So Theorem~\ref{Thm:recurrent-contraction-bis} holds in all cases.

\end{proof}

\section{The proof of the main theorems}\label{s.main}

\subsection{The local fibered flow}
We recall that the linear Poincar\'e flow $(\psi_t)_{t\in \RR}$ has been defined in the introduction.
It is convenient to introduce a rescaled version $(\psi^*_t)_{t\in \RR}$, defined on $\cN_x$ for $x\in M\setminus \sing(X)$ by:
$$\psi_t^*\cdot u=\frac{\|X(x)\|}{\|X(\varphi_t(x))\|}\psi_t\cdot u.$$

We also consider the \emph{non-linear Poincar\'e flow} $(P_t)_{t\in \RR}$, which is a local flow in a neighborhood of the $0$-section of the bundle $\cN$ and which fibers above the restriction of the flow $(\varphi_t)_{t\in \RR}$ to $M\setminus \sing(X)$. It is defined as follows.
For any $x\in M\setminus \sing(X)$ and any $t\in \RR$, one considers the images of neighborhoods of $0$ in $\cN_x$
and $\cN_{\varphi_t(x)}$ by the exponential maps at $x$ and $\varphi_t(x)$ respectively: these are transverse sections to $X$. The Poincar\'e map for the flow $\varphi$ between these sections induces, after conjugacy by the exponential maps, a diffeomorphism
$P_t$ between the neighborhoods of $0$ in $\cN_x$ and $\cN_{\varphi_t(x)}$.

One then introduces a rescaled version $(P^*_t)_{t\in \RR}$ defined by:
$$P_t^*(u)=\|X(\varphi_t(x))\|^{-1}\cdot P_t(\|X(x)\|\cdot u).$$

One has the following result from \cite{CY2}.

\begin{Theorem}[Compactification]\label{Thm:local-fibered-flow-model}
Let $X$ be a $C^k$-vector field, $k\geq 1$, over a compact manifold $M$.
Let $\Lambda\subset M$ be a compact set which is invariant by the flow $(\varphi_t)_{t\in\RR}$
associated to $X$ such that $DX(\sigma)$ is invertible at each singularity $\sigma\in \Lambda$.

Then, there exist a topological flow $(\widehat \varphi_t)_{t\in \RR}$ over a compact metric space $\widehat \Lambda$,
and a local $C^k$-fibered flow $(\widehat P^*_t)$ on a Riemannian vector bundle $\widehat {\cN }\to \widehat \Lambda$
whose fibers have dimension $\dim(M)-1$ such that:
\begin{itemize}
\item[--] The restriction of $\varphi$ to $\Lambda\setminus {\rm Sing}(X)$ embeds in $(\widehat \Lambda, \widehat \varphi)$ through a map $i$; moreover the image $i(\Lambda\setminus{\rm Sing}(X))$ is open and dense in $\widehat\Lambda$.
\item[--] The restriction of $\widehat {\cN}$ to $i(\Lambda\setminus \sing(X))$ is isomorphic to the normal bundle $\cN |_{\Lambda\setminus \sing(X)}$
through a map $I$, which is fibered over $i$ and which is an isometry along each fiber $\cN_x$.
\item[--] The fibered flow $\widehat P^*$ over $i(\Lambda\setminus \sing(X))$ is conjugated by $I$
near the zero-section to the rescaled sectional Poincar\'e flow $P^*$:
$$\widehat P^*=I\circ P^*\circ I^{-1}.$$
\item[--]  For any open set $\widehat U\subset \widehat \Lambda$
whose closure is disjoint from
$\widehat \Lambda\setminus i(\Lambda\setminus{\rm Sing}(X))$, there exist $C>0$ and a $C^k$ identification $\widehat\pi$ on $\widehat U$
which is compatible with the flow $({\widehat P}_{C.t}^*)$.
\end{itemize}
\end{Theorem}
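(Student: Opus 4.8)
The statement is the Compactification Theorem of our earlier paper~\cite{CY2}; the plan is to recall how its proof is organized and which ingredients enter. First I would build $\widehat\Lambda$ by blowing up $M$ along the finite set $\sing(X)\cap\Lambda$: one replaces each such $\sigma$ by its sphere of directions $\mathbb{S}_\sigma=(T_\sigma M\setminus\{0\})/\mathbb{R}^{+}$ and defines $\widehat\Lambda$ to be the closure, inside the resulting compact space, of a tautological copy of $\Lambda\setminus\sing(X)$, that copy being the embedding $i$. Because $DX(\sigma)$ is invertible, the projectivized tangent flow $[D\varphi_t(\sigma)]$ is a continuous flow on $\mathbb{S}_\sigma$, and the first-order expansion $\varphi_t(\sigma+\varepsilon v)=\sigma+\varepsilon\,D\varphi_t(\sigma)v+o(\varepsilon)$ shows that $\varphi$ extends, uniformly on bounded time intervals, to a topological flow $\widehat\varphi$ on $\widehat\Lambda$. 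Each regular point of $\Lambda$ has a neighborhood disjoint from the exceptional spheres, so $i(\Lambda\setminus\sing(X))$ is open in $\widehat\Lambda$, and it is dense by construction.

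Next I would extend the normal bundle. The plane field $\cN|_{\Lambda\setminus\sing(X)}$, where $\cN_x=X(x)^{\perp}$, extends to a Riemannian bundle $\widehat\cN\to\widehat\Lambda$ of rank $\dim M-1$: over a boundary point of $\widehat\Lambda$ lying above $\sigma$ one takes the limit of the planes $\cN_x$ as the regular point $x$ tends to that boundary point, a limit which exists and varies continuously because $\|X(x)\|\asymp d(x,\sigma)$ near $\sigma$ and $X(x)/\|X(x)\|$ has a limit along each approach direction (again using the invertibility of $DX(\sigma)$); the isomorphism over $i(\Lambda\setminus\sing(X))$ is the map $I$. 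The analytic core is that the \emph{rescaled} sectional Poincar\'e flow $P^{*}$—rather than the genuine Poincar\'e flow $P$—extends continuously, together with its fiberwise derivatives, over $\widehat\cN$: the rescaling makes the natural scale of the exponential transverse sections uniform down to the singularity, so the rescaled linear Poincar\'e flow $\psi^{*}$ converges, as $x$ tends to a boundary point, to the renormalized action of $D\varphi_t(\sigma)$ on the limiting normal planes, and the rescaled return maps $P^{*}_t$ converge accordingly in the $C^{k}$ topology. One obtains in this way the local $C^{k}$-fibered flow $\widehat P^{*}$ and checks the composition identity $\widehat P^{*}_{t_1+t_2}=\widehat P^{*}_{t_2}\circ\widehat P^{*}_{t_1}$ for orbits that remain $\beta_0$-close to the zero section; this is exactly where passing to the rescaled flow is indispensable, since the genuine Poincar\'e flow does not compactify.

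For the last item, given an open set $\widehat U$ whose closure avoids $\widehat\Lambda\setminus i(\Lambda\setminus\sing(X))$, every point of $\widehat U$ is a genuine regular orbit of $\varphi$ uniformly bounded away from $\sing(X)$, and $\widehat P^{*}$ is there the rescaled non-linear Poincar\'e flow. The plan is to define $\widehat\pi_{y,x}\colon\widehat\cN_y\to\widehat\cN_x$, for close $x,y\in\widehat U$, by writing a point of $\widehat\cN_y$ in the exponential transverse section at $y$, sliding it along the flow onto the exponential section at $x$, and recording its normal coordinate there; the cocycle relation $\pi_{z,x}\circ\pi_{y,z}=\pi_{y,x}$ for mutually close $x,y,z$ is then immediate. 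The four compatibility properties of Definition~\ref{d.compatible}—No small period, Local injectivity, Local invariance, Global invariance—are local statements about $\varphi$ near a compact piece of regular orbit and follow from continuity of the flow, the flow-box theorem and uniform transversality of the exponential sections; the constant $C$ and the passage to $(\widehat P^{*}_{Ct})$ serve only to arrange, after a time rescaling as in Remark~\ref{r.identification}.b, that the fixed intervals $[-2,2]$ and $[-1/4,1/4]$ occurring in those properties are respected.

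The main obstacle is the uniform $C^{k}$ control of the rescaled Poincar\'e return maps along orbit segments that pass arbitrarily close to a singularity, using only the non-degeneracy of $DX(\sigma)$ and the comparability of $\|X\|$ with the distance to $\sigma$; this is the technical heart of \cite{CY2}, and in the present paper I would simply invoke the corresponding statements there rather than reproduce the estimates.
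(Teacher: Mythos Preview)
Your proposal is correct and follows essentially the same approach as the paper, which does not give a proof but simply cites \cite{CY2} and recalls the construction: blow up each singularity $\sigma$ to its unit sphere $T^1_\sigma M$, extend the line field $\RR X$ over the boundary by $u\mapsto DX(\sigma)\cdot u$ (so that $\widehat\cN_u$ is the orthogonal to $DX(\sigma)\cdot u$), and observe that the rescaled linear and non-linear Poincar\'e flows extend continuously and $C^k$-smoothly to this compactification. Your sketch is somewhat more detailed than the paper's recollection, but the ingredients and their roles match; in particular you correctly identify that the rescaling is what makes the compactification of $P^*$ possible and that the identification on $\widehat U$ comes from genuine holonomy between exponential transversals away from the singular set.
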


Note that $\widehat \psi^*:=D\widehat P^*(0)$ is an extension of the rescaled linear Poincar\'e flow $\psi^*$.
\medskip

We recall the constructions in Theorem~\ref{Thm:local-fibered-flow-model} from \cite{CY2}.
\begin{itemize}
\item We blow up every singularity $\sigma$ to be its sphere bundle $T_\sigma^1M$; in this way, we get a compact manifold $\widehat M$ with boundary and a continuous projection $p:~\widehat M\to M$ which is one-to-one above $\Lambda\setminus{\rm Sing}(X)$. The map $i$ is just the inverse of $p$ on the regular set. The flow $\widehat\varphi$ is the unitary tangent flow of $\varphi$ as in \cite[Proposition 3.1]{CY2}.
\item As in \cite[Section 3.4]{CY2}, for each $u\in p^{-1}(\{\sigma\})$, the line field $\mathbb RX$ can be texted to be the space spanned by $DX(\sigma)\cdot u$. Thus, for each $(0,u)\in \widehat \Lambda\cap p^{-1}(\{\sigma\})$ the space $\widehat \cN_u$ is the orthogonal space of $DX(\sigma)\cdot u$ inside $T_\sigma M$.
\item By combining with \cite[Lemma 6.3]{CY2}, the image by the flow $\widehat \psi^*$ of any $v\in \widehat \cN_u$
coincides with the orthogonal projection of $\frac{\|DX(\sigma)u\|}{\|DX(\sigma)D\varphi_t(\sigma)u\|}D\varphi_t(\sigma)\cdot v$ to $\widehat \cN_{\widehat \varphi_t(u)}$.
\end{itemize}

\subsection{The lifted local fiber flow with dominated structures}
When the compact invariant set admits a singular dominated splitting, one can get more accurate properties for the lifted local fiber flow.

\begin{Theorem}\label{Thm:local-fibered-flow-model-dominated}
Let $X$ be a $C^k$-vector field, $k\geq 1$, over a compact manifold $M$.
Let $\Lambda\subset M$ be a compact set which is invariant by the flow $(\varphi_t)_{t\in\RR}$
associated to $X$ such that each singularity $\sigma\in \Lambda$ is hyperbolic.
Let $(\widehat\varphi_t)_{t\in\RR}$, $\widehat\Lambda$, the local $C^k$-fibered flow $({\widehat P}_t^*)$, the vector bundle $\widehat{\cN}$, the maps $i$ and $I$ be as in the statement of Theorem~\ref{Thm:local-fibered-flow-model}.
And let us assume that $\Lambda$ admits an index-$1$ singular dominated splitting $\cN|_{\Lambda\setminus{\rm Sing}(X)}=\cN^{cs}\oplus\cN^{cu}$.

Then $\widehat\Lambda$ admits a dominated splitting $\widehat\cN|_{\widehat\Lambda}=\widehat\cN^{cs}\oplus\widehat\cN^{cu}$ w.r.t. the extended linear Poincar\'e flow with $\dim(\widehat \cN^{cs})=1$
and there is a compact neighborhood $\widehat V$ of $\widehat\Lambda\setminus i(\Lambda\setminus{\rm Sing}(X))$ such that $\widehat\cN^{cs}$ is uniformly contracted in $\widehat V$ with respect to the extended linear Poincar\'e flow.

Moreover, one has the following properties:
\begin{itemize}
\item For any regular periodic orbit $\cO\subset\Lambda$, if $\cO$ has only negative Lyapunov exponents along $\cN^{cs}$, then $\widehat\cO=i(\cO)$ has only negative Lyapunov exponent along along $\widehat\cN^{cs}$ for $\widehat \psi^*$.

\item If $\Lambda$ contains no normally expanded irrational torus, then $\widehat\Lambda$ does not contain any normally expanded irrational torus.

\end{itemize}

\end{Theorem}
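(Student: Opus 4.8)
The plan is to transfer the dominated-splitting structure from $\cN|_{\Lambda\setminus\sing(X)}$ to $\widehat\cN|_{\widehat\Lambda}$ by a standard extension/closure argument, then check each of the extra properties. First I would recall that by Theorem~\ref{Thm:local-fibered-flow-model}, on the open dense subset $i(\Lambda\setminus\sing(X))\subset\widehat\Lambda$ the extended linear Poincar\'e flow $\widehat\psi^*$ is conjugate (via $I$) to the rescaled linear Poincar\'e flow $\psi^*$, which differs from $\psi$ only by a positive scalar factor; hence the dominated splitting $\cN^{cs}\oplus\cN^{cu}$ is also $\widehat\psi^*$-dominated on this subset, with the same constant $\tau$ after adjusting for the rescaling. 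Since domination with a fixed time $\tau$ and a fixed ratio is a closed condition (the angle between the bundles stays bounded away from $0$, and unit vectors in each bundle satisfy the inequality on a dense set, hence on the closure by continuity of $\widehat\psi^*$ and compactness of the unit bundle), the two continuous sub-bundles $\cN^{cs},\cN^{cu}$ extend continuously over all of $\widehat\Lambda$ to an invariant dominated splitting $\widehat\cN^{cs}\oplus\widehat\cN^{cu}$ with $\dim\widehat\cN^{cs}=1$. The only place this needs care is at the blown-up singular fibers $p^{-1}(\sigma)$: there one must use the third bullet of the recalled construction, namely that $\widehat\psi^*$ on $\widehat\cN_u$ for $u\in p^{-1}(\sigma)$ is the orthogonal projection of the suitably rescaled $D\varphi_t(\sigma)$, together with the hypothesis that $\sigma$ is hyperbolic with a dominated splitting $T_\sigma M=E^{ss}\oplus E^{cu}$ and that $W^{ss}(\sigma)\cap\Lambda=\{\sigma\}$. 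The latter guarantees that the points of $p^{-1}(\sigma)$ that actually lie in $\widehat\Lambda$ avoid the direction $E^{ss}$, so that on these points the limit splitting is exactly the projection of $E^{ss}\oplus E^{cu}$ and is genuinely dominated; one checks $\widehat\cN^{cs}$ is the projection of $E^{ss}$ and is uniformly contracted there.

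Next, for the uniform contraction of $\widehat\cN^{cs}$ on a neighborhood $\widehat V$ of $\widehat\Lambda\setminus i(\Lambda\setminus\sing(X))$: the singular part of $\widehat\Lambda$ consists of finitely many compact fibers over the singularities of $\Lambda$, on each of which (as just argued) $\widehat\cN^{cs}$ is the projection of the strong-stable line $E^{ss}$ of $DX(\sigma)$, which is uniformly contracted by $D\varphi_t(\sigma)$; one must verify that the rescaling factor $\frac{\|DX(\sigma)u\|}{\|DX(\sigma)D\varphi_t(\sigma)u\|}$ together with the orthogonal projection does not destroy contraction. Because $E^{ss}$ is $1$-dimensional and the domination $E^{ss}\oplus E^{cu}$ is strict, the norm of $\widehat\psi^*_t$ restricted to $\widehat\cN^{cs}$ is comparable to $\|D\varphi_t|_{E^{ss}}\|$ divided by the growth along the $\widehat\varphi$-direction, and a short computation (as in \cite{CY2}) shows this still decays exponentially. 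Uniform contraction on a neighborhood then follows by continuity and compactness: the rate is an open condition, so it persists on some compact $\widehat V$ containing the singular part.

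For the first ``Moreover'' item: a regular periodic orbit $\cO\subset\Lambda$ embeds as $\widehat\cO=i(\cO)\subset\widehat\Lambda$, and over $\widehat\cO$ the bundle $\widehat\cN^{cs}$ restricts to $\cN^{cs}|_\cO$ with $\widehat\psi^*$ conjugate to $\psi^*$; since $\psi^*$ and $\psi$ differ by a scalar cocycle that is a coboundary over a periodic orbit (the factor $\|X(x)\|/\|X(\varphi_t(x))\|$ integrates to $1$ over the period), the Lyapunov exponent of $\widehat\cO$ along $\widehat\cN^{cs}$ equals that of $\cO$ along $\cN^{cs}$ — so if the latter is negative, so is the former. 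For the second item I would argue by contrapositive: a normally expanded irrational torus $\widehat\cT\subset\widehat\Lambda$ for $\widehat\psi^*$ cannot meet the singular part (an irrational-flow torus has no fixed points, while the singular fibers are invariant and disjoint from $i(\Lambda\setminus\sing(X))$, and the $\widehat\varphi$-dynamics on them is the projectivized linear flow of $DX(\sigma)$, which has fixed points), hence $\widehat\cT\subset i(\Lambda\setminus\sing(X))$ and its preimage under $i$ is a compact invariant subset of $\Lambda\setminus\sing(X)$ on which the dynamics is topologically equivalent to an irrational flow on $\TT^2$ with $\cN^{cs}$ having zero Lyapunov exponent (by Lemma~\ref{l.torus} transported back); this is exactly a normally expanded irrational torus in $\Lambda$, a contradiction. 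The main obstacle I anticipate is the careful analysis at the blown-up singular fibers — verifying that the limiting splitting there is precisely the projection of $E^{ss}\oplus E^{cu}$ (this is where the hypothesis $W^{ss}(\sigma)\cap\Lambda=\{\sigma\}$ is essential, to rule out points of $\widehat\Lambda$ sitting in the $E^{ss}$ direction where the projection would degenerate) and that the rescaled, orthogonally-projected tangent cocycle still exhibits uniform contraction along $\widehat\cN^{cs}$; the rest is routine closure and coboundary bookkeeping.
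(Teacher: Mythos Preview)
Your argument for parts (a)--(c) is essentially the same as the paper's: lift the splitting via $I$, use that domination is rescaling-invariant and extends to the closure; identify the limit splitting at the blown-up fibers with the projection of $E^{ss}\oplus E^{cu}$ (using $W^{ss}(\sigma)\cap\Lambda=\{\sigma\}$ to ensure $p^{-1}(\sigma)\cap\widehat\Lambda$ lies in the $E^{cu}$-directions), and note that the Lyapunov exponents over a periodic orbit are unchanged by the rescaling cocycle.

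For the torus item you take a genuinely shorter route than the paper. You argue purely topologically: $\widehat\cT$ is minimal with no fixed points, so it avoids each invariant singular fiber $p^{-1}(\sigma)$ (on which the projectivized hyperbolic linear flow forces every orbit to accumulate on an eigendirection), hence $\widehat\cT\subset i(\Lambda\setminus\sing(X))$ and $p(\widehat\cT)\subset\Lambda$ is a compact invariant set homeomorphic to $\TT^2$ with dynamics topologically equivalent to an irrational flow. The paper instead proves more: it uses that $\widehat\cN^{cu}$ is uniformly expanded on $\widehat\cT$ (Lemma~\ref{l.torus}) to get a dominated splitting $TM|_{p(\cT)}=E^c\oplus E^{uu}$ of the \emph{tangent} flow, then invokes the center-manifold result of \cite{BC-whitney} to conclude that $p(\cT)$ is an honest $C^1$-embedded normally expanded $2$-torus in $M$. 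Your argument suffices for the applications to Theorems~\ref{Thm:measure-stable} and~\ref{Thm:minially-non-contracted-stable}, whose hypotheses only exclude an invariant set on which the dynamics is \emph{topologically} equivalent to an irrational flow; the paper's version yields the stronger geometric conclusion that such a set is automatically a smooth submanifold.
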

\begin{proof} We prove successively all the properties of the theorem.

\paragraph{\it (a) The extension of the dominated splitting.}

Through the map $I$, the bundles $\cN^{cs}$ and $\cN^{cu}$ can be lifted over $i(\Lambda\setminus{\rm Sing}(X))$ as bundles $\widehat\cN^{cs}$ and $\widehat\cN^{cu}$. This defines a dominated splitting $\widehat \cN=\widehat \cN^{cs}\oplus \widehat \cN^{cu}$ over $\widehat\Lambda$ for the extended rescaled linear Poincar\'e flow $\widehat \psi^*$ (hence for $\widehat P^*$).
Indeed:
\begin{itemize}
\item[--] dominated splittings are preserved by rescaling, hence $\widehat\cN^{cs}\oplus \widehat \cN^{cu}$ is a dominated splitting for
$\widehat \psi^*$ over $i(\Lambda\setminus \sing(X))$;
\item[--] for continuous linear cocycles, dominated splittings extend to the closure.
\end{itemize}

\paragraph{\it (b) The existence of uniformly contracted neighborhood $\widehat V$.}
We only consider the case where $\Lambda$ does contain singularities, since otherwise $\widehat\Lambda\setminus i(\Lambda\setminus{\rm Sing}(X))=\emptyset$ and the conclusion will be trivial.
There exists a continuous projection $p\colon \widehat \Lambda\to \Lambda$.

Each singularity $\sigma\in \Lambda$ admits a splitting $T_\sigma M=E^{ss}(\sigma)\oplus E^{cu}(\sigma)$ with $\dim E^{ss}(\sigma)=1$ by definition of index-$1$ singular domination. Let $\Delta_\sigma$ be the set of unit vectors $u\in {E^{cu}(\sigma)}\subset T_\sigma M$.
The set $\widehat \Lambda\cap \Delta_\sigma$ is compact and $\widehat \varphi$-invariant.

For each $u\in \widehat \Lambda\cap \Delta_\sigma$ the space $\widehat \cN_u$ is the orthogonal space of
$DX(\sigma)\cdot u$ inside $T_\sigma M$. The image by the flow $\widehat \psi^*$ of any $v\in \widehat \cN_u$
can be obtained as the orthogonal projection of $D\varphi_t(\sigma)\cdot v$ to $\widehat \cN_{\widehat \varphi_t(u)}$.
Since $D\varphi_t(\sigma)$ preserves the splitting $T_\sigma M=E^{ss}\oplus { E^{cu}}$,
and since $DX(\sigma)\cdot u$ has a uniform angle with $E^{ss}(\sigma)$, the orthogonal projection of this splitting
induces a splitting $\widehat \cN_u= \widehat\cE(u)\oplus \widehat\cF(u)$ above $\Delta_\sigma\cap \widehat \Lambda$
which is invariant by $\widehat \psi^*$ such that $\cE(u)$ is one dimensional and uniformly contracted by the flow $\widehat \psi^*$.

On $\widehat\Lambda\cap\Delta_\sigma$, the dominated splittings $\widehat \cN^{cs}\oplus \widehat \cN^{cu}$ and $\widehat\cE\oplus \widehat\cF$ have the same dimensions,
hence coincide. Moreover since $W^{ss}(\sigma)\cap \Lambda=\{\sigma\}$, we have $p^{-1}(\sigma)\cap \widehat\Lambda=\Delta_\sigma\cap \widehat\Lambda$,
{where $p$ denotes the projection $\widehat\Lambda\to \Lambda$}.
This shows that $\widehat \cN^{cs}$ is uniformly contracted by $\widehat P^*$ over $\widehat\Lambda\setminus i(\Lambda\setminus{\rm Sing}(X))$,
hence on any small neighborhood $\widehat V$.

\paragraph{\it (c) Lyapunov exponents of periodic orbits.}
\begin{Lemma}\label{Lem:extended-periodic-contracted}
For any regular periodic orbit $\cO$ in $\Lambda$, the
Lyapunov exponents of $\cO$ for $\psi$ and the Lyapunov exponents of $\widehat \cO=i(\cO)$ for
$\widehat \psi^*$ coincides.
\end{Lemma}
\begin{proof} Indeed the two linear flows are conjugated by the (derivative of the) map $I$.
\end{proof}

\paragraph{\it (d) Normally expanded irrational tori.}
\begin{Lemma}\label{Lem:blowup-no-torus}
If $\cT\subset \widehat \Lambda$ is a normally expanded irrational torus,
then $p(\cT)$ is an invariant torus which carries a dynamics topologically equivalent to an irrational flow.
\end{Lemma}
\begin{proof}
Let $\cT\subset \widehat \Lambda$ be an expanded irrational torus $\cT$.
By Lemma~\ref{l.torus}, the bundle $\widehat \cN^{cu}$ is uniformly expanded over $\cT$.
Since it does not contain fixed points of $\widehat \varphi$, it projects by $p$ in $\Lambda\setminus \sing(X)$.
By construction, the dynamics of $\widehat \psi^*$ over $\cT$ and $\psi^*$ over $p(\cT)$ are the same,
hence $\cN^{cu}$ is uniformly expanded over $p(\cT)$ by $\psi^*$.
Since $p(\cT)\cap \sing(X)=\emptyset$, one deduces that $\cN^{cu}$ is uniformly expanded over $p(\cT)$ by $\psi$.
It follows from a standard argument that the tangent flow over $p(\cT)$ has a dominated
splitting $TM|_{p(\cT)}=E^c\oplus E^{uu}$, with $\dim(E^{c})=2$, see \cite[Lemma 2.13]{GY} for instance.

By partial hyperbolicity, each $x\in p(\cT)$ has a strong unstable manifold $W^{uu}(x)$ tangent to $E^{uu}(x)$.
Note that $W^{uu}(x)\cap p(\cT)=\{x\}$ (since the dynamics is topologically equivalent to
an irrational flow). Then~\cite{BC-whitney} implies that $p(\cT)$ is contained in a two-dimensional submanifold $\Sigma$
transverse to $E^{uu}$ and locally invariant by $\varphi_1$: there exists a neighborhood $U$ of $p(\cT)$ in $\Sigma$ such that
$\varphi_1(U)\subset \Sigma$. Since $p(\cT)$ is homeomorphic to $\TT^2$, it is open and closed in $\Sigma$, hence coincides with $\Sigma$.
This shows that $p(\cT)$ is $C^1$-diffeomorphic to $\TT^2$, normally expanded
and carries a dynamics topologically equivalent to an irrational flow.
\end{proof}

Theorem~\ref{Thm:local-fibered-flow-model-dominated} is now proved.
\end{proof}

\subsection{Proof of Theorem~\ref{Thm:measure-stable}}
We state and prove a more precise version of Theorem~\ref{Thm:measure-stable}.
We first introduce a notion of stable manifold adapted to a set with a normal dominated structure.

\begin{Definition}[Normal stable manifold]\label{Def:normal-parametrized}
Assume that  $\Lambda$ is a $(\varphi_t)_{t\in \RR}$-invariant set with a singular dominated splitting $\cN_\Lambda=\cN^{cs}\oplus \cN^{cu}$. Given $x\in \Lambda\setminus \sing(X)$, a manifold $V^s(x)\subset \cN(x)$ is said to be a \emph{normal stable manifold tangent to $\cN^{cs}(x)$}, if:
\begin{itemize}
\item it contains $x$ and is tangent to $\cN^{cs}(x)$ at $x$;

\item $P_t(V^s)\subset \cN_{\varphi_t(x)}$ for any $t>0$ and ${\rm Diam}(P_t(V^s))\to 0$ as $t\to\infty$.

\end{itemize}
\end{Definition}
 
 Definition~\ref{Def:normal-parametrized} gives a generalization of the stable manifold of the return map to a cross section at a periodic point.
 In order to have uniformity with respect to the base point $x$, we introduce the following definition
 (compare with Definition~\ref{d.plaque}).
 
\begin{Definition}[Normal plaque family]
Let $\Lambda$ be an invariant compact set with a singular dominated splitting $\cN_{\Lambda\setminus{\rm Sing}(X)}=\cE\oplus \cF$.
A \emph{$C^k$ normal plaque family} tangent to $\cE$ over an invariant set $Y\subset \Lambda\setminus \sing(X)$
is a continuous family of $C^k$-diffeomorphisms onto their image $\Theta_x\colon \cE(x)\to \cN_x$, $x\in Y$,
such that $\Theta_x(0)=x$ and the image is tangent to $\cE(x)$. We denote by
$\cW^\cE(x), \cW^\cE_\delta(x)$ the images $\Theta_x(\cE(x)), \Theta_x(B(0,\delta))$.

It is locally invariant if there exists $\alpha>0$ such that for any $x\in Y$ we have
$$P_1^*\bigg(\cW^\cE_\alpha(x)\bigg) \subset \cW^\cE(\varphi_t(x)).$$
\end{Definition}

\begin{Definition}[Family of normal stable manifolds]
Let $\Lambda$ be an invariant compact set with a singular dominated splitting $\cN_{\Lambda\setminus{\rm Sing}(X)}=\cE\oplus \cF$.
A \emph{continuous family of normal stable manifolds} (tangent to $\cE$) over an invariant set $Y\subset \Lambda\setminus \sing(X)$
is a locally invariant $C^1$-plaque family tangent to $\cE$ such that for any
$\rho>0$, there is $\alpha>0$ with the following properties:
$$P_t^*\bigg(\cW^\cE_\alpha(x)\bigg) \subset \cW^\cE_\rho(\varphi_t(x))
\text{ and } 
\diam\bigg(P_t^*(\cW^\cE_\alpha(x))\bigg)\underset{t\to +\infty} \longrightarrow 0.$$
In particular each set $\cW^\cE_\alpha(x)$ is a normal stable manifold.
\end{Definition}

\begin{Remark}
Note that after rescaling, for any $x\in Y$
the diameter of the iterates of the plaque $\cW^\cE_{\alpha\|X(x)\|} (x)$ under the Poincar\'e flow $(P_t)_{t\in \RR}$
decreases to $0$ as $t\to +\infty$.
\end{Remark}

\begin{Theorem A'}\label{Thm:measure-stable-bis}
Let $M$ be a compact Riemannian manifold without boundary, $X$ be a $C^2$ vector field over $M$ and $\mu$ be a regular ergodic measure of the flow generated by $X$. Let us assume that on the support of $\mu$:
\begin{itemize}
\item[--] there exists an index-$1$ singular dominated splitting  $\cN_{{\rm supp}(\mu)\setminus{\rm Sing}(X)}=\cN^{cs}\oplus\cN^{cu}$,
\item[--] any (regular) periodic orbit (if it exists) has at least one negative Lyapunov exponent,
\item[--] the dynamics is not topologically equivalent to an irrational flow on a $2$-torus.
\end{itemize}

Then there exists a continuous family of normal stable manifolds $\cW^{cs}_\alpha(x)$ over a set with full $\mu$-measure.
\end{Theorem A'}
In order to build the stable disc $V^s(x)$ in Theorem~\ref{Thm:measure-stable},
we first project the rescaled normal stable manifold to $M$ by the exponential map at $x$
and then flow it by $(\varphi_t)_{t\in\RR}$ during a small interval of time $[-\epsilon,\epsilon]$, that is:
$$V^s(x)=\varphi_{[-\epsilon,\epsilon]}(\exp_x(\cW^{cs}_{\alpha\|X(x)\|}(x))).$$
 
\begin{proof}
Let us set $\Lambda={\rm supp}(\mu)$.
Note that if $\mu$ is supported by a periodic orbit, the conclusion holds trivially since the periodic orbit has a negative Lyapunov exponent by assumption. We will thus assume that $\mu$ is not supported on a periodic orbit.

By Theorems~\ref{Thm:local-fibered-flow-model} and~\ref{Thm:local-fibered-flow-model-dominated}, there exist a topological flow $(\widehat\varphi_t)_{t\in\RR}$, a compact metric space $\widehat\Lambda$, a local $C^2$-fibered flow $({\widehat P}^*_t)_{t\in \RR}$ over a Riemannian vector bundle $\widehat{\cN}\to\widehat\Lambda$, some conjugacies $i$, $I$; since $\Lambda$ admits an index-$1$ singular dominated splitting  
$\cN^{cs}\oplus\cN^{cu}$, the set $\widehat\Lambda$ admits a dominated splitting $\widehat\cN^{cs}\oplus\widehat\cN^{cu}$, and there is a small neighborhood $\widehat V$ of $\widehat\Lambda\setminus i(\Lambda\setminus{\rm Sing}(X))$ such that the bundle $\widehat\cN^{cs}$ is uniformly contracted on $\widehat V$ by $({\widehat P}^*_t)_{t\in \RR}$.

By the assumptions and the last part of Theorem~\ref{Thm:local-fibered-flow-model-dominated}, $\widehat\cN^{cs}$ is contracted over any (regular) periodic orbit in $\widehat\Lambda$ and there is no normally hyperbolic irrational torus in $\widehat\Lambda$.

By Theorem~\ref{Thm:local-fibered-flow-model}, there is $C>0$ and an identification $\widehat\pi$ compatible with $({\widehat P}^*_{C.t})$ on an open set $\widehat U$ satisfying $\widehat\Lambda\setminus{\widehat U}\subset {\widehat V}$. To simplify the presentation we will assume $C=1$ in the following.

Now we consider the lift of the measure $\widehat\mu$ of $\mu$ on $\widehat \Lambda$. Since $\mu$ is a regular ergodic measure, one has that $\widehat\mu(i(\Lambda\setminus{\rm Sing}(X)))=1$.

The setting of Section~\ref{s.topological-hyperbolicity} is satisfied and we can consider a $C^2$-plaque family $\widehat \cW^{cs}$ tangent to $\widehat \cN^{cs}$.
By Corollary~\ref{Cor:ergodic-measure}, for any $\rho>0$ here is $\alpha>0$ such that for $\widehat\mu$-almost every point $\widehat x$, the image ${\widehat P}_t({\widehat \cW}^{cs}_{\alpha}({\widehat x}))$
is well-defined for any $t\ge 0$, has diameter less than $\rho$ and satisfies
$$\lim_{t\to+\infty} |{\widehat P}^*_t({\widehat \cW}^{cs}_{\alpha}({\widehat x}))|=0.$$
Thus, at $\mu$-almost every point $x$, the rescaled sectional Poincar\'e flow contracts
the manifold ${\cV}^{cs}_{\varepsilon_0}({x})=I^{-1}({\cW}^{cs}_{\varepsilon_0}({\widehat x}))$:
$$\lim_{t\to+\infty} |{P}^*_t({\cV}^{cs}_{\varepsilon_0}({x}))|=0.$$
Then we project the family of plaques $\widehat \cW^{cs}$ by the fibered isometry $I$
and get a continuous family $\cW^{cs}$ of normal stable manifolds, as wanted.
\end{proof}

\subsection{Proof of Theorem~\ref{Thm:minially-non-contracted-stable}}
We state and prove a more precise version of Theorem~\ref{Thm:minially-non-contracted-stable}:

\begin{Theorem C'}\label{Thm:minially-non-contracted-stable-recaled-poincare}
Let $M$ be a Riemannian compact manifold without boundary, $X$ be a $C^2$ vector field over $M$ and $\Lambda$ be a compact set invariant by the flow generated by $X$ which is not reduced to a periodic orbit.
Let us assume that :
\begin{itemize}
\item[--] there exists an index-$1$ singular dominated splitting $\cN_{\Lambda\setminus{\rm Sing}(X)}=\cN^{cs}\oplus\cN^{cu}$,
\item[--] any regular ergodic measure $\mu$ (if there exists) whose support is a proper subset of $\Lambda$ has at least one negative Lyapunov exponent,
\item[--] the dynamics is not topologically equivalent to an irrational flow on a $2$-torus.
\end{itemize}
Then there exists a continuous family of normal stable manifolds over $\Lambda\setminus \sing(X)$.
\end{Theorem C'}
\begin{proof}
As in the proof of Theorem~\ref{Thm:measure-stable-bis},
we introduce $(\widehat\varphi_t)_{t\in\RR}$, $\widehat\Lambda$, $({\widehat P}^*_t)_{t\in \RR}$ over $\widehat{\cN}\to\widehat\Lambda$, $i$, $I$, a dominated splitting $\widehat \cN=\widehat\cN^{cs}\oplus\widehat\cN^{cu}$, a small neighborhood $\widehat V$ of $\widehat\Lambda\setminus i(\Lambda\setminus{\rm Sing}(X))$ such that the bundle $\widehat\cN^{cs}$ is uniformly contracted on $\widehat V$ by ${\widehat P}^*_t$. Moreover $\widehat\cN^{cs}$ is contracted over any regular periodic orbit in $\widehat\Lambda$ and there is no normally hyperbolic irrational torus in $\widehat\Lambda$.

We want to apply Theorem~\ref{Thm:topologicalcontracting}. We have to check that on any proper compact invariant set $\widehat\Gamma\subset \Lambda$, $\widehat\cN^{cs}$ is uniformly contracted by the flow $(\psi^*_t)_{t\in\RR}$. It suffices to prove that for any ergodic measure $\widehat\mu$ supported on $\widehat\Gamma$, the Lyapunov exponent of $\widehat\mu$ with respect to $(\psi_t^*)_{t\in\RR}$ along $\widehat\cN^{cs}$ is negative.
By ergodicity, either $\widehat\mu(i(\Lambda\setminus{\rm Sing}(X)))=1$ or $\widehat\mu(\widehat\Lambda\setminus i(\Lambda\setminus{\rm Sing}(X)))=1$. 

If $\widehat\mu(\widehat\Lambda\setminus i(\Lambda\setminus{\rm Sing}(X)))=1$,
then the support of $\widehat \mu$ is contained in $\widehat V$; by the definition of $\widehat V$, one knows that $\widehat\cN^{cs}$ is uniformly contracted. Consequently, $\widehat\mu$ has negative Lyapunov exponents along $\widehat\cN^{cs}$.

If $\widehat\mu(i(\Lambda\setminus{\rm Sing}(X)))=1$, then the support of $\widehat \mu$ cannot be dense in the open set
$i(\Lambda\setminus{\rm Sing}(X))$ since this set is dense in $\widehat \Lambda$ and the support of $\widehat \mu$
is proper in $\widehat \Lambda$. Let $\mu$ be the projection of $\widehat \mu$ to $\Lambda$.
Since $i$ is a homeomorphism between the open sets
$\Lambda\setminus \sing(X)$ and $i(\Lambda\setminus{\rm Sing}(X))$, the support of $\mu$ is proper in $\Lambda$.
Note also that $\mu$ is regular. By assumption its Lyapunov exponent along $\cN^{cs}$ is negative.
Then by the isometric property of $I$, one knows that $\widehat\mu$ has a negative Lyapunov exponent along $\widehat\cN^{cs}$.

Now we can apply Theorem~\ref{Thm:topologicalcontracting}. there is $\varepsilon_0>0$ such that for every point ${\widehat x}\in\widehat\Lambda$, the image ${\widehat P}_t({\cW}^{cs}_{\varepsilon_0}({\widehat x}))$
is well-defined for any $t\ge 0$, and
$$\lim_{t\to+\infty} \sup_{{\widehat x}\in\widehat\Lambda}|{\widehat P}^*_t({\cW}^{cs}_{\varepsilon_0}({\widehat x}))|=0.$$
Then we project the family of plaques $\widehat \cW^{cs}(\widehat x)$ for $\widehat x\in i(\Lambda\setminus \sing(X))$ by the fibered isometry $I$
and get a continuous family $\cW^{cs}$ of normal stable manifolds, as wanted.
\end{proof}

\vskip 5pt

\begin{tabular}{l l l}
\emph{\normalsize Sylvain Crovisier}
& \quad &
\emph{\normalsize Dawei Yang}
\medskip\\

\small Laboratoire de Math\'ematiques d'Orsay
&& \small School of Mathematical Sciences\\
\small CNRS - Universit\'e Paris-Saclay
&& \small Soochow University\\
\small Orsay 91405, France
&& \small Suzhou, 215006, P.R. China\\
\small \texttt{Sylvain.Crovisier@universite-paris-saclay.fr}
&& \texttt{yangdw1981@gmail.com}\\
&& \texttt{yangdw@suda.edu.cn}\
\end{tabular}

\end{document}